\newcommand{\added}[1]{#1}
\definecolor{darkgreen}{rgb}{0,0.60,0}
\newaliascnt{phenomenon}{theorem}
\newtheorem{phenomenon}[phenomenon]{Phenomenon}
\title{The behavior of higher proof theory I: Case $\Sigma^1_2$}
\author{Hanul Jeon}
\email{ \href{mailto:hj344@cornell.edu}{hj344@cornell.edu}}
\urladdr{\href{https://hanuljeon95.github.io}{https://hanuljeon95.github.io}}
\address{Department of Mathematics, Cornell University, Ithaca, NY 14853} 
\thanks{The author would like to thank James Walsh for feedback on this paper, and Henry Towsner for valuable discussion on my work. The research presented in this paper is supported in part by NSF grant DMS–2153975.}
\newcommand{\lag}{\langle}
\newcommand{\rag}{\rangle}
\newcommand{\bbN}{\mathbb{N}}
\newcommand{\bbP}{\mathbb{P}}
\newcommand{\ttL}{\mathtt{L}}
\newcommand{\rank}{\operatorname{rank}}
\newcommand{\field}{\operatorname{field}}
\newcommand{\arity}{\operatorname{arity}}
\newcommand{\supp}{\operatorname{supp}}
\newcommand{\Tr}{\operatorname{Tr}}
\newcommand{\en}{\operatorname{en}}
\newcommand{\WO}{\mathsf{WO}}
\newcommand{\LO}{\mathsf{LO}}
\newcommand{\Clim}{\mathsf{Clim}}
\newcommand{\Grow}{\mathsf{Grow}}
\newcommand{\Dil}{\mathsf{Dil}}
\newcommand{\CK}{\mathsf{CK}}
\newcommand{\Prv}{\mathsf{Prv}}
\newcommand{\True}{\mathsf{True}}
\newcommand{\Con}{\mathsf{Con}}
\newcommand{\RecPreDil}{\mathsf{RecPreDil}}
\newcommand{\SDil}{\mathsf{SDil}}
\newcommand{\Diag}{\operatorname{Diag}}
\newcommand{\RFN}[1][]{\ifthenelse{\equal{#1}{}}{}{#1\mhyphen}\mathsf{RFN}}
\newcommand{\KB}{\mathsf{KB}}
\newcommand{\AC}{\mathsf{AC}}
\newcommand{\CA}{\mathsf{CA}}
\newcommand{\KP}{\mathsf{KP}}
\newcommand{\RCA}{\mathsf{RCA}}
\newcommand{\ACA}{\mathsf{ACA}}
\newcommand{\ATR}{\mathsf{ATR}}
\newcommand{\BI}{\mathsf{BI}}
\newcommand{\ZFC}{\mathsf{ZFC}}
\newcommand{\CH}{\mathsf{CH}}
\newcommand{\PA}{\mathsf{PA}}
\newcommand{\fraka}{\mathfrak{a}}
\newcommand{\frakf}{\mathfrak{f}}
\def\bfcyrillic{\usefont{T2A}{antt}{eb}{n}}
\def\cyrillic{\usefont{T2A}{antt}{m}{n}}
\newcommand{\cyrDe}{{\text{\cyrillic \char"C4}}}
\newcommand{\cyrZhe}{{\text{\cyrillic \char"C6}}}
\newcommand{\bfcyrDe}{{\text{\bfcyrillic \char"C4}}}
\begin{document}
\maketitle

\begin{abstract}
    Walsh \cite{Walsh2023characterizations} has shown that comparing proof-theoretic ordinals is equivalent to comparing $\Pi^1_1$-consequence comparison and  $\Pi^1_1$-reflection comparison, all modulo true $\Sigma^1_1$-sentences.
    In this paper, we prove the analogous result for $\Sigma^1_2$-consequences modulo true $\Pi^1_2$-sentences, that is, the equivalence between $\Sigma^1_2$-proof-theoretic ordinal comparison, $\Sigma^1_2$-consequence comparison, and $\Sigma^1_2$-reflection comparison, all modulo true $\Pi^1_2$-sentences. 
    We also examine the connection between $\Sigma^1_2$-proof-theoretic ordinal and $\Sigma^1_2$-analogue of the robust reflection rank in Pakhomov-Walsh \cite{PakhomovWalsh2021Reflection}.
\end{abstract}

\section{Introduction}

\added{
One of the major questions in mathematics in the late 19th and early 20th centuries was establishing a consistent and complete foundation for mathematics.
Various attempts to address this problem have yielded foundational theories for mathematics, such as Cantor's set theory, which evolved into Zermelo-Frankel set theory, and Russell-Whitehead's type theory, which has resulted in modern type theories. Other logicians also introduced various forms of foundational theories, like subsystems of second-order arithmetic, structural set theory, or Feferman's Explicit Mathematics. That is, we have a plethora of possible foundational theories, and it becomes natural to ask about the relationship between different theories. 

Set theorists and proof theorists have compared two different theories in various ways, like comparing their consistency strengths or their arithmetical consequences. 
To make the latter arguments more precise, let us define the precise meaning of the consistency comparison:
\begin{definition}
    Let $S$ and $T$ be theories extending a weak arithmetic $\mathsf{A}$.\footnote{For technical convenience, we fix $\mathsf{A} = \mathsf{PRA}$ in this paper. However, $\mathsf{A}$ can be far weaker, like $\mathsf{I\Delta_0 + Exp}$ or bounded arithmetic $\mathsf{S}^1_2$, since these theories can arithmetize the syntax and prove incompleteness theorems. See \cite[Chapter 7]{BussThesis} for technical details.}
    We define the following:
    \begin{itemize}
        \item $S \le_\Con T \iff \mathsf{A} \vdash \Con(T)\to \Con(S)$, and
        \item $S <_\Con T$ if $S \le_\Con T$ but $T\nleq_\Con S$.
    \end{itemize}
\end{definition}
}

There is no reason to believe $\le_\Con$ behaves like a well-order.\footnote{More precisely, a prewellorder.} \added{In fact, the following folklore result shows that $\le_\Con$ is far from being a well-order. (See \cite{Hamkins2022Nonlinearity} for the proof.)}
\begin{theorem} \pushQED{\qed}
    \begin{enumerate}
        \item There are \added{recursive} theories $S$, $T$ extending $\PA$ such that neither $S\le_\Con T$ nor $T\le_\Con S$ hold.
        \item For \added{recursive} theories $S$, $T$ extending $\PA$ satisfying $S<_\Con T$, we can find an extension $U$ of $\PA$ such that $S <_\Con U <_\Con T$.
        \item There is \added{a recursive sequence of theories} $\lag T_i\mid i\in\bbN\rag $ extending $\PA$ such that $T_0 >_\Con T_1 >_\Con \cdots$. \qedhere 
    \end{enumerate}
\end{theorem}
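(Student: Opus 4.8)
The plan is to establish all three items by self-referential arguments at the level of arithmetized provability; none of them requires the apparatus developed later in the paper. Throughout I assume the theories in sight are consistent and that $\PA$ is $\Sigma^0_1$-sound, so that G\"odel's second incompleteness theorem and formalized $\Sigma^0_1$-completeness are available. For item (1), I would apply the diagonal lemma simultaneously to two formulas, producing sentences $\rho_0,\rho_1$ with
\[
  \PA\vdash\rho_0\leftrightarrow\neg\Con(\PA+\rho_1)
  \qquad\text{and}\qquad
  \PA\vdash\rho_1\leftrightarrow\neg\Con(\PA+\rho_0),
\]
and set $S:=\PA+\rho_0$ and $T:=\PA+\rho_1$. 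The key point is that $\PA\nvdash\Con(\PA+\rho_i)$ for $i\in\{0,1\}$: if $\PA+\rho_{1-i}$ is consistent then $\PA\vdash\Con(\PA+\rho_{1-i})$ would yield $\PA\vdash\Con(\PA)$ (since $\PA\vdash\Con(\PA+\rho_{1-i})\to\Con(\PA)$), against the second incompleteness theorem, while if $\PA+\rho_{1-i}$ is inconsistent then $\PA$ already proves $\neg\Con(\PA+\rho_{1-i})$. In either case $\PA+\rho_i$, which is deductively equivalent to $\PA+\neg\Con(\PA+\rho_{1-i})$, is consistent. Now if $S\vdash\Con(T)$, then by the first fixed-point equivalence $S$ proves $\neg\rho_0$ as well as $\rho_0$, contradicting the consistency of $S$; hence $S\nvdash\Con(T)$, and symmetrically $T\nvdash\Con(S)$, so $S$ and $T$ are $<_\Con$-incomparable.

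For item (2), given $S<_\Con T$ the natural first move is to try $U:=\PA+\Con(S)$. This certainly satisfies $S<_\Con U$; the strictness $S\nvdash\Con(U)$ holds because $\PA\vdash\Con(\PA+\Con(S))\to\Con(S)$ together with the second incompleteness theorem forces $S\nvdash\Con(\PA+\Con(S))$; and $U<_\Con T$ holds as soon as $T$ proves $\Con(\PA+\Con(S))$, that is, whenever $T$ lies more than one consistency step above $S$. The remaining situation — $T$ only barely stronger than $S$ — is the delicate one, and I expect it to be the main obstacle in the whole argument: the interpolant $U$ must still prove $\Con(S)$ yet have consistency weak enough to be a theorem of $T$, which cannot be achieved merely by adjoining a consistency statement and instead calls for a finely tuned self-referential sentence (in the style of Rosser's trick, or of a slow-consistency construction) that separates the two theories.

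For item (3), I would obtain this by iterating the interpolation of (2) downward. Fix a concrete strict instance, e.g.\ $S_0:=\PA$ and $T_0:=\PA+\Con(\PA+\Con(\PA))$, and repeatedly interpolate to produce $T_0>_\Con U_1>_\Con U_2>_\Con\cdots$, where each $U_{n+1}$ is produced strictly between $S_0$ and $U_n$. Since the construction of (2) is a primitive-recursive transformation of indices for the axiom sets involved, this yields a genuine computable sequence of consistent r.e.\ extensions of $\PA$ that is strictly $<_\Con$-descending. (Alternatively one can attempt a direct construction via the recursion theorem, defining $T_n:=\PA+\Con(T_{n+1})$ along a simultaneous fixed point, but then one must argue separately that every $T_n$ is consistent — exactly the bookkeeping that routing through (2) avoids.)
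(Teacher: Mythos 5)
The paper itself gives no proof of this theorem; it is stated as known and deferred to the cited Hamkins paper. So there is no internal argument to compare against, and I will assess your proposal on its own merits.

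Item (1) is correct. The pair $\rho_0,\rho_1$ with $\PA\vdash\rho_i\leftrightarrow\neg\Con(\PA+\rho_{1-i})$ is a standard mutually-referential (``G\"odel--Rosser pair'') construction, and your verification — first that $\PA\nvdash\Con(\PA+\rho_{1-i})$ via $\PA\vdash\Con(\PA+\rho_{1-i})\to\Con(\PA)$ and G\"odel's second incompleteness theorem, then that $S\vdash\Con(T)$ would force $S$ to prove both $\rho_0$ and $\neg\rho_0$ — is sound. (For what it's worth, an even shorter witness exists: $S=\PA$ and $T=\PA+\neg\Con(\PA)$ are already $<_\Con$-incomparable, since $T\vdash\Con(\PA)$ would make $T$ inconsistent while $\PA\vdash\Con(T)$ would give $\PA\vdash\Con(\PA)$.)

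Item (2) is where the genuine gap lies, and you have correctly located it. Setting $U=\PA+\Con(S)$ fails already in the minimal case $T=\PA+\Con(S)$, where $U=T$ and $T\nvdash\Con(T)$ by G\"odel's second incompleteness theorem. Your remark that a Rosser-style or slow-consistency interpolant is what is actually needed is the right instinct, but as written this is an expectation, not an argument: no candidate sentence is produced, no verification that it sits strictly between $S$ and $T$ is given. The density of $<_\Con$ is precisely the delicate half of the theorem, and ``I expect this to be the main obstacle'' does not discharge it. Until a concrete self-referential interpolant is exhibited and shown to land strictly between the two consistency strengths, item (2) is unproved.

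Item (3) inherits the gap. Your reduction of a strictly descending chain to iterated interpolation is logically fine — starting from $\PA<_\Con T_0$ with $T_0=\PA+\Con(\PA+\Con(\PA))$ and interpolating between $\PA$ and $U_n$ at each stage does produce an infinite strictly $<_\Con$-descending sequence, \emph{provided} item (2) is available. But since (2) is not established, neither is (3). (Your aside about routing around (2) via a simultaneous recursion-theorem fixed point $T_n=\PA+\Con(T_{n+1})$ is also incomplete for exactly the reason you flag: one must still show every $T_n$ is consistent, and the naive attempt — that inconsistency of one $T_n$ propagates both up and down the chain without producing a contradiction — does not by itself close the loop. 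The known constructions do settle this, but it requires a further idea, not just bookkeeping.)

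In short: (1) is correct and standard; (2) contains an acknowledged but genuine hole where the actual self-referential interpolant must be constructed and verified; (3) is a correct reduction to (2) and so is likewise open until (2) is completed.
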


However, various logicians claimed that $\le_\Con$ behaves like a well-order among `natural' theories. For example, Simpson stated that
\begin{quote}
    It is striking that a great many foundational theories are linearly ordered by $<_\Con$. Of course, it is possible to construct pairs of artificial theories that are incomparable under $<_\Con$. However, this is not the case for the ``natural'' or nonartificial theories which are usually regarded as significant in the foundations of mathematics. (Simpson \cite[p111]{Simpson2010GodelHierarchy}.)
\end{quote}

Koellner also made a similar claim:
\begin{quote}
    Remarkably, it turns out that when one restricts to those theories that ``arise in nature'' the interpretability ordering is quite simple: There are no descending chains and there are no incomparable elements—the interpretability ordering on theories that ``arise in nature'' is a wellordering. (Koellner \cite{Koellner2011IndependenceLC}.)
\end{quote}

\added{
There is another technical phenomenon about the consistency comparison on natural theories that is not addressed in prior literature: When the logicians prove $S<_\Con T$, they prove $T\vdash \Con(S)$. Furthermore, proving $T\vdash \Con(S)$ is essentially the only way to prove $S<_\Con T$ for natural theories $S$ and $T$.
While $T\vdash \Con(S)$ implies $S<_\Con T$, there is no reason to believe that the converse holds:
\begin{theorem} \label{Theorem: Unnatural examples for the consistency jump}
    There are recursive theories $S$, $T$ extending $\PA$ such that $S <_\Con T$, but $T\nvdash \Con(S)$.\footnote{Hamkins' \emph{petulant enumeration} $\ZFC^\bullet$ in \cite[\S 5]{Hamkins2022Nonlinearity} is an example: Fix a stronger theory $T$ (say, $T = \ZFC + \Con(\ZFC)$). In $\ZFC^\bullet$, we enumerate axioms of $\ZFC$, but once we find a proof of contradiction of $T$, we immediately add the contradiction to the enumeration. One can see that $\ZFC <_\Con \ZFC^\bullet$ since $\ZFC^\bullet$ and $T$ are equiconsistent. However, $\ZFC^\bullet \nvdash \Con(\ZFC)$ since if $T$ is consistent, then $\ZFC^\bullet = \ZFC$.}
\end{theorem}

\begin{question} \label{Question: How to explain the phenomena for natural theories}
    Why $\le_\Con$ is (pre)wellordered for \emph{natural} theories?
    Also, for \emph{natural} theories $S$ and $T$, why is the only way to establish $S<_\Con T$ proving $T\vdash \Con(S)$?
\end{question}
}

Another interesting aspect of consistency strength is that it correlates with the consequences of theories. Again, Simpson described
\begin{quote}
     Assuming that the language of $T_1$ is part of the language of $T_2$, let us write $T_1 \subset T_2$ to mean that the sentences that are theorems of $T_1$ form a proper subset of the sentences in the language of $T_1$ which are theorems of $T_2$. We may think of $T_2 \supset T_1$ as meaning that $T_2$ is ``more powerful'' or ``stronger'' than $T_1$. In many cases, the $<_\Con$ ordering and the $\subset$ ordering coincide. (Simpson \cite[p.111--112]{Simpson2010GodelHierarchy}.)
\end{quote}

Of course, \added{some theories deviate from Simpson's observations:} $\ZFC$ and $\ZFC+\mathsf{CH}$ have the same consistency strength, but $\ZFC \subsetneq \ZFC + \mathsf{CH}$.
Still, $<_\Con$ ordering \added{tends to agree with} \emph{projective} consequences of a theory \added{for natural theories}, which Steel describes:
\begin{phenomenon}[Steel {\cite[p.159]{Steel2014GodelProgram}}] \label{Phenomenon: SteelObservation}
    Let $S \subseteq_{\Pi^1_\infty} T$ mean every projective consequence of $S$ is a theorem of $T$.
    For natural theories $S$, $T$ whose consistency strength is at least that of $\ZFC$ with the Projective Determinacy.\footnote{\added{Steel requires $S$ and $T$ be extensions of $\ZFC$ with infinitely many Woodin cardinals, but the author thinks Projective Determinacy is more appropriate in this context.}} Then either
    \begin{equation*}
        S \subseteq_{\Pi^1_\infty} T \text{ or } T \subseteq_{\Pi^1_\infty} S.
    \end{equation*}
    Also, the projective consequences of sufficiently strong natural theories monotonically increase along with their consistency strength.
\end{phenomenon}

A special case for this phenomenon at the level of $\Sigma^1_2$ is the following:
\begin{phenomenon} \label{Phenomenon: Sigma 1 2 observation}
    Let $S\subseteq_{\Sigma^1_2} T$ mean every $\Sigma^1_2$-consequence of $S$ is a theorem of $T$. For natural \added{set} theories $S$, $T$, we have
    \begin{equation*}
        S \subseteq_{\Sigma^1_2} T \text{ or } T \subseteq_{\Sigma^1_2} S.
    \end{equation*}
\end{phenomenon}

A part of the above phenomenon \added{is explained} by the Shoenfield absoluteness theorem: The main machinery to get the independence results in set theory is forcing and inner model, and the Shoenfield absoluteness theorem says none of these change the validity of $\Sigma^1_2$ sentences. 
As an example, forcing and inner model arguments show that $\ZFC$, $\ZFC+\CH$, and $\ZFC+\lnot\CH$ have the same consistency strength. Hence Shoenfield absoluteness theorem suggests all of $\ZFC$, $\ZFC+\CH$, $\ZFC+\lnot\CH$ have the same $\Sigma^1_2$-consequences. Since forcing and inner models are essentially the only practical tools to get another theory from one theory, we can speculate that theories with the same strength have the same $\Sigma^1_2$-consequences.
However, `stronger' theories can have more $\Sigma^1_2$-consequences than weaker theories: For example, $\ZFC$ with the existence of an inaccessible cardinal proves `there is a transitive model of $\ZFC$', which is equivalent to the $\Sigma^1_2$-statement `there is a countable well-founded model of $\ZFC$' that is not a theorem of $\ZFC$.

\subsection{Ordinal analysis}
\added{The observation that natural theories are prewellordered under the consistency comparison} suggests there should be a way to well-order formal theories coherently with their strengths. Indeed, proof theorists have a way for it called the \emph{proof-theoretic ordinal}, which appears in the \emph{ordinal analysis} \added{of} a theory.
Ordinal analysis was initiated by Gentzen's proof of the consistency of Peano arithmetic $\PA$ modulo the well-foundedness of $\varepsilon_0$. After years, Takeuti \cite{Takeuti1967ConsistencySubsystemAnalysis} provided an ordinal analysis for $\Pi^1_1\mhyphen\CA_0$, which is deemed to be impredicative, so providing a proof-theoretic analysis was challenging. \added{From this point, other proof theorists developed an ordinal analysis for stronger theories, such as iterated inductive definitions and Kripke-Platek set theory with large ordinals.} Recently, there were claims on the ordinal analysis for the full second-order arithmetic, from  Arai \cite{Arai2023PiN} and Towsner \cite{Towsner2024ProofsModifyProofs}.

The current project on ordinal analysis showed fruitful results in calibrating the strength of theories in terms of proof-theoretic ordinal
\begin{equation*}
    |T|_{\Pi^1_1} = \sup \{\alpha\mid \text{$\alpha$ is recursive and } T\vdash\text{``$\alpha$ is well-ordered''}\}.
\end{equation*}
However, the proof-theoretic ordinal of a theory does not precisely correspond to the consistency strength of a given theory: For example, both $\mathsf{PA}$ and $\mathsf{PA + Con(PA)}$ have the same proof-theoretic ordinal $\varepsilon_0$, but the latter theory is stronger than the former. Then, what does the proof-theoretic ordinal gauge? Walsh addressed an answer in \cite{Walsh2023characterizations}, \added{and let us introduce some notions before introducing Walsh's results:}
\begin{definition}
    Let $\Gamma$ be a complexity class (like $\Pi^1_1$ or $\Sigma^1_2$) and $T$ be a theory. We say \emph{$\phi$ is $T$-provable with $\Gamma$-oracle} if there is a true $\Gamma$-sentence $\theta$ such that $T + \theta \vdash \phi$. We denote it by $T \vdash^\Gamma \phi$.

    For two theories $S$ and $T$, we denote $S \subseteq^{\check{\Gamma}}_\Gamma T$ if every $\Gamma$-sentence that is $S$-provable with $\check{\Gamma}$-oracle is also $T$-provable with $\check{\Gamma}$-oracle, where $\check{\Gamma} = \{\lnot\phi\mid \phi\in \Gamma\}$.
\end{definition}

\begin{definition} \label{Definition: Gamma-RFN}
    Let $\Gamma$ be a complexity class and let $T$ be a theory. Then $\RFN[\Gamma](T)$ is the following statement:
    \begin{equation*}
        \RFN[\Gamma](T) \equiv \forall^0 \phi\in \Gamma [\Prv_T(\phi)\to \True_\Gamma(\phi)]
    \end{equation*}
    We say $T$ is \emph{$\Gamma$-sound} if every $T$-provable $\Gamma$-sentence is true. In other words, $T$ is $\Gamma$-sound if $\RFN[\Gamma](T)$ is true.
\end{definition}
We may understand $\RFN[\Gamma](T)$ as a generalization of $\Con(T)$: In fact, we can prove that $\Con(T)$ is equivalent to $\RFN[\Pi^0_1](T)$. 
\added{We are ready to state Walsh's result, which is as follows:}
\begin{theorem}[Walsh \cite{Walsh2023characterizations}] \label{Theorem: Walsh's characterizations of PTO}
    \pushQED{\qed}
    Let $S$, $T$ be $\Pi^1_1$-sound extensions of $\ACA_0$.
    \begin{enumerate}
        \item $|S|_{\Pi^1_1}\le |T|_{\Pi^1_1}$ iff $S\subseteq^{\Sigma^1_1}_{\Pi^1_1} T$.\footnote{\added{\cite{Walsh2023characterizations} states $S$ and $T$ to be $\Sigma^1_1$-definable, but the proof of the equivalence in \cite[\S 3.1]{Walsh2023characterizations} works without any definability constraint on $S$ and $T$.}}
        \item Furthermore, if $S$ and $T$ are arithmetically definable, then $|S|_{\Pi^1_1}\le |T|_{\Pi^1_1}$ iff
        \begin{equation*}
            \ACA_0 \vdash^{\Sigma^1_1} \RFN[\Pi^1_1](T)\to\RFN[\Pi^1_1](S). \qedhere 
        \end{equation*}
    \end{enumerate}
\end{theorem}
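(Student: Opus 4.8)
The plan is to route everything through a single lemma relating $\vdash^{\Sigma^1_1}$-provability of $\Pi^1_1$-sentences to the ordinal $|T|_{\Pi^1_1}$. For a $\Pi^1_1$-sentence $\phi$ I fix, via the $\Pi^1_1$-normal form (provable already in $\ACA_0$), a primitive recursive tree $T_\phi$ with $\ACA_0\vdash\phi\leftrightarrow\WF(T_\phi)$, and when $\phi$ is true I set $o(\phi):=\operatorname{otp}(\KB(T_\phi))$; note $o(\WO(\prec))\ge\operatorname{otp}(\prec)$ for recursive well-orders $\prec$. First I would establish the \emph{Main Lemma}: for every $\Pi^1_1$-sound extension $T$ of $\ACA_0$ and every $\Pi^1_1$-sentence $\phi$, one has $T\vdash^{\Sigma^1_1}\phi$ iff $\phi$ is true and $o(\phi)<|T|_{\Pi^1_1}$. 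Part (1) then falls out immediately, and Part (2) will reduce to the Main Lemma together with one nontrivial external input.

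The proof of the Main Lemma rests on closure of $\Pi^1_1$ under disjunction via the product of trees: over $\ACA_0$, $\WF(A)\vee\WF(B)\leftrightarrow\WF(A\otimes B)$, where a branch of $A\otimes B$ is a matched pair of branches of $A$ and $B$. For ``$\Rightarrow$'', suppose $T+\theta\vdash\phi$ with $\theta$ a true $\Sigma^1_1$-sentence; writing $\theta$ as ``the recursive tree $U$ is ill-founded'' we get $T\vdash\lnot\theta\vee\phi$, i.e.\ $T\vdash\WF(U\otimes T_\phi)$. By $\Pi^1_1$-soundness of $T$ this is true, so $U\otimes T_\phi$ is well-founded and $\operatorname{otp}(\KB(U\otimes T_\phi))<|T|_{\Pi^1_1}$; since $\theta$ is true, $U$ is ill-founded, which forces $T_\phi$ to be well-founded (so $\phi$ is true -- the same argument shows $T+\theta$ is $\Pi^1_1$-sound), and picking an infinite branch $\beta$ of $U$, the map $t\mapsto(\beta\restriction|t|,\,t)$ identifies $\KB(T_\phi)$ with a suborder of $\KB(U\otimes T_\phi)$ -- it is order-preserving precisely because on its range any two nodes of the same length share their first coordinate -- so $o(\phi)<|T|_{\Pi^1_1}$. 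For ``$\Leftarrow$'', given $\phi$ true with $o(\phi)<|T|_{\Pi^1_1}$, choose a recursive well-order $\prec$ with $T\vdash\WO(\prec)$ and $\operatorname{otp}(\prec)\ge o(\phi)$; then $\KB(T_\phi)$ order-embeds into $\prec$, the existence of such an embedding is a true $\Sigma^1_1$-sentence, and over $T\vdash\WO(\prec)$ it yields $\WO(\KB(T_\phi))$, i.e.\ $\phi$. Part (1) is now formal: ``$\Rightarrow$'' is just monotonicity of the clause ``$\phi$ true and $o(\phi)<|S|_{\Pi^1_1}\le|T|_{\Pi^1_1}$''; for ``$\Leftarrow$'', if $|T|_{\Pi^1_1}<|S|_{\Pi^1_1}$ then some recursive well-order $\prec$ has $S\vdash\WO(\prec)$ and $\operatorname{otp}(\prec)\ge|T|_{\Pi^1_1}$, and then $\WO(\prec)$ is a $\Pi^1_1$-theorem of $S$ -- hence $\vdash^{\Sigma^1_1}$-provable in $S$ -- that is not $\vdash^{\Sigma^1_1}$-provable in $T$, since $T\vdash^{\Sigma^1_1}\WO(\prec)$ would demand $o(\WO(\prec))<|T|_{\Pi^1_1}\le\operatorname{otp}(\prec)\le o(\WO(\prec))$. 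The very same tree-product bookkeeping also shows that adjoining a true $\Sigma^1_1$-sentence to any theory leaves $|\cdot|_{\Pi^1_1}$ unchanged, a fact I use below.

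For Part (2), observe that $\RFN[\Pi^1_1](T)$ is (equivalent to) a $\Pi^1_1$-sentence when $T$ is arithmetically definable, and that $\ACA_0+\RFN[\Pi^1_1](T)$ is a sound -- so $\Pi^1_1$-sound -- extension of $\ACA_0$, to which the Main Lemma and Part (1) apply. By the deduction theorem, $\ACA_0\vdash^{\Sigma^1_1}(\RFN[\Pi^1_1](T)\to\RFN[\Pi^1_1](S))$ iff $\ACA_0+\RFN[\Pi^1_1](T)\vdash^{\Sigma^1_1}\RFN[\Pi^1_1](S)$; and I claim the latter is equivalent to $|\ACA_0+\RFN[\Pi^1_1](S)|_{\Pi^1_1}\le|\ACA_0+\RFN[\Pi^1_1](T)|_{\Pi^1_1}$. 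Indeed, if $\ACA_0+\RFN[\Pi^1_1](T)+\sigma\vdash\RFN[\Pi^1_1](S)$ for a true $\Sigma^1_1$-sentence $\sigma$, then every $\Pi^1_1$-theorem of $\ACA_0+\RFN[\Pi^1_1](S)$ is a $\Pi^1_1$-theorem of $\ACA_0+\RFN[\Pi^1_1](T)+\sigma$, so its $\Pi^1_1$-ordinal is bounded by that of $\ACA_0+\RFN[\Pi^1_1](T)+\sigma$, which equals $|\ACA_0+\RFN[\Pi^1_1](T)|_{\Pi^1_1}$ by $\Sigma^1_1$-invariance; conversely, that inequality together with Part (1) gives $\ACA_0+\RFN[\Pi^1_1](S)\subseteq^{\Sigma^1_1}_{\Pi^1_1}\ACA_0+\RFN[\Pi^1_1](T)$, and applying this to the $\Pi^1_1$-sentence $\RFN[\Pi^1_1](S)$ -- an outright theorem of $\ACA_0+\RFN[\Pi^1_1](S)$ -- yields $\ACA_0+\RFN[\Pi^1_1](T)\vdash^{\Sigma^1_1}\RFN[\Pi^1_1](S)$. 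Hence Part (2) reduces to the claim that $|S|_{\Pi^1_1}\le|T|_{\Pi^1_1}$ iff $|\ACA_0+\RFN[\Pi^1_1](S)|_{\Pi^1_1}\le|\ACA_0+\RFN[\Pi^1_1](T)|_{\Pi^1_1}$.

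This last equivalence is the heart of the matter and the step I expect to be the main obstacle: it is the assertion that $|\ACA_0+\RFN[\Pi^1_1](T)|_{\Pi^1_1}$ depends only on, and is a \emph{strictly increasing} function of, $|T|_{\Pi^1_1}$ -- the ``reflection ranks versus proof-theoretic ordinals'' phenomenon of Pakhomov--Walsh. The easy inequality -- that $\ACA_0+\RFN[\Pi^1_1](T)$ proves every $\Pi^1_1$-theorem of $T$ via the $\Pi^1_1$-truth predicate, so its ordinal is at least $|T|_{\Pi^1_1}$ -- is soft, but the matching control, namely that $\RFN[\Pi^1_1](T)$ buys exactly a measured amount of extra well-foundedness past $|T|_{\Pi^1_1}$ (no more, and monotonically in $|T|_{\Pi^1_1}$), requires a uniform formalized ordinal analysis of the reflection principle itself and cannot be obtained from the soft considerations above. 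The only remaining friction is the hypothesis that $S,T$ are merely arithmetically (not recursively) axiomatized; this is handled in the standard way by absorbing the complexity of $\Prv_S$ and $\Prv_T$ into the true $\Sigma^1_1$-oracles permitted by $\vdash^{\Sigma^1_1}$, and needs only to be threaded carefully through the arguments above.
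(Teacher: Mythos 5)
The paper states this theorem as a citation to Walsh \cite{Walsh2023characterizations} without reproducing the proof, so the natural comparison is to the paper's own $\Sigma^1_2$-analogue (\autoref{Proposition: First equivalence for Sigma 1 2}, \autoref{Proposition: Second equivalence LtoR for Sigma 1 2}, \autoref{Proposition: Second equivalence RtoL for Sigma 1 2}), which mirrors Walsh's argument.

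Your Part (1) is essentially the same as the paper's: a Kreisel-style norm $o(\phi)$ for $\Pi^1_1$-sentences (using Kleene normal form, tree products for $\Pi^1_1$-disjunction, and the embedding induced by an infinite branch of the $\Sigma^1_1$-oracle's witness tree), plus the observation that the supremum defining $|T|_{\Pi^1_1}$ is strict. This corresponds directly to \autoref{Lemma: When the climax happens before s12 ordinal of a theory} and \autoref{Proposition: First equivalence for Sigma 1 2}, with $\WF$/$\KB$ playing the role that $\Dil$/$\Clim$ play at the $\Sigma^1_2$-level.

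Your Part (2) has a genuine gap, and it is exactly the step you flag. You reduce the statement to the claim that $|\ACA_0+\RFN[\Pi^1_1](T)|_{\Pi^1_1}$ is a monotone function of $|T|_{\Pi^1_1}$ alone, and correctly observe that you cannot obtain this from soft considerations. But this reduction is a detour: Walsh's actual argument (as reflected in the paper's \autoref{Proposition: Second equivalence LtoR for Sigma 1 2} and \autoref{Proposition: Second equivalence RtoL for Sigma 1 2}) never needs to control the ordinal of $\ACA_0+\RFN[\Pi^1_1](T)$. For the left-to-right direction, from $|S|_{\Pi^1_1}\le|T|_{\Pi^1_1}$ one gets $S\subseteq^{\Sigma^1_1}_{\Pi^1_1}T$, which makes true the sentence $\theta\equiv\forall^0\phi\in\Pi^1_1[\Prv_S(\phi)\to\Prv_T^{\Sigma^1_1}(\phi)]$; one then checks that $\ACA_0+\theta\vdash\RFN[\Pi^1_1](T)\to\RFN[\Pi^1_1](S)$ and that $\theta$ reduces to a $\Sigma^1_1$-sentence when $S,T$ are arithmetically definable (here, unlike at the $\Sigma^1_2$-level, no choice scheme beyond $\ACA_0$ is needed because $\Sigma^1_1$ is closed under number quantifiers). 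For the right-to-left direction, one supposes $|T|_{\Pi^1_1}<|S|_{\Pi^1_1}$, picks a recursive well-order $\hat\alpha$ with $S\vdash\WO(\hat\alpha)$ and $|T|_{\Pi^1_1}\le\operatorname{otp}(\hat\alpha)$, forms the true $\Sigma^1_1$-sentence asserting that every $T$-provably well-ordered primitive recursive order embeds into $\hat\alpha$, derives $S+\theta\vdash\RFN[\Pi^1_1](T)$ from it, and then applies the hypothesis plus \autoref{Lemma: RFN is stable under adding true check Gamma statements} to obtain a consistent theory proving its own $\Pi^1_1$-reflection --- a G\"odel-style contradiction. The claim you are stuck on is indeed true (this is the content of \cite[Theorem 5.20]{PakhomovWalsh2021Reflection}, which the paper cites), but it is a downstream consequence of the second equivalence rather than an input to it, so routing the proof through it inverts the logical order and leaves the argument incomplete.
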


That is, Walsh's result claims the following three notions on strength comparison are `equivalent' for arithmetically definable $\Pi^1_1$-sound extensions of $\ACA_0$:
\begin{enumerate}
    \item Comparing the proof-theoretic ordinal $|T|_{\Pi^1_1}$.
    \item Comparing the $\Pi^1_1$-consequences of $T$ (modulo true $\Sigma^1_1$-sentences)
    \item Comparing the $\Pi^1_1$-reflection of $T$ (modulo true $\Sigma^1_1$-sentences), which is a $\Pi^1_1$-analogue of $\Con(T)$.
\end{enumerate}
For an addendum for the $\Pi^1_1$-reflection comparison, modifying Walsh's proof yields the following, which looks more coherent with $<_\Con$:
\begin{theorem} \label{Theorem: PTO strict comparison and Pi 1 1 reflection comparison} 
    Let $S$, $T$ be arithmetically definable $\Pi^1_1$-sound extensions of $\ACA_0$. Then 
    \begin{equation*}
        |S|_{\Pi^1_1} < |T|_{\Pi^1_1} \iff T \vdash^{\Sigma^1_1} \RFN[\Pi^1_1](S).
    \end{equation*}
\end{theorem}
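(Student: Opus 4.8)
We adapt Walsh's argument for the preceding theorem. The plan is to rewrite $\RFN[\Pi^1_1](S)$ as a well-foundedness assertion about a recursive linear order whose order type is exactly $|S|_{\Pi^1_1}$, and then to read the equivalence off from Walsh's part~(1). For the reformulation: since $\Prv_S$ is arithmetical and $\True_{\Pi^1_1}$ is a $\Pi^1_1$ partial truth predicate (with $\ACA_0$-provable Tarski biconditionals on standard sentences), $\RFN[\Pi^1_1](S)$ is, up to an equivalence recognised in $\ACA_0$, itself a $\Pi^1_1$-sentence. Applying the $\Pi^1_1$-normal form — every $\Pi^1_1$-sentence is $\ACA_0$-provably equivalent to ``$\WF$ of a primitive recursive tree'', hence via Kleene--Brouwer to ``$\WO$ of a recursive linear order'' — to $\RFN[\Pi^1_1](S)$ itself produces a recursive linear order $\mathcal{W}_S$ with $\ACA_0 \vdash \RFN[\Pi^1_1](S)\leftrightarrow\WO(\mathcal{W}_S)$. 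As $S$ is $\Pi^1_1$-sound, $\RFN[\Pi^1_1](S)$ is true, so $\mathcal{W}_S$ is genuinely well-founded, and since $\{\gamma : S\vdash\WO(\gamma)\}$ is a $\Sigma^1_1$ set of notations for well-orders, boundedness gives $|S|_{\Pi^1_1}<\omega_1^{\CK}$, so $|\mathcal{W}_S|$ is a recursive ordinal. The crucial quantitative input is the identity $|\mathcal{W}_S| = |S|_{\Pi^1_1}$, which I would establish in the spirit of Walsh's proof of part~(2): the rank of the tree coding $\RFN[\Pi^1_1](S)$ is $\sup\{\text{rank of the tree of }\phi : S\vdash\phi,\ \phi\in\Pi^1_1\}$ (the extra arithmetical quantifiers in $\Prv_S$ only create finite-rank dead ends for false proof-attempts), and this supremum is $|S|_{\Pi^1_1}$ — at least $|S|_{\Pi^1_1}$ because every $S$-provable $\WO(\gamma)$ is such a $\phi$, and at most $|S|_{\Pi^1_1}$ because $\ACA_0\vdash\phi\leftrightarrow\WO(\alpha_\phi)$ forces $S\vdash\WO(\alpha_\phi)$, while $|S|_{\Pi^1_1}$ is never attained since $\WO(\gamma)\to\WO(\gamma+1)$.

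\emph{The key lemma.} Next I would prove: for arithmetically definable $\Pi^1_1$-sound $T\supseteq\ACA_0$ and any recursive well-order $\alpha$, we have $T\vdash^{\Sigma^1_1}\WO(\alpha)$ iff $|\alpha|<|T|_{\Pi^1_1}$. For the forward direction, if $T+\theta\vdash\WO(\alpha)$ with $\theta$ a true $\Sigma^1_1$-sentence, then $\alpha$ is well-founded by $\Pi^1_1$-soundness of $T+\theta$, so $\ACA_0+\WO(\alpha)$ is sound, hence a $\Pi^1_1$-sound extension of $\ACA_0$; absorbing $\theta$ shows $\ACA_0+\WO(\alpha)\subseteq^{\Sigma^1_1}_{\Pi^1_1}T$, so Walsh's part~(1) gives $|\ACA_0+\WO(\alpha)|_{\Pi^1_1}\leq|T|_{\Pi^1_1}$, and $\ACA_0+\WO(\alpha)\vdash\WO(\alpha+1)$ gives $|\alpha|<|\ACA_0+\WO(\alpha)|_{\Pi^1_1}$. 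Conversely, if $|\alpha|<|T|_{\Pi^1_1}$, then (as $|\alpha|$ lies below the supremum $|T|_{\Pi^1_1}$) there is a recursive $\beta$ with $T\vdash\WO(\beta)$ and $|\alpha|<|\beta|$; then $\alpha$ embeds into $\beta$, the existence of such an order-embedding is a true $\Sigma^1_1$-sentence $\theta_\alpha$, and $T+\theta_\alpha\vdash\WO(\alpha)$ by transporting an infinite descending sequence of $\alpha$ to one of $\beta$.

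\emph{Assembling, and the main obstacle.} Because $\ACA_0\subseteq T$ proves $\RFN[\Pi^1_1](S)\leftrightarrow\WO(\mathcal{W}_S)$, we get $T\vdash^{\Sigma^1_1}\RFN[\Pi^1_1](S)$ iff $T\vdash^{\Sigma^1_1}\WO(\mathcal{W}_S)$, iff $|\mathcal{W}_S|<|T|_{\Pi^1_1}$ by the lemma, iff $|S|_{\Pi^1_1}<|T|_{\Pi^1_1}$ by the identity $|\mathcal{W}_S|=|S|_{\Pi^1_1}$. The main obstacle is exactly that identity: the normal form alone hands one only \emph{some} recursive order, and matching its rank to the provable ordinals of $S$ \emph{on the nose} — a bounded error would be fatal, since the comparison in the theorem is strict — is precisely the content that makes ordinal analysis meaningful, and is where I expect to have to rework the core of Walsh's argument rather than cite part~(2) as a black box. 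Everything else (the $\Pi^1_1$-complexity of the reflection principle, closure of true $\Sigma^1_1$-oracles under conjunction and the availability of arithmetical truths as $\Sigma^1_1$-oracles, the embedding trick, and the one-point increment) is routine. One could also derive the ``$\Leftarrow$'' direction of the theorem more cheaply from part~(2) together with the $\Pi^1_1$-analogue of Gödel's second incompleteness theorem — that $T\nvdash^{\Sigma^1_1}\RFN[\Pi^1_1](T)$, via $\RFN[\Pi^1_1](T)\to\Con(T)$ — but the route above handles both directions uniformly.
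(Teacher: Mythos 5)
Your route --- rewrite $\RFN[\Pi^1_1](S)$ as $\WO(\mathcal{W}_S)$ for a recursive order with $|\mathcal{W}_S|=|S|_{\Pi^1_1}$, prove the comparison lemma that $T\vdash^{\Sigma^1_1}\WO(\alpha)$ iff $|\alpha|<|T|_{\Pi^1_1}$, and chain the two --- is sound, and your proof of the comparison lemma (Walsh's part~(1) for the forward direction, the embedding oracle and transport of a descending sequence for the converse, strictness via $\WO(\alpha)\to\WO(\alpha+1)$) is correct. The paper records no proof of this theorem, stating only that it follows by ``modifying Walsh's proof,'' so there is no argument on record to compare against; your route is a legitimate one, and the cheaper alternative you mention for the $\Leftarrow$ direction (Walsh's part~(2) plus the $\Pi^1_1$-version of G\"odel's second incompleteness theorem, using the paper's lemma on stability of reflection under true $\hat\Gamma$-oracles) is also valid and is probably close to what the author has in mind.

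However, the ``main obstacle'' you flag is not an obstacle, and the way you propose to overcome it does not work as written. What you need is exactly the content of \autoref{Theorem: Pi 1 1 reflection and WOness of proof-theoretic ordinal}: for arithmetically definable $\Pi^1_1$-sound $S$, the ordinal $|S|_{\Pi^1_1}$ admits a recursive presentation and $\ACA_0$ proves $\WO(|S|_{\Pi^1_1})\leftrightarrow\RFN[\Pi^1_1](S)$. Taking $\mathcal{W}_S$ to be that presentation makes the rank identity trivial, and you should cite this result rather than try to reconstruct it. Your sketched reconstruction has a genuine gap: it repeatedly conflates the rank of the Kleene tree of a $\Pi^1_1$-sentence $\phi$ with the order type of the linear order $\alpha_\phi$ delivered by the normal form theorem (and, for $\mathcal{W}_S$ itself, conflates the rank of the tree coding $\RFN[\Pi^1_1](S)$ with the order type of its Kleene--Brouwer linearization). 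These are not the same ordinal; the Kleene--Brouwer linearization of a well-founded tree generically has order type far above the tree's rank, and the discrepancy is not bounded. So the displayed supremum computation does not yield $|S|_{\Pi^1_1}$ on the nose by the argument given, and indeed establishing the exact identity is nontrivial --- which is why it is a theorem in its own right, already available for citation, rather than a routine normal-form bookkeeping step.
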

\added{
The author suggests Walsh's result as evidence of the following phenomenon:
\begin{phenomenon} \label{Phenomenon: Pi 1 1 comparison for natural theories}
    For natural theories extending $\ACA_0$, the following three coincide:
    \begin{enumerate}
        \item Comparing the proof-theoretic ordinal of natural theories.
        \item Comparing the $\Pi^1_1$-consequences of natural theories.
        \item $\Pi^1_1$-reflection comparison of natural theories.
    \end{enumerate}
    Moreover, for two natural theories $S$ and $T$ extending $\ACA_0$, $|S|_{\Pi^1_1} < |T|_{\Pi^1_1}$ iff $T \vdash \RFN[\Pi^1_1](S)$.
\end{phenomenon}

We can adopt \autoref{Phenomenon: Pi 1 1 comparison for natural theories} to explain the phenomena in the consistency comparison for natural theories as follows: Let us observe that when logicians make the consistency comparison $S\le_\Con T$, they do more than $S\le_\Con T$.
Here is the list of what logicians do for the consistency comparison:
\begin{enumerate}
    \item For two subsystems of second-order arithmetic $S$ and $T$, the proof of $S<_\Con T$ usually follows from either ``$T$ proves an $\omega$- or $\beta$-model of $S$'' or ``$T$ proves $\RFN[\Gamma](S)$.'' Also, the equiconsistency of $S$ and $T$ usually follows from a stronger result, like the $\Pi^1_1$-conservativity of $T$ over $S$.
    \item When set theorists prove $S <_\Con T$, they typically prove that ``$T$ proves there is a transitive model of $S$.'' Also, when set theorists prove $S\le_\Con T$, they typically rely on forcing and inner models that respect $\Sigma^1_2$-truth.
    \item Ordinal analysis-based consistency comparisons also tend to give stronger results than mere consistency comparisons. For example, Rathjen \cite{Rathjen2014RelativizedOrdinalKP} proved that Power Kripke-Platek set theory $\KP(\mathcal{P})$ and $\mathsf{Z} + \{\text{$V_\xi$ exists}\mid \xi<\mathrm{BH}\}$ are equiconsistent by showing that the two theories prove the same $\Pi_2^\mathcal{P}$-sentences, which also implies that the two theories prove the same $\Pi^1_2$-consequences.
    \item Continuing from the ordinal analysis-based consistency comparison, the independence of Goodstein's theorem over $\PA$ can be shown in a way that Goodstein's theorem implies there is no primitive recursive infinite descending sequence over $\varepsilon_0$.
    Rathjen \cite{Rathjen2015Goodstein} proved that over $\RCA_0$, Goodstein's theorem implies $\varepsilon_0$ is well-founded, which is equivalent to $\RFN[\Pi^1_1](\ACA_0)$. Hence, $\ACA_0 <_\Con \RCA_0 + \text{Goodstein's theorem}$ actually follows from that the latter theory proves $\RFN[\Pi^1_1](\ACA_0)$.
    \item There are `irregular' consistency comparisons for versions of set theories, like \cite{Jeon2023WA}. However, one can check \cite{Jeon2023WA} shows a $\RFN[\Gamma]$ of a weaker theory over a stronger theory for $\Gamma$ containing every projective pointclass.
\end{enumerate}

Note that the following examination only applies to classical theories. For constructive theories, the projective hierarchy may collapse \cite{Veldman2022ProjectiveSets}, so projective reflection comparison or projective consequence comparison may not make sense. Also, the following list works only for theories stronger than $\RCA_0$. For theories at the level of Bounded arithmetic or Elementary arithmetic, the author does not rule out the possibility that the consistency comparison does not happen by stronger comparisons.

The previous list indicates the following phenomenon:
\begin{phenomenon} \label{Phenomenon: Consistency comparison and Pi 1 1 comparison may coincide}
    Let $S$ and $T$ be classical natural theories extending $\RCA_0$.
    When logicians prove $S$ and $T$ are equiconsistent, what they actually prove is (or implies) $S$ and $T$ have the same $\Pi^1_1$-consequences.
    When logicians prove $S <_\Con T$, what they actually prove is (or implies) $T \vdash \RFN[\Pi^1_1](S)$. 
\end{phenomenon}
The reader may think \autoref{Phenomenon: Consistency comparison and Pi 1 1 comparison may coincide} suggests that for natural theories, the consistency comparison and the $\Pi^1_1$-comparison may coincide. In particular, this coincidence implies that the consistency comparison is prewellordered for natural theories since $\Pi^1_1$-reflection comparison prewellorders natural theories by Walsh's result.
However, the author warns that \autoref{Phenomenon: Consistency comparison and Pi 1 1 comparison may coincide} itself \emph{does not} say that the consistency comparison and $\Pi^1_1$-reflection comparison coincide for natural theories. The point of \autoref{Phenomenon: Consistency comparison and Pi 1 1 comparison may coincide} is focused on results in prior and current literature, and it does not rule out the possibility that in a far future, someone proves $S<_\Con T$ for natural $S$ and $T$ with certain criteria by directly proving, for example, $T\vdash \Con(S)$ or $\mathsf{A}\nvdash \Con(S)\to \Con(T)$, and not proving $T\vdash \RFN[\Pi^1_1](S)$.

Even if \autoref{Phenomenon: Consistency comparison and Pi 1 1 comparison may coincide} does not imply the consistency comparison and $\Pi^1_1$-reflection comparison coincide for natural theories, it still explains why logicians have witnessed that the consistency comparison for natural theories is prewellordered:
What logicians have done for the consistency comparison is, in fact, the $\Pi^1_1$-reflection comparison or its strengthening, which is prewellordered for natural theories by \autoref{Phenomenon: Pi 1 1 comparison for natural theories}.
In other words, logicians may have misunderstood the prewellorderedness of the consistency comparison for natural theories since they have witnessed is (or implies) $\Pi^1_1$-reflection comparison, and have not seen a consistency comparison that is not a $\Pi^1_1$-reflection comparison.
}

\subsection{Shifting from \texorpdfstring{$\Pi^1_1$}{Pi 1 1} to \texorpdfstring{$\Sigma^1_2$}{Sigma 1 2}}
\added{A consequence of \autoref{Phenomenon: Pi 1 1 comparison for natural theories} is that the $\Pi^1_1$-consequence comparison for natural theories extending $\ACA_0$ is prewellordered. In particular, for two natural theories $S$, $T$ extending $\ACA_0$, either $S \subseteq_{\Pi^1_1} T$ or $T\subseteq_{\Pi^1_1} S$. This is close to Steel's observation \autoref{Phenomenon: SteelObservation}, but \autoref{Phenomenon: Pi 1 1 comparison for natural theories} is insufficient to imply \autoref{Phenomenon: Sigma 1 2 observation} witnessed for natural set theories. We may hope that by fortifying Walsh's characterization from $\Pi^1_1$ to beyond, we may supply evidence for \autoref{Phenomenon: Sigma 1 2 observation} and $\Sigma^1_2$-analogue of \autoref{Phenomenon: Pi 1 1 comparison for natural theories}.}

Then, how to generalize Walsh's result beyond $\Pi^1_1$? To \added{illustrate} this, let us \added{revisit} the \added{primary} motivation behind Walsh's result. 
The underlying motivation behind Walsh's results about the proof theory, which we will call \emph{$\Pi^1_1$-proof theory}, comes from the Kleene normal form theorem, which says every true $\Pi^1_1$-sentence has a recursive ordinal complexity:
\begin{theorem}[Kleene normal form theorem] \label{Theorem: Kleene normal form}
    \pushQED{\qed}
    \added{For every $\Pi^1_1$-formula $\phi(X)$ we can effectively find a Turing machine $\alpha_X$ for a linear order with an oracle variable $X$ such that the following holds:
    \begin{enumerate}
        \item $\ACA_0$ proves ``For every real $X$, $\alpha_X$ is a linear order.''
        \item $\ACA_0$ proves $\forall^1 X [\phi(X)\lr \WO(\alpha_X)]$. \qedhere
    \end{enumerate}
    }
\end{theorem}

To get an ordinal characteristic of a theory representing more complex consequences beyond $\Pi^1_1$, we need a way to assign an ordinal complexity to a more complex formula. But is it possible? The answer is yes for a $\Sigma^1_2$-formula, the case we will focus on throughout this paper. However, we need a different tool to calibrate the complexity of a $\Sigma^1_2$-formula, \added{which} we will call a \emph{pseudodilator}. For each $\Sigma^1_2$-sentence, we can assign a recursive pseudodilator $F$, a function from ordinals to linear orders. $F$ witnesses the `ordinal complexity' $\alpha$ when $\alpha$ is the least ordinal making $F(\alpha)$ ill-founded, which we will call a \emph{climax} $\Clim(F)$ of $F$. 

Now let us consider the \emph{$\Sigma^1_2$-proof-theoretic ordinal $s^1_2(T)$ of $T$} defined as follows:
\begin{equation*}
    s^1_2(T) = \sup\{\Clim(F) \mid T\vdash \text{$F$ is a pseudodilator}\}.\footnotemark
\end{equation*}
\footnotetext{The $\Sigma^1_2$-proof-theoretic ordinal $s^1_2(T)$ is first defined by Aguilera-Pakhomov \cite[\S 7]{AguileraPakhomov2023Pi12} under the name \emph{$\Sigma^1_2$-soundness ordinal}. $s^1_2(T)$ represents the amount of $\Sigma^1_2$-consequences $T$ can prove, so we call it a $\Sigma^1_2$-proof-theoretic ordinal instead.
Aguilera-Pakhomov did not use the word climax or pseudodilator, and they parsed each word in their definitions. The term \emph{climax} is due to the author, and \emph{pseudodilator} is due to Justin Moore that was suggested during the author's A exam.}
We can ask whether $s^1_2(T)$ enjoys the properties that the $\Pi^1_1$-proof-theoretic ordinal enjoys, and the following results in this paper answer the question:

\begin{theorem*} \phantom{a}
    \begin{enumerate}
        \item (\autoref{Proposition: First equivalence for Sigma 1 2})  For $\Sigma^1_2$-sound extensions $S$, $T$ of $\ACA_0$, we have
        \begin{equation*}
            S\subseteq^{\Pi^1_2}_{\Sigma^1_2} T\iff s^1_2(S)\le s^1_2(T).
        \end{equation*}
        
        \item (\autoref{Proposition: Second equivalence LtoR for Sigma 1 2} and \autoref{Proposition: Second equivalence RtoL for Sigma 1 2}) Furthermore, if $S$ and $T$ are arithmetically definable $\Sigma^1_2$-sound extensions of $\Sigma^1_2\mhyphen\AC_0$, then we have
        \begin{equation*}
            s^1_2(S)\le s^1_2(T)\iff \Sigma^1_2\mhyphen \AC_0\vdash^{\Pi^1_2} \RFN[\Sigma^1_2](T)\to \RFN[\Sigma^1_2](S).
        \end{equation*}
        See \autoref{Definition: Gamma-RFN} for the meaning of $\RFN[\Sigma^1_2](T)$.        
        The requirement `$S$ and $T$ being extensions of $\Sigma^1_2\mhyphen\AC_0$' is optimal as presented in \autoref{Example: Desired Second equivalence LtoR fails}.
        \item \added{(\autoref{Theorem: Strict s 1 2 comparison equivalence condition}) For arithmetically definable $\Sigma^1_2$-extensions $S$, $T$ of $\Sigma^1_2\mhyphen\AC_0$, we have}
        \begin{equation*}
            \added{s^1_2(S) < s^1_2(T) \iff T \vdash^{\Pi^1_2} \RFN[\Sigma^1_2](S).}
        \end{equation*}
    \end{enumerate}
\end{theorem*}

\subsection{Reflection rank} \label{Subsection: Reflection rank}
\added{Walsh's characterization of ordinal analysis would not be the only result we shift from $\Pi^1_1$ to $\Sigma^1_2$. We will also generalize \emph{reflection rank} defined by Pakhomov and Walsh \cite{PakhomovWalsh2021Reflection} into the $\Sigma^1_2$-context.

As we have raised in \autoref{Question: How to explain the phenomena for natural theories}, logicians have proved $T\vdash \Con(S)$ for natural $S$ and $T$ is to prove $S<_\Con T$. Under the perspective of \autoref{Phenomenon: Consistency comparison and Pi 1 1 comparison may coincide}, logicians have proved $T\vdash \RFN[\Pi^1_1](S)$ or its strengthening to prove $S <_\Con T$. This suggests that the relation $\prec_{\Pi^1_1}$ has a special role in stratifying the consistency strength of natural theories, where
\begin{equation*}
    S\prec_{\Gamma} T \iff T\vdash \RFN[\Gamma](S)
\end{equation*}
for $\Gamma$-sound r.e. extensions $S$ and $T$ of $\ACA_0$.
Indeed, Pakhomov and Walsh \cite{PakhomovWalsh2021Reflection} proved that $\prec_{\Pi^1_1}$ for $\Pi^1_1$-sound r.e.\ extensions of $\ACA_0$ is well-founded.}
Thus, every $\Pi^1_1$-sound r.e.\ extensions of $\ACA_0$ admits a $\prec_{\Pi^1_1}$-rank, which we will call the reflection rank. Pakhomov-Walsh \cite{PakhomovWalsh2021ReflectionInfDeriv} proved that the reflection rank coincides with the proof-theoretic ordinal for $\Pi^1_1$-sound r.e.\ extensions of $\ACA_0^+$, and proved in \cite[\S 5.4]{PakhomovWalsh2021Reflection} that a variant of the reflection rank what they called \emph{robust reflection rank} is `equivalent' to the proof-theoretic ordinal: For $\Gamma=\Pi^1_1$, define
\begin{equation*}
    S\prec^{\check{\Gamma}}_\Gamma T \iff T\vdash^{\check{\Gamma}} \RFN[\Gamma](S).
\end{equation*}
Then we have the following:
\begin{theorem*} \phantom{a}
    \begin{enumerate}
        \item $\prec^{\Sigma^1_1}_{\Pi^1_1}$ is well-founded for $\Pi^1_1$-sound r.e.\ extensions of $\ACA_0$.
        
        \item (\autoref{Theorem: Proof theoretic ordinal and Pi 1 1 rank equivalence}) Let $\rank^{\Sigma^1_1}_{\Pi^1_1}(T)$ be the $\prec^{\Sigma^1_1}_{\Pi^1_1}$-rank of $T$. For $\Pi^1_1$-sound r.e.\ extensions $S$, $T$ of $\ACA_0$, we have
        \begin{equation*}
            |S|_{\Pi^1_1}\le |T|_{\Pi^1_1} \iff \rank^{\Sigma^1_1}_{\Pi^1_1}(S)\le \rank^{\Sigma^1_1}_{\Pi^1_1}(T).
        \end{equation*}

        \item (\cite[Theorem 5.20]{PakhomovWalsh2021Reflection}) Let $T$ be a $\Pi^1_1$-sound extension of $\ACA_0$. If $\rank^{\Sigma^1_1}_{\Pi^1_1}(T)=\alpha$ then $|T|_{\Pi^1_1}=\varepsilon_\alpha$.
    \end{enumerate}
\end{theorem*}

It turns out that the $\Sigma^1_2$-proof-theoretic ordinal also has an analogue of the previous result:

\begin{theorem*} \phantom{a}
    \begin{enumerate}
        \item (\autoref{Proposition: Sigma 1 2 modulo Pi 1 2 reflection rank wellfoundedness}) $\prec^{\Pi^1_2}_{\Sigma^1_2}$ is well-founded for $\Sigma^1_2$-sound r.e.\ extensions of $\Sigma^1_2\mhyphen\AC_0$.
        
        \item (\autoref{Corollary: s 1 2 and Sigma 1 2 reflection rank coincides}) 
        Let $\rank^{\Pi^1_2}_{\Sigma^1_2}(T)$ be the $\prec^{\Pi^1_2}_{\Sigma^1_2}$-rank of $T$
        For $\Sigma^1_2$-sound r.e.\ extensions $S$, $T$ of $\Sigma^1_2\mhyphen\AC_0$,  we have
        \begin{equation*}
            s^1_2(S)\le s^1_2(T) \iff \rank^{\Pi^1_2}_{\Sigma^1_2}(S)\le \rank^{\Pi^1_2}_{\Sigma^1_2}(T).
        \end{equation*}
    \end{enumerate}
\end{theorem*}

\subsection*{The structure of the paper}
In \autoref{Section: Prelims}, we review preliminaries, which include the reflection principle and set-theoretic facts.
Preliminaries for dilators \added{is} too long to be a subsection of \autoref{Section: Prelims}, so we separated it into \autoref{Section: Dilators}. In \autoref{Section: Pi 1 1 and Pi 1 2}, we review facts about $\Pi^1_1$- and $\Pi^1_2$-proof theory. We also prove the correspondence between $\prec^{\Sigma^1_1}_{\Pi^1_1}$-rank and the proof-theoretic ordinal in \autoref{Section: Pi 1 1 and Pi 1 2}, which was not known in previous literature.
In \autoref{Section: Pseudodilators}, we define pseudodilators and $s^1_2$-ordinal and briefly view their properties, which are the main protagonists of this paper. 
In \autoref{Section: s12 metaproperties}, we prove the $\Sigma^1_2$ version of Walsh's first and the second equivalence for the characterization of ordinal analysis \cite{Walsh2023characterizations}, and in \autoref{Section: Sigma 1 2 reflection rank}, we examine the connection between $s^1_2$ and the $\prec^{\Pi^1_2}_{\Sigma^1_2}$-rank.
In \autoref{Section: Pi 1 2 case}, we briefly explore the properties of $\prec^{\Sigma^1_2}_{\Pi^1_2}$-rank.
In \autoref{Section: Finale}, we discuss the philosophical meaning of $s^1_2$ and list some open problems.

\section{Preliminaries}
\label{Section: Prelims}

In this paper, we often work over the language of second-order arithmetic, in which there are two types of objects, natural numbers and real numbers. The standard way to distinguish these two is by using uppercase letters for real numbers and lowercase letters for natural numbers. \added{We try to follow this convention, but we will not strictly follow it. However, lowercase $i,j,k,l,m,n$ will be reserved for natural numbers, and they do not represent real numbers. We also use} indexed quantifiers $\forall^i$ and $\exists^i$ to indicate the range of the quantification: Throughout this paper, $\forall^0$ and $\exists^0$ are quantifiers over natural numbers, and $\forall^1$ and $\exists^1$ are quantifiers over real numbers. 

\subsection{Complexity class and $\Gamma$-Reflection}
A \emph{complexity class} \added{is} a collection of formulas. Throughout this paper, the complexity class will always mean one of $\Sigma^1_n$ or $\Pi^1_n$ for a natural number $n$. If a formula or a sentence is a member of $\Gamma$, then we call it \emph{$\Gamma$-formula} or \emph{$\Gamma$-sentence} respectively. Also, for a complexity class $\Gamma$, we define $\check{\Gamma}$ by
\begin{equation*}
    \check{\Gamma} = \{\lnot\phi \mid \phi \in \Gamma\}.
\end{equation*}
It is \added{folklore} that $\ACA_0$ proves there is a universal $\Pi^1_n$-formula for $n\ge 1$. \added{The existence of a universal $\Sigma^1_1$-formula follows from \cite[Lemma V.1.4]{Simpson2009} and the existence of a universal lightface $\Pi^0_1$-formula that is provable from $\RCA_0$ (See \cite[Definition VII.1.3]{Simpson2009}.) The existence of other universal formulas follows from an inductive argument.} This enables us to talk about the truth of $\Pi^1_n$- or $\Sigma^1_n$-formulas.
\begin{proposition} \pushQED{\qed}
    Let $\Gamma$ be one of $\Pi^1_n$ or $\Sigma^1_n$ for $n\ge 1$. Then there is a $\Gamma$-formula $\phi(i,X)$ such that for every $\Gamma$-formula $\psi(X)$, we can find a natural number $n$ such that
    \begin{equation*}
        \added{\ACA_0 \vdash  \forall^1 X [\phi(n,X)\lr \psi(X)].}
        \qedhere 
    \end{equation*}
\end{proposition}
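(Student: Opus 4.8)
The plan is to carry out the standard \emph{universal formula} construction, proceeding by induction on $n$, using the Kleene normal form theorem (\autoref{Theorem: Kleene normal form}) and its dual at the base level and ``spending'' one real quantifier at each inductive step. Fix once and for all an effective G\"odel numbering of $\Gamma$-formulas; by pairing reals we may assume each such formula has a single free real variable, so what we seek is a single $\Gamma$-formula $\phi(i,x)$ --- with $i$ ranging over numbers (the ``index'') and $x$ over reals --- such that for every $\Gamma$-formula $\psi(x)$ there is $n$ with $\ACA_0\vdash\forall^1 x(\phi(n,x)\lr\psi(x))$.

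\emph{Base case $n=1$.} Kleene normal form (\autoref{Theorem: Kleene normal form}) gives, for each $\Pi^1_1$-formula $\psi(x)$, a primitive recursive linear order $\alpha_x$ with $\ACA_0\vdash\forall^1 x(\psi(x)\lr\WO(\alpha_x))$; inspecting the proof, the passage from a code $e$ of $\psi$ to a code of the functional $x\mapsto\alpha_x$ is itself primitive recursive, so writing $\alpha^e_x$ for the resulting order, $U_{\Pi^1_1}(e,x):\equiv\WO(\alpha^e_x)$ is a single $\Pi^1_1$-formula that is universal for $\Pi^1_1$. Composing with $e\mapsto\bar e$, a code for $\lnot\psi$, and taking negations, $U_{\Sigma^1_1}(e,x):\equiv\lnot\WO(\alpha^{\bar e}_x)$ is a universal $\Sigma^1_1$-formula. (Equivalently: the truth of the arithmetical matrix of any strict $\Sigma^1_1$- or $\Pi^1_1$-formula is uniformly $\Delta^1_1$ over $\ACA_0$, which is precisely what Kleene normal form provides; this is where the hypothesis $n\ge 1$ is used, since by Tarski's undefinability of truth there is no universal arithmetical, i.e.\ $\Pi^1_0$, formula.)

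\emph{Inductive step.} Suppose $U_{\Sigma^1_n}(e,Z)$ and $U_{\Pi^1_n}(e,Z)$ are universal $\Sigma^1_n$- and $\Pi^1_n$-formulas (one free real variable $Z$, number parameter $e$). Using a primitive recursive pairing $\lag\cdot,\cdot\rag$ on reals, put
\[
U_{\Sigma^1_{n+1}}(e,x):\equiv\exists^1 Y\,U_{\Pi^1_n}(e,\lag Y,x\rag),\qquad U_{\Pi^1_{n+1}}(e,x):\equiv\forall^1 Y\,U_{\Sigma^1_n}(e,\lag Y,x\rag).
\]
These are $\Sigma^1_{n+1}$- resp.\ $\Pi^1_{n+1}$-formulas. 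For universality one uses the (effective, $\ACA_0$-provable) prenex normal form: every $\Sigma^1_{n+1}$-formula $\psi(x)$ is $\ACA_0$-provably equivalent, uniformly in its code, to $\exists^1 Y\,\pi(Y,x)$ for some $\Pi^1_n$-formula $\pi$; by the inductive hypothesis $\ACA_0\vdash\forall^1 Y\,\forall^1 x\,(\pi(Y,x)\lr U_{\Pi^1_n}(e,\lag Y,x\rag))$ for a suitable $e$ computable from the code of $\psi$, and pushing $\exists^1 Y$ through gives $\ACA_0\vdash\forall^1 x(\psi(x)\lr U_{\Sigma^1_{n+1}}(e,x))$. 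The argument for $\Pi^1_{n+1}$ is dual. Taking $\phi:=U_\Gamma$ for the given $\Gamma$ finishes the induction.

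\emph{Main obstacle.} The only genuine work is the arithmetized-syntax bookkeeping behind the phrases ``effective'' and ``$\ACA_0$-provable'': one must verify that the prenex-normal-form transformation is primitive recursive and that its correctness is provable in $\ACA_0$. The step drawing on the strength of $\ACA_0$ is the absorption of unbounded number quantifiers into the innermost real quantifier; here one invokes arithmetical comprehension to build partial satisfaction predicates for formulas of bounded quantifier rank, together with Kleene normal form to rewrite the relevant arithmetical matrices as path-existence statements through primitive recursive trees. I expect this routine-but-fiddly verification to be the main obstacle; everything else is purely formal manipulation.
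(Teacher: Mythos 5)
The paper states this proposition without proof (it is treated as a known fact, which is why the proposition carries an immediate $\qed$); there is no in-paper argument to compare against, so the only question is whether your argument is sound. It is, and it is the standard inductive construction of universal formulas. One remark about the base case: you build $U_{\Pi^1_1}$ by appealing to \autoref{Theorem: Kleene normal form} and then asserting that the map $e\mapsto(\text{code of }x\mapsto\alpha^e_x)$ is primitive recursive. That is true, but it is not literally what \autoref{Theorem: Kleene normal form} states (the theorem fixes a single $\Pi^1_1$-formula and gives uniformity in $x$, not in the code of the formula), so you are really re-proving the uniform version. It is worth noting that the uniform Kleene normal form is, in the usual development, itself derived from the existence of a universal $\Pi^1_1$-relation rather than the other way around; the cleaner base case is to define directly $U_{\Pi^1_1}(e,x):\equiv\forall^1 Y\,\exists^0 m\, T(e,\bar Y(m),\bar x(m))$ for a universal $\Sigma^0_1$-predicate $T$, and then prove (in $\ACA_0$, uniformly in codes) that every $\Pi^1_1$-formula is equivalent to one in this strict normal form by folding the arithmetical quantifiers into the function quantifier. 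That said, taking \autoref{Theorem: Kleene normal form} as a black box and reading off the uniformity from its proof, as you do, is acceptable. The inductive step is correct and genuinely routine once $\Sigma^1_{n+1}$- and $\Pi^1_{n+1}$-formulas are taken in prenex form, and your flagging of the number-quantifier absorption as the place where arithmetical comprehension is used is accurate.
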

We denote a fixed universal $\Gamma$-formula by $\True_\Gamma(\phi)$, where $\phi$ is a code for a $\Gamma$-formula.
The following lemma says adding a true $\check{\Gamma}$ sentence does not change the $\Gamma$-reflection principle of a given theory:
\begin{lemma}[$\ACA_0$] \label{Lemma: RFN is stable under adding true check Gamma statements}
    Suppose that $\phi$ is in $\check{\Gamma}$. If \added{$T$ is an extension of $\ACA_0$ and} $\phi$ is true, then $\RFN[\Gamma](T)\to \RFN[\Gamma](T+\phi)$.
\end{lemma}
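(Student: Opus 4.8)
The plan is to unwind the definition of $\RFN[\Gamma]$ and use the fact that provability from $T+\phi$ is essentially provability from $T$ of an implication whose hypothesis is a true $\hat\Gamma$-sentence. First I would observe that, provably in a weak base theory, $\Prv_{T+\phi}(\psi)$ is equivalent to $\Prv_T(\ulcorner \phi\to\psi\urcorner)$ by the deduction theorem, and that when $\psi$ is a $\Gamma$-sentence the formula $\phi\to\psi$ is logically equivalent (indeed provably so, via a routine prenexation) to a $\Gamma$-sentence $\psi'$, because $\phi\in\hat\Gamma$ means $\lnot\phi\in\Gamma$ and $\lnot\phi\lor\psi$ has the right shape once we pull the quantifiers of $\psi$ to the front past the (already suitably typed) quantifiers of $\lnot\phi$. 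One has to be slightly careful here that the relevant complexity classes $\Sigma^1_n,\Pi^1_n$ are closed under the operations in play; for $n\ge 1$ and $\Gamma=\Sigma^1_n$ this is the standard closure under disjunction and under the matching quantifier, and the argument is symmetric for $\Pi^1_n$.

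Next I would argue the implication itself. Assume $\RFN[\Gamma](T)$ and let $\psi$ be a $\Gamma$-sentence with $\Prv_{T+\phi}(\psi)$; we want $\True_\Gamma(\psi)$. By the previous paragraph $\Prv_T(\psi')$ where $\psi'$ is a $\Gamma$-sentence provably equivalent to $\phi\to\psi$, so $\RFN[\Gamma](T)$ gives $\True_\Gamma(\psi')$, i.e.\ $\phi\to\psi$ holds. Now invoke the hypothesis that $\phi$ is true: this is exactly a true $\hat\Gamma$-sentence, and combined with $\phi\to\psi$ it yields $\psi$, hence $\True_\Gamma(\psi)$ by the defining property of the universal $\Gamma$-formula. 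Since $\psi$ was an arbitrary $T+\phi$-provable $\Gamma$-sentence, $\RFN[\Gamma](T+\phi)$ follows.

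I expect the main obstacle to be purely bookkeeping rather than conceptual: namely making precise, uniformly in the code $\psi$, the primitive recursive transformation $\psi\mapsto\psi'$ together with an internal proof that $T\vdash(\psi'\lr(\phi\to\psi))$, so that the quantifier `$\forall^0\psi\in\Gamma$' in the definition of $\RFN[\Gamma](T+\phi)$ can be handled arithmetically. This is the sort of formalized-prenexation lemma that is routine but tedious; I would either cite it or sketch it, noting that it goes through over $\ACA_0$ (or even $\mathsf{ACA}_0$'s arithmetical fragment), which is well below anything assumed in the paper. Everything else — the deduction theorem for provability predicates, and the soundness step using that $\phi$ is genuinely true — is immediate once that transformation is in place. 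Note also that the statement as written is a theorem of the ambient metatheory (a true implication between arithmetical sentences), not something asserted to be provable in $T$, so no further internalization is needed beyond what is used to make sense of $\True_\Gamma$ and $\Prv$.
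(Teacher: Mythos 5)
Your argument is the same as the paper's: apply the deduction theorem to turn $T+\phi\vdash\psi$ into $T\vdash\phi\to\psi$, note that $\phi\to\psi\equiv\lnot\phi\lor\psi$ is (provably in first-order logic) equivalent to a $\Gamma$-sentence because $\lnot\phi\in\Gamma$ and $\Gamma$ is closed under disjunction, then apply $\RFN[\Gamma](T)$ and the truth of $\phi$ by modus ponens. The paper compresses the formalized-prenexation bookkeeping into the phrase ``whose equivalence is provable over the first-order logic,'' but there is no substantive difference in the route taken.
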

\begin{proof}
    Suppose that $T+\phi \vdash \psi$ holds for a $\Gamma$-sentence $\psi$. Then $T\vdash \phi\to\psi$, and $\phi\to\psi$ is equivalent to a $\Gamma$-sentence, whose equivalence is provable over the first-order logic. By $\RFN[\Gamma](T)$, $\phi\to\psi$ is true. Since $\phi$ is true, $\psi$ is also true. 
\end{proof}

The following lemma says being $\Gamma$-sound is transitive: That is, if $T$ is $\Gamma$-sound and if $T\vdash \text{$S$ is $\Gamma$-sound}$, then $S$ is also $\Gamma$-sound.
\begin{lemma}[$\ACA_0$] \label{Lemma: Transitivity of Gamma-soundness for r.e. theories}
    Let $S$, $T$ be extensions of $\ACA_0$ such that $T$ is $\Gamma$-sound and $S$ is r.e.\
    If $T\vdash \RFN[\Gamma](S)$, then $\RFN[\Gamma](S)$ is true.
\end{lemma}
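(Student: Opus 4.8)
The plan is to unfold the definition $\RFN[\Gamma](S) \equiv \forall^0\phi\in\Gamma\,[\Prv_S(\phi)\to\True_\Gamma(\phi)]$ and verify it one instance at a time, transporting the reflection that $T$ proves down to the standard model by way of the $\Gamma$-soundness of $T$. So I would fix a $\Gamma$-code $\phi$ and assume $\Prv_S(\phi)$ holds in the standard model. Since $S$ is r.e., $\Prv_S$ is a $\Sigma^0_1$-predicate, so $\Prv_S(\phi)$ is a true $\Sigma^0_1$-sentence; hence it is provable already in $\ACA_0$ by $\Sigma^0_1$-completeness, and a fortiori $T\vdash\Prv_S(\phi)$.

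Next I would invoke the hypothesis $T\vdash\RFN[\Gamma](S)$: instantiating its leading universal number quantifier at the numeral for $\phi$ yields $T\vdash\Prv_S(\phi)\to\True_\Gamma(\phi)$, and combining this with the previous step gives $T\vdash\True_\Gamma(\phi)$. The key observation is then that $\True_\Gamma(\cdot)$ may be taken to be a $\Gamma$-formula — this is exactly the content of the universal-formula proposition above — so that $\True_\Gamma(\phi)$ is a $T$-provable $\Gamma$-sentence. Since $T$ is $\Gamma$-sound, $\True_\Gamma(\phi)$ is true, i.e.\ $\phi$ is true. As $\phi$ was an arbitrary $S$-provable $\Gamma$-sentence, $\RFN[\Gamma](S)$ holds.

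The one point that requires care — and the reason for routing through the individual instances $\True_\Gamma(\phi)$ rather than applying $\Gamma$-soundness of $T$ directly to the sentence $\RFN[\Gamma](S)$ — is that $\RFN[\Gamma](S)$ itself need not be of complexity $\Gamma$ when $\Gamma=\Sigma^1_n$: its leading $\forall^0$ sits in front of a $\Sigma^1_n$-matrix, and over the weak base theory $\ACA_0$ one cannot collapse this (doing so would require a form of choice such as $\Sigma^1_n\mhyphen\AC_0$). Arguing formula by formula sidesteps the issue, since each $\True_\Gamma(\phi)$ is genuinely $\Gamma$. When $\Gamma=\Pi^1_n$ the sentence $\RFN[\Gamma](S)$ is provably $\Pi^1_n$ and the direct shortcut would also work, so this is not a genuine obstacle to the argument; the only real subtlety is in making the reasoning uniform across the $\Sigma^1_n$ case.
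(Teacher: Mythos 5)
Your proof is correct and is essentially identical to the paper's: fix an $S$-provable $\Gamma$-sentence $\phi$, note $T\vdash\Prv_S(\phi)$ by $\Sigma^0_1$-completeness, push through $T\vdash\RFN[\Gamma](S)$ to get $T\vdash\True_\Gamma(\phi)$, and apply $\Gamma$-soundness of $T$ to that single $\Gamma$-sentence. The paper makes the same observation about why one cannot shortcut by applying $\Gamma$-soundness directly to $\RFN[\Sigma^1_n](S)$, so the remark you add is also in agreement with the paper's commentary.
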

\begin{proof}
    If $\Gamma = \Pi^1_n$, then one can easily check that the statement $\RFN[\Pi^1_n](S)$ is $\Pi^1_n$. However, this line of argument does not work when $\Gamma=\Sigma^1_n$ since $\RFN[\Sigma^1_n](S)$ takes the form
    \begin{equation*}
        \forall^0 \phi\in \Sigma^1_n [\Prv_S(\phi) \to \True_{\Sigma^1_n}(\phi)]
    \end{equation*}
    whose complexity takes the form $\forall^0 (\Sigma^0_1\to \Sigma^1_n)$. However, we can still prove $\RFN[\Sigma^1_n](S)$ from $T\vdash \RFN[\Sigma^1_n](S)$ as follows:

    Let $\phi\in \Sigma^1_n$ be a (natural number coding a) $\Sigma^1_n$-formula. If $\Prv_S(\phi)$, then we have $T\vdash \Prv_S(\phi)$ since $\Prv_S(\phi)$ is a true $\Sigma^0_1$ statement. Since $T\vdash \RFN[\Sigma^1_n](S)$, we have $T\vdash \True_{\Sigma^1_n}(\phi)$, which implies $\True_{\Sigma^1_n}(\phi)$ by $\Sigma^1_n$-soundness of $T$.
\end{proof}

\added{
The following theorem expresses the $\Gamma$-reflection version of G\"odel's second incompleteness theorem, which is a modification of \cite[Theorem 2.9]{TowsnerWalsh2024Classification}
\begin{proposition} \label{Proposition: Reflection version of second incompleteness}
    Suppose that $\Gamma$ is either $\Pi^1_n$ or $\Sigma^1_n$ for $n\ge 1$.
    If $T$ is a $\Gamma$-sound, $\check{\Gamma}$-definable extension of $\ACA_0$, then $T$ does not prove $\RFN[\Gamma](T)$.
\end{proposition}
\begin{proof}
    Suppose that $T\vdash \RFN[\Gamma](T)$, and let $\phi$ be a conjunction of all statements in $T$ employed in the proof of $\RFN[\Gamma](T)$ from $T$.
    The statement $T\vdash \phi$ is $\RCA_0$-provably equivalent to a $\check{\Gamma}$-assertion, and we have
    $\ACA_0+\phi+ \text{``} T\vdash \phi\text{''}\vdash \RFN[\Gamma](T)$.

    Working in the theory $\ACA_0 + \phi + \text{``} T\vdash \phi\text{''}$, we know $\RFN[\Gamma](T)$ holds. $T$ proves $\phi$, so $\RFN[\Gamma](\ACA_0+\phi)$. Since ``$T\vdash \phi$'' is equivalent (both over the metatheory and $\ACA_0$-provably) to a $\check{\Gamma}$-assertion, we have $\RFN(\ACA_0 + \phi + \text{``} T\vdash \phi\text{''})$.
    In summary, we get
    \begin{equation*}
        \ACA_0+ \phi + \text{``} T\vdash \phi\text{''}\vdash \RFN[\Gamma]\bigl(\ACA_0+\phi + \text{``} T\vdash \phi\text{''}\bigr).
    \end{equation*}
    In particular, $\ACA_0+\phi + \text{``} T\vdash \phi\text{''}$ proves its own consistency.
    Observe that the theory $\ACA_0+\phi + \text{``} T\vdash \phi\text{''}$ is recursive, so we get a contradiction by G\"odel's second incompleteness theorem.
\end{proof}
}

We will mainly focus on the $\Sigma^1_2$-formulas in this paper.
The class of $\Sigma^1_2$-formulas is not closed under number quantifiers. That is, for a $\Sigma^1_2$-formula $\phi(n)$, there is no guarantee to see that $\forall^0 n \phi(n)$ is equivalent to a $\Sigma^1_2$-formula in general. If we have $\Sigma^1_2\mhyphen\AC_0$, however, we can reduce $\forall^0 n \phi(n)$ to a $\Sigma^1_2$-formula. Moreover, the reduction of $\forall^0 n\phi(n)$ implies $\forall^0 n \phi(n)$ without $\Sigma^1_2\mhyphen\AC_0$. We state this fact as follows: 
\begin{lemma} \label{Lemma: Reducing forall n Sigma 1 2}
    Let $\phi(n)$ be a $\Sigma^1_2$-formula. Then we can find a $\Sigma^1_2$-formula $\psi$ such that
    \begin{enumerate}
        \item $\ACA_0$ proves $\psi\to\forall^0 n \phi(n)$.
        \item $\Sigma^1_2\mhyphen\AC_0$ proves $\forall^0 n \phi(n) \to \psi$.
    \end{enumerate}
    We call $\psi$ a \emph{reduction of $\forall n \phi(n)$}.
\end{lemma}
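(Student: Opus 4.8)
The plan is to put $\phi(n)$ into $\Sigma^1_2$ normal form and then read off $\psi$ as the ``uniformized'' version of $\forall^0 n\,\phi(n)$ obtained by collapsing the outer number quantifier past the leading existential real quantifier, and to check that the resulting formula is still $\Sigma^1_2$ by using that $\Pi^1_1$ is closed under number quantifiers.

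First I would invoke the standard fact, provable over $\ACA_0$ using the universal $\Pi^1_1$-formula and the usual contraction lemmas, that $\phi(n)$ is equivalent to a formula of the form $\exists^1 X\,\chi(n,X)$ with $\chi$ a $\Pi^1_1$-formula. Then I would set
\[
    \psi \;:\equiv\; \exists^1 Z\,\forall^0 n\,\chi(n,(Z)_n),
\]
where $(Z)_n=\{k : \lag n,k\rag\in Z\}$ is the $n$-th section of $Z$. The first point to verify is that $\psi$ is (provably over $\ACA_0$ equivalent to) a genuine $\Sigma^1_2$-formula and not merely $\Pi^1_2$: since $\Pi^1_1$ is closed under universal number quantification over $\ACA_0$ (swap the outer $\forall^0 n$ past the $\forall^1$ in the $\Pi^1_1$ matrix of $\chi$ and absorb it into the arithmetic kernel), the subformula $\forall^0 n\,\chi(n,(Z)_n)$ is $\Pi^1_1$, hence $\psi$ is $\Sigma^1_2$.

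For clause (1), I would argue in $\ACA_0$ (indeed $\mathsf{RCA}_0$ suffices): given $Z$ witnessing $\psi$, for each $n$ the section $(Z)_n$ exists as a set and witnesses $\exists^1 X\,\chi(n,X)$, i.e.\ $\phi(n)$; as $n$ was arbitrary, $\forall^0 n\,\phi(n)$. No choice principle is used here. For clause (2), I would argue in $\Sigma^1_2\mhyphen\AC_0$: from $\forall^0 n\,\phi(n)$ we obtain $\forall^0 n\,\exists^1 X\,\chi(n,X)$, and since $\chi\in\Pi^1_1\subseteq\Sigma^1_2$, the instance of $\Sigma^1_2\mhyphen\AC_0$ applied to $\chi$ yields $\exists^1 Z\,\forall^0 n\,\chi(n,(Z)_n)$, which is precisely $\psi$.

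There is no serious obstacle; the only things requiring care are (a) confirming that $\psi$ is really $\Sigma^1_2$, which is exactly where closure of $\Pi^1_1$ under number quantifiers enters, and (b) keeping track of which direction needs choice, since the content of the lemma is that clause (1) is choice-free while clause (2) genuinely uses $\Sigma^1_2\mhyphen\AC_0$.
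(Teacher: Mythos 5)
Your proposal is correct and is essentially the paper's argument, just spelled out more fully: the paper writes $\phi(n)\equiv\exists x\,\theta(n,x)$ with $\theta\in\Pi^1_1$ and takes $\psi\equiv\exists^1 f\,\forall^0 n\,\theta(n,f(n))$, which is the same formula as your $\exists^1 Z\,\forall^0 n\,\chi(n,(Z)_n)$ up to notation for sections versus a ``choice function.'' Your added care about why $\psi$ remains $\Sigma^1_2$ (closure of $\Pi^1_1$ under number quantifiers) and about which direction actually consumes $\Sigma^1_2\mhyphen\AC_0$ is correct and makes explicit what the paper leaves to the reader.
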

\begin{proof}
    Let $\phi(n)\equiv \exists^1 X \theta(n,X)$ for a $\Pi^1_1$-formula $\theta$. Then consider the following statement:
    \begin{equation*}
        \psi \equiv \exists^1 F \forall^0 n \theta\bigl(n,F(n)\bigr). \qedhere 
    \end{equation*}
\end{proof}

\subsection{Ordinals and well-orders}
Throughout this paper, we conflate well-orders with ordinals. Thus, for example, we write $\alpha\le \beta$ for well-orders $\alpha$ and $\beta$ if there is an embedding $e\colon \alpha\to\beta$. But the choice of the words is not completely random: We prefer the word `well-order' in a given argument if the argument is formalizable over second-order arithmetic. When we completely work over set theory or externally (i.e., over a metatheory), we prefer the word `ordinal.' For a given linear order $\alpha$, $\WO(\alpha)$ means $\alpha$ is well-ordered.

The \emph{lightface projective ordinals} $\delta^1_n$ have a focal role in descriptive set theory. $\delta^1_n$ is defined by the supremum of all $\Delta^1_n$-definable well-orders of domain $\omega$. In this paper, we will only see two of them, namely $\delta^1_1$ and $\delta^1_2$.
The following results are well-known folklore:
\begin{proposition} \phantom{a} \label{Proposition: Characterizing delta 1 n} \pushQED{\qed}
    \begin{enumerate}
        \item (Spector \cite{Spector1955RecursiveWO}) $\delta^1_1=\omega_1^\CK$.
        \item (\cite[Corollary V.8.3]{Barwise1975}) $\delta^1_2$ is the least stable ordinal $\sigma$, that is, the least $\sigma$ such that $L_\sigma\prec_{\Sigma_1} L$. \qedhere 
    \end{enumerate}
\end{proposition}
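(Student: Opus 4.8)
The plan is to prove the two items by classical means, indicating a route for each and where the genuine content sits. For item (1), I would first invoke Kleene's theorem that $\Delta^1_1$ coincides with the hyperarithmetic reals, so a $\Delta^1_1$ well-order $R$ on $\omega$ is hyperarithmetic. For the inequality $\delta^1_1 \le \omega_1^{\CK}$, note that the singleton $\{R\}$ is itself $\Delta^1_1$, hence a $\Sigma^1_1$ subset of $\WO$, so by the $\Sigma^1_1$-boundedness theorem the order type of $R$ lies strictly below $\omega_1^{\CK}$; taking the supremum over all such $R$ gives $\delta^1_1 \le \omega_1^{\CK}$ (this is precisely Spector's theorem that every $\Delta^1_1$ well-order has a recursive order type). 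For the reverse inequality, every recursive well-order is $\Delta^1_1$, so $\delta^1_1$ dominates the supremum of the order types of recursive well-orders, which equals $\omega_1^{\CK}$ by the definition of the Church--Kleene ordinal as the least non-recursive ordinal. Hence $\delta^1_1 = \omega_1^{\CK}$.

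For item (2), write $\sigma_0$ for the least stable ordinal; it exists since the stable ordinals form a closed unbounded class, and it is admissible (indeed much more). The inequality $\delta^1_2 \le \sigma_0$ is the easy half: given a $\Delta^1_2$ well-order $R$ on $\omega$, Shoenfield absoluteness rewrites both ``$n\,R\,m$'' and ``$\lnot(n\,R\,m)$'' as parameter-free $\Sigma_1$ conditions on $L$, so $R$ is $\Delta_1$-definable over $L$; since $L_{\sigma_0}\prec_{\Sigma_1}L$ and $L_{\sigma_0}$ satisfies $\Delta_1$-separation, the same relation $R$, being a $\Delta_1$-definable subset of $\omega\times\omega\in L_{\sigma_0}$, is an element of $L_{\sigma_0}$. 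Well-foundedness is downward absolute to the transitive set $L_{\sigma_0}$, and admissibility lets $L_{\sigma_0}$ compute rank functions, so $L_{\sigma_0}\models$ ``$R$ is a well-order of order type $\alpha$'' for the genuine order type $\alpha$, whence $\alpha\in\mathrm{Ord}\cap L_{\sigma_0}=\sigma_0$. Taking the supremum over $R$ gives $\delta^1_2\le\sigma_0$.

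The inequality $\sigma_0 \le \delta^1_2$ is where the work is, and I would prove it by showing $\delta^1_2$ is itself stable, so that $\delta^1_2 \ge \sigma_0$ by minimality of $\sigma_0$; equivalently, it suffices to exhibit, for each $\gamma < \sigma_0$, a $\Delta^1_2$ well-order of $\omega$ of order type at least $\gamma$. The mechanism is the standard coding of pointwise-definable initial segments of $L$ by reals: if $\gamma$ is least such that $L_\gamma$ satisfies a fixed sentence $\chi$ (chosen to entail $V=L$ together with enough of $\ZFC^-$), then a condensation/Mostowski-collapse argument shows $L_\gamma$ is pointwise definable, hence coded by a real $x$; one checks that ``$x$ codes a well-founded $\in$-model of $\chi$ that is an initial segment of $L$ in which every set is definable'' is $\Sigma^1_2$ in $x$ via Shoenfield, while the minimality of $\gamma$ upgrades this description to $\Delta^1_2$, so reading off the ordinal part of $x$ yields a $\Delta^1_2$ well-order of order type $\gamma$. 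One then checks such $\gamma$ are cofinal in $\sigma_0$: for any $\beta<\sigma_0$, non-stability of $\beta$ produces a $\Sigma_1$ fact first witnessed in $L$ at a level strictly between $\beta$ and $\sigma_0$. I expect the main obstacle to be precisely this coding step --- verifying that the property of $x$ is genuinely $\Delta^1_2$ rather than merely $\Sigma^1_2$ (combining absoluteness of well-foundedness with the minimality of $\gamma$), and arranging the parameter-free bookkeeping so that the pointwise-definable levels are cofinal below $\sigma_0$. Assembling these pieces gives $\delta^1_2=\sigma_0$.
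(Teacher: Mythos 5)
The paper does not give a proof of this proposition; it is stated as folklore with citations to Spector and to Barwise (V.8.3), so there is no internal argument to compare against. Judged on its own terms, your item (1) is correct and is the standard route: $\Sigma^1_1$-boundedness applied to the $\Delta^1_1$ singleton $\{R\}$ gives $\delta^1_1\le\omega_1^\CK$, and the reverse inequality is immediate.

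For item (2), your outline is the standard one (via pointwise-definable levels of $L$), and the direction $\delta^1_2\le\sigma_0$ is carried out correctly. But the two ``main obstacles'' you flag are genuine, and your description of how they resolve is off in a way worth correcting. First, the set of codes $x$ of the least $L_\gamma\models\chi$ is not merely $\Sigma^1_2$ via Shoenfield; it is $\Pi^1_1$, because (i) by condensation any well-founded model of $\chi+V{=}L$ \emph{is} an $L_\alpha$, so ``initial segment of $L$'' costs nothing, and (ii) minimality can be asserted \emph{internally} --- add to $\chi$ the first-order sentence ``no $L_\beta$ satisfies $\chi$'' --- which is arithmetic in $x$ rather than requiring an external comparison with all other codes. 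The $\Delta^1_2$-ness of the resulting well-order $R$ then does not come from the property of $x$ being $\Delta^1_2$ (it isn't; it is $\Pi^1_1$) but from the two-sided definition: since $L_{\gamma_\chi}$ is rigid, the pointwise-definability enumeration $e_x$ is code-independent up to the unique isomorphism, so $R(m,n)\iff\exists x\in C_\chi\,[\textit{arith}]\iff\forall x\in C_\chi\,[\textit{arith}]$, and with $C_\chi\in\Pi^1_1$ the existential form is $\Sigma^1_2$ while the universal form is $\Pi^1_2$. Second, your cofinality argument --- ``non-stability of $\beta$ produces a $\Sigma_1$ fact first witnessed strictly between $\beta$ and $\sigma_0$'' --- is too quick: non-stability of $\beta$ only yields a $\Sigma_1$ fact with a \emph{parameter} from $L_\beta$, which is not a sentence $\chi$ you can feed into the construction. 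To make this parameter-free you need something like the characterization of $\sigma_0$ as the supremum of the parameter-free $\Sigma_1^L$-definable ordinals (which is exactly Barwise V.7.9, stated in the paper as \autoref{Proposition: Set theoretic characterization of delta 1 2}); then for any $\beta<\sigma_0$ pick a parameter-free $\Sigma_1$ definition of some $\gamma\in[\beta,\sigma_0)$, set $\chi$ to assert that $\gamma$'s defining formula is witnessed, and show $\gamma_\chi>\gamma$ while $\gamma_\chi<\sigma_0$ by stability. With those repairs, your plan assembles into a correct proof; without them the $\sigma_0\le\delta^1_2$ half is not yet a proof.
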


\begin{proposition}[{\cite[Corollary V.7.9]{Barwise1975}}]
    \pushQED{\qed}
    \label{Proposition: Set theoretic characterization of delta 1 2}
    $\delta^1_2$ is the set of all $\Sigma_1$-definable ordinals over $L$ without parameters. Similarly, $L_{\delta^1_2}$ is the set of all $\Sigma_1$-definable sets over $L$ without parameters. \qedhere 
\end{proposition}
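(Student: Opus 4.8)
The plan is to cite the structure theory of the constructible hierarchy, specifically the $\Sigma_1$-reflection/condensation phenomenon at stable ordinals, rather than to reprove it from scratch. The target statement has two halves: (i) an ordinal $\gamma$ is $\Sigma_1$-definable over $L$ without parameters iff $\gamma < \delta^1_2$; and (ii) a set $x$ is $\Sigma_1$-definable over $L$ without parameters iff $x \in L_{\delta^1_2}$. I would first dispatch (i) and then bootstrap (ii) from it using the fact that every element of $L$ is coded by (is the $\alpha$-th element in the canonical $L$-order for) some ordinal $\alpha$, so definability of a set reduces to definability of an ordinal via the (absolute, $\Sigma_1$-definable) well-ordering $<_L$ of $L$.

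For the forward direction of (i): suppose $\gamma$ is $\Sigma_1$-definable over $L$, say $\gamma$ is the unique ordinal satisfying a $\Sigma_1$ formula $\varphi$. By \autoref{Proposition: Characterizing delta 1 n}(2), $\delta^1_2 = \sigma$ is the least ordinal with $L_\sigma \prec_{\Sigma_1} L$. Since $L \models \exists!\, \xi\, \varphi(\xi)$ and this is a $\Sigma_1$-over-$L$ assertion that is reflected down to $L_\sigma$ — using $L_\sigma \prec_{\Sigma_1} L$ together with the fact that uniqueness plus $\Sigma_1$-existence is absorbed by the elementary substructure — the witness $\gamma$ must already lie in $L_\sigma$, hence $\gamma < \sigma = \delta^1_2$. (One must be a little careful: $L_\sigma \prec_{\Sigma_1} L$ gives that the $\Sigma_1$ statement $\exists \xi \varphi(\xi)$ holds in $L_\sigma$, so there is $\gamma' \in L_\sigma$ with $L_\sigma \models \varphi(\gamma')$; then $\Sigma_1$-upward-absoluteness from $L_\sigma$ to $L$ gives $L \models \varphi(\gamma')$, so $\gamma' = \gamma$ by uniqueness in $L$, so $\gamma \in L_\sigma$ and in fact $\gamma < \sigma$ since $\sigma$ is a limit.) For the converse: given $\gamma < \delta^1_2$, I want a parameter-free $\Sigma_1$ definition. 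This is the heart of the matter and is exactly \autoref{Proposition: Set theoretic characterization of delta 1 2} as stated in Barwise — the standard argument is that $L_{\delta^1_2}$ is pointwise $\Sigma_1$-definable: one takes the $\Sigma_1$ Skolem hull $H$ of $\emptyset$ inside $L_{\delta^1_2}$ (or inside $L$, intersecting with $\delta^1_2$), applies condensation to get $H \cong L_{\bar\sigma}$ for some $\bar\sigma \le \delta^1_2$, and then argues by the minimality of $\delta^1_2$ as the least stable ordinal that $\bar\sigma = \delta^1_2$, so $H$ is all of $L_{\delta^1_2}$; hence every element, in particular every ordinal $\gamma < \delta^1_2$, is $\Sigma_1$-definable without parameters. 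I would present this as the cited content of \cite[Corollary V.7.9]{Barwise1975} rather than expand it.

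For (ii): if $x$ is $\Sigma_1$-definable over $L$ without parameters, then so is the ordinal $\alpha$ such that $x$ is the $\alpha$-th element of $L$ under $<_L$ (since $<_L$ and the rank function are $\Sigma_1$-definable over $L$ with no parameters, and composition/definable-substitution preserves parameter-free $\Sigma_1$-definability of the unique witness); by (i), $\alpha < \delta^1_2$, and then $x \in L_{\delta^1_2}$ because $L_{\delta^1_2}$ is closed under "the $\beta$-th element of $L$" for $\beta < \delta^1_2$ (a consequence of $L_{\delta^1_2} \prec_{\Sigma_1} L$ and admissibility-type closure, or directly from the pointwise-definability of $L_{\delta^1_2}$). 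Conversely, if $x \in L_{\delta^1_2}$, then by the pointwise $\Sigma_1$-definability of $L_{\delta^1_2}$ established in (i), $x$ is $\Sigma_1$-definable over $L_{\delta^1_2}$ without parameters, and one upgrades this to a definition over $L$ using $L_{\delta^1_2} \prec_{\Sigma_1} L$ (the same $\Sigma_1$ formula, suitably relativized, defines $x$ over $L$ because the elementary embedding fixes the definition and the unique witness). The main obstacle — and the only non-bookkeeping point — is the converse direction of (i), i.e. the pointwise $\Sigma_1$-definability of $L_{\delta^1_2}$, which rests on condensation plus the minimality clause in the characterization of $\delta^1_2$ as the least stable ordinal; everything else is routine absoluteness bookkeeping, and since the whole proposition is being invoked as a known folklore result from Barwise, I would keep the writeup to a short paragraph pointing to that reference and sketching the Skolem-hull/condensation argument.
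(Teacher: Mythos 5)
The paper itself supplies no proof of this proposition: it is stated as a known result with a citation to Barwise, and the QED sign is placed immediately. Your proposal agrees with that treatment — the substantive work is deferred to the same reference — and your sketch of the underlying Skolem-hull and condensation argument is essentially the standard one found there.

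One point the sketch quietly relies on but does not mention is why the Mostowski collapse $\pi\colon H\to L_{\bar\sigma}$ of the hull $H$ of parameter-free $\Sigma_1$-definable elements is the identity, i.e.\ why $H$ is already transitive. This is used twice in your outline: first to transfer $H\prec_{\Sigma_1} L$ to $L_{\bar\sigma}\prec_{\Sigma_1} L$ (without which minimality of $\delta^1_2$ as the least stable ordinal does not directly apply to conclude $\bar\sigma\ge\delta^1_2$), and second to pass from $\bar\sigma=\delta^1_2$ to the conclusion $H=L_{\delta^1_2}$. The standard way to see it: if $x\in H$ is the unique witness in $L$ of a $\Sigma_1$ formula $\varphi_x$, then $L_{\bar\sigma}\models\varphi_x(\pi(x))$ since $\pi$ is an isomorphism, so by upward $\Sigma_1$-absoluteness $L\models\varphi_x(\pi(x))$, and uniqueness forces $\pi(x)=x$, so $\pi$ is the identity and $H$ is transitive. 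Since you are explicitly citing rather than reproving Barwise's corollary, this is a presentational observation rather than an error, but if you do retain the sketch in the writeup it is worth adding this one step so the condensation-plus-minimality argument closes.
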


\begin{proposition} \label{Proposition: Reducing Pi 1 1 and Sigma 1 2 into L} 
    Every $\Pi^1_1$-sentence is equivalent to a $\Sigma_1$-statement over $L_{\omega_1^\CK}$. Every $\Sigma^1_2$-sentence is equivalent to a $\Sigma_1$-statement over $L_{\delta^1_2}$. 
\end{proposition}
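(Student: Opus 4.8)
The plan is to prove the two statements separately: a $\Pi^1_1$-sentence is reduced to $L_{\omega_1^\CK}$ via Kleene normal form and basic hyperarithmetic theory, and a $\Sigma^1_2$-sentence is reduced to $L_{\delta^1_2}$ via Shoenfield absoluteness together with the stability of $\delta^1_2$.

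For the $\Pi^1_1$ case, let $\phi$ be a $\Pi^1_1$-sentence. By \autoref{Theorem: Kleene normal form}, $\phi$ is equivalent, provably over $\ACA_0$ and hence genuinely, to $\WO(\alpha)$ for a fixed primitive recursive linear order $\alpha$. Let $\sigma$ be the $\Sigma_1$-formula of set theory asserting that there exist an ordinal $\beta$ and an order isomorphism $f\colon\alpha\cong\beta$. If $\alpha$ is genuinely well-ordered, then $\mathrm{otp}(\alpha)$ is a recursive ordinal, so $\mathrm{otp}(\alpha)<\omega_1^\CK$ and thus $\mathrm{otp}(\alpha)\in L_{\omega_1^\CK}$; moreover the collapsing map $f\colon\alpha\cong\mathrm{otp}(\alpha)$ is hyperarithmetic, and since the reals of $L_{\omega_1^\CK}$ are exactly the hyperarithmetic ones we get $f,\alpha\in L_{\omega_1^\CK}$, hence $L_{\omega_1^\CK}\models\sigma$. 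Conversely, if $L_{\omega_1^\CK}\models\sigma$ then the witnesses $\beta$ and $f$ genuinely exist, and an order isomorphism onto an ordinal precludes infinite $\alpha$-descending sequences, so $\alpha$ is genuinely well-ordered. Thus $\phi$ is equivalent to the $\Sigma_1$-statement $\sigma$ over $L_{\omega_1^\CK}$.

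For the $\Sigma^1_2$ case, write $\phi\equiv\exists^1 X\,\psi(X)$ with $\psi$ a $\Pi^1_1$-formula, and let $\alpha_X$ be the linear order, primitive recursive in $X$, furnished by the relativization of Kleene normal form, so that $\psi(X)\leftrightarrow\WO(\alpha_X)$. Let $\sigma$ be the set-theoretic sentence asserting that there exist $X$, an ordinal $\beta$, and an order isomorphism $f\colon\alpha_X\cong\beta$; since ``$\alpha_X$'' is $\Delta_0$-definable from $X$ (it is primitive recursive in $X$), $\sigma$ is genuinely $\Sigma_1$. The equivalences $\psi(X)\leftrightarrow\WO(\alpha_X)$, a consequence of $\ACA_0$, and ``$\WO(\alpha_X)$ iff $\alpha_X$ is isomorphic to an ordinal'', a consequence of $\ZFC$, hold in $L$ for every $X\in L$, so $(L\models\phi)\leftrightarrow(L\models\sigma)$; and $\phi\leftrightarrow(L\models\phi)$ by Shoenfield absoluteness. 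Finally, $\delta^1_2$ is the least stable ordinal, so $L_{\delta^1_2}\prec_{\Sigma_1}L$ by \autoref{Proposition: Characterizing delta 1 n}, whence $(L\models\sigma)\leftrightarrow(L_{\delta^1_2}\models\sigma)$. Combining, $\phi$ is equivalent to the $\Sigma_1$-statement $\sigma$ over $L_{\delta^1_2}$.

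The facts invoked — that the reals of $L_{\omega_1^\CK}$ are precisely the hyperarithmetic reals, that the collapsing map of a recursive well-order is hyperarithmetic, and that $\Delta_0$-truth about the relevant coded objects is absolute — are all standard, so I do not expect a serious obstacle. The one point that genuinely needs care is the complexity bookkeeping in the $\Sigma^1_2$ case: checking that ``$\alpha_X$'' and ``$f\colon\alpha_X\cong\beta$'' unwind to $\Delta_0$-formulas, so that $\sigma$ really is $\Sigma_1$ rather than $\Sigma_2$ — it is precisely this that lets the stability of $\delta^1_2$ finish the job.
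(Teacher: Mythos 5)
The paper states this proposition without proof — it is folklore, essentially assembled from Barwise Chapter V, which the paper cites for the neighboring facts about $\delta^1_2$ — so the comparison is to the standard argument, and your proposal is a faithful reconstruction of it: Kleene normal form plus the admissibility of $L_{\omega_1^\CK}$ for the $\Pi^1_1$ case, and Shoenfield absoluteness plus the stability of $\delta^1_2$ for the $\Sigma^1_2$ case. Two small imprecisions are worth correcting, though neither affects the structure of the argument. First, saying "the collapsing map $f\colon\alpha\cong\mathrm{otp}(\alpha)$ is hyperarithmetic" is a type error: $f$ takes ordinal values, so it is not a real. The clean route is to observe directly that $L_{\omega_1^\CK}$ is admissible, hence closed under Mostowski collapse of any relation it sees as well-founded, and a genuinely well-founded recursive $\alpha$ is internally well-founded because the subsets of $\omega$ in $L_{\omega_1^\CK}$ are genuine subsets; this gives $f\in L_{\omega_1^\CK}$ without any detour through hyperarithmetic reals. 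Second, "$\alpha_X$ is $\Delta_0$-definable from $X$" overstates things: a primitive-recursive-in-$X$ relation is $\Delta_1$ over $\KP$, not in general $\Delta_0$. This is harmless — $\Sigma_1$-formulas with $\Delta_1$-subformulas remain $\Sigma_1$ over any admissible set, so your $\sigma$ is still genuinely $\Sigma_1$ and the appeal to $L_{\delta^1_2}\prec_{\Sigma_1}L$ goes through — but the justification you give for the complexity count is not the right one, and you correctly flag this as the one place that needs care.
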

\begin{proof}\added{
    For a real number $X$, let $\omega_1^X$ be the least ordinal $\alpha$ such that $L_\alpha[X]\vDash \KP$. Then we can see the following: For every $\Pi^1_1$-formula $\phi(X)$ with only real free variable $X$ displayed, we can find a $\Sigma_1$-formula $\psi(X)$ such that for every real $X$,
    \begin{equation} \label{Formula: Pi 1 1 equivalent to Sigma 1 over an admissible}
        \phi(X) \iff L_{\omega_1^X}[X]\vDash \psi(X).
    \end{equation}
    To see this, we first appeal to the Kleene normal form theorem to find a recursive linear order $\alpha_X$ such that $\phi(X)$ and $\WO(\alpha_X)$ are equivalent. Then we take
    \begin{equation*}
        \psi(X) \equiv \exists \xi\in \Ord \exists f [\text{$f$ is an isomorphism from $\xi$ to $\alpha_X$}]
    \end{equation*}
    The right-to-left implication of \eqref{Formula: Pi 1 1 equivalent to Sigma 1 over an admissible} is clear. For the left-to-right implication, fix $X$ and suppose that $\alpha_X$ is well-founded. Since $\alpha_X$ is recursive in $X$, $\alpha_X \in L_{\omega_1^X}[X]$. 
    By mimicking the proof of $\Sigma$-recursion theorem \cite[Theorem I.6.4]{Barwise1975}, we can see that if $F$ is a $\Sigma$-definable function over $L_{\omega_1^X}[X]$, one can define a $\Sigma_1^{L_{\omega_1^x}[X]}$-definable $G\colon \alpha_X\to L_{\omega_1^X}[X]$ such that for every $i\in \alpha_X$,
    \begin{equation*}
        G(i) = F\bigl(\bigl\lag (j,G(j)) \bigm| \alpha_X\vDash j<i\bigr\rag\bigr).
    \end{equation*}
    We take $F$ by $F(x) = \min (\Ord \setminus \ran x)$, which is uniformly $\Sigma$-definable over an admissible set. We can also see that the resulting $\Sigma$-recursively defined function $G$ gives an embedding from $\alpha_X$ to $\omega_1^X$ whose range is an ordinal, and $\Sigma$-Replacement over $L_{\omega_1^X}[X]$ implies $\xi:=\ran G$ is in $L_{\omega_1^X}[X]$. 
    
    This completes the proof of the first part of the proposition.
    To see the remaining part, suppose that $\phi(X)$ is a $\Pi^1_1$-formula, and let $\psi(X)$ be a $\Sigma_1$-formula satisfying \eqref{Formula: Pi 1 1 equivalent to Sigma 1 over an admissible}.
    By Shoenfield absoluteness, we have
    \begin{equation*}
        \exists^1 X \phi(X) \iff L\vDash \exists^1 X \phi(X) \iff L\vDash \exists X\in \mathbb{R} [L_{\omega_1^X}[X]\vDash \psi(X)].
    \end{equation*}
    Observe that the function $X\mapsto \omega_1^X$ is $\Sigma_1$-definable in $L$, together with the function $\alpha\mapsto L_\alpha$. Hence the statement ``$\exists X\in \mathbb{R} [L_{\omega_1^X}[X]\vDash \psi(X)]$'' is $\Sigma_1$. $L_{\delta^1_2}\prec_{\Sigma_1} L$ by \autoref{Proposition: Characterizing delta 1 n}, so we have}
    \begin{equation*}
        \added{\exists^1 X\psi(X) \iff L_{\delta^1_2}\vDash \exists X\in \mathbb{R} [L_{\omega_1^X}[X]\vDash \psi(X)].} \qedhere 
    \end{equation*}
\end{proof}

\subsection{Logical aspects of dilators}
\autoref{Lemma: Being a dilator is captured by countable WOs} shows the sentence `$D$ is a dilator' is $\Pi^1_2$ for a countable dilator $D$:
\begin{equation*}
    \Dil(D) \equiv \forall^1 \alpha [\WO(\alpha)\to \WO(D(\alpha))].
\end{equation*}
Similar to Kleene normal form theorem, which says every $\Pi^1_1$-formula is equivalent to a formula of the form $\WO(\alpha)$, the \emph{$\Pi^1_2$-completeness theorem of dilators} we will introduce says every $\Pi^1_2$-formula is equivalent to a formula of the form $\Dil(D)$:
\begin{theorem}[Girard \cite{Catlow1994}, \cite{Girard1982Logical}] \pushQED{\qed}
    For every $\Pi^1_2$ formula $\phi(X)$ we can find a Turing machine $D_X$ with an oracle variable $X$ such that $\ACA_0$ proves the following:
    \begin{enumerate}
        \item For every real $X$, $D_X$ is a predilator.
        \item $\forall^1 X [\Dil(D_X) \lr \phi(X)]$.
    \end{enumerate}
\end{theorem}\added{
\begin{proof}[Sketch of the proof]
    There are two types of proof: We can manually build the corresponding predilator, or we can use $\beta$-logic. We follow the former in this sketch. For a proof using $\beta$-logic, see \cite[Lemma 7]{AguileraPakhomov2024Nonlinearity}. 

    Suppose that $\phi(X) \equiv \forall^1 Y \lnot \psi(X,Y)$ for some $\Pi^1_1$-formula $\psi(X,Y)$. A version of Kleene's normal form theorem implies we can find a Turing machine $\alpha_{X,Y}$ with oracle variables $X$ and $Y$ such that $\ACA_0$ proves
    \begin{enumerate}
        \item For every real $X$ and $Y$, $\alpha_{X,Y}$ is a linear order.
        \item $\forall^1 X,Y [\WO(\alpha_{X,Y})\lr \psi(X,Y)]$.
    \end{enumerate}
    Then for a given linear order $\beta$, we build a primitive recursive tree $T_{\alpha,X}(\beta)$ trying to find a real $Y$ and an embedding $\alpha(X,Y)\to \beta$, with techniques used in \cite[Lemma 8]{AguileraPakhomov2023Pi12} or \autoref{Lemma: Disjunctive join of two predilators}.
    We can see an infinite branch of $T_{\alpha,X}(\beta)$ witnesses the validity of $\exists^1 Y \psi(X,Y)$.
    Then turn the tree $T_{\alpha,X}$ into a predilator by imposing the Kleene-Brouwer order.
\end{proof}
}

Thus, we can identify every $\Pi^1_2$ sentence with $\Dil(D)$ for some \emph{primitive recursive} predilator $D$. Similarly, every $\Sigma^1_2$-sentence can be identified with $\lnot\Dil(D)$ for some primitive recursive predilator $D$. 

We will often quantify over the set of recursive predilators, and so checking the complexity of being a recursive predilator is important. The following proposition shows that the collection of primitive recursive predilators has a low complexity:
\begin{proposition}
    The set $\RecPreDil$ of all codes of recursive predilators is $\Pi^0_2$.
\end{proposition}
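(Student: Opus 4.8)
The plan is to unwind the definition of ``code of a recursive predilator'' and to show that the only source of complexity above the low levels of the arithmetical hierarchy is the totality of the partial recursive functions that a code packages, which is $\Pi^0_2$; the remaining requirements (irreflexivity, linearity, transitivity, monotonicity) contribute only $\Pi^0_1$-type content over the graphs of those functions, and this can be folded into a single $\Pi^0_2$ sentence by the usual stage-by-stage trick.

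First I would fix the coding convention. A code $e$ determines indices for two partial recursive functions: $\tau_e$, which is meant to decide ``$t$ is a $D$-term'' and to return $\arity(t)$, and $\kappa_e$, which is meant to compute the truth value of $t_0 <_\Delta t_1$ on triples $(t_0,t_1,\Delta)$ where $\Delta$ is an arity diagram for $t_0,t_1$. Arity diagrams, being finite commutative squares of finite linear orders, are coded by natural numbers, and the relevant side-conditions ``$\Delta$ is an arity diagram for $t_0,t_1$'', ``$\Delta$ is trivial'', ``$\Delta$ has the shape \eqref{Formula: Arity diagram with the same arities} with $e_0\le e_1$'', and ``$\Delta_2$ is a merge of $\Delta_0$ and $\Delta_1$'' are all primitive recursive in the codes: each asks about finitely many finite objects whose size is effectively bounded in terms of the displayed arities (for merges, the objects of any witnessing IU diagram $\bbD$ for $a_0,a_1,a_2$ all have field of size at most $a_0+a_1+a_2$, by the union clause in the definition of an arity diagram). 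With this setup, $e\in\RecPreDil$ iff (T) $\tau_e$ and $\kappa_e$ are total on their intended domains, and (A) the resulting denotation system is a predilator, i.e.\ satisfies irreflexivity, linearity, transitivity, and monotonicity.

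Next I would note that, granting (T), each clause of (A) is a universal statement over decidable matrices — e.g.\ transitivity reads ``for all $t_0,t_1,t_2$, all arity diagrams $\Delta_0,\Delta_1$ for the appropriate pairs, and all merges $\Delta_2$ of $\Delta_0,\Delta_1$, if $t_0<_{\Delta_0}t_1$ and $t_1<_{\Delta_1}t_2$ then $t_0<_{\Delta_2}t_2$'', the inner quantifier over merges being harmless by the boundedness noted above. To package (T)$\wedge$(A) as one $\Pi^0_2$ sentence, let $R(e,n,s)$ be the primitive recursive predicate: all computations of $\tau_e$ and $\kappa_e$ on inputs coded by numbers $\le n$ halt within $s$ steps, and every instance of the four predilator axioms with all parameters coded by numbers $\le n$ is verified using the values so computed. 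Then
\begin{equation*}
    e\in\RecPreDil \iff \forall n\,\exists s\, R(e,n,s),
\end{equation*}
which is $\Pi^0_2$: the left-to-right direction is immediate from totality together with the axioms, and conversely $\forall n\,\exists s\, R(e,n,s)$ forces every computation to halt (each input eventually occurs with a large enough $n$) and forces every axiom instance (take $n$ past all its parameters).

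The only step that requires genuine care — and the one I expect to be the main obstacle — is verifying that all the combinatorial predicates on arity diagrams (being an arity diagram for a given pair, triviality, having the monotonicity shape, being a merge determined by some IU diagram) are honestly primitive recursive in the codes, so that $R$ is primitive recursive and no stray quantifier is smuggled in; once that bookkeeping is done, the complexity count ``$\Pi^0_2$ totality plus $\Pi^0_1$ axioms, merged by the stage trick, equals $\Pi^0_2$'' closes the argument.
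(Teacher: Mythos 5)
Your proof is correct and follows essentially the same approach as the paper: both reduce the problem to observing that the combinatorics of arity diagrams are primitive recursive (including the bounded search over IU diagrams), that the predilator axioms are universal once the underlying functions are total, and that totality contributes the single $\forall\exists$ alternation. Your stage-by-stage predicate $R(e,n,s)$ is a more careful packaging of what the paper leaves as ``by inspecting a definition of a predilator, we can see that a real $D$ predilator is $\Pi^0_2(D)$.''
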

\begin{proof}
    A recursive object is a recursive predilator if there is a \emph{recursive} set of terms with arity assignment and recursive comparison rules, all of which can be coded by a natural number. Also, we can see that there are countably many arity diagrams, and in fact, we can recursively enumerate all arity diagrams. Thus, a statement `a natural number codes an arity diagram' is $\Delta^0_1$. By inspecting a definition of a predilator, we can see that a real $D$ predilator is $\Pi^0_2(D)$, so we get the desired result.
\end{proof}

By $\Pi^1_1$- and $\Pi^1_2$-completeness of recursive well-orders and dilators, we can represent $\Pi^1_1$ and $\Pi^1_2$ sentences as of the form $\WO(\alpha)$ or $\Dil(D)$ for recursive $\alpha$ and $D$ respectively.
We may ask if there are ways to form new well-orders or dilators that correspond to logical connectives, which take formulas and form a new formula. The following constructions will illustrate operators for linear orders and predilators that correspond to logical connectives:

\begin{lemma}[$\ACA_0$] \label{Lemma: Conjunction of two predilators}
    Let $D_0$, $D_1$ be predilators of field subsets of $\bbN$. Then we can construct a predilator $D=D_0\land D_1$ with the following properties:
    \begin{enumerate}
        \item $\Dil(D)\lr (\Dil(D_0)\land \Dil(D_1))$.
        \item If $D_0$ is a dilator, then for every well-order $\alpha$, $\WO(D_1(\alpha))$ iff $\WO(D(\alpha))$.
        \item If both of $D_0$ and $D_1$ are recursive, then so is $D$.
    \end{enumerate}
\end{lemma}
\begin{proof}
    Take $D(x) = D_0(x)+ D_1(x)$ as an ordered sum. More precisely, $D_0+D_1$ is a dilator whose field is the disjoint sum
    \begin{equation*}
        \field(D_0+D_1) = \{(0,t)\mid t\in \field(D_0)\} \cup \{(1,t)\mid t\in \field(D_1)\}, 
    \end{equation*}
    and $\arity(i,t) = \arity t$.
    The comparison between two terms is given as follows: For two terms $(k_0,t_0),(k_1,t_1) \in \field(D_0+D_1)$ and an arity diagram $\cyrDe$ between them, we have $(k_0,t_0) <_\cyrDe (k_1,t_1)$ iff either
    \begin{enumerate}
        \item $k_0<k_1$, or
        \item $k_0=k_1$ and $t_0<_\cyrDe t_1$.
    \end{enumerate}
    We can see that $D_0+D_1$ is a semidilator and is a predilator if both $D_0$ and $D_1$ are predilators. Proving the statements of the lemma is straightforward, so we leave it to the reader. 
\end{proof}

\begin{lemma}[Aguilera-Pakhomov {\cite[Lemma 8]{AguileraPakhomov2023Pi12}}, $\ACA_0$] \pushQED{\qed}
    For linear orders $\alpha$ and $\beta$ of fields subsets of $\bbN$, we can effectively construct a predilator $D=(\alpha\to\beta)$ such that
    \begin{enumerate}
        \item There is an embedding $e\colon \beta\to D(\alpha)$.
        \item $D$ is a dilator iff $\WO(\alpha)\to\WO(\beta)$ holds.
        \item If $D$ is not a dilator, then $D(\gamma)$ is illfounded iff $\alpha\le \gamma$ for any linear order $\gamma$.
    \end{enumerate}
    Furthermore, if both $\alpha$ and $\beta$ are recursive, then so is $D$. \qedhere 
\end{lemma}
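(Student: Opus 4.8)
The plan is to realize $(\alpha\to\beta)$ as the functor sending a linear order $\gamma$ to the Kleene--Brouwer order of a tree $T_{\alpha,\beta}(\gamma)$ whose infinite branches simultaneously encode an order-embedding of $\alpha$ into $\gamma$ \emph{and} an infinite strictly descending sequence through $\beta$. Forcing a branch to carry \emph{both} pieces of data is exactly what makes ``$D(\gamma)$ is ill-founded'' equivalent to ``$\alpha\le\gamma$ and $\neg\WO(\beta)$'', which is the behaviour the three clauses demand.

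Concretely, enumerate $\field(\alpha)$ in increasing natural-number order --- an enumeration available even when $\alpha$ is ill-founded, unlike the $\alpha$-enumeration. A node of $T_{\alpha,\beta}(\gamma)$ is a pair $(p,q)$ where $q$ is a finite strictly $\beta$-descending sequence, whose length $\ell$ is the \emph{level} of the node, and $p$ is a strictly order-preserving injection into $\gamma$ of an initial segment of $\field(\alpha)$ in the natural-number enumeration, subject to $\operatorname{dom}(p)$ being the first $\ell$ elements of $\field(\alpha)$ unless $\operatorname{dom}(p)$ is already all of $\field(\alpha)$. The child at level $\ell+1$ lengthens $q$ by one $\beta$-smaller element, and lengthens $p$ by a value at the next element of $\field(\alpha)$ (leaving $p$ fixed once $\field(\alpha)$ is exhausted, which happens only for finite $\alpha$); siblings are ordered lexicographically, first by the new $\gamma$-value appended to $p$ and then by the new $\beta$-value appended to $q$; and $(\alpha\to\beta)(\gamma)$ is the resulting Kleene--Brouwer order. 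A strictly increasing $f\colon\gamma\to\delta$ acts by $D(f)(p,q)=(f\circ p,q)$, with support $\supp_\gamma(p,q)=\ran(p)$. The degenerate cases are absorbed by this recipe: if $\beta=\emptyset$ only the root survives; if $\alpha=\emptyset$ the $p$-coordinate is always empty and $D(\gamma)$ is the Kleene--Brouwer order of the tree of finite $\beta$-descending sequences, independently of $\gamma$.

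I would then verify, in order: (i) $D$ is a functor $\LO\to\LO$ with support $\supp$, it satisfies the support condition, and it is monotone --- if $f\le g$ pointwise then $f\circ p\le g\circ p$ pointwise, and at the first level where the nodes $(f\circ p,q)$ and $(g\circ p,q)$ differ the former takes the $\le$-smaller sibling --- so by the correspondence with Freund's functorial formulation established above $D$ is a predilator, recursive uniformly in codes for $\alpha,\beta$; (ii) over $\ACA_0$, $D(\gamma)$ is ill-founded iff $T_{\alpha,\beta}(\gamma)$ has an infinite branch iff there is an embedding $\alpha\hookrightarrow\gamma$ together with an infinite strictly $\beta$-descending sequence, i.e.\ iff $\alpha\le\gamma$ and $\neg\WO(\beta)$ (the forward direction unions the branch's $p$-coordinates into a total embedding of $\alpha$, the finite case being absorbed by the ``fixed once exhausted'' clause, and unions its $q$-coordinates into a descending sequence); (iii) feeding (ii) back, $D$ is a dilator iff no well-order $\gamma$ satisfies $\alpha\le\gamma\wedge\neg\WO(\beta)$ iff $\WO(\beta)\vee\neg\WO(\alpha)$ iff $\WO(\alpha)\to\WO(\beta)$, which is clause (2); if $D$ is not a dilator then (2) forces $\neg\WO(\beta)$, so (ii) collapses to ``$D(\gamma)$ is ill-founded iff $\alpha\le\gamma$'', which is clause (3); and, writing $n_0$ for the least element of $\field(\alpha)$, the map $b\mapsto(\{(n_0,n_0)\},\langle b\rangle)$ (or $b\mapsto(\emptyset,\langle b\rangle)$ when $\alpha=\emptyset$) is an order-embedding $\beta\to D(\alpha)$ because its values are pairwise siblings distinguished only by their $\beta$-coordinate, which is clause (1).

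The main obstacle is making $(\alpha\to\beta)$ a genuine \emph{pre}dilator --- a monotone functor with the support condition whose comparison rule depends only on the relative order of the supports --- while still detecting ``$\alpha\le\gamma$'' correctly for \emph{ill-founded} and \emph{infinite} $\alpha$. This is why the partial embeddings must be carried as honest order-preserving functions on $\field(\alpha)$ enumerated in the \emph{natural-number} order, so that coherent unions along a branch reassemble a total embedding of $\alpha$ (which would fail if one merely tracked ranges), and why the recursion must be ``padded'' in the finite-$\alpha$ case so the tree does not bottom out before an infinite $\beta$-descent can appear. Once the tree is arranged so that any branch exhibits both data, the Kleene--Brouwer order automatically rules out any descending sequence using only one of them, and the remaining verifications are routine bookkeeping.
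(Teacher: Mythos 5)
Your construction does not satisfy clause~(3), and the step where this fails is the implicit assumption that the Kleene--Brouwer order of a tree is ill-founded \emph{only if} the tree has an infinite branch. That equivalence requires the set of children of each node to be well-ordered; it is exactly the presence of the well-ordering on the branching alphabet that lets one extract a branch from a $<_{\mathsf{KB}}$-descending sequence. In $T_{\alpha,\beta}(\gamma)$ you order siblings lexicographically with the new $\beta$-value as a tiebreaker, so whenever some node has at least one child, its child set contains, for each admissible $\gamma$-value, a copy of an initial segment of $\beta$. If $\beta$ is ill-founded this child set is already ill-ordered, and $D(\gamma)$ is ill-founded no matter how $\alpha$ and $\gamma$ compare. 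Concretely, take $\alpha$ infinite and well-ordered, $\beta$ ill-founded (so $D$ must be a pseudodilator by clause~(2)), and $\gamma=1=\{0\}$. Then $\alpha\nleq\gamma$, so clause~(3) requires $D(1)$ to be well-founded; but the tree has only the root and the level-one nodes $(\{(n_0,0)\},\langle b\rangle)$ for $b\in\beta$, all mutual siblings compared purely by their $\beta$-coordinate, so $D(1)\cong\beta+1$ is ill-founded. This is not a corner case: in the paper the lemma is invoked with $\beta=\omega^{*}$ precisely because it is ill-founded, and clause~(3) is what computes the climax.

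The obvious repair --- break ties among siblings by the natural-number code of the newly appended element of $\beta$ rather than by $\beta$ itself --- restores the ``ill-founded iff infinite branch'' equivalence when $\gamma$ is a well-order and hence yields clauses~(2) and~(3), but it destroys clause~(1): the map $b\mapsto(\{(n_0,n_0)\},\langle b\rangle)$ now compares siblings by the code of $b$, not by $\beta$, so it is no longer an order-embedding of $\beta$ into $D(\alpha)$. One therefore cannot simply hang a $\beta$-indexed fan of siblings off a single node in a Kleene--Brouwer construction and read $\beta$ off it; a genuinely different way of threading $\beta$ through the order is needed. For what it is worth, the paper itself gives no proof of this lemma --- it is stated verbatim as \cite[Lemma 8]{AguileraPakhomov2023Pi12} with a closing \textsf{qed} --- so there is no internal argument to compare against, but the obstruction above is real and your route does not establish the statement.
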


The following construction is a variant of \cite[Lemma 5.2]{PakhomovWalsh2023omegaReflection}, which is also stated in \cite[Proposition 23]{AguileraPakhomov2023Pi13spectrum}. \added{It} gives a `disjunction' of two predilators, 
\begin{lemma}[$\ACA_0$] \label{Lemma: Disjunctive join of two predilators}
    Let $D_0$ and $D_1$ be two predilators of field subsets of $\bbN$. Then we can construct a new predilator $D=D_0\lor D_1$ such that the following holds:
    \begin{enumerate}
        \item $D$ is a dilator iff either $D_0$ or $D_1$ is a dilator.
        \item If $\alpha$ is a well-order, $D_0$ is a dilator and $D_1(\alpha)$ is ill-founded with an infinite decreasing sequence $b$, then there is an embedding $e^b_\alpha \colon D_0(\alpha)\to D(\alpha)$.
        \item If both of $D_0$ and $D_1$ are recursive, then so is $D$.
    \end{enumerate}
\end{lemma}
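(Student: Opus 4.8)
The plan is to construct $D=D_0\lor D_1$ as a predilator by a variant of the Kleene--Brouwer construction and then verify the three clauses in turn; the only genuine difficulty is the construction itself.

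\emph{Construction and predilator axioms.}
Following Pakhomov--Walsh, one wants $D(X)$ to be a linear order whose infinite descending sequences are precisely those tracing out a \emph{matched pair}: an infinite descending sequence in $D_0(X)$ together with one in $D_1(X)$. The naive attempt --- letting $D(X)$ be the Kleene--Brouwer order of the tree $T_X$ of pairs $(\sigma,\tau)$ of equal-length strictly decreasing sequences, $\sigma$ from $D_0(X)$ and $\tau$ from $D_1(X)$ --- fails clause (1): the length-one nodes of $T_X$ already carry the full product $D_0(X)\times D_1(X)$, so $D(X)$ is ill-founded whenever either factor is, even when, say, $D_0$ is a dilator. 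The correct construction, which is the substance of the cited lemma, tunes the comparison rule on nodes so that every infinite descending sequence in $D(X)$ must converge to a genuine branch of $T_X$. The resulting $D$ is presented as a denotation system whose terms code matched finite data together with the relative positions of the $D_0$- and $D_1$-supports appearing in it, the arity being the size of the union of those supports. Irreflexivity, linearity and transitivity for $D$ reduce, via the relevant arity diagrams, to the corresponding properties of $D_0$ and $D_1$ on the recorded data, so $D$ is a semidilator; and $D$ is a predilator by \autoref{Lemma: Monotone semidilator iff predilator}, since $f\le g$ forces $D_0(f)\le D_0(g)$ and $D_1(f)\le D_1(g)$, and this propagates entrywise. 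For clause (3), recursive term sets and comparison rules for $D_0,D_1$ yield, uniformly and effectively, a recursive presentation of $D$.

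\emph{Clause (1).}
Fix a well-order $\alpha$. By design, an infinite descending sequence in $D(\alpha)$ produces an infinite branch of $T_\alpha$, hence infinite descending sequences in both $D_0(\alpha)$ and $D_1(\alpha)$; conversely, given such a pair, the finite initial segments of the corresponding branch form an infinite descending sequence in $D(\alpha)$. Thus $D(\alpha)$ is well-ordered iff $D_0(\alpha)$ or $D_1(\alpha)$ is. Consequently, if $D_0$ or $D_1$ is a dilator then $D(\alpha)$ is well-ordered for every well-order $\alpha$, i.e.\ $D$ is a dilator. For the converse, suppose neither $D_0$ nor $D_1$ is a dilator, and fix countable well-orders $\alpha_0,\alpha_1$ with $D_0(\alpha_0)$ and $D_1(\alpha_1)$ ill-founded. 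Put $\gamma=\alpha_0+\alpha_1$. Since $\alpha_i$ embeds into $\gamma$ and predilators induce functors on $\LO$ (\autoref{Proposition: Predilator is a functor}), the strictly increasing maps $D_0(\alpha_0)\to D_0(\gamma)$ and $D_1(\alpha_1)\to D_1(\gamma)$ carry the witnessing descending sequences over, so $D_0(\gamma)$ and $D_1(\gamma)$ are both ill-founded; hence $D(\gamma)$ is ill-founded and $D$ is not a dilator. (By \autoref{Lemma: Being a dilator is captured by countable WOs} it suffices to use countable well-orders, and the argument is formalizable in $\ACA_0$.)

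\emph{Clause (2).}
Assume $D_0$ is a dilator, $\alpha$ is a well-order, and $b=\langle b_0>b_1>\cdots\rangle$ is an infinite descending sequence in $D_1(\alpha)$; then $D_0(\alpha)$ is a well-order. Define $e^b_\alpha\colon D_0(\alpha)\to D(\alpha)$ by sending $x$ to the node of $T_\alpha$ that records $x$ in its $D_0$-thread and a suitable initial segment of $b$ in its $D_1$-thread, of length depending only on $x$. Unwinding the comparison rule of $D$, the relation $e^b_\alpha(x)<_{D(\alpha)}e^b_\alpha(x')$ reduces to $x<_{D_0(\alpha)}x'$, so $e^b_\alpha$ is an embedding.

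\emph{Main obstacle.}
Everything above is routine bookkeeping --- leaning on the denotation-system formalism of \autoref{Section: Dilators} --- once the construction is fixed. The hard part is the design of the node-comparison rule: it must simultaneously rule out the parasitic descending sequences that would otherwise come from an ill-founded side-factor while still admitting the branch-induced ones, and this is exactly the point borrowed from Pakhomov--Walsh.
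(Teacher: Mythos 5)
Your high-level picture is right, and your diagnosis that the naive Kleene--Brouwer tree fails (length-one nodes carry $D_0(X)\times D_1(X)$, so one ill-founded factor already ruins it) is exactly the obstacle the paper's construction is designed around. Your argument that $D$ not being a dilator forces neither factor to be a dilator, via $\gamma=\alpha_0+\alpha_1$ and pushing descents forward along the functorial action, is also correct and is in fact spelled out more explicitly than the paper does. But there are two genuine gaps.

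First, you never actually give the construction; the phrase ``tunes the comparison rule on nodes so that every infinite descending sequence in $D(X)$ must converge to a genuine branch'' describes the goal, not a rule. The concrete trick the paper uses is to compare two nodes $c,c'$ of $T(\alpha)$ not by the orders $D_0(\alpha)$ or $D_1(\alpha)$ at all: $c'$ beats $c$ if $c'$ is a proper initial segment, and otherwise the first disagreeing entries, written as $(s(\vec\xi),t(\vec\eta))$ and $(s'(\vec\xi{}'),t'(\vec\eta{}'))$, are compared lexicographically on $(\text{code of }s,\text{code of }t,\vec\xi,\vec\eta)\in\bbN\times\bbN\times[\alpha]^{<\omega}\times[\alpha]^{<\omega}$. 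The point is that along any infinite descent in $D(\alpha)$ with bounded length, the $D_0$-term code at a fixed column stabilizes (being a natural number), which forces the arity of the corresponding $\vec\xi$ to stabilize, and then $[\alpha]^{k}$ with lex order is a well-order, so each column is eventually constant --- contradiction. This ``columns compared by a fixed well-order, not by $D_0(\alpha)$ or $D_1(\alpha)$'' device is the whole content of the lemma, and it is what has to be written down before ``by design'' can carry any weight in your clause (1) argument.

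Second, clause (2) is not routine once the correct comparison rule is in place, and your proposed $e^b_\alpha$ does not obviously work. Sending $x$ to a node with $x$ in the last $D_0$-slot and an initial segment of $b$ in the $D_1$-slots leaves unspecified what the earlier $D_0$-slots are, and they are constrained: the $D_0$-thread of a node must be strictly $D_0(\alpha)$-decreasing to lie in $T(\alpha)$ at all. Moreover, since sibling nodes are compared by the lexicographic order on codes and supports, the claim that $e^b_\alpha(x)<_{D(\alpha)}e^b_\alpha(x')$ ``reduces to'' $x<_{D_0(\alpha)}x'$ is false for the obvious choices (e.g. length-one nodes $\langle(x,b_0)\rangle$). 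The paper builds $e^b_\alpha$ by a transfinite recursion: enumerate $D_0(\alpha)$ in the bookkeeping well-order on $\field(D_0)\times[\alpha]^{<\omega}$, and when placing the $\nu$-th element, either start a fresh length-one node (if it is $D_0(\alpha)$-above all previously placed elements) or extend $e^b_\alpha$ of the $D_0(\alpha)$-least previously placed element above it, padding the $D_1$-thread with the next entry of $b$. Order-preservation is then a case analysis by induction along the bookkeeping order; it is several pages and genuinely uses the lexicographic comparison's interaction with the bookkeeping enumeration. This is not a step one can wave through as ``unwinding the comparison rule.''
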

\begin{proof}
    For a fixed linear order $\alpha$, let $T(\alpha)$ be the tree trying to construct decreasing sequences on $D_0(\alpha)$ and $D_1(\alpha)$ simultaneously. More precisely, $T(\alpha)$ is the set of all sequences of the form
    \begin{equation*}
        \lag (x_0,y_0),\cdots, (x_n,y_n)\rag \in (D_0(\alpha)\times D_1(\alpha))^{<\omega}
    \end{equation*}
    such that $x_0>_{D_0(\alpha)}\cdots >_{D_0(\alpha)} x_n$ and $y_0>_{D_1(\alpha)}\cdots >_{D_1(\alpha)} y_n$.
    Then define $D(\alpha) = (T(\alpha),<_{\KB})$, but we need to specify the meaning of $<_{\KB}$ to make the point clear that the ordertype of $D(\alpha)$ only depends on $D_0$, $D_1$ and the ordertype of $\alpha$.
    
    Let $c=\lag c_0,\cdots, c_m\rag$, $c'=\lag c_0',\cdots,c_n'\rag$ be members of $T(\alpha)$. Then we say $c <_{\KB} c'$ holds if one of the following holds:
    \begin{enumerate}
        \item If $c'$ is a proper initial segment of $c$.
        \item If it is not the first case, and suppose that $i\le\min(m,n)$ is the least natural number such that $c_i\neq c_i'$. Furthermore, assume that $c_i$ and $c_i'$ take the following form:
        \begin{itemize}
            \item $c_i = (s(\vec{\xi}),t(\vec{\eta}))$.
            \item $c_i' = (s'(\vec{\xi}'),t'(\vec{\eta}'))$.
        \end{itemize}
        Here $s,s'$ are $D_0$-terms, $t,t'$ are $D_1$-terms, and $\vec{\xi}, \vec{\xi}', \vec{\eta}, \vec{\eta}'$ are finite increasing sequences over $\alpha$. Since $D_0$ and $D_1$ are countable, we may view $s,s',t,t'$ as natural numbers.
        Then we say $c<_{\KB} c'$ when we have
        \begin{equation*}
            c_i \vartriangleleft c'_i \equiv 
            (s,t,\vec{\xi},\vec{\eta}) < (s',t',\vec{\xi}',\vec{\eta}')\text{ under the lexicographic order over $\bbN\times\bbN\times [\alpha]^{<\omega}\times [\alpha]^{<\omega}$,}
        \end{equation*}
        where $[\alpha]^{<\omega}$ is the set of finite subsets over $\alpha$, identified with their increasing enumeration, with the lexicographic order. Here we view $D_i$-terms as natural numbers.
    \end{enumerate}
    We can see that $<_\KB$ is a strict linear order over $T(\alpha)$.
    Hence $D$ is a semidilator since the comparison only depends on the terms and the relative order of elements of $\alpha$.\footnote{Alternatively, we can define a support function $\supp_\alpha\colon D(\alpha)\to [\alpha]^{<\omega}$ by $\supp_\alpha(c)=\bigcup_{i<m} \vec{\xi}_i\cup \vec{\eta}_i$, where $c=\lag(s_i(\vec{\xi}_i), t_i(\vec{\eta}_i)) \mid i<m\rag$.}
    Furthermore, $D$ is recursive if both $D_0$ and $D_1$ are recursive.

    To see $D$ is a predilator, let us apply \autoref{Lemma: Monotone semidilator iff predilator}: Let $f,g\colon \alpha\to\beta$ be strictly increasing functions between two linear orders $\alpha$ and $\beta$ such that $f\le g$. We can see that for $c=\lag(s_i(\vec{\xi}_i), t_i(\vec{\eta}_i)) \mid i<m\rag$,
    \begin{equation*}
        D(f)(c) = \lag(s_i(f[\vec{\xi}_i]), t_i(f[\vec{\eta}_i])) \mid i<m\rag.
    \end{equation*}
    So by the definition of $<_\KB$ over $D(\beta)$, we can see that $D(f)(c)\le D(g)(c)$ holds. 
    
    Now let us claim that for each well-order $\alpha$,
    \begin{equation*}
        \WO(D(\alpha))\lr \WO(D_0(\alpha))\lor \WO(D_1(\alpha)).
    \end{equation*}
    Clearly $D(\alpha)$ is ill-founded if both of $D_0(\alpha)$ and $D_1(\alpha)$ are ill-founded. Conversely, suppose that $D(\alpha)$ is ill-founded, so we have an infinite decreasing sequence
    \begin{equation*}
        c(0) >_\KB c(1) >_\KB c(2) >_\KB \cdots.
    \end{equation*}
    We claim that the length of $c(i)$ is unbounded, so we can find a decreasing sequence over $D_0(\alpha)$ and $D_1(\alpha)$.
    
    Suppose not, assume that the length of $c(i)$ is stabilized by, say, $n$. For notational convenience, we may assume that the length of all of $c(i)$ is $n$, so $c(i) = (c_0(i),\cdots, c_{n-1}(i))$. If there are infinitely many $i$ such that $c_0(i)$ are different, then there must be an infinite sequence $\lag i_k\mid k<\omega\rag$ such that $c_0(i_0) \vartriangleright c_0(i_1) \vartriangleright \cdots$, which is impossible. Hence $c_0(i)$ is eventually constant. By the same argument, we can see that all of $c_1(i)$, $c_2(i)$, $\cdots$, $c_{n-1}(i)$ are eventually constant, so we get a contradiction. 
    
    Hence, if one of $D_0$ or $D_1$ is a dilator, then so is $D$.
    It remains to construct $e^b_\alpha$ \added{that} we promised. It follows from modifying the construction in the proof of \cite[Lemma 5.2]{PakhomovWalsh2023omegaReflection}
    Suppose that we are given an infinite decreasing sequence $b=\lag b_n \mid n<\omega \rag$ over $D_1(\alpha)$. 
    
    Now let us enumerate elements of $D_0(\alpha)$ using the lexicographic order $<_{\bbN\times [\alpha]^{<\omega}}$, which is possible since
    \begin{equation*}
        D_0(\alpha) = \{\lag t, \vec{\xi}\rag\in \field(D_0)\times [\alpha]^{<\omega}\mid \arity(t) = |\vec{\xi}|\}.
    \end{equation*}
    Since $\alpha$ is a well-order, $<_{\bbN\times [\alpha]^{<\omega}}$ is also a well-order. For a notational convenience, we assume that we enumerate elements of $D_0(\alpha)$ by $\{\lag t_\nu,\vec{\xi}_\nu\rag\mid \nu \in \kappa\}$, where $\kappa$ is a well-order isomorphic to a subset of $\field(D_0)\times[\alpha]^{<\omega}$ with the lexicographic order.

    Define $e^b_\alpha(\lag t_0,\vec{\xi}_0\rag) = \lag (t_0(\vec{\xi}), b_0)\rag$. Now for a given $\nu\in\kappa$, consider the two cases:
    \begin{enumerate}
        \item $t_\nu(\vec{\xi}_\nu) >_{D_0(\alpha)} t_\mu (\vec{\xi}_\mu)$ for all $\mu<\nu$.
        \item $t_\nu(\vec{\xi}_\nu) <_{D_0(\alpha)} t_\mu (\vec{\xi}_\mu)$  for some $\mu<\nu$.
    \end{enumerate}

    In the first case, set $e^b_\alpha(t_\nu(\vec{\xi}_\nu))=\lag(t_\nu(\vec{\xi}_\nu),b_{0})\rag$. In the second case, let us pick $\mu'<\nu$ satisfying
    \begin{equation} \label{Formula: mu' satisfying minimality}
        t_{\mu'}(\vec{\xi}_{\mu'}) = \min_{D_0(\alpha)} \{t_\mu(\vec{\xi}_\mu) \mid \mu <\nu\land t_\nu(\vec{\xi}_\nu) <_{D_0(\alpha)} t_\mu(\vec{\xi}_\mu) \}, \tag{$\spadesuit$}
    \end{equation}
    where the minimum is computed under the order over $D_0(\alpha)$ induced by a dilator $D_0$. It is possible since $D_0$ is a dilator, so $D_0(\alpha)$ is a well-order. Then define
    \begin{equation*}
        e^b_\alpha(t_\nu(\vec{\xi}_\nu)) = e^b_\alpha(t_{\mu'}(\vec{\xi}_{\mu'})) ^\frown \lag (t_\nu(\vec{\xi}_\nu), b_{l})\rag
    \end{equation*}
    where $l$ is the length of $e^b_\alpha(t_{\mu'}(\vec{\xi}_{\mu'}))$.
    Then we can inductively show the following holds:
    \begin{enumerate}
        \item If $e^b_\alpha(t_\nu(\vec{\xi}_\nu))$ has length $l$, then it takes the form
        \begin{equation*} 
            \lag (t_{\nu_0}(\vec{\xi}_{\nu_0}),b_{0}), (t_{\nu_1}(\vec{\xi}_{\nu_1}),b_1),\cdots, (t_{\nu_{l-1}}(\vec{\xi}_{\nu_{l-1}}),b_{l-1})\rag
        \end{equation*}
        with $\nu_0<\nu_1<\cdots<\nu_{l-1}=\nu$.
        \item $e^b_\alpha\colon D_0(\alpha)\to D(\alpha)$ is order-preserving. 
    \end{enumerate}
    The first item follows almost immediately from the definition of $e^b_\alpha$. Let us claim the second by induction on $\nu\in\kappa$:
    Suppose inductively that $e^b_\alpha\colon \{t_\mu(\vec{\xi}_\mu)\mid \mu<\nu\}\to D(\alpha)$ is order-preserving. Fix $\mu<\nu$, and divide the cases:
    \begin{enumerate}
        \item Suppose that $t_\mu(\vec{\xi}_\mu) > t_\nu(\vec{\xi}_\nu)$. Let $\mu'<\nu$ be the index satisfying \eqref{Formula: mu' satisfying minimality}, that is, $t_{\mu'}(\vec{\xi}_{\mu'})$ is the $D_0(\alpha)$-least value among $t_\zeta(\vec{\xi}_\zeta)$ greater than $t_\nu(\vec{\xi}_\nu)$ such that $\zeta<\nu$.
        Then we have
        \begin{equation*}
            e^b_\alpha(t_\nu(\vec{\xi}_\nu)) = e^b_\alpha(t_{\mu'}(\vec{\xi}_{\mu'})) ^\frown \lag (t_\nu(\vec{\xi}_\nu), b_{l})\rag <_{D(\alpha)} e^b_\alpha(t_{\mu'}(\vec{\xi}_{\mu'})) \le_{D(\alpha)} e^b_\alpha(t_\mu(\vec{\xi}_\mu)),
        \end{equation*}
        where the last inequality is held by the inductive assumption on $e^b_\alpha$.

        \item Now suppose that $t_\mu(\vec{\xi}_\mu) < t_\nu(\vec{\xi}_\nu)$. If $t_\zeta(\vec{\xi}_\zeta) < t_\nu(\vec{\xi}_\nu)$ holds for all $\zeta<\mu$, then
        \begin{equation*}
            e^b_\alpha(t_\nu(\vec{\xi}_\nu))=\lag t_\nu(\vec{\xi}_\nu) , b_{0}\rag <_{D(\alpha)} \lag(t_\zeta(\vec{\xi}_\zeta),b_{0}),\cdots \rag = e^b_\alpha(t_\mu(\vec{\xi}_\mu)),
        \end{equation*}
        for some $\zeta\le \mu<\nu$. We can see that the above inequality holds since $(t_\zeta(\vec{\xi}_\zeta),b_{0}) \vartriangleleft (t_\nu(\vec{\xi}_\nu),b_{0})$, which follows from $\zeta<\nu$ and the definition of the enumeration $\{t_\nu(\vec{\xi}_\nu)\mid \nu\in\kappa\}$.
        
        Now consider the case when $t_\zeta(\vec{\xi}_\zeta) > t_\nu(\vec{\xi}_\nu)$ holds for some $\zeta<\mu$.
        Choose again $\mu'<\nu$ satisfying \eqref{Formula: mu' satisfying minimality}. Then we have $e^b_\alpha(t_\mu(\vec{\xi}_\mu))<e^b_\alpha(t_{\mu'}(\vec{\xi}_{\mu'}))$, so we have the following two possible cases:
        \begin{enumerate}
            \item $e^b_\alpha(t_\mu(\vec{\xi}_\mu))\supsetneq e^b_\alpha(t_{\mu'}(\vec{\xi}_{\mu'}))$, or
            \item There is the least $i$ such that the $i$th component of $e^b_\alpha(t_\mu(\vec{\xi}_\mu))$ is different from that of $e^b_\alpha(t_{\mu'}(\vec{\xi}_{\mu'}))$, and the former is $\vartriangleleft$-less than the latter.
        \end{enumerate}
        In the latter case, we have $e^b_\alpha(t_\mu(\vec{\xi}_\mu))<e^b_\alpha(t_\nu(\vec{\xi}_\nu))$ since $e^b_\alpha(t_\nu(\vec{\xi}_\nu))\supseteq e^b_\alpha(t_{\mu'}(\vec{\xi}_{\mu'}))$.
        In the former case, we have
        \begin{equation*}
            e^b_\alpha(t_\mu(\vec{\xi}_\mu)) = e^b_\alpha(t_{\mu'}(\vec{\xi}_{\mu'}))^\frown \lag (t_{\mu''}(\vec{\xi}_{\mu''}),b_{l}),\cdots, (t_{\mu}(\vec{\xi}_{\mu}),b_{l'}) \rag
        \end{equation*}
        for some $\mu'<\mu''\le \mu$ and $l\le l'<\omega$, and we have
        \begin{equation*}
            e^b_\alpha(t_\nu(\vec{\xi}_\nu)) = e^b_\alpha(t_{\mu'}(\vec{\xi}_{\mu'}))^\frown \lag (t_{\nu}(\vec{\xi}_{\nu}),b_{l}) \rag.
        \end{equation*}
        Since $\mu''<\nu$, we have $(t_{\mu''}(\vec{\xi}_{\mu''}),b_{l}) \vartriangleleft (t_{\nu}(\vec{\xi}_{\nu}),b_{l})$, so  $e^b_\alpha(t_\mu(\vec{\xi}_\mu)) <  e^b_\alpha(t_\nu(\vec{\xi}_\nu))$.
    \end{enumerate}
    Thus $e^b_\alpha$ is an order-preserving map, as desired.
\end{proof}

The following construction is a countably infinite variant of \autoref{Lemma: Disjunctive join of two predilators}:
\begin{lemma}[$\ACA_0$] \label{Lemma: Disjunctions of countably many predilators}
    Let $\lag D_n\mid n<\omega\rag$ be an enumeration of countable predilators. Then we can find a \emph{disjuctive join} $D=\bigvee_{n<\omega} D_n$ of $\lag D_n\mid n<\omega\rag$ such that for every well-order $\alpha$,
    \begin{equation*}
        \WO(D(\alpha))\iff \exists n<\omega \WO(D_n(\alpha)).
    \end{equation*}
    Furthermore, if each of $D_n$ and the enumeration $\{D_n\mid n<\omega\}$ are recursive, then so is $D$.
\end{lemma}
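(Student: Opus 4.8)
The plan is to realize $D=\bigvee_{n<\omega}D_n$ as a Kleene--Brouwer order on a tree $T(\alpha)$ that tries to build an infinite descending sequence in \emph{every} $D_n(\alpha)$ at once, using a staggered activation of coordinates so that the tree is an honest (countably branching) tree at each finite level. Concretely, I would let a node of $T(\alpha)$ of length $\ell$ be a sequence $\lag u_0,\dots,u_{\ell-1}\rag$, where $u_j=(x_{j,0},\dots,x_{j,j})$ with $x_{j,i}\in D_i(\alpha)$ for $i\le j$, subject to the requirement that for every $i<\ell$ the sequence $x_{i,i}>_{D_i(\alpha)}x_{i+1,i}>_{D_i(\alpha)}\cdots>_{D_i(\alpha)}x_{\ell-1,i}$ be strictly descending (coordinate $i$ is ``switched on'' at entry $u_i$). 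This set is closed under initial segments. An infinite branch through $T(\alpha)$ yields, for each $n$, the infinite $D_n(\alpha)$-descending sequence $\lag x_{j,n}\mid j\ge n\rag$; conversely, from infinite descending sequences $\lag y^n_j\mid j<\omega\rag$ in the various $D_n(\alpha)$ one obtains the branch whose $j$-th entry is $(y^0_j,\dots,y^j_j)$. Set $D(\alpha)=(T(\alpha),<_{\KB})$.

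To see that $D$ is a semidilator (so that the order type of $D(\alpha)$ depends only on $D_0,D_1,\dots$ and the order type of $\alpha$), I would, exactly as in the proof of \autoref{Lemma: Disjunctive join of two predilators}, make $<_{\KB}$ explicit: at the least coordinate $p$ where two nodes differ, compare the entries $u_p=(s_0(\vec\xi_0),\dots,s_p(\vec\xi_p))$ and $u_p'=(s_0'(\vec\xi_0'),\dots,s_p'(\vec\xi_p'))$ by the lexicographic order on the corresponding tuples of pairs $(\text{term as a natural number},\ \text{finite subset of }\alpha)$, the finite subsets themselves being compared lexicographically; the support function is $\supp_\alpha(\lag u_0,\dots,u_{\ell-1}\rag)=\bigcup_{j<\ell}\bigcup_{i\le j}\supp^{D_i}_\alpha(x_{j,i})$. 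Recursiveness of $D$ given a recursive enumeration of recursive $D_n$'s is then immediate, since $T(\alpha)$ and $<_{\KB}$ are uniformly recursive. For monotonicity I would apply \autoref{Lemma: Monotone semidilator iff predilator}: if $f\le g$, then $D(f)(\sigma)$ and $D(g)(\sigma)$ have the same length and at every coordinate the $f$-image entry is $\vartriangleleft$-below-or-equal to the $g$-image entry (the underlying terms agree, and $f\le g$ forces each $\alpha$-tuple $f[\vec\xi\,]$ to be lexicographically $\le g[\vec\xi\,]$), whence $D(f)(\sigma)\le_{\KB}D(g)(\sigma)$; so $D$ is a predilator.

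It remains to verify $\WO(D(\alpha))\iff\exists n<\omega\,\WO(D_n(\alpha))$ for a well-order $\alpha$ (which, with \autoref{Lemma: Being a dilator is captured by countable WOs}, also pins down when $D$ is a dilator). The direction $(\Leftarrow)$ is the contrapositive: if every $D_n(\alpha)$ is ill-founded, the branch constructed above makes $T(\alpha)$ ill-founded, and its initial segments form a $<_{\KB}$-descending sequence. For $(\Rightarrow)$ I would mimic the Kleene--Brouwer analysis in \autoref{Lemma: Disjunctive join of two predilators}: given a $<_{\KB}$-descending sequence $\lag c(i)\mid i<\omega\rag$, first argue its lengths are unbounded, since $<_{\KB}$ restricted to nodes of length $\le N$ is a well-order (it is the Kleene--Brouwer order of a tree of height $\le N$ whose possible entries are well-ordered by the lexicographic $\vartriangleleft$, using that $\alpha$ is a well-order), so a bounded-length sequence could not descend forever; then each coordinate $p$ of $c(i)$ is eventually constant (otherwise its values give an infinite $\vartriangleleft$-descending chain over a well-order), and the stabilized values $\lag u^*_p\mid p<\omega\rag$ form an infinite branch of $T(\alpha)$ because the defining descending conditions involve only finitely many coordinates at a time; reading off its $n$-th coordinates produces an infinite $D_n(\alpha)$-descending sequence for every $n$. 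The main obstacle, as already in the binary case, is the bookkeeping for this last extraction — justifying ``lengths are unbounded'' and ``coordinates stabilize'' cleanly inside $\ACA_0$ — together with checking that the staggered tree really encodes ``every $D_n(\alpha)$ is ill-founded'' rather than only ``cofinally many are''.
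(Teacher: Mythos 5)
Your construction is essentially the same as the paper's: you build the KB order on the staggered (triangular) tree whose nodes are finite sequences of tuples with coordinate $i$ "switched on" at stage $i$, each row descending in $D_i(\alpha)$, and your verification (monotonicity via the lex comparison, the ``lengths are unbounded'' and ``coordinates stabilize'' analysis of a $<_\KB$-descending sequence) mirrors the paper's, which itself just appeals to the same argument given for the binary disjunctive join. The one point you should smooth out — and where your phrase ``entries are well-ordered by the lexicographic $\vartriangleleft$'' is slightly off as stated — is that the lexicographic order on $[\alpha]^{<\omega}$ alone is not a well-order; what makes the stabilization argument work is that once the term (the $\bbN$-component) stabilizes, the length of the accompanying tuple $\vec{\xi}$ is fixed, and lex on $\alpha^n$ for fixed $n$ is a well-order.
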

\begin{proof}
    First, for each $\alpha$, define a partial order $P(\alpha)$ as follows: Its domain consists of the following two-dimensional sequence
    \begin{equation*}
        c = 
        \begin{pmatrix}
            c^0_0 & c^0_1 & \cdots & c^0_n \\
            & c^1_1 & \cdots & c^1_n \\
            & & \ddots & \vdots \\
            & & & c^n_n
        \end{pmatrix}
    \end{equation*}
    and each $i$th row of the sequence is $<_{D_n(\alpha)}$-decreasing. The order over $P(\alpha)$ is given by an extension.
    Then consider $D(\alpha) = (P(\alpha),<_\mathsf{KB})$. Then $D$ is a predilator by a similar argument in the proof of \autoref{Lemma: Disjunctive join of two predilators}: 
    More formally, we compare $c,d\in P(\alpha)$ as follows. $c<_\KB d'$ if and only if either
    \begin{enumerate}
        \item $d$ is a proper initial segment of $c$, or
        \item Suppose that $i$ is the least natural number such that the $i$th column of $c$ and that of $d$ are different. Then 
        \begin{equation*} \label{Formula: vartriangleleft definition}
            (c^0_i,\cdots, c^i_i) \vartriangleleft (d^0_i,\cdots, d^i_i) 
        \end{equation*}
        holds, which is given as follows: If $c^j_i = s^j_i(\vec{\xi}^j_i)$ and $d^j_i = t^j_i(\vec{\eta}^j_i)$, then \eqref{Formula: vartriangleleft definition} is equivalent to
        \begin{equation*}
            (\lag s^0_i,\cdots,s^i_i\rag, \vec{\xi}^0_i\cup \cdots \cup \vec{\xi}^i_i) < (\lag t^0_i,\cdots,t^i_i\rag, \vec{\eta}^0_i\cup \cdots \cup \vec{\eta}^i_i)
        \end{equation*}
        over $\bbN\times \alpha^{<\omega}$ under the lexicographic order, where $\lag n_0,\cdots,n_{k-1}\rag$ is a bijective primitive recursive function coding a finite tuple into a natural number.        
    \end{enumerate}
    Then $<_\KB$ is a strict linear order over $P(\alpha)$, and we can see that $D$ is a predilator. Clearly, $D$ is recursive if $\lag D_n\mid n<\omega\rag$ is recursive. 

    It is clear that $D(\alpha)$ is ill-founded if all of $D_i(\alpha)$ is ill-founded. Conversely, suppose that $D(\alpha)$ is ill-founded, so we have an infinite decreasing sequence
    \begin{equation*}
        c(0) >_\KB c(1) >_\KB c(2) >_\KB \cdots.
    \end{equation*}
    By the same argument we provided in the proof of \autoref{Lemma: Disjunctive join of two predilators}, we have that the number of columns of $c(i)$ is unbounded. From this, we can extract an infinite decreasing sequence over $D_i(\alpha)$ for every $i$, so $D_i(\alpha)$ is ill-founded for every $i$.
\end{proof}

\section{\texorpdfstring{$\Pi^1_1$-}{Pi 1 1-}, \texorpdfstring{$\Pi^1_2$-}{Pi 1 2-}proof theory, and Reflection rank}
\label{Section: Pi 1 1 and Pi 1 2}

\subsection{\texorpdfstring{$\Pi^1_1$}{Pi 1 1}-proof theory}
In this subsection, we will briefly review a more technical aspect of $\Pi^1_1$-proof theory that we have not explained in the introduction. 

Proof theorists gauged the strength of a given theory by examining how far the given theory can be capable of recursive well-orders. 
\begin{definition}
    Let $T$ be a $\Pi^1_1$-sound extension of $\ACA_0$. We define its \emph{proof-theoretic ordinal} by
    \begin{equation*}
        |T|_{\Pi^1_1} = \sup\{\alpha\in \mathsf{RWO} \mid T\vdash\WO(\alpha)\},
    \end{equation*}
    where $\mathsf{RWO}$ is the set of all recursive well-orders, and $\WO(\alpha)$ is the assertion that a linear order $\alpha$ is well-ordered.
\end{definition}

The subscript $\Pi^1_1$ of $|T|_{\Pi^1_1}$ means the proof-theoretic ordinal of $T$ gauges its robustness via gauging its $\Pi^1_1$-consequences.
The Kleene normal form theorem justifies the reason for using primitive recursive well-orders. 
\added{Note that the definition of $|T|_{\Pi^1_1}$ is strict supremum since if $T\vdash \WO(\alpha)$, then $T\vdash \WO(\alpha+1)$.

We claim that the assertion `$|T|_{\Pi^1_1}$ is well-ordered' is equivalent to the $\Pi^1_1$-reflection of $T$, but this statement itself is problematic since $|T|_{\Pi^1_1}$ is already an ordinal, which is always well-founded. To make the statement precise, we define the following subsidiary linear order, which is isomorphic to the proof-theoretic ordinal:
\begin{definition}
    Let $T$ be a $\Pi^1_1$-sound extension of $\ACA_0$. We define $\|T\|_{\Pi^1_1}$ by the ordered sum of $T$-provably recursive well-orders. That is, we define
    \begin{equation*} \textstyle
        \|T\|_{\Pi^1_1} := \sum \bigl\{\alpha \in \mathsf{RWO} \bigm| T\vdash\WO(\alpha)\bigr\}.
    \end{equation*}
\end{definition}
To make the definition precise, we assume that $T$ in the previous definition is not a set of theorems, but an enumeration $\lag \sigma_n\mid n\in \bbN\rag$ of theorems of ordertype $\bbN$. For a recursive or arithmetically definable theory as a set, we can find such an enumeration of the theorems of the same definitional complexity. The ordered sum is indexed by the index for the statement $\WO(\alpha)$ in the enumeration.
}
\begin{theorem}\label{Theorem: Pi 1 1 reflection and WOness of proof-theoretic ordinal}
    Let $T$ be a $\Pi^1_1$-sound extension of $\ACA_0$. \added{
    \begin{enumerate}
        \item The ordertype of $\|T\|_{\Pi^1_1}$ is $|T|_{\Pi^1_1}$.
        \item If $T$ is recursive or arithmetically definable, then so is $\|T\|_{\Pi^1_1}$.
        \item For an arithmetically definable $T$, 
        \begin{equation*}
            \ACA_0\vdash \WO(\|T\|_{\Pi^1_1}) \lr \RFN[\Pi^1_1](T).
        \end{equation*}
    \end{enumerate}}
\end{theorem}
\begin{proof}
    \added{
    \begin{enumerate}
        \item To see $|T|_{\Pi^1_1}\le \|T\|_{\Pi^1_1}$, suppose that $T\vdash \WO(\alpha)$ for a recursive linear order $\alpha$. Then $\|T\|_{\Pi^1_1}$ has a component isomorphic to $\alpha$, so $\alpha \le \|T\|_{\Pi^1_1}$. Then the definition of $|T|_{\Pi^1_1}$ yields the desired result.
        For $\|T\|_{\Pi^1_1} \le |T|_{\Pi^1_1}$, observe that if $\|T \|_{\Pi^1_1} = \sum_{i\in\bbN} \alpha_i$ for recursive $\alpha_i$ such that $T\vdash \WO(\alpha_i)$, we have that $T\vdash \WO\bigl(\sum_{i<n} \alpha_i\bigr)$ for every $n$. 
        The ordertype of $\|T\|_{\Pi^1_1}$ is the supremum of the ordertype of $\sum_{i<n}\alpha_i$ for $n\in \bbN$, which is less than or equal to $|T|_{\Pi^1_1}$. This finishes the proof.
        \item Clear.
        \item Let us work in $\ACA_0$. Suppose that $\|T\|_{\Pi^1_1}$ is a well-order and $T\vdash \phi$ for a $\Pi^1_1$-sentence $\phi$. By the Kleene normal form theorem, we can find a recursive linear order $\alpha$ such that $\phi$ and $\WO(\alpha)$ are equivalent (over the metatheory) and $\ACA_0\vdash \phi \lr \WO(\alpha)$. Hence $T\vdash \WO(\alpha)$, so $\alpha$ embeds to $\|T\|_{\Pi^1_1}$ and so $\alpha$ is a well-order. This shows $\phi$ is true. In summary, we have that every $\Pi^1_1$-consequence of $T$ is true.
        Conversely, if $\RFN[\Pi^1_1](T)$ holds, then each additive component of $\|T\|_{\Pi^1_1}$ is a well-order. Hence $\|T\|_{\Pi^1_1}$ is also a well-order. \qedhere 
    \end{enumerate}}
\end{proof}

\added{
Also, the following folklore result credited to Kriesel states that adding a set of true $\Sigma^1_1$-sentences does not change the proof-theoretic ordinal. (See \cite[Proposition 2.1]{Walsh2023characterizations} for its proof.)
\begin{theorem} \pushQED{\qed} \label{Theorem: Kriesel's theorem on PTO}
    Let $T$ be a $\Pi^1_1$-sound extension of $\ACA_0$, and $X$ be a set of true $\Sigma^1_1$-sentences. Then $|T|_{\Pi^1_1} = |T + X|_{\Pi^1_1}$. \qedhere 
\end{theorem}

As stated in \autoref{Theorem: Walsh's characterizations of PTO}, Walsh provided equivalent conditions for comparing proof-theoretic ordinals in \cite{Walsh2022incompleteness} \autoref{Theorem: PTO strict comparison and Pi 1 1 reflection comparison} did not appear in \cite{Walsh2022incompleteness}, so we provide its proof:
\begin{proof}[Proof of \autoref{Theorem: PTO strict comparison and Pi 1 1 reflection comparison}]
    For one direction, suppose that $|S|_{\Pi^1_1} < |T|_{\Pi^1_1}$. One can see that $|T|_{\Pi^1_1}$ is a limit ordinal, so there is $\alpha\in \mathsf{PRWO}$ such that $T\vdash \WO(\alpha)$ and $|S|_{\Pi^1_1}\le \alpha$. Hence the following $\Sigma^1_1$ sentence is true:
    \begin{equation*}
        \sigma \equiv \exists^1 F [\text{$F$ is an embedding from $\|S\|_{\Pi^1_1}$ to $\alpha$.}]
    \end{equation*}
    We need the arithmetical definability of $S$ to guarantee $\sigma$ is $\Sigma^1_1$. We have $T+\sigma\vdash \WO(\|S\|_{\Pi^1_1})$, so \autoref{Theorem: Pi 1 1 reflection and WOness of proof-theoretic ordinal} implies $T+\sigma \vdash \RFN[\Pi^1_1](S)$.
    For the other direction, suppose that $T\vdash^{\Sigma^1_1} \RFN[\Pi^1_1](S)$, so we can find a true $\Sigma^1_1$-sentence $\sigma$ such that $T+\sigma\vdash \WO(\|S\|_{\Pi^1_1})$. This implies 
    \begin{equation*}
        |S|_{\Pi^1_1} \cong \|S\|_{\Pi^1_1} < |T + \sigma|_{\Pi^1_1} = |T|_{\Pi^1_1}.
        \qedhere 
    \end{equation*}
\end{proof}
}

\subsection{\texorpdfstring{$\Pi^1_2$}{Pi 1 2}-proof theory}
Proof-theoretic ordinal gauges the $\Pi^1_1$-consequences of a theory. We need a more complicated object than ordinals to describe an appropriate characteristic for the $\Pi^1_2$-consequences of a theory. It turns out that dilators are the right object to describe $\Pi^1_2$-consequences. The following definition is due to \added{Aguilera-Pakhomov} \cite{AguileraPakhomov2023Pi12}:
\begin{definition}
    Let $T$ be a $\Pi^1_2$-sound extension of $\ACA_0$. Then $|T|_{\Pi^1_2}$ is the dilator unique up to bi-embeddability satisfying the following conditions:
    \begin{enumerate}
        \item\label{Item: Pi12dilator} For a recursive predilator $D$, if $T\vdash \Dil(D)$ then $D$ embeds into $|T|_{\Pi^1_2}$.
        \item (Universality) If a dilator $\hat{D}$ satisfies \eqref{Item: Pi12dilator}, then $|T|_{\Pi^1_2}$ embeds into $\hat{D}$.
    \end{enumerate}
\end{definition}
\added{Aguilera-Pakhomov} \cite{AguileraPakhomov2023Pi12} showed that every $\Pi^1_2$-sound extension of $\ACA_0$ has a proof-theoretic dilator, and is recursive if $T$ is. In fact, \added{the ordered sum of all recursive predilators that are $T$-provably dilators satisfies the criteria for $|T|_{\Pi^1_2}$}. Furthermore, if $T$ is r.e.\added{,} then we can \added{find a recursive dilator satisfying the criteria for $|T|_{\Pi^1_2}$}. 
We may try to define $|T|_{\Pi^1_2}$ for $\Pi^1_2$-unsound theories. In this case, however, $|T|_{\Pi^1_2}$ will not be a dilator, and there is no guarantee that $|T|_{\Pi^1_2}$ satisfies the universality condition.

Also, like proof-theoretic ordinals \added{are} associated with the $\Pi^1_1$-reflection, proof-theoretic dilators are related to $\Pi^1_2$-reflection:
\begin{theorem}[Aguilera-Pakhomov {\cite[Theorem 7]{AguileraPakhomov2023Pi12}}] \label{Theorem: Proof-theoretic dilator and Pi 1 2 reflection}\pushQED{\qed}
    $\ACA_0$ proves the following: If $T$ is a $\Pi^1_2$-sound r.e.\ extension of $\ACA_0$, then $\Dil(|T|_{\Pi^1_2})$ is equivalent to $\RFN[\Pi^1_2](T)$. \qedhere 
\end{theorem}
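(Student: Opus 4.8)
The plan is to read off the equivalence from the concrete description of the proof-theoretic dilator as a sum. Since $T$ is r.e., one may take $|T|_{\Pi^1_2}$ to be the recursive predilator $\sum_{n<\omega}D_n$, where $\lag D_n\mid n<\omega\rag$ is a uniformly recursive family defined from a recursive enumeration $\lag \psi_n\mid n<\omega\rag$ of the $T$-provable sentences by setting $D_n = E$ whenever $\psi_n$ is (literally) $\Dil(E)$ for a primitive recursive predilator $E$, and $D_n = \mathbf 0$ (the trivial dilator with no terms, so $\mathbf 0(\alpha)=\emptyset$) otherwise; here $\sum_n D_n$ is the predilator with $\bigl(\sum_n D_n\bigr)(\alpha)$ the ordered sum of the $D_n(\alpha)$ and support inherited componentwise. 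Two ingredients will then do all the work: an $\omega$-ary version of \autoref{Lemma: Conjunction of two predilators}, namely that $\ACA_0$ proves $\Dil\bigl(\sum_{n}D_n\bigr)\lr\forall^0 n\,\Dil(D_n)$, and the $\Pi^1_2$-completeness theorem of dilators, which lets us pass between $T$-provable $\Pi^1_2$-sentences and the family $\lag D_n\rag$.

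For the $\omega$-ary conjunction lemma I would argue as in \autoref{Lemma: Disjunctions of countably many predilators}, only more simply: the right-to-left direction uses that for a well-order $\alpha$ any putative infinite descent in $\bigl(\sum_n D_n\bigr)(\alpha)$ has non-increasing first coordinates, hence --- arithmetically, inside $\ACA_0$ --- an eventually constant first coordinate $n$, so its tail is an infinite descent in $D_n(\alpha)$; the left-to-right direction is immediate since a descent in any $D_n(\alpha)$ embeds into $\bigl(\sum_n D_n\bigr)(\alpha)$. Combining this with the construction gives, provably in $\ACA_0$, that $\Dil(|T|_{\Pi^1_2})\lr\forall^0 n\,\Dil(D_n)$, so it remains only to identify the right-hand side with $\RFN[\Pi^1_2](T)$.

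For $\RFN[\Pi^1_2](T)\to\forall^0 n\,\Dil(D_n)$: fix $n$; if $D_n=\mathbf 0$ then $\Dil(D_n)$ holds trivially, and otherwise $\psi_n$ is a genuine $\Pi^1_2$-sentence $\Dil(E)$ with $\Prv_T(\psi_n)$, so $\RFN[\Pi^1_2](T)$ yields $\True_{\Pi^1_2}(\Dil(E))$, hence $\Dil(E)$ by the $\ACA_0$-provable adequacy of the universal $\Pi^1_2$-formula. Conversely, for $\forall^0 n\,\Dil(D_n)\to\RFN[\Pi^1_2](T)$: let $\phi$ code a $\Pi^1_2$-sentence with $\Prv_T(\phi)$; the $\Pi^1_2$-completeness theorem produces, uniformly in $\phi$, a primitive recursive predilator $E_\phi$ with $\ACA_0\vdash\Dil(E_\phi)\lr\phi$, so $\Prv_T(\Dil(E_\phi))$ as well, whence $\Dil(E_\phi)$ appears as some $\psi_n$ in the enumeration and $D_n=E_\phi$; by hypothesis $\Dil(E_\phi)$ holds, therefore $\phi$, i.e.\ $\True_{\Pi^1_2}(\phi)$. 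As this is uniform in $\phi$ we obtain $\forall^0\phi\in\Pi^1_2[\Prv_T(\phi)\to\True_{\Pi^1_2}(\phi)]$, which is exactly $\RFN[\Pi^1_2](T)$.

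The main obstacle is not the logical shuffling above but the prerequisites: setting up $|T|_{\Pi^1_2}$ rigorously as a recursive predilator from an r.e.\ presentation of $T$ (the content of \cite{AguileraPakhomov2023Pi12} itself), a clean formalization inside $\ACA_0$ of the $\omega$-ary conjunction of predilators and of the adequacy of $\True_{\Pi^1_2}$, and the uniformity bookkeeping needed to land the genuine $\forall^0\phi$ form of $\RFN[\Pi^1_2](T)$; once these are granted, the equivalence falls out as above. Note that the $\Pi^1_2$-soundness hypothesis is used only to ensure $|T|_{\Pi^1_2}$ is an honest dilator with the universality property --- it is not needed for the equivalence, both sides of which are simply false when $T$ is $\Pi^1_2$-unsound.
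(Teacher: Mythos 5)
The paper does not prove this statement itself --- it is cited verbatim from Aguilera--Pakhomov \cite[Theorem 7]{AguileraPakhomov2023Pi12} with a $\qedsymbol$, so there is no internal proof to compare against. Judged on its own terms, your argument is the right one and matches the representative the paper itself suggests (``$|T|_{\Pi^1_2}$ can be defined by taking the sum of all $T$-provably recursive dilators''), and your $\omega$-ary conjunction lemma is the obvious extension of \autoref{Lemma: Conjunction of two predilators}: in an $\omega$-indexed ordered sum the first coordinate of a descending sequence is non-increasing in $\omega$, hence eventually constant, and this is available already in $\mathsf{RCA}_0$.

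Two small points are worth making explicit, beyond the formalization overhead you already flag. First, for the sum $\sum_n D_n$ to even be a predilator you need each $D_n$ to actually be one; a $T$-theorem of the literal shape $\Dil(E)$ does not guarantee that $E$ is a predilator when $T$ is unsound (the predicate $\RecPreDil$ is only $\Pi^0_2$). The standard fix, which the paper also leaves implicit in the parallel Definition~\ref{Definition: s12 ordinal}-adjacent construction of $|T|_{\Sigma^1_2}$, is to restrict to a decidable set of ``canonical'' codes (e.g.\ those in the range of the $\Pi^1_2$-completeness translation), which are predilators by construction. Second, the step ``$\Prv_T(\phi)$ so $\Prv_T(\Dil(E_\phi))$'' needs the \emph{internal} uniform version of $\Pi^1_2$-completeness, i.e.\ an $\ACA_0$-provable $\Pi^0_1$ assertion that a primitive recursive function produces, from $\phi$, an $\ACA_0$-proof of $\phi\lr\Dil(E_\phi)$; this is exactly the ``uniformity bookkeeping'' you mention, and it is where the argument would break if one only had the schematic meta-level statement. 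Your closing observation that $\Pi^1_2$-soundness is not actually used for the biconditional (both sides being false when $T$ is $\Pi^1_2$-unsound) is correct and consistent with the paper's remark that $|T|_{\Pi^1_2}$ is still defined, just not a dilator, in that case.
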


$|T|_{\Pi^1_2}$ has the following extensional description for ordinals less than $\omega_1^\CK$:
\begin{theorem}[Aguilera-Pakhomov {\cite[Theorem 9]{AguileraPakhomov2023Pi12}}] \label{Theorem: Extensional behavior of Pi12 dilators below omega1CK}
\pushQED{\qed}
    Let $T$ be a $\Pi^1_2$-sound extension of $\ACA_0$.
    If $\alpha$ is a recursive well-order, then
    \begin{equation*}
        |T|_{\Pi^1_2}(\alpha)=|T+\WO(\alpha)|_{\Pi^1_1}. \qedhere 
    \end{equation*}
\end{theorem}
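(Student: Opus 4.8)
The plan is to prove the two inequalities $|T|_{\Pi^1_2}(\alpha)\le|T+\WO(\alpha)|_{\Pi^1_1}$ and $|T+\WO(\alpha)|_{\Pi^1_1}\le|T|_{\Pi^1_2}(\alpha)$ separately, using the concrete descriptions of the two proof-theoretic characteristics and the paper's convention of identifying a well-order with its order type. First I would record that both sides are genuine ordinals: $|T|_{\Pi^1_2}$ is a dilator because $T$ is $\Pi^1_2$-sound, so $|T|_{\Pi^1_2}(\alpha)$ is a well-order whose order type is moreover independent of the representative chosen in the bi-embeddability class of $|T|_{\Pi^1_2}$, since an embedding $\iota\colon D\to E$ of predilators induces a strictly increasing $\iota_\alpha\colon D(\alpha)\to E(\alpha)$, so bi-embeddable dilators agree at every well-order; and $T+\WO(\alpha)$ is $\Pi^1_1$-sound because $\WO(\alpha)$ is a true $\Pi^1_1$-sentence while, for a $\Pi^1_1$-sentence $\phi$, the sentence $\WO(\alpha)\to\phi$ is equivalent to a $\Pi^1_2$-sentence and hence true by $\Pi^1_2$-soundness of $T$ whenever $T$ proves it.

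For the inequality $|T|_{\Pi^1_2}(\alpha)\le|T+\WO(\alpha)|_{\Pi^1_1}$ I would use that $|T|_{\Pi^1_2}$ may be realized as the ordered sum $\sum_{i<\omega}D_i$ of an enumeration $\langle D_i\mid i<\omega\rangle$ of the $T$-provably recursive dilators, so that $|T|_{\Pi^1_2}(\alpha)=\sum_{i<\omega}D_i(\alpha)$. By \autoref{Lemma: Conjunction of two predilators} and induction, each finite partial sum $E_n:=D_0+\dots+D_{n-1}$ is a recursive predilator with $\ACA_0\vdash\Dil(E_n)\leftrightarrow\bigwedge_{i<n}\Dil(D_i)$, hence $T\vdash\Dil(E_n)$; instantiating $\Dil(E_n)\equiv\forall\gamma[\WO(\gamma)\to\WO(E_n(\gamma))]$ at $\gamma=\alpha$ and using $T+\WO(\alpha)\vdash\WO(\alpha)$ yields $T+\WO(\alpha)\vdash\WO(E_n(\alpha))$. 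Since any proper initial segment of $\sum_{i<\omega}D_i(\alpha)$ cut off by an element in the $n$-th block embeds into $E_{n+1}(\alpha)$, and $E_{n+1}(\alpha)$ is a recursive well-order, each such initial segment has order type at most $|T+\WO(\alpha)|_{\Pi^1_1}$ — here using the standard fact that $|\cdot|_{\Pi^1_1}$ may equivalently be computed as a supremum over recursive (not merely primitive recursive) provably well-ordered orders. Taking the supremum over all proper initial segments gives the inequality.

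For the reverse inequality I would fix a primitive recursive well-order $\beta$ with $T+\WO(\alpha)\vdash\WO(\beta)$ and show $\beta\le|T|_{\Pi^1_2}(\alpha)$. From $T+\WO(\alpha)\vdash\WO(\beta)$ we get $T\vdash\WO(\alpha)\to\WO(\beta)$. Applying the Aguilera--Pakhomov construction \cite{AguileraPakhomov2023Pi12} I would form the recursive predilator $D=(\alpha\to\beta)$, for which $\ACA_0$ proves $\Dil(D)\leftrightarrow(\WO(\alpha)\to\WO(\beta))$; hence $T\vdash\Dil(D)$, so $D$ is a $T$-provably recursive dilator and therefore embeds into $|T|_{\Pi^1_2}$ by condition (1) in the definition of $|T|_{\Pi^1_2}$. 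Applying this embedding at $\alpha$ gives a strictly increasing map $D(\alpha)\to|T|_{\Pi^1_2}(\alpha)$, while the same lemma supplies an embedding $\beta\to D(\alpha)$; composing them, $\beta$ embeds into $|T|_{\Pi^1_2}(\alpha)$. Taking the supremum over all such $\beta$ gives $|T+\WO(\alpha)|_{\Pi^1_1}\le|T|_{\Pi^1_2}(\alpha)$, completing the argument.

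I expect the only delicate points to be bookkeeping rather than conceptual: ensuring that the order type of $|T|_{\Pi^1_2}(\alpha)$ is well defined (bi-embeddability invariance, handled above) and reconciling the recursive and primitive-recursive versions of the proof-theoretic ordinal in the first inequality; essentially all the real content is isolated in the cited Aguilera--Pakhomov construction $(\alpha\to\beta)$ and in the ordered-sum description of $|T|_{\Pi^1_2}$ combined with \autoref{Lemma: Conjunction of two predilators}.
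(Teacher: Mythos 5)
The paper states this result as a citation to \cite{AguileraPakhomov2023Pi12}, Theorem~9, with no proof of its own, so there is no internal argument to compare against; what can be assessed is whether your reconstruction is sound, and it is. Both inequalities go through essentially as you describe. For $|T|_{\Pi^1_2}(\alpha)\le|T+\WO(\alpha)|_{\Pi^1_1}$, you correctly combine the ordered-sum realization of $|T|_{\Pi^1_2}$ (which the paper states in the sentence following its definition) with \autoref{Lemma: Conjunction of two predilators} to show that every proper initial segment of $|T|_{\Pi^1_2}(\alpha)$ lands inside a recursive well-order that $T+\WO(\alpha)$ proves well-ordered, and the Kreisel-style fact that $|\cdot|_{\Pi^1_1}$ may equally be computed over recursive rather than primitive recursive orderings is indeed the one technical ingredient you need from outside the paper and is standard; since $|T+\WO(\alpha)|_{\Pi^1_1}$ is a limit ordinal, bounding every proper initial segment strictly below it does bound the whole order type. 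For the converse, the $(\alpha\to\beta)$ predilator from Lemma~8 of \cite{AguileraPakhomov2023Pi12}, clause~(1) of the definition of $|T|_{\Pi^1_2}$, and the fact that a predilator embedding induces a strictly increasing map at every linear order do exactly what you say, and the $\beta\hookrightarrow D(\alpha)$ embedding is supplied by the same lemma. Your preliminary reductions---bi-embeddability invariance of the order type of $|T|_{\Pi^1_2}(\alpha)$, and $\Pi^1_1$-soundness of $T+\WO(\alpha)$ because $\WO(\alpha)\to\phi$ is $\Pi^1_2$ when $\phi$ is $\Pi^1_1$---are both correct and both genuinely needed for the two sides to denote ordinals at all. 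You also locate the recursiveness hypothesis on $\alpha$ correctly: it is what makes $E_{n+1}(\alpha)$ and $(\alpha\to\beta)$ recursive, and it is used in both directions.
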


The following theorem shows an extensional description of some proof-theoretic dilators. Some of the results are available in \cite{PakhomovWalsh2023omegaReflection}, and some others are unpublished results by Aguilera and Pakhomov:
\begin{theorem} \pushQED{\qed} \phantom{a}
    \begin{enumerate}
        \item Let $\varepsilon^+$ be a dilator such that $\varepsilon^+(\alpha)$ is the least epsilon number greater than $\alpha$. Then $|\ACA_0|_{\Pi^1_2}=\varepsilon^+$.
        \item Let $\varphi_2^+$ be a dilator such that $\varphi_2^+(\alpha)$ is the least Velben number of the form $\varphi_2(\xi)$ greater than $\alpha$. Then $|\ACA^+_0|_{\Pi^1_2}=\varphi_2^+$.
        \item Let $\Gamma^+$ be a dilator such that $\Gamma^+(\alpha)$ is the least fixpoint for Veblen function greater than $\alpha$. Then $|\ATR_0|_{\Pi^1_2} = \Gamma^+$. 
        \qedhere 
    \end{enumerate}
\end{theorem}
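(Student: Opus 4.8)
The plan is to treat the three pairs $(T,F)$ — namely $(\ACA_0,\varepsilon^+)$, $(\ACA^+_0,\varphi_2^+)$, and $(\ATR_0,\Gamma^+)$ — in parallel, and for each to produce embeddings in both directions, $F\le|T|_{\Pi^1_2}$ and $|T|_{\Pi^1_2}\le F$; since $|T|_{\Pi^1_2}$ is determined only up to bi-embeddability and is a genuine dilator (each $T$ is $\Pi^1_2$-sound), this suffices. The orienting observation is that $F$ is the ordinal-dilator attached to the classical ordinal analysis of $T$: relativizing that analysis to a recursive well-order $\alpha$ yields $|T+\WO(\alpha)|_{\Pi^1_1}=F(\alpha)$ — the least epsilon number, the least ordinal of the form $\varphi_2(\xi)$, and the least strongly critical ordinal strictly above $\alpha$, respectively — and by \autoref{Theorem: Extensional behavior of Pi12 dilators below omega1CK} the left-hand side is $|T|_{\Pi^1_2}(\alpha)$. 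So $F$ and $|T|_{\Pi^1_2}$ agree on every recursive well-order, which already pins down $F$ as the only reasonable candidate.

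For the lower bound $F\le|T|_{\Pi^1_2}$, I would write $F$ as a direct limit $\varinjlim_n E_n$ of $T$-provably recursive dilators $E_n$ along natural embeddings $E_n\hookrightarrow E_{n+1}$. For $\ACA_0$, take $E_n(\alpha)=\omega_n(\alpha+1)$, the $n$-fold iterate of $\beta\mapsto\omega^\beta$ above $\alpha+1$: each $E_n$ is $\ACA_0$-provably a dilator, the inclusions $\omega_n(\alpha+1)\hookrightarrow\omega^{\omega_n(\alpha+1)}=\omega_{n+1}(\alpha+1)$ are natural in $\alpha$, and $\varinjlim_n E_n$ is bi-embeddable with $\varepsilon^+$ because $\sup_n\omega_n(\alpha+1)$ is the least epsilon number strictly above $\alpha$. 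For $\ACA^+_0$ one uses finite iterates of the $\varepsilon$-function $\beta\mapsto\varphi_1(\beta)$, which $\ACA^+_0$ proves is a dilator (essentially the totality of the $\omega$-jump), with direct limit $\varphi_2^+$; for $\ATR_0$ one uses finite iterates of $\beta\mapsto\varphi(\beta,0)$, which $\ATR_0$ proves is a dilator via arithmetical transfinite recursion, with direct limit $\Gamma^+$. Realizing $|T|_{\Pi^1_2}$ as the sum of all $T$-provably recursive dilators as in \cite{AguileraPakhomov2023Pi12}, it contains a sub-dilator bi-embeddable with $\bigoplus_n E_n$, hence with $F$, which gives $F\le|T|_{\Pi^1_2}$. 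The one point needing care is coherence: clause (1) of the definition of $|T|_{\Pi^1_2}$ supplies only individual embeddings $E_n\hookrightarrow|T|_{\Pi^1_2}$, and to assemble them over the limit one uses the universality clause (2) (or the explicit sum presentation).

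The upper bound $|T|_{\Pi^1_2}\le F$ — equivalently, every $T$-provably recursive dilator $D$ embeds into $F$ — is the main obstacle, and it is where the real proof-theoretic work lies. One runs a cut-elimination (and, for $\ATR_0$, collapsing) argument on a $T$-proof of $\Dil(D)$ and extracts from it an embedding of $D$ into the ordinal-dilator $F$; this is the dilator-level lift of Gentzen's analysis for $\ACA_0$, of the analysis of the $\omega$-jump hierarchy for $\ACA^+_0$, and of the analysis of $\ATR_0$ (Friedman, J\"ager), and is carried out in \cite{PakhomovWalsh2023omegaReflection} and in unpublished work of Aguilera and Pakhomov. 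It is worth emphasizing that this step cannot be bypassed using the pointwise agreement noted above: an order-preserving map between well-orders of the same order type need not be surjective, and pointwise comparison of values does not control embeddability of dilators (which is a statement about their traces), so the functorial structure has to be matched directly — exactly what the ordinal analysis delivers. Packaged via \autoref{Theorem: Proof-theoretic dilator and Pi 1 2 reflection}, the upper bound says that $\Dil(F)$ implies $\RFN[\Pi^1_2](T)$ over $\ACA_0$: the well-foundedness of the dilator $F$ already suffices to establish the $\Pi^1_2$-soundness of $T$.
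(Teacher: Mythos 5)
The paper does not prove this theorem: it is stated as a cited result, attributed to \cite{PakhomovWalsh2023omegaReflection} and to unpublished work of Aguilera and Pakhomov, with the \textsf{qed} placed directly after the statement, so there is no internal proof to compare your sketch against. That said, your overall architecture is reasonable --- produce $F\le|T|_{\Pi^1_2}$ and $|T|_{\Pi^1_2}\le F$, use the pointwise agreement from \autoref{Theorem: Extensional behavior of Pi12 dilators below omega1CK} as a consistency check rather than a proof, and locate the genuine proof-theoretic work in the upper bound, which you correctly defer to the cited literature. Your caveat that pointwise agreement does not control embeddability of dilators is exactly right.

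The lower bound argument, however, has a real gap. You realize $F$ as $\varinjlim_n E_n$ with each $E_n$ a $T$-provably recursive dilator, and then assert that $|T|_{\Pi^1_2}$ contains a sub-dilator bi-embeddable with $\bigoplus_n E_n$ ``hence with $F$.'' The implicit claim that the \emph{sum} $\bigoplus_n E_n$ is bi-embeddable with the \emph{direct limit} $F$ is unjustified and is not obvious. The natural candidate map --- send a term of $\varepsilon^+$ to its copy in the summand indexed by its minimal nesting depth --- is not order-preserving: in $\varepsilon^+(\alpha)$ for $\alpha>\omega$, the closed term $\omega^{\omega^{\omega^0}}=\omega^\omega$ (depth $3$) lies strictly below the shallow term $\omega^{x_0}$ instantiated at $\alpha$ (depth $1$), so the images land in the wrong order of summands. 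You flag a coherence issue, but neither of your proposed patches closes it: the universality clause (2) of the definition of $|T|_{\Pi^1_2}$ supplies embeddings $|T|_{\Pi^1_2}\to\hat{D}$ (out of, not into $|T|_{\Pi^1_2}$), so it is relevant to the upper bound, not the lower; and ``the explicit sum presentation'' is precisely $\bigoplus$, which is the object in question. To establish $F\le|T|_{\Pi^1_2}$ one needs either a sharper choice of the approximating sequence together with an actual construction of a coherent trace embedding, or a detour through \autoref{Theorem: Proof-theoretic dilator and Pi 1 2 reflection} by comparing $\Dil(\varepsilon^+)$ with $\RFN[\Pi^1_2](\ACA_0)$ at the level of arithmetized dilator embeddings; either route involves work the sketch elides.
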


\subsection{Reflection rank, again}
In this subsection, we review and establish facts about the reflection rank mentioned in \autoref{Subsection: Reflection rank}. Let us start with the following result:

\begin{theorem}[Pakhomov-Walsh {\cite[Theorem 3.2]{PakhomovWalsh2021Reflection}}] \pushQED{\qed}
    Let $\Gamma=\Pi^1_n$ or $\Gamma=\Sigma^1_{n+1}$ for $n\ge 1$. Then $\prec_\Gamma$ is well-founded. \qedhere 
\end{theorem}
Pakhomov and Walsh \cite[Theorem 3.2]{PakhomovWalsh2021Reflection} \added{proved the above result only} for $\Gamma=\Pi^1_1$, but the same proof works for any $\Sigma^1_n$ or $\Pi^1_n$ for $n\ge 2$. It allows us to define the \emph{$\Gamma$-reflection rank} $\rank_\Gamma(T)$ for $\Gamma$-sound r.e.\ theories $T$ extending $\ACA_0$. We also recall that $\prec_\Gamma$ is defined over the set of $\Gamma$-sound r.e. extensions of $\ACA_0$.

It is natural to ask about the value and the behavior of the $\Gamma$-reflection rank for natural theories. The following lemma says the \added{reflection} rank is always less than $\omega_1^\CK$:
\begin{lemma}
    Let $T$ be a $\Gamma$-sound r.e.\ extension of $\ACA_0$. Then $\rank_\Gamma(T)<\omega_1^\CK$.
\end{lemma}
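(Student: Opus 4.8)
The plan is to reduce the statement to $\Sigma^1_1$-boundedness by packaging the $\prec_\Gamma$-structure below $T$ into a recursively enumerable well-founded tree. Fix a standard enumeration $e\mapsto S_e$ of r.e.\ theories, so that $\Prv_{S_e}(\phi)$ is $\Sigma^0_1$ in $(e,\phi)$ and (using that $\ACA_0$ is finitely axiomatizable) the set of $e$ with $S_e$ proving $\ACA_0$ is $\Sigma^0_1$. The first point I would record is that every $\prec_\Gamma$-predecessor of $T$ is itself a $\Gamma$-sound r.e.\ extension of $\ACA_0$: if $S\prec_\Gamma T$ then $S$ is r.e., extends $\ACA_0$, and $T\vdash\RFN[\Gamma](S)$, so $\RFN[\Gamma](S)$ is true by \autoref{Lemma: Transitivity of Gamma-soundness for r.e. theories}.

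Next I would define the tree $\mathcal T$ whose nodes are the finite sequences $\lag e_0,\dots,e_k\rag$ (including the empty one) such that $S_{e_i}$ proves $\ACA_0$ for all $i\le k$, $T\vdash\RFN[\Gamma](S_{e_0})$, and $S_{e_i}\vdash\RFN[\Gamma](S_{e_{i+1}})$ for all $i<k$. By the remarks on the enumeration, membership in $\mathcal T$ is a $\Sigma^0_1$ condition, and $\mathcal T$ is obviously closed under initial segments. The crucial point is that $\mathcal T$ is well-founded: an infinite branch would yield indices $e_0,e_1,\dots$ with $T\vdash\RFN[\Gamma](S_{e_0})$ and $S_{e_i}\vdash\RFN[\Gamma](S_{e_{i+1}})$; iterating \autoref{Lemma: Transitivity of Gamma-soundness for r.e. theories} along the chain — using that $T$ is $\Gamma$-sound at the first step and the $\ACA_0$-clause throughout — shows that every $S_{e_i}$ is $\Gamma$-sound, so $S_{e_0}\succ_\Gamma S_{e_1}\succ_\Gamma\cdots$ is an infinite $\prec_\Gamma$-descending sequence of $\Gamma$-sound r.e.\ extensions of $\ACA_0$, contradicting the well-foundedness of $\prec_\Gamma$ (Pakhomov--Walsh \cite{PakhomovWalsh2021Reflection}). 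Since $\mathcal T$ is a $\Sigma^0_1$ — hence $\Sigma^1_1$ — well-founded tree, $\Sigma^1_1$-boundedness gives that its rank $\rank(\mathcal T)$ is below $\omega_1^\CK$.

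It then remains to see that $\rank(\mathcal T)$ dominates $\rank_\Gamma(T)$. I would prove by induction on $\rank_{\mathcal T}(u)$ that every node $u=\lag e_0,\dots,e_k\rag\in\mathcal T$ of length $\ge 1$ satisfies $\rank_\Gamma(S_{e_k})\le\rank_{\mathcal T}(u)$: given $S'\prec_\Gamma S_{e_k}$, choose an index $e_{k+1}$ with $S_{e_{k+1}}=S'$; then $u{}^\frown\lag e_{k+1}\rag\in\mathcal T$ with strictly smaller $\mathcal T$-rank, so by the inductive hypothesis $\rank_\Gamma(S')\le\rank_{\mathcal T}(u{}^\frown\lag e_{k+1}\rag)$ and hence $\rank_\Gamma(S')+1\le\rank_{\mathcal T}(u)$; taking the supremum over all such $S'$ gives the claim. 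Applying this to the length-one nodes, any $S\prec_\Gamma T$ with index $e_0$ has $\rank_\Gamma(S)\le\rank_{\mathcal T}(\lag e_0\rag)<\rank(\mathcal T)$, so $\rank_\Gamma(T)=\sup\{\rank_\Gamma(S)+1 : S\prec_\Gamma T\}\le\rank(\mathcal T)<\omega_1^\CK$.

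I expect the only delicate points to be bookkeeping: checking that the clauses defining $\mathcal T$ really are $\Sigma^0_1$ (this uses essentially that $S$ and $T$ are r.e., so that provability is $\Sigma^0_1$, and that the code of $\RFN[\Gamma](S_e)$ is recursive in $e$), and invoking boundedness in the right form. It is worth noting that, conveniently, the argument never uses transitivity of $\prec_\Gamma$, which is comforting since transitivity is not obvious for $\Gamma=\Sigma^1_{n+1}$ when the relevant theories do not contain $\Sigma^1_{n+1}\mhyphen\AC_0$.
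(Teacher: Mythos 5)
Your proposal is correct and follows essentially the same strategy as the paper: both reduce the claim to $\Sigma^1_1$-boundedness applied to a $\Sigma^0_1$-definable well-founded structure extracted from $\prec_\Gamma$-chains below $T$, invoking \autoref{Lemma: Transitivity of Gamma-soundness for r.e. theories} to guarantee $\Gamma$-soundness along those chains and then transferring the rank bound to $\rank_\Gamma(T)$. The only real difference is one of packaging --- the paper works with the set $\bbP=\{S : T\vdash\RFN[\Gamma](S)\}$ of direct predecessors together with the restriction of $\prec_\Gamma$ to $\bbP$, whereas you unfold this into a tree of finite descending chains, which has the small advantage that you never need to check that $\bbP$ is downward closed under $\prec_\Gamma$ (a point the paper's claim that $\rho(S)=\rank_\Gamma(S)$ implicitly relies on).
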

\begin{proof}
    Fix a $\Gamma$-sound r.e.\ extension $T$ of $\ACA_0$, and consider the following set:
    \begin{equation*}
        \mathbb{P}=\{S \mid T\vdash\RFN[\Gamma](S)\},
    \end{equation*}
    where $S$ is ranged over all \emph{r.e.\ extension of $\ACA_0$}. That is, we do not assume $\Gamma$-soundness of $S$ in $\bbP$, but the $\Gamma$-soundness of $S$ follows from the $\Gamma$-soundness of $T$ by \autoref{Lemma: Transitivity of Gamma-soundness for r.e. theories}.
    
    We can see that $\bbP$ is a recursive set, and the relation $\prec_{\Gamma}$ over $\Gamma$ is a recursive well-founded relation. Hence we can define a $\prec_\Gamma$-rank function $\rho$ over $\bbP$. Moreover, the supremum of all $\rho(S)$ for $S\in \bbP$ must be less than $\omega_1^\CK$.
    However, we can see that $\rho(S) = \rank_\Gamma(S)$ for all $S\in \bbP$ by $\prec_\Gamma$-induction, and the supremum of all $\rho(S)$ for $S\in \bbP$ coincides with $\rank_\Gamma(T)$. Hence $\rank_\Gamma(T) < \omega_1^\CK$
\end{proof}

We will see that $\prec_\Gamma$ does not cohere well with other proof-theoretic characteristics like the $\Sigma^1_2$-proof-theoretic ordinal $s^1_2(T)$. Hence, let us define the following variant of $\prec_\Gamma$:

\begin{definition}
    Let $\Gamma$ be a complexity class. Define
    \begin{equation*}
        S\prec^{\check{\Gamma}}_\Gamma T\iff T\vdash^{\check{\Gamma}} \RFN[\Gamma](S).
    \end{equation*}
\end{definition}

The argument provided in \added{the proof of} \cite[Theorem 3.2]{PakhomovWalsh2021Reflection} shows $\prec^{\check{\Gamma}}_\Gamma$ is well-founded for $\Gamma = \Pi^1_n$ for extensions of $\Sigma^1_n\mhyphen\AC_0$. This allows us to define the $\prec^{\check{\Gamma}}_\Gamma$-rank for theories extending $\Sigma^1_n\mhyphen\AC_0$ for $\Gamma=\Pi^1_n$. Let us provide the proof for $n=2$:
\begin{proposition}[$\Sigma^1_2\mhyphen\AC_0$] \label{Proposition: WFness Pi 1 2 reflection rank}
    $\prec^{\Sigma^1_2}_{\Pi^1_2}$ for r.e.\ $\Pi^1_2$-sound extensions of $\Sigma^1_2\mhyphen\AC_0$ is well-founded.
\end{proposition}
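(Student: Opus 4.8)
Here is the plan. I want to turn a hypothetical $\prec^{\Sigma^1_2}_{\Pi^1_2}$-descending sequence into a $\prec_{\Pi^1_2}$-descending sequence, which is impossible by the well-foundedness of $\prec_{\Pi^1_2}$ (the case $\Gamma=\Pi^1_2$ of \cite[Theorem 3.2]{PakhomovWalsh2021Reflection}). So suppose toward a contradiction that $\lag T_n\mid n<\omega\rag$ is an infinite $\prec^{\Sigma^1_2}_{\Pi^1_2}$-descending sequence of $\Pi^1_2$-sound r.e.\ extensions of $\Sigma^1_2\mhyphen\AC_0$, so that for each $n$ there is a true $\Sigma^1_2$-sentence $\theta_n$ with $T_n+\theta_n\vdash\RFN[\Pi^1_2](T_{n+1})$. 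The predicate ``$\theta$ codes a true $\Sigma^1_2$-sentence and $T_n+\theta\vdash\RFN[\Pi^1_2](T_{n+1})$'' is $\Sigma^1_2$ in the real coding $\lag T_n\mid n<\omega\rag$, so by $\Sigma^1_2\mhyphen\AC_0$ I may also fix the witnessing sequence $\lag\theta_n\mid n<\omega\rag$. Let $Z$ be a real coding both sequences; from now on every object below is understood relative to $Z$, so that in particular the sentence $\Theta$ constructed next has a G\"odel code in the language expanded by a name for $Z$ and each $T_n+\Theta$ is an honest r.e.-in-$Z$ theory, and the contradiction will be with the $Z$-relativized well-foundedness of $\prec_{\Pi^1_2}$, which holds by the same proof.

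The next step is to collapse all the oracle sentences into one. Applying \autoref{Lemma: Reducing forall n Sigma 1 2} to the $\Sigma^1_2$-formula ``$\theta_n$ is true'' yields a $\Sigma^1_2$-sentence $\Theta$ with $\ACA_0\vdash\Theta\to\forall^0 n\,\theta_n$ and with $\Sigma^1_2\mhyphen\AC_0\vdash\forall^0 n\,\theta_n\to\Theta$; since each $\theta_n$ is true and we work over $\Sigma^1_2\mhyphen\AC_0$, $\Theta$ is true. Now consider the theories $T_n+\Theta$. Each one is an r.e.-in-$Z$ extension of $\Sigma^1_2\mhyphen\AC_0$, hence of $\ACA_0$, and each one is $\Pi^1_2$-sound: $\RFN[\Pi^1_2](T_n)$ is true since $T_n$ is $\Pi^1_2$-sound, $\Theta$ is a true $\Sigma^1_2$-sentence, and $\Sigma^1_2=\hat{\Gamma}$ for $\Gamma=\Pi^1_2$, so \autoref{Lemma: RFN is stable under adding true check Gamma statements} gives that $\RFN[\Pi^1_2](T_n+\Theta)$ is true.

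The heart of the argument is the claim that $T_{n+1}+\Theta\prec_{\Pi^1_2}T_n+\Theta$, i.e.\ $T_n+\Theta\vdash\RFN[\Pi^1_2](T_{n+1}+\Theta)$, for each $n$. On the one hand, $\ACA_0\vdash\Theta\to\theta_n$, so $T_n+\Theta$ proves every axiom of $T_n+\theta_n$, whence $T_n+\Theta\vdash\RFN[\Pi^1_2](T_{n+1})$. On the other hand, the proof of \autoref{Lemma: RFN is stable under adding true check Gamma statements} is elementary and formalizes over $\ACA_0$ to yield, uniformly in a code for an r.e.\ theory $S$, the implication $\ACA_0+\True_{\Sigma^1_2}(\Theta)\vdash\RFN[\Pi^1_2](S)\to\RFN[\Pi^1_2](S+\Theta)$; taking $S=T_{n+1}$ and using that $T_n+\Theta$ proves $\True_{\Sigma^1_2}(\Theta)$, we get $T_n+\Theta\vdash\RFN[\Pi^1_2](T_{n+1})\to\RFN[\Pi^1_2](T_{n+1}+\Theta)$, and combining the two proves the claim. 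Hence $\lag T_n+\Theta\mid n<\omega\rag$ is an infinite $\prec_{\Pi^1_2}$-descending sequence of $\Pi^1_2$-sound r.e.-in-$Z$ extensions of $\ACA_0$, contradicting the $Z$-relativized well-foundedness of $\prec_{\Pi^1_2}$; therefore no such $Z$ exists and $\prec^{\Sigma^1_2}_{\Pi^1_2}$ is well-founded.

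The conceptual content is short, and the main obstacle I anticipate is the bookkeeping around the relativization to $Z$: one must check that $\Theta$ really has a standard code, that ``$T_n+\Theta$'' denotes a legitimate r.e.-in-$Z$ theory, that the stability lemma and the well-foundedness of $\prec_{\Pi^1_2}$ are invoked in precisely their $Z$-relativized, formalizable-over-$\Sigma^1_2\mhyphen\AC_0$ forms, and that the provability facts used (such as ``$T_n+\theta_n\vdash\RFN[\Pi^1_2](T_{n+1})$'' and ``$\ACA_0\vdash\Theta\to\theta_n$'') are available, uniformly in $n$, inside the ambient theory. It is worth flagging that $\Sigma^1_2\mhyphen\AC_0$ is used twice and essentially --- once to extract $\lag\theta_n\mid n<\omega\rag$ and once, via \autoref{Lemma: Reducing forall n Sigma 1 2}, to conclude that $\Theta$ is true --- which is exactly why the hypothesis that the theories extend $\Sigma^1_2\mhyphen\AC_0$ cannot be dropped.
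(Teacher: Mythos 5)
Your route---collapse the $\Sigma^1_2$-oracles $\theta_n$ into one true $\Sigma^1_2$-sentence $\Theta$ via $\Sigma^1_2\mhyphen\AC_0$ and reduce to the $Z$-relativized well-foundedness of $\prec_{\Pi^1_2}$---is genuinely different from the paper's, which runs a direct G\"odel-style argument showing that $\Sigma^1_2\mhyphen\AC_0$ together with the assertion of an infinite $\prec^{\Sigma^1_2}_{\Pi^1_2}$-descending chain proves its own consistency. However, there is a real gap at the step $\ACA_0\vdash\Theta\to\theta_n$. Making the $Z$-relativization explicit, what \autoref{Lemma: Reducing forall n Sigma 1 2} actually yields is $\ACA_0\vdash\Theta\to\forall^0 m\,\True_{\Sigma^1_2}((\dot{Z})_m)$, where $\dot{Z}$ is the fresh constant naming $Z$; but $T_n+\Theta$ carries no axiom linking $\dot{Z}$ to the actual sequence $\lag\theta_m\rag$. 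A model of $T_n+\Theta$ may interpret $\dot{Z}$ as any real all of whose columns code true $\Sigma^1_2$-sentences---for instance, all coding $0=0$---and in such a model $\theta_n$ need not hold (take $T_n=\ACA_0$, $\theta_n=\Con(\ACA_0)$, and a model of $\ACA_0+\lnot\Con(\ACA_0)$ with $\dot Z$ interpreted trivially). So $T_n+\Theta\nvdash\theta_n$ in general, and the sequence $\lag T_n+\Theta\mid n<\omega\rag$ need not be $\prec_{\Pi^1_2}$-descending.

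One can try to repair this by passing to $T_n'=T_n+\Theta+\{\theta_m\mid m<\omega\}$, still r.e.\ in $Z$, so that $T_n'\vdash\theta_n$ is immediate, while $\Theta$ continues to supply the uniform $\forall^0 m\,\True_{\Sigma^1_2}((\dot{Z})_m)$ needed to push $\RFN[\Pi^1_2]$ from $T_{n+1}$ to $T_{n+1}'$. But then $T_n'$ must also verify, inside its arithmetized proof predicate for $T_{n+1}'$, that the extra axioms of $T_{n+1}'$ are exactly the codes $(\dot{Z})_m$---so some portion of the diagram of $Z$ has to be imported as well, and the bookkeeping has to be redone inside $\Sigma^1_2\mhyphen\AC_0$, where the codes $\theta_m$ may be nonstandard and the Tarski biconditional for $\True_{\Sigma^1_2}$ is only a schema. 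This is substantially more delicate than the sketch suggests, and it is exactly what the paper's self-referential argument avoids: its sentence $\theta$ existentially quantifies the entire descending chain and its oracles, so no specific parameter is ever named inside the object theories and no diagram needs to be smuggled in.
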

\begin{proof}
    To prove the above statement from $\Sigma^1_2\mhyphen\AC_0$, we instead prove the inconsistency $\Sigma^1_2\mhyphen\AC_0$ with the following statement:
    \begin{equation*}
        \theta \equiv \exists^1 \lag T_n\mid n\in\mathbb{N}\rag \bigl[\RFN[\Pi^1_2](T_0) \land \forall^0 n \Prv_{T_n}^{\Sigma^1_2}\bigl(\RFN[\Pi^1_2](T_{n+1})\bigr) \land \forall^0 n (T_n \supseteq \Sigma^1_2\mhyphen\AC_0)\bigr].
    \end{equation*}
    To prove this, we claim that $\Sigma^1_2\mhyphen\AC_0 + \theta$ proves its own consistency. 
    Reasoning over $\Sigma^1_2\mhyphen\AC_0 + \theta$, let $\lag T_n \mid n\in\mathbb{N}\rag$ be a sequence witnessing $\theta$. Then consider the following statement:
    \begin{equation*}
        \tau_0 \equiv \exists^1 \lag S_n\mid n\in\mathbb{N}\rag  \bigl[S_0 = T_1\land \forall^0 n \Prv_{S_n}^{\Sigma^1_2}\bigl(\RFN[\Pi^1_2](S_{n+1})\bigr) \land \forall^0 n (S_n \supseteq \Sigma^1_1\mhyphen\AC_0)\bigr].
    \end{equation*}
    By $\Sigma^1_2\mhyphen\AC_0$, $\tau_0$ is equivalent to a $\Sigma^1_2$-statement  $\tau$. Furthermore, $\tau$ is true since $\lag T_{n+1}\mid n\in\mathbb{N}\rag$ witnesses $\tau_0$.
    From $\RFN[\Sigma^1_2](T_0)$ and that $\tau$ is true $\Sigma^1_2$, we get
    \begin{equation*}
        \mathsf{Con}(T_0 + \tau).
    \end{equation*}
    By $T_0\supseteq \Sigma^1_2\mhyphen\AC_0$ and $\Pr_{T_0}(\RFN[\Sigma^1_2](T_1))$, we have
    \begin{equation*}
        \mathsf{Con}\bigl(\Sigma^1_2\mhyphen\AC_0 + \RFN[\Sigma^1_2](T_1) + \tau\bigr).
    \end{equation*}
    However, we can see that $\RFN[\Sigma^1_2](T_1) + \tau$ implies $\theta$ over $\Sigma^1_1\mhyphen\AC_0$, so we have $\mathsf{Con}(\Sigma^1_2\mhyphen\AC_0 + \theta)$.
\end{proof}
We will prove in \autoref{Section: Pi 1 2 case} that $\prec^{\Sigma^1_2}_{\Pi^1_2}$ is well-founded for $\Pi^1_2$-sound r.e.\ \emph{extensions of $\ACA_0$.} Although the above proof gives a weaker result, it has the benefit that a similar argument works for the well-foundedness of $\prec^{\Sigma^1_n}_{\Pi^1_n}$ for extensions of $\Sigma^1_n\mhyphen\AC_0$. 
However, the well-foundedness of $\prec^{\Pi^1_2}_{\Sigma^1_2}$ does not follow from the proof similar to that of \autoref{Proposition: WFness Pi 1 2 reflection rank} even for theories extending $\Sigma^1_2\mhyphen\AC_0$: The main trouble is that $\theta$ in \autoref{Proposition: WFness Pi 1 2 reflection rank} becomes $\Sigma^1_3$ instead of $\Pi^1_2$. We will go back to this issue in \autoref{Section: Sigma 1 2 reflection rank}.

We can ask whether there is any connection between the $\prec_{\Pi^1_1}$-rank and the proof-theoretic ordinal. \cite{PakhomovWalsh2021ReflectionInfDeriv} proved that they coincide for theories extending $\ACA_0^+$:
\begin{theorem}[Pakhomov-Walsh \cite{PakhomovWalsh2021ReflectionInfDeriv}]
    Let $T$ be a $\Pi^1_1$-sound r.e.\ extension of $\ACA_0^+$. Then $\rank_{\Pi^1_1}(T) = |T|_{\Pi^1_1}$.
\end{theorem}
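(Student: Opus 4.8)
The plan is to prove the two inequalities $\rank_{\Pi^1_1}(T)\le|T|_{\Pi^1_1}$ and $|T|_{\Pi^1_1}\le\rank_{\Pi^1_1}(T)$ separately; only the second uses the extra strength of $\ACA_0^+$ over $\ACA_0$. For $\rank_{\Pi^1_1}(T)\le|T|_{\Pi^1_1}$ I would induct along the well-founded relation $\prec_{\Pi^1_1}$, which reduces the statement to the claim that $S\prec_{\Pi^1_1}T$ implies $|S|_{\Pi^1_1}<|T|_{\Pi^1_1}$. For the claim: $S\prec_{\Pi^1_1}T$ means $T\vdash\RFN[\Pi^1_1](S)$; as $S$ is r.e.\ (hence arithmetically definable) and inherits $\Pi^1_1$-soundness from $T$ by \autoref{Lemma: Transitivity of Gamma-soundness for r.e. theories}, \autoref{Theorem: Pi 1 1 reflection and WOness of proof-theoretic ordinal} gives that $|S|_{\Pi^1_1}$ is recursive and $\ACA_0\vdash\RFN[\Pi^1_1](S)\leftrightarrow\WO(|S|_{\Pi^1_1})$, so $T\vdash\WO(|S|_{\Pi^1_1})$; since $\ACA_0$ proves $\WO(X)\to\WO(X+1)$ and recursive well-orders have primitive recursive copies, $|T|_{\Pi^1_1}\ge|S|_{\Pi^1_1}+1>|S|_{\Pi^1_1}$. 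Then $\rank_{\Pi^1_1}(T)=\sup\{\rank_{\Pi^1_1}(S)+1\mid S\prec_{\Pi^1_1}T\}$ together with the induction hypothesis $\rank_{\Pi^1_1}(S)\le|S|_{\Pi^1_1}$ yields $\rank_{\Pi^1_1}(T)\le|T|_{\Pi^1_1}$. (Alternatively this follows from $\prec_{\Pi^1_1}\subseteq\prec^{\Sigma^1_1}_{\Pi^1_1}$ and \cite[Theorem 5.20]{PakhomovWalsh2021Reflection}, giving $\rank_{\Pi^1_1}(T)\le\rank^{\Sigma^1_1}_{\Pi^1_1}(T)\le\varepsilon_{\rank^{\Sigma^1_1}_{\Pi^1_1}(T)}=|T|_{\Pi^1_1}$.)

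For $|T|_{\Pi^1_1}\le\rank_{\Pi^1_1}(T)$ the key lemma is that $T\vdash\WO(\alpha)$ implies $\rank_{\Pi^1_1}(T)>|\alpha|$ for every primitive recursive well-order $\alpha$; since $|\alpha|+1$ then ranges cofinally in $|T|_{\Pi^1_1}$, this gives the inequality. To witness $\rank_{\Pi^1_1}(T)>|\alpha|$ I would use $R_\alpha$, the theory obtained by iterating uniform $\Pi^1_1$-reflection along $\alpha$ over $\ACA_0$ (so $R_{\beta+1}=R_\beta+\RFN[\Pi^1_1](R_\beta)$, with unions at limits, using the presentation of $\alpha$ to keep the family uniformly r.e.); $R_\alpha$ is r.e.\ and $\Pi^1_1$-sound by transfinite induction via \autoref{Lemma: Transitivity of Gamma-soundness for r.e. theories}. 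First, $\rank_{\Pi^1_1}(R_\alpha)\ge|\alpha|$ by transfinite induction on $\alpha$: for every proper initial segment $\beta$ of $\alpha$ one has $R_\beta\prec_{\Pi^1_1}R_\alpha$ (as $R_\alpha\supseteq R_{\beta+1}\vdash\RFN[\Pi^1_1](R_\beta)$), so $\rank_{\Pi^1_1}(R_\alpha)\ge\sup_{\beta}\bigl(\rank_{\Pi^1_1}(R_\beta)+1\bigr)\ge|\alpha|$. Second, $R_\alpha\prec_{\Pi^1_1}T$: the ordinal analysis of iterated $\Pi^1_1$-reflection yields $|R_\alpha|_{\Pi^1_1}=\varepsilon_{|\alpha|}$ up to bookkeeping at the bottom — or at least $|R_\alpha|_{\Pi^1_1}\le G(|\alpha|)$ for a fixed primitive recursive $\varepsilon$-type dilator $G$ — and $\ACA_0^+$ proves $\forall X(\WO(X)\to\WO(\varepsilon_X))$, i.e.\ that such $G$ preserves well-orderedness (this is essentially the $\Pi^1_1$-proof-theoretic meaning of $\ACA_0^+$). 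Since $\ACA_0^+\subseteq T$ and $T\vdash\WO(\alpha)$, we obtain $T\vdash\WO(|R_\alpha|_{\Pi^1_1})$, and with $\ACA_0\vdash\RFN[\Pi^1_1](R_\alpha)\leftrightarrow\WO(|R_\alpha|_{\Pi^1_1})$ (\autoref{Theorem: Pi 1 1 reflection and WOness of proof-theoretic ordinal}, since $R_\alpha$ is r.e.\ and $\Pi^1_1$-sound) this gives $T\vdash\RFN[\Pi^1_1](R_\alpha)$, i.e.\ $R_\alpha\prec_{\Pi^1_1}T$. Hence $\rank_{\Pi^1_1}(T)\ge\rank_{\Pi^1_1}(R_\alpha)+1>|\alpha|$.

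The main obstacle is the ordinal-analytic input in the second direction: the upper bound $|R_\alpha|_{\Pi^1_1}\le\varepsilon_{|\alpha|}$ for iterated $\Pi^1_1$-reflection and the fact that $\ACA_0^+$ proves the $\varepsilon$-function total on well-orders. Both are classical, but they require the customary care with presentations, since provability of $\WO$-statements and of embeddings between well-orders is sensitive to the chosen notation systems, and one must arrange that $T\vdash\WO(\alpha)$ really does transfer to $T\vdash\WO(|R_\alpha|_{\Pi^1_1})$. Pakhomov and Walsh's proof in \cite{PakhomovWalsh2021ReflectionInfDeriv} sidesteps the explicit ordinal arithmetic by recasting everything through cut-free $\omega$-logic derivations, where the $\omega$-jump supplied by $\ACA_0^+$ is precisely the ability to iterate a derivation $\omega$ times — the derivation-theoretic incarnation of closure under $\varepsilon$ — making the coincidence of the two ranks transparent.
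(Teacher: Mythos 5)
This theorem is cited by the paper from \cite{PakhomovWalsh2021ReflectionInfDeriv}; the paper gives no proof, so there is no ``paper's argument'' to compare against literally. You already know this, and you correctly flag that the published proof goes through cut-free $\omega$-logic derivations rather than explicit ordinal bounds. Your reconstruction is a genuinely different route: a Schmerl-style argument via iterated $\Pi^1_1$-reflection $R_\alpha$ over $\ACA_0$, combined with the Afshari--Rathjen/Marcone--Montalb\'an characterization of $\ACA_0^+$ as the well-ordering principle for $X\mapsto\varepsilon_X$. That route is a standard way to see why $\ACA_0^+$ is the right threshold, and it makes the role of the $\omega$-jump transparent as ``closure under $\varepsilon$.'' The $\omega$-logic proof buys more uniformity and avoids committing to a specific ordinal-arithmetic upper bound for $|R_\alpha|_{\Pi^1_1}$; your route buys a cleaner conceptual picture at the cost of importing two pieces of classical but nontrivial ordinal analysis that you correctly identify as the load-bearing ingredients.

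Two things should be fixed. First, the claimed justification of $\Pi^1_1$-soundness of $R_\alpha$ ``by transfinite induction via \autoref{Lemma: Transitivity of Gamma-soundness for r.e. theories}'' does not go through as stated: that lemma lets you transfer soundness from $T$ down to $S$ when $T\vdash\RFN[\Gamma](S)$, but in the inductive step you need to pass soundness from $R_\beta$ \emph{up} to $R_{\beta+1}=R_\beta+\RFN[\Pi^1_1](R_\beta)$, and $\RFN[\Pi^1_1](R_\beta)$ is $\Pi^1_1$, not $\Sigma^1_1$, so \autoref{Lemma: RFN is stable under adding true check Gamma statements} does not apply either. The correct argument is that $R_\alpha$ is true in the standard model by transfinite induction in the metatheory (every axiom added along the iteration is a true sentence), and hence sound for all complexity classes; $\Pi^1_1$-soundness is a special case. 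Alternatively, once you establish $T\vdash\RFN[\Pi^1_1](R_\alpha)$, \autoref{Lemma: Transitivity of Gamma-soundness for r.e. theories} applied from $T$ gives $\Pi^1_1$-soundness of $R_\alpha$, but that requires the soundness claim to not already be a prerequisite of the argument producing $T\vdash\RFN[\Pi^1_1](R_\alpha)$, so some reorganization is needed. Second, the invocation of \autoref{Theorem: Pi 1 1 reflection and WOness of proof-theoretic ordinal} plus an embedding $|R_\alpha|_{\Pi^1_1}\hookrightarrow\varepsilon_\alpha$ is slightly indirect; what you actually need, and what the Schmerl analysis delivers, is the uniform reduction $\ACA_0\vdash\WO(\varepsilon_\alpha)\to\RFN[\Pi^1_1](R_\alpha)$, which combined with $\ACA_0^+\vdash\WO(\alpha)\to\WO(\varepsilon_\alpha)$ and $T\vdash\WO(\alpha)$ yields $T\vdash\RFN[\Pi^1_1](R_\alpha)$ directly, without routing through a presentation of $|R_\alpha|_{\Pi^1_1}$ at all.
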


Then what can we say about $\prec^{\Sigma^1_1}_{\Pi^1_1}$-rank?
It turns out that  $\prec^{\Sigma^1_1}_{\Pi^1_1}$-rank gauges the size of the proof-theoretic ordinal in the following manner:
\begin{theorem}[Pakhomov-Walsh {\cite[Theorem 5.20]{PakhomovWalsh2021Reflection}}]
    Let $T$ be a $\Pi^1_1$-sound extension of $\ACA_0$. If $\rank^{\Sigma^1_1}_{\Pi^1_1}(T)=\alpha$ then $|T|_{\Pi^1_1}=\varepsilon_\alpha$.
\end{theorem}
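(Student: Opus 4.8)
The plan is to argue by transfinite induction on $\alpha=\rank^{\Sigma^1_1}_{\Pi^1_1}(T)$, establishing the two inequalities $|T|_{\Pi^1_1}\ge\varepsilon_\alpha$ and $|T|_{\Pi^1_1}\le\varepsilon_\alpha$ in tandem; here $T$ ranges over the r.e.\ $\Pi^1_1$-sound extensions of $\ACA_0$, which is the setting in which the reflection-rank apparatus of the excerpt lives, and $\alpha<\omega_1^{\CK}$, so that $\varepsilon_\alpha$ is a recursive well-order. Two observations drawn from the results already available will be used freely. First, adjoining a true $\Sigma^1_1$-sentence $\theta$ to such a theory $U$ does not change $|U|_{\Pi^1_1}$: if $U+\theta\vdash\WO(\gamma)$ then every $\Pi^1_1$-sentence provable-with-$\Sigma^1_1$-oracle from $\ACA_0+\WO(\gamma)$ is also provable-with-$\Sigma^1_1$-oracle from $U$, so $|\ACA_0+\WO(\gamma)|_{\Pi^1_1}\le|U|_{\Pi^1_1}$ by Walsh's first equivalence, and hence $\gamma\le|U|_{\Pi^1_1}$. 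Second, for such a theory $S$ one has $\ACA_0\vdash\RFN[\Pi^1_1](S)\leftrightarrow\WO(|S|_{\Pi^1_1})$ by \autoref{Theorem: Pi 1 1 reflection and WOness of proof-theoretic ordinal}, while $S\prec^{\Sigma^1_1}_{\Pi^1_1}T$ holds iff $|S|_{\Pi^1_1}<|T|_{\Pi^1_1}$ by Walsh's addendum. The single ingredient not in the excerpt is the classical equality $|\ACA_0+\WO(\varepsilon_\gamma)|_{\Pi^1_1}=\varepsilon_{\gamma+1}$, valid for every recursive $\gamma$; equivalently, the $\Pi^1_1$-proof-theoretic ordinals of r.e.\ $\Pi^1_1$-sound extensions of $\ACA_0$ are precisely the recursive $\varepsilon$-numbers $\ge\varepsilon_0$. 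Finally I fix the canonical theory $S_\alpha:=\ACA_0+\{\WO(\varepsilon_\beta)\mid\beta<\alpha\}$, presented as an r.e.\ theory along a recursive copy of $\alpha$; it is $\Pi^1_1$-sound because all its axioms are true, and a compactness argument combined with the displayed equality gives $|S_\alpha|_{\Pi^1_1}=\varepsilon_\alpha$.

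For the lower bound, assume $\rank^{\Sigma^1_1}_{\Pi^1_1}(T)=\alpha$ and fix $\beta<\alpha$. By definition of the rank there is $S\prec^{\Sigma^1_1}_{\Pi^1_1}T$ with $\rank^{\Sigma^1_1}_{\Pi^1_1}(S)\ge\beta$, and since $S\prec^{\Sigma^1_1}_{\Pi^1_1}T$ its rank is $<\alpha$; the induction hypothesis then yields $|S|_{\Pi^1_1}=\varepsilon_{\rank^{\Sigma^1_1}_{\Pi^1_1}(S)}\ge\varepsilon_\beta$. Unwinding $S\prec^{\Sigma^1_1}_{\Pi^1_1}T$ and using \autoref{Theorem: Pi 1 1 reflection and WOness of proof-theoretic ordinal} gives $T+\theta\vdash\WO(|S|_{\Pi^1_1})$ for some true $\Sigma^1_1$-sentence $\theta$, hence $T+\theta\vdash\WO(\varepsilon_\beta)$, hence $T+\theta$ contains $\ACA_0+\WO(\varepsilon_\beta)$, so $|T+\theta|_{\Pi^1_1}\ge\varepsilon_{\beta+1}$ by the displayed equality, and this equals $|T|_{\Pi^1_1}$ by the first observation. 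Letting $\beta$ run through all ordinals below $\alpha$ gives $|T|_{\Pi^1_1}\ge\sup_{\beta<\alpha}\varepsilon_{\beta+1}=\varepsilon_\alpha$ when $\alpha>0$, and for $\alpha=0$ we just use $|T|_{\Pi^1_1}\ge|\ACA_0|_{\Pi^1_1}=\varepsilon_0$.

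For the upper bound I first pin down $\rank^{\Sigma^1_1}_{\Pi^1_1}(S_\alpha)=\alpha$, using that the lower-bound half is by now available for all ordinals. If some $S'\prec^{\Sigma^1_1}_{\Pi^1_1}S_\alpha$ had $\rank^{\Sigma^1_1}_{\Pi^1_1}(S')\ge\alpha$, then $|S'|_{\Pi^1_1}\ge\varepsilon_\alpha=|S_\alpha|_{\Pi^1_1}$, contradicting Walsh's addendum; so $\rank^{\Sigma^1_1}_{\Pi^1_1}(S_\alpha)\le\alpha$. Conversely, for each $\beta<\alpha$ the axiom $\WO(\varepsilon_\beta)$ of $S_\alpha$ is $\ACA_0$-equivalent to $\RFN[\Pi^1_1](S_\beta)$ (since $|S_\beta|_{\Pi^1_1}=\varepsilon_\beta$ by the same computation as for $S_\alpha$), so $S_\beta\prec^{\Sigma^1_1}_{\Pi^1_1}S_\alpha$, and the induction hypothesis gives $\rank^{\Sigma^1_1}_{\Pi^1_1}(S_\beta)=\beta$ (its rank is strictly below $\rank^{\Sigma^1_1}_{\Pi^1_1}(S_\alpha)\le\alpha$, and together with $|S_\beta|_{\Pi^1_1}=\varepsilon_\beta$ the hypothesis forces the value $\beta$); hence $\rank^{\Sigma^1_1}_{\Pi^1_1}(S_\alpha)\ge\sup_{\beta<\alpha}(\beta+1)=\alpha$. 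Now suppose, toward a contradiction, that $|T|_{\Pi^1_1}>\varepsilon_\alpha$. Then $T\vdash\WO(\gamma)$ for some primitive recursive well-order $\gamma$ of order type exceeding $\varepsilon_\alpha$, so $T\vdash\WO(\varepsilon_\alpha)$, which by \autoref{Theorem: Pi 1 1 reflection and WOness of proof-theoretic ordinal} and $|S_\alpha|_{\Pi^1_1}=\varepsilon_\alpha$ is exactly $\RFN[\Pi^1_1](S_\alpha)$; thus $T\vdash\RFN[\Pi^1_1](S_\alpha)$, in particular $S_\alpha\prec^{\Sigma^1_1}_{\Pi^1_1}T$, and so $\rank^{\Sigma^1_1}_{\Pi^1_1}(T)\ge\rank^{\Sigma^1_1}_{\Pi^1_1}(S_\alpha)+1=\alpha+1$, contradicting $\rank^{\Sigma^1_1}_{\Pi^1_1}(T)=\alpha$. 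Therefore $|T|_{\Pi^1_1}\le\varepsilon_\alpha$, and with the lower bound $|T|_{\Pi^1_1}=\varepsilon_\alpha$.

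I expect the hard part to be the displayed equality $|\ACA_0+\WO(\varepsilon_\gamma)|_{\Pi^1_1}=\varepsilon_{\gamma+1}$. Its ``$\ge$'' half is routine: $\ACA_0$ proves $\WO(\delta)\to\WO(\delta+\delta)$ and $\WO(\delta)\to\WO(\omega^\delta)$, so from $\WO(\varepsilon_\gamma)$ it derives $\WO(\tau)$ for every $\tau$ below the least $\varepsilon$-number greater than $\varepsilon_\gamma$, which is $\varepsilon_{\gamma+1}$. The ``$\le$'' half, namely $\ACA_0+\WO(\varepsilon_\gamma)\not\vdash\WO(\varepsilon_{\gamma+1})$, carries the real content: it cannot be read off from an $\omega$-model, since any $\omega$-model of $\ACA_0$ satisfies $\WO$ of every genuine recursive well-order, and instead requires the cut-elimination and ordinal analysis of $\ACA_0+\WO(\varepsilon_\gamma)$ — this is exactly the step that makes the $\varepsilon$-function, rather than some slower-growing normal function, appear in the conclusion. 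Everything else is bookkeeping with the definition of the reflection rank and the theorems recalled above.
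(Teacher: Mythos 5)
The paper does not prove this statement itself --- it cites it directly from Pakhomov--Walsh \cite[Theorem 5.20]{PakhomovWalsh2021Reflection} --- so there is no in-paper proof to compare against; I will therefore only assess the correctness of your argument. The architecture is sound and is essentially the expected one: introduce the canonical theories $S_\alpha = \ACA_0 + \{\WO(\varepsilon_\beta)\mid\beta<\alpha\}$, pin $|S_\alpha|_{\Pi^1_1}=\varepsilon_\alpha$, and compare $T$ against the $S_\alpha$'s by transfinite induction using Walsh's addendum and \autoref{Theorem: Pi 1 1 reflection and WOness of proof-theoretic ordinal}. You also correctly isolate where the ordinal-analytic content lives, namely in the cut-elimination fact $|\ACA_0+\WO(\varepsilon_\gamma)|_{\Pi^1_1}=\varepsilon_{\gamma+1}$.

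Two points deserve tightening. First, the induction is not really ``in tandem'' as you describe: in showing $\rank^{\Sigma^1_1}_{\Pi^1_1}(S_\alpha)\le\alpha$ you apply the lower bound to a hypothetical $S'$ of rank possibly $\ge\alpha$, which the tandem hypothesis does not cover. The fix is what you implicitly do anyway --- run the lower bound $|T|_{\Pi^1_1}\ge\varepsilon_{\rank(T)}$ as a self-contained transfinite induction (it only uses the lower bound at smaller ranks), and only then prove the upper bound, now entitled to cite the lower bound for all ordinals. Second, the inference ``$|T|_{\Pi^1_1}>\varepsilon_\alpha$, so $T\vdash\WO(\gamma)$ for some $\gamma$ of larger order type, so $T\vdash\WO(\varepsilon_\alpha)$'' is not quite licensed: the embedding of a fixed recursive copy of $\varepsilon_\alpha$ into $\gamma$ need not be $T$-provable, only true (hence a true $\Sigma^1_1$-fact). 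You can either route through the $\Sigma^1_1$-oracle to get $T\vdash^{\Sigma^1_1}\WO(\varepsilon_\alpha)$, or --- more cleanly --- skip the detour and apply Walsh's addendum directly: $|S_\alpha|_{\Pi^1_1}=\varepsilon_\alpha<|T|_{\Pi^1_1}$ already gives $S_\alpha\prec^{\Sigma^1_1}_{\Pi^1_1}T$ and hence $\rank^{\Sigma^1_1}_{\Pi^1_1}(T)\ge\alpha+1$. With those repairs, and granting the acknowledged external equality $|\ACA_0+\WO(\varepsilon_\gamma)|_{\Pi^1_1}=\varepsilon_{\gamma+1}$, the argument goes through.
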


Hence, we immediately get the following:
\begin{theorem} \label{Theorem: Proof theoretic ordinal and Pi 1 1 rank equivalence} \pushQED{\qed }
    Let $S$, $T$ be $\Pi^1_1$-sound r.e.\ extensions of $\ACA_0$. Then we have
    \begin{equation*}
        |S|_{\Pi^1_1} \le |T|_{\Pi^1_1} \iff \rank^{\Sigma^1_1}_{\Pi^1_1}(S)\le \rank^{\Sigma^1_1}_{\Pi^1_1}(T). \qedhere 
    \end{equation*}
\end{theorem}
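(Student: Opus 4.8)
The plan is to read the statement off the preceding theorem of Pakhomov--Walsh (\cite[Theorem 5.20]{PakhomovWalsh2021Reflection}), which already does all the work: it says that for every $\Pi^1_1$-sound extension $T$ of $\ACA_0$, if $\rank^{\Sigma^1_1}_{\Pi^1_1}(T)=\alpha$ then $|T|_{\Pi^1_1}=\varepsilon_\alpha$. So the only thing left is to transport an inequality through the map $\xi\mapsto\varepsilon_\xi$.

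First I would note that the ranks are well-defined: since $S$ and $T$ are $\Pi^1_1$-sound r.e.\ extensions of $\ACA_0$, the relation $\prec^{\Sigma^1_1}_{\Pi^1_1}$ is well-founded on the class of such theories, so $\rank^{\Sigma^1_1}_{\Pi^1_1}(S)$ and $\rank^{\Sigma^1_1}_{\Pi^1_1}(T)$ exist as ordinals; call them $\alpha$ and $\beta$. Applying \cite[Theorem 5.20]{PakhomovWalsh2021Reflection} to each of $S$ and $T$ gives $|S|_{\Pi^1_1}=\varepsilon_\alpha$ and $|T|_{\Pi^1_1}=\varepsilon_\beta$.

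Next I would invoke the standard fact that the enumeration $\xi\mapsto\varepsilon_\xi$ of epsilon numbers is strictly increasing, hence both order-preserving and order-reflecting: $\varepsilon_\alpha\le\varepsilon_\beta$ holds iff $\alpha\le\beta$. Chaining the equivalences, $|S|_{\Pi^1_1}\le|T|_{\Pi^1_1}$ iff $\varepsilon_\alpha\le\varepsilon_\beta$ iff $\alpha\le\beta$ iff $\rank^{\Sigma^1_1}_{\Pi^1_1}(S)\le\rank^{\Sigma^1_1}_{\Pi^1_1}(T)$, which is exactly the assertion.

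There is essentially no obstacle here; the substance is entirely in the quoted theorem, and the supplementary input --- monotonicity of $\xi\mapsto\varepsilon_\xi$ --- is elementary. The only point deserving a sentence is the existence of the ranks, which rests on well-foundedness of $\prec^{\Sigma^1_1}_{\Pi^1_1}$ (the first item of the displayed theorem block immediately above the statement); once that is granted, the equivalence is a formality, so the proof can be given in a couple of lines.
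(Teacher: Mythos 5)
Your proof is correct and matches the paper's intent exactly: the paper simply states ``Hence we immediately get the following'' after quoting Pakhomov--Walsh \cite[Theorem 5.20]{PakhomovWalsh2021Reflection}, leaving the transport through the strictly increasing enumeration $\xi\mapsto\varepsilon_\xi$ implicit, which is precisely the step you spell out.
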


\section{Pseudodilators: A transition to growth and climax} \label{Section: Pseudodilators}
Recursive predilators that are not dilators appear frequently when we handle $\Sigma^1_2$-formulas. Thus, let us name this concept separately.
\begin{definition}
    A predilator $D$ is a \emph{pseudodilator} if it is not a dilator. For a pseudodilator $D$, the \emph{climax of $D$} is the least ordinal $\alpha$ such that $D(\alpha)$ is not well-founded. We denote the climax of $D$ by $\Clim(D)$.
\end{definition}
Pseudodilators can be used to represent ordinals via their climax. For example, we can find a recursive pseudodilator whose climax is $\omega_1^\CK$, which is impossible to represent via a recursive well-ordering:
\begin{example}
    Let $\phi(i)$ be a universal $\Pi^1_1$-formula \added{with a natural number variable $i$}; That is, for each $\Pi^1_1$-sentence $\sigma$ we can find a natural number $n$ such that $\phi(n)$ and $\sigma$ are equivalent.
    By the Kleene normal form theorem, we can find a recursive well-order $\alpha_i$ such that $\phi(i)\lr \WO(\alpha_i)$ for every natural number $i$. Then we can see that the family $\lag (\alpha_i\to\omega^*) \mid i<\omega\rag$ of dilators is also recursive, where $\omega^*$ is $\omega$ with the reversed order. Notice that if $\alpha_i$ is well-founded, then $(\alpha_i\to\omega^*)$ is a pseudodilator of climax $\alpha_i$.

    Now define $D_i = (\alpha_i\to\omega^*)\land C_{\alpha_i}$,
    where $C_x$ is a constant predilator with value $x$, i.e., $C_x(\alpha)=x$ for all $\alpha$.
    Then take $D$ as the disjunctive sum of all $D_i$s.
    Since the enumeration $\lag D_i\mid i<\omega\rag$ is a recursive enumeration of recursive predilators, $D$ is a recursive predilator. Observe that
    \begin{enumerate}
        \item If $\alpha_i$ is ill-founded, then $D_i(\gamma)$ is ill-founded regardless of $\gamma$. Thus, $D_i$ is a pseudodilator of climax 0.
        \item If $\alpha_i$ is well-founded, then $D_i(\gamma)$ is ill-founded iff $(\alpha_i\to\omega^*)(\gamma)$ is ill-founded, which is equivalent to $\alpha_i\le \gamma$. Hence $D_i$ is a pseudodilator of climax $\alpha_i$.
    \end{enumerate}
    Hence, by \autoref{Lemma: Disjunctive join of two predilators} again, $D(\gamma)$ is ill-founded iff $\gamma\ge \alpha_i$ for all $i$ when $\alpha_i$ is well-founded. Hence $D$ is a recursive predilator whose climax is $\omega_1^\CK$!
\end{example}

However, there is a limit to expressing ordinals in terms of the climax of a recursive pseudodilator:
In fact, if $D$ is a $\Sigma^1_2$-\added{singleton}, then its climax must be less than $\delta^1_2$:
\begin{proposition}
    Let $D$ be a $\Sigma^1_2$-\added{singleton} pseudodilator. Then $\Clim(D)<\delta^1_2$. In addition, the supremum of all $\Clim(D)$ for recursive pseudodilators $D$ is $\delta^1_2$.
\end{proposition}
\begin{proof}
    The statement `$D$ is a pseudodilator' is stated as follows:
    \begin{equation} \label{Formula: D is a pseudodilator}
        \exists^1 \alpha [\WO(\alpha)\land \lnot\WO(D(\alpha))].
    \end{equation}
    Since $D$ is a $\Sigma^1_2$-\added{singleton}, $\lnot\WO(D(\alpha))$ is also $\Sigma^1_2$ because it is equivalent to
    \begin{equation*}
        \exists^1 E \phi(E) \land \bigl[\exists^1 f \forall^0 n [f(n)\in E(\alpha) \land E(\alpha)\vDash f(n) > f(n+1)]\bigr].
    \end{equation*}
    Here \added{$\phi$ is a $\Sigma^1_2$-formula witnessing $D$ being a $\Sigma^1_2$-singleton}, the innermost bracketed formulas are arithmetical, and $\forall n$ is a universal quantifier over the natural numbers. Hence, we can reduce the above formula to a $\Sigma^1_2$-formula.
    Thus \eqref{Formula: D is a pseudodilator} also holds over $L_{\delta^1_2}$ by Shoenfield absoluteness theorem, \autoref{Proposition: Characterizing delta 1 n}, and \autoref{Proposition: Reducing Pi 1 1 and Sigma 1 2 into L}. This shows $\Clim(D)<\delta^1_2$. 
    
    To show $\delta^1_2$ is the supremum of all $\Clim(D)$ for a recursive $D$, fix an ordinal $\alpha<\delta^1_2$ and its $\Sigma_1$-definition $\phi(x)$ over $L$. We follow an argument through $\beta$-preproof presented in \cite[Theorem 28]{AguileraPakhomov2023Pi12}: Consider the $\beta$-preproof $P=\{P(\xi)\mid\xi\in\Ord\}$ trying to show the conjunction of the axioms of $\mathsf{KPi}$, a theory $\KP$ plus a proper class of admissible ordinals with the following:
    \begin{enumerate}
        \item $V=L$ holds.
        \item $\lnot\exists x \phi(x)$.
    \end{enumerate}
    Then $P$ gives a recursive predilator.
    Since $T = \mathsf{KPi} + (V=L) + \exists x \phi(x)$ has a transitive model (namely $L_{\delta^1_2}$), so $P$ is a pseudodilator.
    Furthermore, $P(\xi)$ is well-founded for $\xi<\alpha$ since no model of $T$ of height less than $\alpha$. Hence $\Clim(P)\ge \alpha$.
\end{proof}

It is well-known that every ordinal below $\omega_1^\CK$ has an isomorphic recursive well-order. Similarly, we can ask if every ordinal below $\delta^1_2$ is a climax of a recursive pseudodilator.
However, the proof of \cite[Theorem 24]{AguileraPakhomov2023Pi12} implies there is no recursive pseudodilator $D$ such that $\alpha = \Clim(D)$ if $\alpha$ is \emph{parameter-free $\Sigma^1_1$-reflecting} and is either admissible or a limit of admissibles. The case when $\alpha$ is \emph{neither} an admissible nor a limit admissible is open:
\begin{question}
    Can we characterize ordinals $\alpha < \delta^1_2$ such that $\alpha = \Clim(D)$ for some recursive pseudodilator $D$?
\end{question}
\added{Ressayre \cite[Proposition III.7]{Ressayre1982BoundingGRF} states that ``$\alpha = \Clim(D)$ for a recursive pseudodilator $D$'' is equivalent to what he called the \emph{$\beta^-$-definability of $\alpha$}. However, he only proves one direction of the equivalence, namely, every $\beta^-$-definable ordinal is a climax of a pseudodilator.}

One may ask why we do not use semidilators instead of predilators when we define pseudodilators. The reason is that non-pre-semidilators have trivial climaxes, which make them less interesting.
The following result follows from the proof of \autoref{Proposition: Dilator is a predilator}:
\begin{proposition} \label{Proposition: Climax is a semidilator is trivial} \pushQED{\qed}
    Suppose that $D$ is a semidilator that is not a predilator. Then $\Clim(D)<\omega\cdot 2$. \qedhere 
\end{proposition}

The following notion describes how to compare the climaxes of two pseudodilators:
\begin{definition}
    Let $D_0$ and $D_1$ be two pseudodilators. We say \emph{$D_0$ grows over $D_1$} if $\Clim(D_0)\le \Clim(D_1)$. Equivalently, $D_0$ grows over $D_1$ if the following holds:
    \begin{equation*}
        \Grow(D_0,D_1)\equiv \forall^1 \alpha \bigl[[\WO(\alpha)\land \WO(D_0(\alpha))]\to \WO(D_1(\alpha))\bigr].
    \end{equation*}
\end{definition}
Note that if $D_0$ and $D_1$ are recursive, then $\Grow(D_0,D_1)$ is $\Pi^1_2$.
The choice of the terminology `$D_0$ grows over $D_1$' may be awkward because it states $\Clim(D_0) \le \Clim(D_1)$ and not the \added{reverse}. The reason of the current choice is that if $\Clim(D_0)\le \Clim(D_1)$, then $D_0(\alpha)$ `diverges' to the ill-founded state earlier than $D_1(\alpha)$. This means we may regard $D_0$ `grows' faster than $D_1$, so it diverges sooner. 

In the case of a well-order, we can effectively construct its successor. We can ask the same for pseudodilators: That is, for a given pseudodilator $D$, can we effectively construct a new pseudodilator $D'$ such that $\Clim(D') = \Clim(D)+1$? One way to do this is to consider the following predilator:
\begin{equation*}
    {\textstyle\int}D(\alpha) = {\textstyle \sum}_{i\in\field \alpha} D(\alpha\restricts i),
\end{equation*}
where $\alpha\restricts i$ is the well-order given from $\alpha$ by restricting its field to $\{j\in\field(\alpha)\mid j<_\alpha i\}$.
Let us formulate the construction of $D'$ from $D$ more formally, in a way that the construction works over $\ACA_0$.
\begin{lemma}[$\ACA_0$] \label{Lemma: The successor pseudodilator}
    Let $D$ be a predilator. Then we can construct a predilator $\int D$ such that the following holds: For a well-order $\alpha$,
    \begin{enumerate}
        \item If $D(\alpha)$ is ill-founded, then so is $\int D(\alpha+1)$.
        \item If $D(\alpha\restricts i)$ is well-founded for all $i\in\field(\alpha)$, then so is $\int D(\alpha)$.
        \item If $D$ is recursive, then so is $\int D$.
    \end{enumerate}
\end{lemma}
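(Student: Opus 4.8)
The plan is to realise the formula ${\textstyle\int}D(\alpha)=\sum_{i\in\field(\alpha)}D(\alpha\restricts i)$ quoted just above as an honest predilator. The quickest route is to first build $\int D$ as a functor with support, i.e.\ as an F\nobreakdash-predilator in the sense of \autoref{Section: Dilators}, and then read off the associated denotation system. Concretely, for a linear order $X$ let $\int D(X)$ be the ordered sum of the family $\lag D(X\restricts i)\mid i\in\field(X)\rag$: its underlying set consists of pairs $(i,y)$ with $i\in\field(X)$ and $y\in D(X\restricts i)$, and $(i,y)<(i',y')$ iff $i<_X i'$, or $i=i'$ and $y<_{D(X\restricts i)}y'$. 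For a strictly increasing $f\colon X\to Y$ note that $f$ restricts to a strictly increasing $f_i\colon X\restricts i\to Y\restricts f(i)$, and set $\int D(f)(i,y)=(f(i),D(f_i)(y))$. Define $\supp_X(i,y)=\{i\}\cup\{\,x\mid x\in\supp^D_{X\restricts i}(y)\,\}$, the $D$-support of $y$ together with the index $i$.

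I would then check, all inside $\ACA_0$: that $\int D$ is a functor and $\supp$ a natural transformation $\int D\to[\cdot]^{<\omega}$; that the support condition \eqref{Formula: Support condition} holds, using the equality $\ran(f)\cap(Y\restricts f(i))=\ran(f_i)$ together with the support condition for $D$; and that $\int D$ is monotone, which follows from $D$ being a predilator (via \autoref{Lemma: Monotone semidilator iff predilator}) by splitting a pair $f\le g$ into the sub-cases $f(i)<_Y g(i)$ and $f(i)=g(i)$. Granting this, \autoref{Proposition: F-semidilator recovery} (together with \autoref{Lemma: Monotone semidilator iff predilator}) yields a predilator $\widehat{\int D}$ isomorphic to $\int D$ as a functor; its terms are exactly the $D$-terms with arity increased by one, with $(t,a)$ read as the element $t(a\setminus\{\max a\})$ of the $(\max a)$-th summand, and its comparison rule is obtained from that of $D$ by a recursive case split according to whether the images of the top elements of the two arities coincide. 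In particular item (3) is immediate: if $D$ is recursive, this description of $\widehat{\int D}$ is recursive.

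Items (1) and (2) are then short arguments about ordered sums. For (1), $\int D(\alpha+1)$ has $D((\alpha+1)\restricts\alpha)=D(\alpha)$ as its final summand, so an infinite $<_{D(\alpha)}$-descending sequence is an infinite $<_{\int D(\alpha+1)}$-descending sequence. For (2), given an infinite descent $\lag(i_n,y_n)\mid n<\omega\rag$ in $\int D(\alpha)$, the indices satisfy $i_0\ge_\alpha i_1\ge_\alpha\cdots$; since $\WF(\alpha)$ holds they are eventually constant (the steps at which the index strictly drops form an arithmetically definable subsequence along which the indices strictly descend, so if infinitely many occurred we would contradict $\WF(\alpha)$), and from that tail we extract an infinite descent inside a single $D(\alpha\restricts i)$, contradicting its well-foundedness.

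The main obstacle is the predilator verification, and in particular — if one prefers to argue directly with denotation systems rather than through the functorial repackaging — the transitivity clause. There one must track, for an IU diagram on three terms $t_0,t_1,t_2$, the images $p_0,p_1,p_2$ of the three top elements inside the largest object, observe that any strict inequality among them propagates to $p_0<p_2$ and hence $t_0<_{\bbD(0,2)}t_2$, and, in the residual case $p_0=p_1=p_2$, check that deleting this common (necessarily maximal) element from every object of the IU diagram again yields an IU diagram, that this deletion is compatible with forming $\bbD(i,j)$ and with merges, and then invoke the transitivity of $D$. The bookkeeping that ``delete the common top'' sends arity diagrams to arity diagrams and commutes with the diagram operations — and that the hypothesis forces the top elements of all objects of the IU diagram, not just the three generators, to agree — is the only genuinely technical point; everything else is routine unwinding of definitions.
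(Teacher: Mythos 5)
Your proposal is correct and constructs the same predilator as the paper: the $\int D$-terms are the $D$-terms with arity raised by one, the extra (top) argument serving as the summand index, and items (1)--(3) are argued exactly as in the paper's proof (final summand embedding for (1), eventual stabilization of indices for (2), effectiveness of the term translation for (3)). The only difference is packaging --- you arrive at the denotation system by first building $\int D$ as an F-predilator with support and then invoking \autoref{Proposition: F-semidilator recovery} and \autoref{Lemma: Monotone semidilator iff predilator}, whereas the paper writes the term system and comparison rule directly and is rather terser about verifying the monotonicity condition, a check you carry out explicitly and correctly.
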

\begin{proof}
    Let $D$ be a predilator. Let us recall the definition of $\int D$: It comprises a term $t^{\int}$ of arity $(\arity t+1)$. Let us fix a linear order $\alpha$, then for given $D$-terms $s$, $t$, and elements $\vec{\xi},\vec{\eta},\xi',\eta'$ of $\alpha$ such that $\vec{\xi}<\xi'$ and $\vec{\eta}<\eta'$,
    \begin{equation*} \textstyle
        \int D\vDash s^{\int}(\vec{\xi},\xi') < t^{\int}(\vec{\eta},\eta') \iff \text{either } \xi'<\eta' \text{ or }[\xi'=\eta' \text{ and } D(\alpha)\vDash s(\vec{\xi}) < t(\vec{\eta})].
    \end{equation*}

    On the one hand, we can see that there is an embedding from $D(\alpha)$ to $(\int D)(\alpha+1)$ given by $t(\vec{\xi}) \mapsto t^{\int}(\vec{\xi},\alpha)$.
    Therefore if $D(\alpha)$ is ill-founded, then so is $\int D(\alpha+1)$. 
    On the other hand, if $D(\alpha\restriction i)$ is well-founded for every $i\in \field(\alpha)$, then we can find $D$-terms $s_0,s_1,\cdots$ and ordinals $\vec{\xi}_0,\vec{\xi}_1,\cdots\in \alpha$, $\eta_0,\eta_1,\cdots\in \alpha$ \added{such that $\vec{\xi}_i <\eta_i$ for every $i$, we have}
    \begin{equation*}
        \textstyle \int D(\alpha)\vDash 
        s_0^{\int}(\vec{\xi}_0,\eta_0) >
        s_1^{\int}(\vec{\xi}_1,\eta_1) >
        s_2^{\int}(\vec{\xi}_2,\eta_2) > \cdots .
    \end{equation*}
    Since \added{$\alpha$ is well-founded}, there must be $n$ such that $\eta_n = \eta_{n+1} = \eta_{n+2} = \cdots$. For such $n$, we get
    \begin{equation*}
        D(\alpha\restriction \eta_n) \vDash 
        s_0(\vec{\xi}_n) >
        s_1(\vec{\xi}_{n+1}) >
        s_2(\vec{\xi}_{n+2}) > \cdots ,
    \end{equation*}
    contradicting with that $D(\alpha\restriction \eta_n)$ is well-founded.
    Clearly, if $D$ is recursive, then so is $\int D$.
\end{proof}

\subsection{Proof-theoretic pseudodilator}
We will define the \emph{proof-theoretic pseudodilator} $\|T\|_{\Sigma^1_2}$ for a given theory $T$, which is characterized by the following property: For every well-order $\alpha$,
\begin{equation} \label{Formula: Main property of proof-theoretic pseudodilator}
    [\forall^0 D\in \RecPreDil (S\vdash \lnot\Dil(D) \implies \lnot \WO(D(\alpha))) \iff \lnot \WO(\|T\|_{\Sigma^1_2}(\alpha)]
\end{equation}
where $\RecPreDil$ is the set of all recursive predilators. One may naively define $\|T\|_{\Sigma^1_2}$ by
\begin{equation*}
    \|T\|_{\Sigma^1_2} = {\textstyle \bigvee}\lag D \mid \text{$D$ is a recursive predilator such that } T\vdash \lnot\Dil(D)\rag.
\end{equation*}
However, this definition has a \added{fault}: $\|T\|_{\Sigma^1_2}$ is not necessarily recursive even if $T$ is \added{ since the assertion ``$D$ is a recursive predilator'' is $\Pi^0_2$}. We will resolve this issue by modifying our naive definition in the following way:

\begin{definition}
    Let $T$ be a $\Sigma^1_2$-sound extension of $\ACA_0$.
    A \emph{proof-theoretic pseudodilator} $\|T\|_{\Sigma^1_2}$ is defined as follows:
    Consider the enumeration of all pairs $\lag D, \pi\rag$, where $D$ is a recursive predilator and $\pi$ is a $T$-proof for $\lnot\Dil(D)$. For a given pair $\lag D,\pi\rag$, let $D_\pi$ be a recursive predilator obtained from $D$ by replacing all $D$-terms $t$ with $\lag \pi, t\rag$.
    Then define
    \begin{equation*}
        \|T\|_{\Sigma^1_2} = {\textstyle \bigvee}\lag D_\pi \mid D\in\RecPreDil \land \text{$\pi$ is a proof for $\lnot\Dil(D)$ from $T$}\rag.
    \end{equation*}
\end{definition}
If $T$ is recursive, then the statements `$\pi$ is a $T$-proof of a given statement' and the map $\lag D.\pi\rag\mapsto D_\pi$ are also recursive. Hence $\|T\|_{\Sigma^1_2}$ is also recursive.
Like proof-theoretic dilators, we can still define $\|T\|_{\Sigma^1_2}$ for $\Sigma^1_2$-unsound theories; We just do not know if $\|T\|_{\Sigma^1_2}$ is a pseudodilator or not in this case.

We need to check our proof-theoretic pseudodilator meets \eqref{Formula: Main property of proof-theoretic pseudodilator}:
\begin{lemma}[$\ACA_0$] \label{Lemma: Characterization of proof-theoretic pseudodilator}
    Let $D$ be a predilator and $T$ be a $\Sigma^1_2$-sound extension of $\ACA_0$.
    Then for a well-order $\alpha$, the following two are equivalent:
    \begin{enumerate}
        \item For every recursive predilator $D$ such that $T\vdash \lnot\Dil(D)$, $D(\alpha)$ is ill-founded, and
        \item $\|T\|_{\Sigma^1_2}(\alpha)$ is ill-founded.
    \end{enumerate}
\end{lemma}
\begin{proof}
    Fix a well-order $\alpha$.
    Suppose that for every $T$-provably recursive pseudodilator $D$, $D(\alpha)$ is ill-founded, and let $\pi$ be a $T$-proof for $D$ being a pseudodilator. Then $D$ and $D_\pi$ are isomorphic predilators, so $D_\pi(\alpha)$ is also ill-founded. Since $D$ and $\pi$ are arbitrarily, we have that $\|T\|_{\Sigma^1_2}(\alpha)$ is also ill-founded. The remaining direction can be shown similarly.
\end{proof}

Proof-theoretic pseudodilator of $T$ is associated with the $\Sigma^1_2$-reflection of $T$:
\begin{proposition}[$\Sigma^1_2\mhyphen\AC_0$] \label{Proposition: Sigma 1 2 reflection and pseudodilators}
    Let $T$ be a $\Sigma^1_2$-sound extension of $\ACA_0$.  
    Then $\RFN[\Sigma^1_2](T)$ iff $\lnot\Dil(\|T\|_{\Sigma^1_2})$. 
\end{proposition}
\begin{proof}
    For one direction, suppose that $\RFN[\Sigma^1_2](T)$ holds. Then if $D$ is a recursive predilator such that $T\vdash\lnot\Dil(D)$, then $D$ is a pseudodilator. That is, if $T\vdash \lnot\Dil(D)$, then we can find a well-order $\alpha$ such that $D(\alpha)$ is ill-founded. 
    Then by $\Sigma^1_2\mhyphen\AC_0$ applied to the sentence
    \begin{equation*}
        \added{\forall^0 D}\in \mathcal{X} \exists^1 \alpha \bigl[\WO(\alpha)\land \lnot\WO(D(\alpha))\bigr]
    \end{equation*}
    for the set $\mathcal{X}=\bigl\{D\in \RecPreDil \bigm| T\vdash \lnot\Dil(D)\bigr\}$, which can be viewed as a subset of $\mathbb{N}$, we have a set $A$ such that
    \begin{equation*}
        \added{\forall^0 D}\in \mathcal{X} \bigl[\WO\bigl((A)_D\bigr) \land \lnot\WO\bigl(D((A)_D)\bigr)\bigr].
    \end{equation*}
    Now take $\gamma = \sum_{D\in \mathcal{X}}(A)_D$. Then $\gamma$ is a well-order embedding all $(A)_D$ for $D\in \mathcal{X}$, so $D(\gamma)$ is ill-founded for every recursive $D$ such that $T\vdash \lnot\Dil(D)$.
    Hence by \autoref{Lemma: Characterization of proof-theoretic pseudodilator}, $\|T\|_{\Sigma^1_2}$ is also a pseudodilator.

    For the remaining direction, suppose that $\|T\|_{\Sigma^1_2}$ is a pseudodilator. By the completeness of dilators, every $\Sigma^1_2$-sentence is equivalent to $\lnot\Dil(D)$ for some recursive predilator $D$, and this is a theorem of $\ACA_0$. Thus, we can identify a $\Sigma^1_2$-sentence with $\lnot\Dil(D)$ for some recursive predilator $D$.

    Now assume that $T\vdash\lnot\Dil(D)$. Since $\|T\|_{\Sigma^1_2}$ is a pseudodilator, we can find a well-order $\alpha$ such that $\lnot\WO(\|T\|_{\Sigma^1_2}(\alpha))$ holds. But by \autoref{Lemma: Characterization of proof-theoretic pseudodilator}, we have $\lnot\WO(D(\alpha))$. Hence, $D$ is not a dilator.
\end{proof}

\subsection{\texorpdfstring{$\Sigma^1_2$}{Sigma 1 2}-proof-theoretic ordinal}
\begin{definition} \label{Definition: s12 ordinal}
    Let $T$ be a $\Sigma^1_2$-sound extension of $\ACA_0$. We define $s^1_2(T)$ as follows:
    \begin{equation*}
        s^1_2(T) = \sup\{\Clim(D) \mid \text{$D$ is a recursive predilator and }T\vdash \lnot\Dil(D)\}.
    \end{equation*}
\end{definition}
\added{We will see in the next section that the $\Sigma^1_2$-proof-theoretic ordinal has the role of proof-theoretic ordinal for $\Sigma^1_2$-sentences.}
The following is immediate by \autoref{Lemma: Characterization of proof-theoretic pseudodilator}:
\begin{lemma} \pushQED{\qed}
    Let $T$ be a $\Sigma^1_2$-sound extension of $\ACA_0$. Then $\Clim(\|T\|_{\Sigma^1_2}) = s^1_2(T)$. \qedhere 
\end{lemma}

\begin{lemma} \label{Lemma: s 1 2 is the strict supremum}
    Let $T$ be a $\Sigma^1_2$-sound extension of $\ACA_0$.
    Then $s^1_2(T)$ is the strict supremum of all $\Clim(D)$ for $T$-provably recursive pseudodilators $D$. That is, if $T\vdash \lnot\Dil(D)$, then $\Clim(D)<s^1_2(T)$.
\end{lemma}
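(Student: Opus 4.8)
The plan is to show that for every recursive predilator $D$ with $T\vdash\lnot\Dil(D)$, the ``successor pseudodilator'' $\int D$ of \autoref{Lemma: The successor pseudodilator} is again a $T$-provably recursive pseudodilator and satisfies $\Clim(\int D)=\Clim(D)+1$. Since $\Clim(\int D)$ then contributes to the supremum defining $s^1_2(T)$, this yields $\Clim(D)<\Clim(D)+1=\Clim(\int D)\le s^1_2(T)$, which is exactly the claim.

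First I would record that $\int D$ is a recursive predilator: this is item (3) of \autoref{Lemma: The successor pseudodilator}. Next I would check $T\vdash\lnot\Dil(\int D)$. By hypothesis $T$ proves $\exists^1\alpha[\WO(\alpha)\land\lnot\WO(D(\alpha))]$, and, reasoning inside $\ACA_0$ (hence inside $T$), from such an $\alpha$ we pass to the well-order $\alpha+1$, and item (1) of \autoref{Lemma: The successor pseudodilator} gives $\lnot\WO(\int D(\alpha+1))$. Thus $T\vdash\exists^1\beta[\WO(\beta)\land\lnot\WO(\int D(\beta))]$, i.e.\ $T\vdash\lnot\Dil(\int D)$; by $\Sigma^1_2$-soundness of $T$, $\int D$ is genuinely a pseudodilator and $\Clim(\int D)$ is defined.

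For the climax computation, write $\beta=\Clim(D)$. If $\alpha$ is a well-order with $\alpha\le\beta$, then for every $i\in\field(\alpha)$ the proper initial segment $\alpha\restricts i$ is strictly less than $\beta$, so by minimality of $\beta$ the order $D(\alpha\restricts i)$ is well-founded; item (2) of \autoref{Lemma: The successor pseudodilator} then gives $\WO(\int D(\alpha))$. Hence $\int D(\alpha)$ is well-founded for all $\alpha\le\beta$, so $\Clim(\int D)\ge\beta+1$. Conversely $D(\beta)$ is ill-founded, so item (1) gives $\lnot\WO(\int D(\beta+1))$, whence $\Clim(\int D)\le\beta+1$. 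Therefore $\Clim(\int D)=\Clim(D)+1$. Combining with $T\vdash\lnot\Dil(\int D)$ and \autoref{Definition: s12 ordinal}, we get $s^1_2(T)\ge\Clim(\int D)=\Clim(D)+1>\Clim(D)$, as desired.

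The argument is essentially routine once \autoref{Lemma: The successor pseudodilator} is in hand; the only point requiring a little care is the verification $\Clim(\int D)=\Clim(D)+1$, where one must apply item (2) of the lemma to the \emph{proper initial segments} of a well-order $\alpha\le\Clim(D)$ and invoke the minimality of $\Clim(D)$ to know those segments are sent to well-orders by $D$. I do not anticipate any genuine obstacle beyond this bookkeeping.
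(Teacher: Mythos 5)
Your proof is correct and follows essentially the same route as the paper: construct $\int D$ via the successor pseudodilator lemma, show $T\vdash\lnot\Dil(\int D)$ by reasoning inside $T$ with item (1), and conclude $\Clim(\int D)=\Clim(D)+1$. You are more careful than the paper in verifying the climax identity (using the minimality of $\Clim(D)$ on proper initial segments and item (2)), whereas the paper simply asserts that the successor lemma "ensures" it; that added detail is welcome but does not change the argument.
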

\begin{proof}
    Let $D$ be a recursive pseudodilator such that $T\vdash \lnot\Dil(D)$.
    Then by \autoref{Lemma: The successor pseudodilator}, we can construct a new recursive predilator $\int D$. 
    
    First, we need to show $T\vdash \lnot\Dil\bigl(\int D\bigr)$.
    Reasoning over $T$, let $\alpha$ be a well-order such that $\lnot\WO(D(\alpha))$. Then by \autoref{Lemma: The successor pseudodilator}, $\lnot\WO\bigl(\int D(\alpha+1)\bigr)$. Hence $\int D$ is also a pseudodilator.
    Furthermore, \autoref{Lemma: The successor pseudodilator} ensures $\Clim\bigl(\int D\bigr) = \Clim D + 1$. Hence we get $\Clim(D) < \Clim\bigl(\int D\bigr) < s^1_2(T)$.
\end{proof}

It is known that the supermum of all $|T|_{\Pi^1_1}$ for $\Pi^1_1$-sound r.e.\ extension of $\ACA_0$ is $\omega_1^\CK$. What about the supremum of $s^1_2(T)$? The following theorem \added{answers}: 
\begin{theorem}[Aguilera-Pakhomov {\cite[Theorem 33]{AguileraPakhomov2023Pi12}}] \pushQED{\qed}
    $\delta^1_2$ is the supremum of all $s^1_2(T)$ for $\Sigma^1_2$-sound r.e.\ extension of $\ACA_0$. \qedhere 
\end{theorem}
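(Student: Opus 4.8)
The plan is to prove the two inequalities $s^1_2(T)\le\delta^1_2$ for every $\Sigma^1_2$-sound r.e.\ extension $T$ of $\ACA_0$, and $\sup\{s^1_2(T)\mid T\text{ as above}\}\ge\delta^1_2$; together these give equality. The second inequality will be obtained by exhibiting, for each $\alpha<\delta^1_2$, a $\Sigma^1_2$-sound r.e.\ extension $T_\alpha$ of $\ACA_0$ with $s^1_2(T_\alpha)\ge\alpha$.

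\emph{Upper bound.} Suppose $T$ is $\Sigma^1_2$-sound and $T\vdash\lnot\Dil(D)$ for a recursive predilator $D$. Since $\Dil(D)$ is $\Pi^1_2$, the sentence $\lnot\Dil(D)$ is $\Sigma^1_2$, so $\Sigma^1_2$-soundness of $T$ forces it to be true; hence $D$ is a recursive pseudodilator, and the proposition asserting that every $\Sigma^1_2$-definable pseudodilator has climax below $\delta^1_2$ gives $\Clim(D)<\delta^1_2$. Taking the supremum over all such $D$ yields $s^1_2(T)\le\delta^1_2$. This even sharpens to a strict inequality when $T$ is r.e.: then $|T|_{\Sigma^1_2}$ is a recursive predilator, and by \autoref{Proposition: Sigma 1 2 reflection and pseudodilators} together with the $\Sigma^1_2$-soundness of $T$ it is a pseudodilator, whence $s^1_2(T)=\Clim(|T|_{\Sigma^1_2})<\delta^1_2$; but this refinement is not needed for the theorem.

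\emph{Lower bound.} Fix $\alpha<\delta^1_2$. By the second assertion of the same proposition, the climaxes of recursive pseudodilators are cofinal in $\delta^1_2$, so we may fix a recursive pseudodilator $P$ with $\Clim(P)\ge\alpha$. Put $T_\alpha=\ACA_0+\lnot\Dil(P)$. Since $P$ is genuinely a pseudodilator, $\lnot\Dil(P)$ is true, so all axioms of $T_\alpha$ hold in the standard model; hence $T_\alpha$ is a sound — in particular $\Sigma^1_2$-sound — r.e.\ extension of $\ACA_0$. As $T_\alpha\vdash\lnot\Dil(P)$ and $P$ is a recursive predilator, the definition of $s^1_2$ and \autoref{Lemma: s 1 2 is the strict supremum} give $s^1_2(T_\alpha)>\Clim(P)\ge\alpha$. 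Letting $\alpha$ approach $\delta^1_2$ yields $\sup_T s^1_2(T)\ge\delta^1_2$, and combined with the upper bound this proves $\sup_T s^1_2(T)=\delta^1_2$.

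I expect essentially no genuine obstacle: the hard step — producing recursive pseudodilators of arbitrarily large climax below $\delta^1_2$ — is already carried out earlier via the $\beta$-preproof construction for $\mathsf{KPi}+(V=L)$ together with a $\Sigma_1$-definition of the target ordinal, and is imported here through the cited proposition. The only points requiring any attention are routine: that $\ACA_0+\lnot\Dil(P)$ remains $\Sigma^1_2$-sound, which is immediate once $\lnot\Dil(P)$ is known to be true; and that $\Sigma^1_2$-provability of $\lnot\Dil(D)$ certifies $D$ is an honest pseudodilator, which is immediate from $\Sigma^1_2$-soundness. If one wanted the supremum witnessed by ``natural'' theories rather than the ad hoc $\ACA_0+\lnot\Dil(P)$, the extra work would be to realize $P$ (or an embedded copy of it) as the proof-theoretic pseudodilator of such a theory, but the statement as given does not demand this.
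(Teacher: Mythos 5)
Your proof is correct. In fact, the paper does not reproduce a proof of this theorem at all: it is stated with an external citation to Aguilera--Pakhomov and closed immediately with \texttt{\textbackslash qedhere}, so there is no in-paper argument to compare against. What you have done is exactly the natural derivation that the surrounding text sets up: the unnumbered proposition just above (``Let $D$ be a $\Sigma^1_2$-definable pseudodilator. Then $\Clim(D)<\delta^1_2$. In addition, the supremum of all $\Clim(D)$ for recursive pseudodilators $D$ is $\delta^1_2$'') carries all the real content, namely the $\beta$-preproof construction producing recursive pseudodilators of arbitrarily large climax below $\delta^1_2$, and the Shoenfield argument bounding climaxes. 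Your upper bound correctly uses $\Sigma^1_2$-soundness to turn $T$-provable non-dilatorhood into actual non-dilatorhood and then applies the first half of that proposition; your lower bound correctly packages the second half into the ad hoc theory $\ACA_0+\lnot\Dil(P)$, which is r.e.\ and sound because $\lnot\Dil(P)$ is true. The aside that $s^1_2(T)=\Clim(|T|_{\Sigma^1_2})<\delta^1_2$ strictly for r.e.\ $T$ is also right and uses \autoref{Proposition: Sigma 1 2 reflection and pseudodilators} correctly, though as you note it is not needed. No gaps.
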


The following result shows the $s^1_2(T)$ for some theories $T$:
\begin{theorem}[Aguilera-Pakhomov {\cite[Theorem 36]{AguileraPakhomov2023Pi12}}] \phantom{a}
    \pushQED{\qed} \label{Theorem: s 1 2 for some theories}
    \begin{enumerate}
        \item $s^1_2(\ACA_0) = s^1_2(\KP) = \omega_1^\CK$.
        \item $s^1_2(\Pi^1_1\mhyphen \CA_0) = \omega_\omega^\CK$.
        \item $s^1_2(\Pi^1_2\mhyphen \CA_0) = \sup_{n<\omega} \sigma_n$, where $\sigma_n$ is the least ordinal $\alpha$ such that there are $\alpha_0<\cdots<\alpha_{n-1}=\alpha$ satisfying $L_{\alpha_0}\prec_{\Sigma_1}\cdots\prec_{\Sigma_1} L_{\alpha_{n-1}}$.\footnote{The $\Sigma^1_2$-proof-theoretic ordinal for $\Pi^1_2\mhyphen\CA_0$ presented in \cite{AguileraPakhomov2023Pi12} is incorrect, and the value presented in this paper is due to Aguilera (Private communication).}
        \qedhere 
    \end{enumerate}
\end{theorem}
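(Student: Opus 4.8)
The plan is to prove each of the three clauses by establishing matching lower and upper bounds, routing everything through the set-theoretic picture of $\Sigma^1_2$-sentences. Recall that every $\Sigma^1_2$-sentence is $\ACA_0$-provably equivalent to a $\Sigma_1$-statement over $L_{\delta^1_2}$ (\autoref{Proposition: Reducing Pi 1 1 and Sigma 1 2 into L}), so that a $T$-provable instance $\lnot\Dil(D)$ unwinds to a $\Sigma_1$-assertion about $L$ whose least witnessing level is, up to the growth rate of the constructible hierarchy, exactly $\Clim(D)$; recall also that $s^1_2(T)=\Clim(|T|_{\Sigma^1_2})$ and, over $\Sigma^1_2\mhyphen\AC_0$, that $\lnot\Dil(|T|_{\Sigma^1_2})$ is equivalent to $\RFN[\Sigma^1_2](T)$ (\autoref{Proposition: Sigma 1 2 reflection and pseudodilators}). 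So computing $s^1_2(T)$ amounts to locating the least level of $L$ at which the set-theoretic translation of $\RFN[\Sigma^1_2](T)$ first becomes true.

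For the upper bound I would show that if $D$ is a recursive predilator with $T\vdash\lnot\Dil(D)$, then $D(\alpha)$ is already ill-founded for some $\alpha$ below the stated ordinal $\lambda_T$; then $s^1_2(T)\le\lambda_T$ follows by taking the supremum over all such $D$. For this, for each $T$ I would exhibit a level $L_{\lambda_T}$ whose reals form a $\beta$-correct $\omega$-model of (the translation of) $T$, with $\mathrm{Ord}\cap L_{\lambda_T}=\lambda_T$: for $\ACA_0$ and $\KP$ the least admissible level $L_{\omega_1^\CK}$, which models $\KP$ and whose reals, the hyperarithmetic sets, already model $\ACA_0$ (this is exactly why the much stronger $\KP$ yields the same value); for $\Pi^1_1\mhyphen\CA_0$ the level $L_{\omega_\omega^\CK}$, whose reals are closed under the hyperjump (no smaller level is, since $L_\gamma$ with $\gamma<\omega_n^\CK$ cannot reach the $n$-fold hyperjump $\calO_n$); and for $\Pi^1_2\mhyphen\CA_0$ the least level of $L$ closed under passing to a $\Sigma_1$-elementary extension, whose height is $\sup_n\sigma_n$ by directly unwinding the definition of $\sigma_n$ as the top of a length-$n$ tower $L_{\alpha_0}\prec_{\Sigma_1}\cdots\prec_{\Sigma_1}L_{\alpha_{n-1}}$. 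Since $T\vdash\lnot\Dil(D)$ forces the $\Sigma_1$-translation of $\lnot\Dil(D)$ to hold in this model, and the model is $\beta$-correct, one extracts a genuine well-order of order type $<\lambda_T$ on which $D$ is ill-founded.

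For the lower bound I would show, conversely, that for each $\alpha<\lambda_T$ there is a recursive pseudodilator $D_\alpha$ with $T\vdash\lnot\Dil(D_\alpha)$ and $\Clim(D_\alpha)\ge\alpha$, using the $\beta$-preproof technique of this section (following \cite[Theorem 28]{AguileraPakhomov2023Pi12}): one feeds in a set theory whose $\beta$-proof search $T$ provably carries out until the first level of $L$ past $\alpha$. For $\Pi^1_1\mhyphen\CA_0$ this is clean: it proves, for each standard $n$, the existence of $\calO_n$ and, by extracting well-founded parts, of a well-order of order type $\omega_{n+1}^\CK$, and the corresponding $\Sigma^1_2$-consequences yield recursive pseudodilators of climax $\omega_{n+1}^\CK$, whose supremum is $\omega_\omega^\CK$; for $\Pi^1_2\mhyphen\CA_0$ one uses that it proves the existence of $n$-fold $\Sigma_1$-elementary towers of $L$-levels, reaching climax $\sigma_n$ for each $n$. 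The hard part will be the upper bound and, inside it, the exact identification of $\lambda_T$: unlike the $\Pi^1_1$-case, $\Sigma^1_2$-soundness alone gives no bound on $\Clim(D)$, since a recursive predilator can be well-founded on every $\alpha<\omega_1^\CK$ yet fail to be a dilator (as with the recursive pseudodilator of climax $\omega_1^\CK$ constructed above), so one must genuinely extract ordinal content from the proof; and for $\ACA_0$ the matching lower bound is delicate precisely because its provable $\WO$-statements reach only $\varepsilon_0$, forcing the climax-$\omega_1^\CK$ pseudodilators to come from genuinely $\Sigma^1_2$ consequences via a reflection/self-reference argument rather than from provable well-orders. For clause (3) the two-directional match between finite $\Sigma_1$-elementary towers and the $\Sigma^1_2$-consequences of $\Pi^1_2\mhyphen\CA_0$ is the most subtle point, and is exactly where the correction noted in the footnote enters.
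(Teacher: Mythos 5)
Your overall two-sided strategy — upper bounds from a well-chosen transitive or $\beta$-correct model, lower bounds via the $\beta$-preproof technique of Aguilera--Pakhomov — is exactly the mechanism the paper itself deploys in the proof of \autoref{Proposition: s 1 2 for bar induction}, so the architecture is right. But there is a concrete gap in the upper bound for $\ACA_0$.

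You assert that the reals of $L_{\omega_1^\CK}$, i.e.\ the hyperarithmetic sets $\mathrm{HYP}$, form a $\beta$-correct $\omega$-model of $\ACA_0$. They do not. $\mathrm{HYP}$ is the minimum $\omega$-model of $\Sigma^1_1\mhyphen\AC_0$, but it is famously not a $\beta$-model: the Harrison linear order $H$ is a recursive ill-founded linear order with no hyperarithmetic descending chain, so $\mathrm{HYP}\vDash\WO(H)$ even though $H$ is ill-founded. Consequently, from $\mathrm{HYP}\vDash\lnot\Dil(D)$ you obtain a hyperarithmetic $\alpha$ with $\mathrm{HYP}\vDash\WO(\alpha)\wedge\lnot\WO(D(\alpha))$, but $\alpha$ may itself be a pseudo-well-ordering — it need not yield any genuine ordinal $<\omega_1^\CK$ at which $D$ has already ``climaxed.'' (For $\KP$ your argument can be salvaged by working directly in the transitive structure $L_{\omega_1^\CK}$: the $\Sigma_1$-translation of $\lnot\Dil(D)$ holding there produces an actual von Neumann ordinal $<\omega_1^\CK$ together with a genuine descending chain, and no second-order $\beta$-correctness is needed. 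But that route does not automatically cover $\ACA_0$.) The paper, in \autoref{Proposition: s 1 2 for bar induction}, is careful to use instead a $\beta$-model $M$ of height $\omega_1^\CK$ ``constructed from the hyperjump of $\emptyset$'' — a model properly containing $\mathrm{HYP}$ and in particular containing $\calO$, which is what furnishes the descending chains needed for $\beta$-correctness. Your identification of $L_{\omega_\omega^\CK}$ is fine precisely because closure under the hyperjump \emph{does} give $\beta$-correctness (the hyperjump of $X$ computes a descending chain through any ill-founded order coded by $X$), which is exactly what fails one level down at $L_{\omega_1^\CK}$. You should replace $\mathrm{HYP}$ by the minimal $\beta$-model (of $\ATR_0+\BI$) of height $\omega_1^\CK$ for the $\ACA_0$ upper bound. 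The lower-bound sketches are plausible but as you note they are the harder half, particularly for $\ACA_0$ where the $\beta$-preproof argument must be done with some care, and they are not yet proofs.
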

\begin{proposition} \label{Proposition: s 1 2 for bar induction}
    Let $\BI$ stand for Bar Induction schema (or Transfinite Induction for all second-order formulas.) Then we have the following:
    \begin{enumerate}
        \item $s^1_2(\ATR_0 + \BI) = \omega_1^\CK$.
        \item $s^1_2(\Pi^1_1\mhyphen \CA_0 + \BI) = \omega_\omega^\CK$.
    \end{enumerate}
\end{proposition}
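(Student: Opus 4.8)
The plan is, for each of the two theories, to pin the ordinal by a squeeze: a cheap lower bound from monotonicity of $s^1_2$, and a matching upper bound extracted from a carefully chosen $\beta$-model. Both $\ATR_0+\BI$ and $\Pi^1_1\mhyphen\CA_0+\BI$ are true in the full model $(\omega,\mathcal P(\omega))$, hence $\Sigma^1_2$-sound, so $s^1_2$ is defined for them by \autoref{Definition: s12 ordinal}. For the lower bounds, observe that $\ACA_0$ is a subtheory of $\ATR_0+\BI$, so every recursive predilator $D$ with $\ACA_0\vdash\lnot\Dil(D)$ also satisfies $\ATR_0+\BI\vdash\lnot\Dil(D)$; hence $s^1_2(\ACA_0)\le s^1_2(\ATR_0+\BI)$, and \autoref{Theorem: s 1 2 for some theories}(1) yields $\omega_1^\CK\le s^1_2(\ATR_0+\BI)$. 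The same reasoning with \autoref{Theorem: s 1 2 for some theories}(2) yields $\omega_\omega^\CK\le s^1_2(\Pi^1_1\mhyphen\CA_0+\BI)$.

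For the upper bounds I would first isolate two general facts. First, \emph{every $\beta$-model $M$ of $\ACA_0$ satisfies $\BI$}: if $M\models\WO(\prec)$ then $\prec$ is genuinely well-founded by the $\beta$-property, so if some formula $\varphi$ were progressive along $\prec$ in $M$ while $M\models\lnot\varphi(a_0)$ for some $a_0\in\field(\prec)$, then progressivity applied repeatedly would produce $a_0\succ a_1\succ a_2\succ\cdots$ with $M\models\lnot\varphi(a_n)$, an honest infinite $\prec$-descending sequence, contradicting well-foundedness. Second, \emph{if $T\supseteq\ACA_0$ has a $\beta$-model $M$ all of whose well-orders have order type less than $\kappa$, then $s^1_2(T)\le\kappa$}: given a recursive predilator $D$ with $T\vdash\lnot\Dil(D)$, we have $M\models\exists^1\alpha(\WO(\alpha)\land\lnot\WO(D(\alpha)))$; since $D$ is recursive the comparison relation of $D(\alpha)$ is arithmetic in $\alpha$, so by $\beta$-absoluteness the witness $\alpha\in M$ is genuinely well-founded (of order type $<\kappa$) and $D(\alpha)$ is genuinely ill-founded, whence $\Clim(D)<\kappa$; taking the supremum over all such $D$ gives $s^1_2(T)\le\kappa$.

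It then remains to exhibit the $\beta$-models. For $\ATR_0+\BI$: the hyperarithmetic reals $\mathsf{HYP}=\mathcal P(\omega)\cap L_{\omega_1^\CK}$ form a $\beta$-model of $\ATR_0$ (iterating an arithmetic operator along a hyperarithmetic well-order, necessarily of order type $<\omega_1^\CK$, produces a hyperarithmetic jump hierarchy), hence by the first fact they also model $\BI$; since all their well-orders have order type $<\omega_1^\CK$, the second fact gives $s^1_2(\ATR_0+\BI)\le\omega_1^\CK$, so $s^1_2(\ATR_0+\BI)=\omega_1^\CK$. For $\Pi^1_1\mhyphen\CA_0+\BI$: the reals of $L_{\omega_\omega^\CK}$ form a $\beta$-model of $\Pi^1_1\mhyphen\CA_0$ (being closed under hyperjump; this is the minimum $\beta$-model of $\Pi^1_1\mhyphen\CA_0$, see \cite{Simpson2009} and the proof of \autoref{Theorem: s 1 2 for some theories}(2)), hence a model of $\BI$ by the first fact, and all their well-orders have order type $<\omega_\omega^\CK$; the second fact gives $s^1_2(\Pi^1_1\mhyphen\CA_0+\BI)\le\omega_\omega^\CK$, so equality.

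I expect the delicate point to be the absoluteness bookkeeping in the second fact---that the witness for ``$D(\alpha)$ is ill-founded'' genuinely transfers between $M$ and $V$, which is precisely where recursiveness of $D$ is used (it forces the order on $D(\alpha)$ to be arithmetic in the parameter $\alpha$)---together with checking the first fact, that $\beta$-models verify full bar induction. Everything else reduces to the monotonicity of $s^1_2$ and the standard identifications of $\mathsf{HYP}$ and of the reals of $L_{\omega_\omega^\CK}$ as $\beta$-models of $\ATR_0$ and $\Pi^1_1\mhyphen\CA_0$ respectively.
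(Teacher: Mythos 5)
Your overall strategy matches the paper exactly—lower bound from monotonicity of $s^1_2$ together with \autoref{Theorem: s 1 2 for some theories}, and upper bound by observing that a $\beta$-model $M$ of the theory correctly recognizes the climax of any $T$-provable recursive pseudodilator and only contains well-orders of bounded type. Your ``Fact 1'' (every $\beta$-model of $\ACA_0$ satisfies $\BI$) and ``Fact 2'' are both correct and are precisely what the paper invokes via its citations to Simpson. Your treatment of part (2), using the minimum $\beta$-model of $\Pi^1_1\mhyphen\CA_0$, which is the reals of $L_{\omega_\omega^\CK}$, is fine.

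However, there is a genuine error in part (1): $\mathsf{HYP}=\mathcal P(\omega)\cap L_{\omega_1^\CK}$ is not a $\beta$-model of $\ATR_0$, and in fact it is not a $\beta$-model at all. The Harrison linear order $H$ is a recursive linear order of type $\omega_1^\CK\cdot(1+\eta)$ with no hyperarithmetic descending sequence, so $\mathsf{HYP}\vDash\WO(H)$ even though $H$ is ill-founded; hence $\mathsf{HYP}$ is not $\Pi^1_1$-correct, and your parenthetical argument (``iterating along a hyperarithmetic well-order, necessarily of type $<\omega_1^\CK$'') is circular, since knowing that the apparent well-orders of $\mathsf{HYP}$ have genuine type $<\omega_1^\CK$ already presupposes the $\beta$-property you are trying to establish. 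Moreover, even as an $\omega$-model, $\mathsf{HYP}$ does not satisfy $\ATR_0$---this is a standard fact, one form of which is that $\ATR_0$ is equivalent to $\Sigma^1_1$-determinacy and $\mathsf{HYP}$ fails this. The paper instead invokes a $\beta$-model of $\ATR_0+\BI$ of height $\omega_1^\CK$ constructed from the hyperjump of $\emptyset$ (Simpson, Theorem VII.2.7 and Lemma VII.2.15); such a model necessarily contains non-hyperarithmetic reals (including descending sequences for Harrison-type orders), and the fact that one can be chosen with height exactly $\omega_1^\CK$ is a nontrivial theorem, not a routine observation. Your ``Fact 2'' is the right tool, but you need to apply it to this correct $\beta$-model rather than to $\mathsf{HYP}$.
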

\begin{proof}
    The right-to-left inequality follows from \autoref{Theorem: s 1 2 for some theories}. To show $s^1_2(\ATR_0 + \BI) \le \omega_1^\CK$, let us consider a $\beta$-model $M$ of height $\omega_1^\CK$ (For example, one constructed from the hyperjump of $\emptyset$.) By Theorem VII.2.7. and Lemma VII.2.15 of \cite{Simpson2009}, $M$ satisfies $\ATR_0 + \BI$. If $D$ is a recursive pseudodilator such that $\ATR_0 + \BI \vdash \lnot\Dil(D)$, then we get
    \begin{equation*}
        M \vDash \lnot\Dil(D).
    \end{equation*}
    Hence the climax of $D$ must be less than $\omega_1^\CK$. To prove $s^1_2(\Pi^1_1\mhyphen \CA_0 +\BI) \le \omega_\omega^\CK$, repeat the same argument with the minimal $\beta$-model of $\Pi^1_1\mhyphen\CA_0$ instead.
\end{proof}

It is open whether $s^1_2(T)$ must be either an admissible ordinal or a limit of admissible ordinals if $T$ is a sound r.e.\ extension of $\ACA_0$.

\section{\texorpdfstring{$s^1_2$}{s 1 2}, \texorpdfstring{$\Sigma^1_2$}{Sigma 1 2}-consequences, and \texorpdfstring{$\Sigma^1_2$}{Sigma 1 2}-reflection} \label{Section: s12 metaproperties}
\subsection{The first equivalence}
The main goal of this section is to provide the $\Sigma^1_2$-version of the characterization of $\Pi^1_1$-proof theory.
Let us state the $\Sigma^1_2$ version of the first equivalence stated in \cite{Walsh2023characterizations}:
\begin{proposition} \label{Proposition: First equivalence for Sigma 1 2}
    For $\Sigma^1_2$-sound extensions $S$, $T$ of $\ACA_0$, we have
    \begin{equation*}
        S\subseteq^{\Pi^1_2}_{\Sigma^1_2} T\iff s^1_2(S)\le s^1_2(T).
    \end{equation*}
\end{proposition}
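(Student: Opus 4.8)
The plan is to push everything through the $\Pi^1_2$-completeness theorem for dilators, which lets me replace an arbitrary $\Sigma^1_2$-sentence by $\lnot\Dil(D)$ for a recursive predilator $D$, and to reduce statements of the form ``$\phi$ is $T$-provable with a $\Pi^1_2$-oracle'' to statements about climaxes. The ingredient I would isolate first is the following \emph{absorption observation}: if $\theta$ is a true $\Pi^1_2$-sentence, $U\supseteq\ACA_0$, and $U+\theta\vdash\lnot\Dil(D)$ for a recursive predilator $D$, then there is a recursive predilator $D'$ with $U\vdash\lnot\Dil(D')$ and $\ACA_0\vdash\forall^1\alpha[\WO(\alpha)\to(\WO(D(\alpha))\lr\WO(D'(\alpha)))]$. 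To prove it I would use completeness to write $\ACA_0\vdash\Dil(E)\lr\theta$ for a primitive recursive predilator $E$; since $\theta$ is true, $E$ is a dilator. Then $U\vdash\lnot\Dil(E)\lor\lnot\Dil(D)$, which by \autoref{Lemma: Conjunction of two predilators}(1) is $U\vdash\lnot\Dil(E\land D)$, and \autoref{Lemma: Conjunction of two predilators}(2), applied with the dilator $E$ in the first coordinate, yields the displayed $\ACA_0$-provable equivalence for $D'=E\land D$. In particular, when $U$ is $\Sigma^1_2$-sound, $D'$ and hence $D$ is a pseudodilator, with $\Clim(D)=\Clim(D')<s^1_2(U)$ by \autoref{Lemma: s 1 2 is the strict supremum}.

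\textbf{Forward direction.} Assume $S\subseteq^{\Pi^1_2}_{\Sigma^1_2}T$. Given a recursive predilator $D$ with $S\vdash\lnot\Dil(D)$, I have $S\vdash^{\Pi^1_2}\lnot\Dil(D)$ a fortiori, so by hypothesis $T\vdash^{\Pi^1_2}\lnot\Dil(D)$, i.e.\ $T+\theta\vdash\lnot\Dil(D)$ for some true $\Pi^1_2$-sentence $\theta$. The absorption observation with $U=T$ then gives $\Clim(D)<s^1_2(T)$. Taking the supremum over all such $D$ yields $s^1_2(S)\le s^1_2(T)$.

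\textbf{Backward direction.} Assume $s^1_2(S)\le s^1_2(T)$, and let $\phi$ be a $\Sigma^1_2$-sentence with $S\vdash^{\Pi^1_2}\phi$. By completeness fix a recursive predilator $D$ with $\ACA_0\vdash\phi\lr\lnot\Dil(D)$; then $S\vdash^{\Pi^1_2}\lnot\Dil(D)$, so $S+\theta\vdash\lnot\Dil(D)$ for some true $\Pi^1_2$-sentence $\theta$. By the absorption observation with $U=S$, $D$ is a pseudodilator and $\Clim(D)<s^1_2(S)\le s^1_2(T)$. Hence there is a recursive predilator $F$ with $T\vdash\lnot\Dil(F)$ and $\Clim(F)>\Clim(D)$. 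Now $\Grow(D,F)$ is $\Pi^1_2$ and, since $\Clim(D)\le\Clim(F)$, true; moreover $T+\Grow(D,F)\vdash\lnot\Dil(D)$, because reasoning inside $T+\Grow(D,F)$ one obtains from $T\vdash\lnot\Dil(F)$ a well-order $\alpha$ with $\lnot\WO(F(\alpha))$, and the contrapositive of $\Grow(D,F)$ forces $\lnot\WO(D(\alpha))$, i.e.\ $\lnot\Dil(D)$. Therefore $T\vdash^{\Pi^1_2}\lnot\Dil(D)$, hence $T\vdash^{\Pi^1_2}\phi$, and so $S\subseteq^{\Pi^1_2}_{\Sigma^1_2}T$.

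\textbf{Main obstacle.} The delicate point is the backward direction: from the bare ordinal inequality $\Clim(D)<s^1_2(T)$ I must manufacture a genuine true $\Pi^1_2$-oracle witnessing $T\vdash^{\Pi^1_2}\lnot\Dil(D)$. The idea that makes this work is that $T$ already certifies some recursive $F$ to be a pseudodilator of strictly larger climax, and $\Grow(D,F)$ is precisely the true $\Pi^1_2$-sentence that transports $F$'s eventual ill-foundedness down to $D$. Checking that $\Grow(D,F)$ is legitimately $\Pi^1_2$ (so it may serve as an oracle) and the bookkeeping of absorbing oracles through \autoref{Lemma: Conjunction of two predilators} are the two spots that need a little care; the forward direction is comparatively immediate.
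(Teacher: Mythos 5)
Your argument is correct and essentially coincides with the paper's: the paper isolates a lemma stating $T\vdash^{\Pi^1_2}\lnot\Dil(D)\iff\Clim(D)<s^1_2(T)$, whose forward direction is exactly your ``absorption observation'' (via $\Pi^1_2$-completeness, \autoref{Lemma: Conjunction of two predilators}, and the strict supremum fact \autoref{Lemma: s 1 2 is the strict supremum}) and whose backward direction is exactly your $\Grow(D,F)$ oracle argument; the proposition is then read off in the same way. One small slip worth noting: you assert $\ACA_0\vdash\forall^1\alpha[\WO(\alpha)\to(\WO(D(\alpha))\leftrightarrow\WO(D'(\alpha)))]$, but \autoref{Lemma: Conjunction of two predilators}(2) only yields this conditionally on $\Dil(E)$, which $\ACA_0$ does not prove; since $E$ really is a dilator the equivalence holds as a true statement, and that is all you in fact use to conclude $\Clim(D)=\Clim(D')$.
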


Instead of proving the above claim directly, let us prove the following lemma that will immediately imply \autoref{Proposition: First equivalence for Sigma 1 2}. We separate this lemma from the proof of \autoref{Proposition: First equivalence for Sigma 1 2} for later purposes.

\begin{lemma} \label{Lemma: When the climax happens before s12 ordinal of a theory}
    Let $T$ be a $\Sigma^1_2$-sound extension of $\ACA_0$, and let $D$ be a recursive predilator.
    Then
    \begin{equation*}
        T\vdash^{\Pi^1_2} \lnot\Dil(D)\iff \Clim(D)<s^1_2(T).
    \end{equation*} 
\end{lemma}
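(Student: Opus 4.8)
The plan is to prove the two implications separately, using the conjunction operation on predilators from \autoref{Lemma: Conjunction of two predilators} for the forward direction and the growth relation $\Grow$ together with \autoref{Lemma: s 1 2 is the strict supremum} for the reverse direction. We may assume at the outset that $D$ is a genuine pseudodilator: if $D$ were a dilator, then $\lnot\Dil(D)$ would be false, so the right-hand side fails (there is no ordinal $\Clim(D)$), and the left-hand side fails too, since a true $\Pi^1_2$-sentence $\theta$ with $T+\theta\vdash\lnot\Dil(D)$ would, by \autoref{Lemma: RFN is stable under adding true check Gamma statements}, make $T+\theta$ a $\Sigma^1_2$-sound theory proving a false $\Sigma^1_2$-sentence.

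For $(\Leftarrow)$, assume $\Clim(D)<s^1_2(T)$. Since $s^1_2(T)$ is the supremum of the climaxes of $T$-provably recursive pseudodilators, there is a recursive predilator $D'$ with $T\vdash\lnot\Dil(D')$ and $\Clim(D)<\Clim(D')$ (and $D'$ is a genuine pseudodilator by $\Sigma^1_2$-soundness of $T$). I would take the oracle sentence to be $\theta\equiv\Grow(D,D')$. It is $\Pi^1_2$ because $D$ and $D'$ are recursive, and it is true: any well-order $\alpha$ with $D(\alpha)$ well-founded satisfies $\alpha<\Clim(D)\le\Clim(D')$, hence $D'(\alpha)$ is well-founded. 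Finally $T+\theta\vdash\lnot\Dil(D)$: working in $T+\Grow(D,D')$, from $T\vdash\lnot\Dil(D')$ fix $\alpha$ with $\WO(\alpha)\land\lnot\WO(D'(\alpha))$; the instance of $\Grow(D,D')$ at $\alpha$ then forces $\lnot\WO(D(\alpha))$, whence $\lnot\Dil(D)$. So $T\vdash^{\Pi^1_2}\lnot\Dil(D)$.

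For $(\Rightarrow)$, fix a true $\Pi^1_2$-sentence $\theta$ with $T+\theta\vdash\lnot\Dil(D)$. By the $\Pi^1_2$-completeness theorem for dilators, choose a recursive predilator $E$ with $\ACA_0\vdash\Dil(E)\lr\theta$; since $\theta$ is true, $E$ is a genuine dilator. Set $F=E\land D$, a recursive predilator. First, $T\vdash\lnot\Dil(F)$: by \autoref{Lemma: Conjunction of two predilators} we have $\ACA_0\vdash\Dil(F)\to(\Dil(E)\land\Dil(D))$, while $T+\Dil(E)+\Dil(D)$ is inconsistent (as $T+\theta\vdash\lnot\Dil(D)$ and $T$ proves $\theta\lr\Dil(E)$), so $T+\Dil(F)$ is inconsistent. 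Hence \autoref{Lemma: s 1 2 is the strict supremum} gives $\Clim(F)<s^1_2(T)$. Second, $\Clim(F)=\Clim(D)$: since $E$ is a dilator, the second clause of \autoref{Lemma: Conjunction of two predilators}, read in the standard model using soundness of $\ACA_0$, yields $\WO(D(\alpha))\lr\WO(F(\alpha))$ for every well-order $\alpha$, so $D$ and $F$ have the same climax. Combining, $\Clim(D)<s^1_2(T)$.

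The subtle point, more a design choice than an obstacle, is selecting the correct Boolean operation on predilators in each direction and tracking the genuineness of the auxiliary objects. The forward direction needs an $F$ with $\Clim(F)=\min(\Clim(D),\Clim(E))=\Clim(D)$, which is exactly what the conjunction (ordered sum) delivers when $E$ is a dilator; the disjunctive join would instead compute the maximum and be useless here. And at each step where we move between $T\vdash(\cdot)$ and the truth of a $\Sigma^1_2$- or $\Pi^1_2$-statement, one must know the relevant theory ($T$, or $T+\theta$) is $\Sigma^1_2$-sound and the relevant predilator ($D$, $D'$, $F$, or $E$) is a genuine (pseudo)dilator, which is why the preliminary reduction and the repeated appeals to $\Sigma^1_2$-soundness are needed. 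The reverse direction, by contrast, is a direct unwinding of the definitions once $\Grow(D,D')$ is recognized as the witnessing oracle.
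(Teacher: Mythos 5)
Your proof is correct and follows essentially the same route as the paper's: the forward direction uses the conjunction $E\land D$ with the dilator $E$ extracted from the $\Pi^1_2$-completeness theorem to get a $T$-provable pseudodilator of the same climax as $D$, and the reverse direction uses $\Grow(D,D')$ as the $\Pi^1_2$-oracle. Your preliminary reduction to the case where $D$ is a pseudodilator is a small but worthwhile addition (the paper leaves it implicit), and writing the conjunction as $E\land D$ rather than $D\land E$ matches the literal phrasing of the conjunction lemma's second clause, but these are cosmetic rather than substantive differences.
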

\begin{proof}
    For one direction, assume that $T\vdash^{\Pi^1_2} \lnot\Dil(D)$. By $\Pi^1_2$-completeness of dilators, we have a recursive dilator $E$ such that $T + \Dil(E) \vdash \lnot\Dil(D)$.
    Hence we get
    \begin{equation*}
        T\vdash\lnot(\Dil(E)\land\Dil(D)).
    \end{equation*}
    
    Now let $\hat{D}=D\land E$ given by \autoref{Lemma: Conjunction of two predilators}.
    Then $\hat{D}$ is a recursive predilator and $T\vdash \lnot\Dil(\hat{D})$, so $\Clim(\hat{D})<s^1_2(T)$. 
    Since $E$ is a dilator, we have for each ordinal $\alpha$,
    \begin{equation*}
        \WO(\hat{D}(\alpha))\iff\WO(D(\alpha)).
    \end{equation*}
    This implies $\Clim(\hat{D})=\Clim(D)$, which gives the desired inequality.

    Let us show the remaining direction. Suppose that $\Clim(D)< s^1_2(T)$. Then we can find a recursive pseudodilator $E$ such that $T\vdash\lnot \Dil(E)$ and $\Clim(D)\le \Clim(E) < s^1_2(T)$. 
    Since $\Clim(D)\le \Clim(E)$, a $\Pi^1_2$-statement
    \begin{equation*}
        \Grow(D,E)\equiv \forall^1\alpha \bigl[[\WO(\alpha)\land\WO(D(\alpha))]\to \WO(E(\alpha))\bigr]
    \end{equation*}
    is true. Since $T$ proves $\lnot\Dil(E)$,
    \begin{equation*}
        T  \vdash \exists^1 \alpha \bigl[\WO(\alpha) \land \lnot\WO(E(\alpha))\bigr].
    \end{equation*}
    Now let us reason in $T + \Grow(D,E)$: Suppose that $\alpha$ is a well-order such that \added{$\lnot\WO(E(\alpha))$}. Then by the contrapositive of $\Grow(D,E)$, $D(\alpha)$ is also ill-founded. Hence, $D$ is also not a dilator. That is, we get
    \begin{equation*}
        T +\Grow(D,E) \vdash \lnot\Dil(D).
    \end{equation*}
    Since $\Grow(D,E)$ is a true $\Pi^1_2$-statement, we have $T\vdash^{\Pi^1_2}\lnot\Dil(D)$.
\end{proof}

\begin{proof}[Proof of \autoref{Proposition: First equivalence for Sigma 1 2}]
    For one direction, assume that $S\subseteq^{\Pi^1_2}_{\Sigma^1_2} T$.
    If $D$ is a recursive predilator such that $S\vdash\lnot\Dil(D)$, then $T\vdash^{\Pi^1_2}\lnot\Dil(D)$ by the assumption. Thus, by \autoref{Lemma: When the climax happens before s12 ordinal of a theory}, we have $\Clim(D)<s^1_2(T)$. Hence $s^1_2(S)\le s^1_2(T)$.

    For the other direction, assume that $s^1_2(S)\le s^1_2(T)$ and $S\vdash^{\Pi^1_2} \phi$ for a $\Sigma^1_2$-statement $\phi$. By $\Pi^1_2$-completeness of dilators, we may assume that $\phi$ takes the form $\lnot\Dil(D)$ for some recursive \added{pseudo}dilator $D$.
    Then by \autoref{Lemma: When the climax happens before s12 ordinal of a theory}, we have $\Clim(D) < s^1_2(S) \le s^1_2(T)$. Hence, by \autoref{Lemma: When the climax happens before s12 ordinal of a theory} again, we get $T\vdash^{\Pi^1_2} \lnot\Dil(D)$.
\end{proof}

Then the following is immediate, which is a \added{$\Sigma^1_2$-version of \autoref{Theorem: Kriesel's theorem on PTO}}:
\begin{corollary} \pushQED{\qed} \label{Corollary: s 1 2 is stable under true Pi 1 2}
    Let $T$ be an $\Sigma^1_2$-sound extension of $\ACA_0$. If $\sigma$ is any true $\Pi^1_2$ sentence, then $s^1_2(T)=s^1_2(T+\sigma)$. \qedhere
\end{corollary}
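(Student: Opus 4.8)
The plan is to read this off directly from \autoref{Proposition: First equivalence for Sigma 1 2}, applied to the pair $T$ and $T+\sigma$ in both directions. Before doing so I would record one small preliminary fact: $T+\sigma$ is again $\Sigma^1_2$-sound. Indeed, if $T+\sigma\vdash\phi$ for a $\Sigma^1_2$-sentence $\phi$, then $T\vdash\lnot\sigma\vee\phi$, and since $\lnot\sigma$ is $\Sigma^1_2$, the disjunction $\lnot\sigma\vee\phi$ is (provably over first-order logic equivalent to) a $\Sigma^1_2$-sentence; by $\Sigma^1_2$-soundness of $T$ it is true, and as $\sigma$ is true, $\phi$ is true. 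So \autoref{Proposition: First equivalence for Sigma 1 2} is available for $T+\sigma$.

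For $s^1_2(T)\le s^1_2(T+\sigma)$: since $T\subseteq T+\sigma$ as theories, every $\Sigma^1_2$-sentence that is $T$-provable with a $\Pi^1_2$-oracle is a fortiori $(T+\sigma)$-provable with the same oracle, so $T\subseteq^{\Pi^1_2}_{\Sigma^1_2}T+\sigma$, and \autoref{Proposition: First equivalence for Sigma 1 2} gives the inequality. For the reverse direction, suppose $\phi$ is a $\Sigma^1_2$-sentence with $T+\sigma\vdash^{\Pi^1_2}\phi$, witnessed by a true $\Pi^1_2$-sentence $\theta$ with $T+\sigma+\theta\vdash\phi$. Then $\sigma\wedge\theta$ is a true $\Pi^1_2$-sentence and $T+(\sigma\wedge\theta)\vdash\phi$, so $T\vdash^{\Pi^1_2}\phi$. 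Hence $T+\sigma\subseteq^{\Pi^1_2}_{\Sigma^1_2}T$, and \autoref{Proposition: First equivalence for Sigma 1 2} yields $s^1_2(T+\sigma)\le s^1_2(T)$. Combining the two inequalities gives $s^1_2(T)=s^1_2(T+\sigma)$.

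There is no genuine obstacle here; this is essentially a formal bookkeeping argument. The only point that needs a moment's attention is the verification that $T+\sigma$ stays $\Sigma^1_2$-sound, which is exactly what is needed to invoke the first equivalence for $T+\sigma$, and this relies on the fact that appending a $\Pi^1_2$-sentence keeps the implication $\lnot\sigma\vee\phi$ inside the $\Sigma^1_2$ pointclass. Everything else is immediate from closure of true $\Pi^1_2$-sentences under conjunction and the monotonicity of provability.
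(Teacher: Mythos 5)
Your proof is correct and takes essentially the same route the paper intends: read the statement off \autoref{Proposition: First equivalence for Sigma 1 2} by observing $T\subseteq^{\Pi^1_2}_{\Sigma^1_2}T+\sigma$ trivially and $T+\sigma\subseteq^{\Pi^1_2}_{\Sigma^1_2}T$ by absorbing $\sigma$ into the $\Pi^1_2$-oracle (using closure of true $\Pi^1_2$-sentences under conjunction). Your preliminary check that $T+\sigma$ remains $\Sigma^1_2$-sound is a necessary hypothesis verification that the paper leaves implicit; it is the truth-version of \autoref{Lemma: RFN is stable under adding true check Gamma statements} and you argue it correctly.
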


\subsection{The second equivalence}
\added{Walsh} \cite{Walsh2023characterizations} also provided a connection between proof-theoretic ordinal and $\Pi^1_1$-reflection. The following shows the connection between $s^1_2$ and $\Sigma^1_2$-reflection:
\begin{proposition} \label{Proposition: Second equivalence LtoR for Sigma 1 2}
    Let $S$ and $T$ be $\Sigma^1_2$-sound extensions of $\ACA_0$ such that $S$ is $\Pi^1_2$-definable and $T$ is $\Sigma^1_2$-definable.
    Then we have
    \begin{equation} \label{Formula: Second equivalence LtoR}
        s^1_2(S)\le s^1_2(T)\implies \Sigma^1_2\mhyphen \AC_0\vdash^{\Pi^1_2} \RFN[\Sigma^1_2](T)\to \RFN[\Sigma^1_2](S).
    \end{equation}
\end{proposition}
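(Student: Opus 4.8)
The plan is to prove this direction by converting both $\RFN[\Sigma^1_2](T)$ and $\RFN[\Sigma^1_2](S)$ into statements about (pseudo)dilators and then exhibiting an explicit true $\Pi^1_2$-sentence that can serve as the $\Pi^1_2$-oracle. On the $T$-side I would use \autoref{Proposition: Sigma 1 2 reflection and pseudodilators}, so that over $\Sigma^1_2\mhyphen\AC_0$ the antecedent $\RFN[\Sigma^1_2](T)$ becomes $\lnot\Dil(|T|_{\Sigma^1_2})$. On the $S$-side, rather than work with $|S|_{\Sigma^1_2}$ (whose order is only $\Pi^1_2$-definable when $S$ is merely $\Pi^1_2$-definable, which would be too complex), I would unwind $\RFN[\Sigma^1_2]$ directly: by the $\Pi^1_2$-completeness theorem of dilators there is a primitive recursive predilator $D_\phi$, obtained uniformly and primitive recursively from a code of a $\Sigma^1_2$-sentence $\phi$, with $\ACA_0\vdash \True_{\Sigma^1_2}(\phi)\lr \lnot\Dil(D_\phi)$. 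Thus deriving $\RFN[\Sigma^1_2](S)$ reduces to showing that the recursive predilator $D_\phi$ is a pseudodilator whenever $\Prv_S(\phi)$ holds.

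Let $\sigma$ be (a $\Pi^1_2$-sentence equivalent over $\Sigma^1_2\mhyphen\AC_0$ to)
\begin{equation*}
    \forall^0\phi\in\Sigma^1_2\;\forall^1\alpha\;\bigl[\bigl(\Prv_S(\phi)\land\WO(\alpha)\land\WO(D_\phi(\alpha))\bigr)\to\WO(|T|_{\Sigma^1_2}(\alpha))\bigr].
\end{equation*}
I claim $\sigma$ is true. Fix $\phi$ and $\alpha$ with $\Prv_S(\phi)$, $\WO(\alpha)$ and $\WO(D_\phi(\alpha))$. Since $\ACA_0\subseteq S$ proves $\phi\lr\lnot\Dil(D_\phi)$, we have $S\vdash\lnot\Dil(D_\phi)$, so by $\Sigma^1_2$-soundness of $S$ the predilator $D_\phi$ is a pseudodilator, and $\Clim(D_\phi)\le s^1_2(S)$ by \autoref{Definition: s12 ordinal}; moreover $\WO(D_\phi(\alpha))$ forces $\alpha<\Clim(D_\phi)$ by functoriality of $D_\phi$. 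Since $\Clim(|T|_{\Sigma^1_2})=s^1_2(T)$ (by \autoref{Lemma: Characterization of proof-theoretic pseudodilator}, using $\Sigma^1_2$-soundness of $T$) and $s^1_2(S)\le s^1_2(T)$ by hypothesis, it follows that $\alpha<\Clim(|T|_{\Sigma^1_2})$, i.e.\ $\WO(|T|_{\Sigma^1_2}(\alpha))$, as required. The two definability hypotheses are consumed precisely when one verifies that $\sigma$ is $\Pi^1_2$ over $\Sigma^1_2\mhyphen\AC_0$: because $T$ is $\Sigma^1_2$-definable the order $|T|_{\Sigma^1_2}(\alpha)$ is $\Sigma^1_2$-definable in $\alpha$, so $\WO(|T|_{\Sigma^1_2}(\alpha))$ is $\Pi^1_2$ once the implicit number quantifier is absorbed via \autoref{Lemma: Reducing forall n Sigma 1 2}; because $S$ is $\Pi^1_2$-definable, $\Prv_S(\phi)$ is $\Pi^1_2$ after the same kind of $\forall^0\exists^1$-collapse; and $\WO(\alpha)$, $\WO(D_\phi(\alpha))$ are $\Pi^1_1$ since $D_\phi$ is recursive.

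It then remains to check $\Sigma^1_2\mhyphen\AC_0+\sigma\vdash\RFN[\Sigma^1_2](T)\to\RFN[\Sigma^1_2](S)$. Reasoning in $\Sigma^1_2\mhyphen\AC_0+\sigma$, assume $\RFN[\Sigma^1_2](T)$; by \autoref{Proposition: Sigma 1 2 reflection and pseudodilators} this yields $\lnot\Dil(|T|_{\Sigma^1_2})$, so fix a well-order $\alpha_0$ with $\lnot\WO(|T|_{\Sigma^1_2}(\alpha_0))$. Let $\phi$ be an arbitrary $\Sigma^1_2$-sentence with $\Prv_S(\phi)$. Instantiating $\sigma$ at $\phi$ and $\alpha_0$ and contraposing, the conjuncts $\Prv_S(\phi)$ and $\WO(\alpha_0)$ together with $\lnot\WO(|T|_{\Sigma^1_2}(\alpha_0))$ force $\lnot\WO(D_\phi(\alpha_0))$; hence $\lnot\Dil(D_\phi)$, and therefore $\True_{\Sigma^1_2}(\phi)$ by the $\ACA_0$-provable equivalence. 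Since $\phi$ was arbitrary this is $\RFN[\Sigma^1_2](S)$, so $\Sigma^1_2\mhyphen\AC_0\vdash^{\Pi^1_2}\RFN[\Sigma^1_2](T)\to\RFN[\Sigma^1_2](S)$.

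The step I expect to be the main obstacle is exactly the complexity bookkeeping in the second paragraph: one must check carefully that $\WO(|T|_{\Sigma^1_2}(\alpha))$ and $\Prv_S(\phi)$ do not climb out of $\Pi^1_2$ when the number quantifiers hidden in them are absorbed over $\Sigma^1_2\mhyphen\AC_0$ (this is where one really needs $T$ to be $\Sigma^1_2$-definable and $S$ to be $\Pi^1_2$-definable, and not the reverse), and that the uniform, formalized version of the $\Pi^1_2$-completeness theorem of dilators genuinely delivers the $\ACA_0$-provable equivalence between $\True_{\Sigma^1_2}(\phi)$ and $\lnot\Dil(D_\phi)$ used in the last step. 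The ordinal-theoretic heart — the chain $\alpha<\Clim(D_\phi)\le s^1_2(S)\le s^1_2(T)=\Clim(|T|_{\Sigma^1_2})$ — is routine given the machinery already developed.
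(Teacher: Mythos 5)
Your route is genuinely different from the paper's. The paper first invokes the first equivalence (\autoref{Proposition: First equivalence for Sigma 1 2}) to obtain $S\subseteq^{\Pi^1_2}_{\Sigma^1_2}T$, formalizes this inclusion as $\theta\equiv\forall^0\phi[\Prv_S(\phi)\to\Prv_T^{\Pi^1_2}(\phi)]$, checks that $\ACA_0+\theta$ proves the reflection implication, and then argues that $\theta$ collapses to a $\Pi^1_2$-sentence over $\Sigma^1_2\mhyphen\AC_0$. You bypass the first equivalence and the $\Prv_T^{\Pi^1_2}$-machinery entirely, constructing the oracle $\sigma$ directly from the dilator apparatus, with $|T|_{\Sigma^1_2}$ on the $T$-side (via \autoref{Proposition: Sigma 1 2 reflection and pseudodilators}) and the Girard terms $D_\phi$ on the $S$-side. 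The ordinal-theoretic core --- $\alpha<\Clim(D_\phi)\le s^1_2(S)\le s^1_2(T)=\Clim(|T|_{\Sigma^1_2})$ --- is correct, and so is your third paragraph deriving the reflection implication from $\sigma$. (Small slip: $\Clim(|T|_{\Sigma^1_2})=s^1_2(T)$ is the unnumbered lemma immediately following \autoref{Definition: s12 ordinal}, not \autoref{Lemma: Characterization of proof-theoretic pseudodilator}.)

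The complexity bookkeeping, which you rightly single out as the delicate step, does have a gap, and it sits exactly where you guessed it might --- on $\Prv_S(\phi)$. You argue that $\Prv_S(\phi)$ is $\Pi^1_2$ because $S$ is $\Pi^1_2$-definable, and take that to suffice. But $\Prv_S(\phi)$ is in the \emph{antecedent} of the implication inside $\sigma$, so what must be $\Pi^1_2$ is $\lnot\Prv_S(\phi)$, not $\Prv_S(\phi)$. If the axiom set of $S$ is $\Pi^1_2$, then $\Prv_S(\phi)$ is $\exists^0\Pi^1_2$ and $\lnot\Prv_S(\phi)$ is $\forall^0\Sigma^1_2$, which \autoref{Lemma: Reducing forall n Sigma 1 2} collapses over $\Sigma^1_2\mhyphen\AC_0$ to $\Sigma^1_2$, not to $\Pi^1_2$. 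The body of $\sigma$ then lies in $\Sigma^1_2\lor\Pi^1_2$, which is $\Delta^1_3$ but not $\Pi^1_2$. To close the argument you would want $S$ to be $\Sigma^1_2$-definable (so that $\Prv_S(\phi)$ is $\Sigma^1_2$ and its negation is $\Pi^1_2$), which is the \emph{opposite} of what the proposition states; your remark ``this is where one really needs \ldots $S$ to be $\Pi^1_2$-definable, and not the reverse'' appears to have it backwards. As far as I can tell, the paper's own computation for $\theta$ faces the same issue on the $\lnot\Prv_S(\phi)$ disjunct, so this may be an infelicity in the proposition's definability hypotheses rather than in your construction --- and the corollary it serves restricts to arithmetically definable theories, where $\lnot\Prv_S(\phi)$ is $\Pi^0_1$ and the difficulty evaporates. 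But as written, your argument does not deliver a $\Pi^1_2$ oracle under the stated hypotheses.
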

\begin{proof}
    Suppose that $s^1_2(S)\le s^1_2(T)$, which implies $S\subseteq^{\Pi^1_2}_{\Sigma^1_2} T$.
    That is, the following sentence becomes true:
    \begin{equation*}
        \theta :\equiv \forall^0 \phi\in \Sigma^1_2 [\Prv_S(\phi)\to \Prv^{\Pi^1_2}_T(\phi)].
    \end{equation*}
    Let us first prove the following lemma:
    \begin{lemma*}
        $\ACA_0+\theta \vdash \RFN[\Sigma^1_2](T)\to \RFN[\Sigma^1_2](S)$.
    \end{lemma*}
    \begin{proof}
        Let us reason over $\ACA_0+\theta$, and suppose that we have $\RFN[\Sigma^1_2](T)$.
        If $\Prv_S(\phi)$ holds for a $\Sigma^1_2$-sentence $\phi$, then by $\theta$, we have $\Prv^{\Pi^1_2}_T(\phi)$. That is, we can find a true $\Pi^1_2$-sentence $\psi$ such that $\Prv_T(\psi\to\phi)$.
        Hence by $\RFN[\Sigma^1_2](T)$ and since $\psi\to\phi$ is $\Sigma^1_2$, $\psi\to\phi$ is true. Since $\psi$ is true, so is $\phi$. In sum, we have
        \begin{equation*}
            \forall^0 \phi\in \Sigma^1_2 [\Prv_S(\phi)\to \mathsf{True}_{\Sigma^1_2}(\phi)]. \qedhere 
        \end{equation*}
    \end{proof}
    Now, let us compute the complexity of $\theta$. One can see \added{that} the following holds:
    \begin{enumerate}
        \item If a theory $T$ is $\Gamma$-definable and $\Gamma\supseteq \Sigma^0_1$, then $\Prv_T(\phi)$ has the complexity of the form $\exists^0\forall^0\Gamma$.
        \item Under the same assumption, $\Prv_T^{\Pi^1_2}$ has the complexity of the form $\exists^0\forall^0(\Pi^1_2\land\Gamma)$.
    \end{enumerate}
    Hence $\theta$ has the following complexity:
    \begin{equation*}
        \forall^0 [(\exists^0\forall^0 \Pi^1_2) \lor (\exists^0\forall^0\Pi^1_2)],
    \end{equation*}
    which reduces to $\forall^0 \exists^0 \Pi^1_2$. 
    Furthermore, by \autoref{Lemma: Reducing forall n Sigma 1 2}, for every $\Pi^1_2$-formula $\phi(m)$ we can find a $\Pi^1_2$-sentence $\tau$ such that 
    \begin{itemize}
        \item $\ACA_0\vdash \exists^0 m \phi(m)\to\tau$, and
        \item $\Sigma^1_2\mhyphen\AC_0\vdash \tau\to \exists^0 m\phi(m)$.
    \end{itemize}
    Hence by using $\Sigma^1_2\mhyphen \AC_0$, we can replace $\theta$ to an equivalent formula $\theta'$ of the form $\forall^0 \Pi^1_2$. Then we have
    \begin{equation*}
        \Sigma^1_2\mhyphen\AC_0+\theta'\vdash \RFN[\Sigma^1_2](T)\to\RFN[\Sigma^1_2](S).
        \qedhere
    \end{equation*}
\end{proof}

Now, let us prove the remaining direction:
\begin{proposition} \label{Proposition: Second equivalence RtoL for Sigma 1 2}
    Suppose that $S$, $T$ be $\Sigma^1_2$-sound extensions of $\ACA_0$ such that \added{$S$ and $T$ are} arithmetically definable. If
    \begin{equation} \label{Formula: ACA0 Pi 1 2 proves Sigma 1 2 RFN implication}
        \ACA_0\vdash^{\Pi^1_2} \RFN[\Sigma^1_2](T)\to\RFN[\Sigma^1_2](S),
    \end{equation}
    then we have $s^1_2(S)\le s^1_2(T)$. The same holds if we replace all occurrences of $\ACA_0$ \added{with} $\Sigma^1_2\mhyphen \AC_0$.
\end{proposition}
\begin{proof}
    Suppose the contrary that \eqref{Formula: ACA0 Pi 1 2 proves Sigma 1 2 RFN implication} holds but $s^1_2(T) < s^1_2(S)$ holds. Then we have a true $\Pi^1_2$-sentence $\tau$ such that the following holds:
    \begin{equation} \label{Formula: ACA0 Pi 1 2 proves Sigma 1 2 RFN implication w witness}
        \ACA_0 + \tau\vdash \RFN[\Sigma^1_2](T)\to\RFN[\Sigma^1_2](S),
    \end{equation}
    
    We can find a recursive pseudodilator $\hat{D}$ such that $S\vdash\lnot\Dil(\hat{D})$ and $s^1_2(T) \le \Clim(\hat{D}) < s^1_2(S)$. \added{In particular}, the following $\Pi^1_2$ sentence is true:
    \begin{equation*}
        \theta \equiv \forall^0 D \in \mathsf{RecPreDil} [\Prv_T(\lnot\Dil(D))\to \Grow(D,\hat{D})].
    \end{equation*}
    Here $\mathsf{RecPreDil}$ is the set of all (codes for) recursive predilators. Since $S$ extends $\ACA_0$, we have
    \begin{equation*}
        S + \theta \vdash \forall^0 D \in \mathsf{RecPreDil} [\Prv_T(\lnot\Dil(D))\to \lnot\Dil(D)].
    \end{equation*}
    Thus, by the $\Pi^1_2$-completeness of recursive dilators, we have
    \begin{equation*}
        S + \theta \vdash \RFN[\Sigma^1_2](T).
    \end{equation*}
    By applying \eqref{Formula: ACA0 Pi 1 2 proves Sigma 1 2 RFN implication w witness}, we can derive $S + \theta + \tau \vdash \RFN[\Sigma^1_2](S)$. Hence, by \autoref{Lemma: RFN is stable under adding true check Gamma statements}, we have
    \begin{equation*}
        S + \theta +\tau \vdash \RFN[\Sigma^1_2](S + \theta + \tau).
    \end{equation*}
    However, no \added{$\Sigma^1_2$-sound arithmetically definable} theory can prove its own $\Sigma^1_2$-reflection, a contradiction.
\end{proof}

Hence we get the following:
\begin{corollary} \pushQED{\qed}
    Let $S$ and $T$ be $\Sigma^1_2$-sound arithmetically definable extensions of $\Sigma^1_2\mhyphen\AC_0$. Then we have
    \begin{equation*}
        s^1_2(S) \le s^1_2(T) \iff \Sigma^1_2\mhyphen\AC_0 \vdash^{\Pi^1_2} \RFN[\Sigma^1_2](T)\to\RFN[\Sigma^1_2](S). \qedhere 
    \end{equation*}
\end{corollary}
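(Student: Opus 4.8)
The plan is to obtain the corollary as an immediate consequence of \autoref{Proposition: Second equivalence LtoR for Sigma 1 2} and \autoref{Proposition: Second equivalence RtoL for Sigma 1 2}, so the real work is just checking that the hypotheses of those two propositions are met here. The key observation is that an arithmetically definable theory is both $\Pi^1_2$-definable and $\Sigma^1_2$-definable, and that $\Sigma^1_2\mhyphen\AC_0$ extends $\ACA_0$, so all of the ambient definability and base-theory requirements carry over without friction.

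For the forward implication, I would assume $s^1_2(S)\le s^1_2(T)$. Since $S$ and $T$ are arithmetically definable, in particular $S$ is $\Pi^1_2$-definable and $T$ is $\Sigma^1_2$-definable, and both are $\Sigma^1_2$-sound extensions of $\ACA_0$; hence \autoref{Proposition: Second equivalence LtoR for Sigma 1 2} applies verbatim and yields $\Sigma^1_2\mhyphen\AC_0\vdash^{\Pi^1_2}\RFN[\Sigma^1_2](T)\to\RFN[\Sigma^1_2](S)$.

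For the reverse implication, I would assume $\Sigma^1_2\mhyphen\AC_0\vdash^{\Pi^1_2}\RFN[\Sigma^1_2](T)\to\RFN[\Sigma^1_2](S)$. Here $S$ is (arithmetically, hence) definable and $T$ is arithmetically definable, and both are $\Sigma^1_2$-sound extensions of $\Sigma^1_2\mhyphen\AC_0$; invoking the variant of \autoref{Proposition: Second equivalence RtoL for Sigma 1 2} in which every occurrence of $\ACA_0$ is replaced by $\Sigma^1_2\mhyphen\AC_0$ (as stated there) gives $s^1_2(S)\le s^1_2(T)$.

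Since all the substantive content already lives in the two cited propositions, there is no genuine obstacle; the only point that requires care is the bookkeeping about which definability class and which base theory each proposition demands, and verifying that the weaker, symmetric hypotheses of the corollary — arithmetical definability of both theories, and their being extensions of $\Sigma^1_2\mhyphen\AC_0$ — are strong enough to trigger both halves.
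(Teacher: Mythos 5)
Your proposal is correct and matches the paper's intent exactly: the corollary is stated without proof precisely because it is the immediate conjunction of \autoref{Proposition: Second equivalence LtoR for Sigma 1 2} and the $\Sigma^1_2\mhyphen\AC_0$-variant of \autoref{Proposition: Second equivalence RtoL for Sigma 1 2}, after the routine observation that arithmetical definability subsumes both $\Pi^1_2$- and $\Sigma^1_2$-definability and that $\Sigma^1_2\mhyphen\AC_0\supseteq\ACA_0$.
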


\subsection{The restriction in the second equivalence}
The restriction for theories being extensions of $\Sigma^1_2\mhyphen \AC_0$ might be annoying at first glance, which is inherited from \autoref{Proposition: Second equivalence LtoR for Sigma 1 2}. The reader may want to fortify the conclusion \eqref{Formula: Second equivalence LtoR} of \autoref{Proposition: Second equivalence LtoR for Sigma 1 2} by
\begin{equation} \label{Formula: Desired Second equivalence LtoR}
    s^1_2(S)\le s^1_2(T)\implies \ACA_0\vdash^{\Pi^1_2} \RFN[\Sigma^1_2](T)\to \RFN[\Sigma^1_2](S).
\end{equation}
However, the following example shows \eqref{Formula: Desired Second equivalence LtoR} cannot be achieved even if we replace $\ACA_0$-provability with $\Pi^1_1\mhyphen\CA_0$-provability for sound recursive extensions of $\ACA_0$:
\begin{example} \label{Example: Desired Second equivalence LtoR fails}
    By \autoref{Theorem: s 1 2 for some theories} and \autoref{Proposition: s 1 2 for bar induction}, we have $s^1_2(\Pi^1_1\mhyphen\CA_0) = s^1_2(\Pi^1_1\mhyphen\CA_0 + \BI)$. Now suppose that we have
    \begin{equation*}
        \Pi^1_1\mhyphen\CA_0 \vdash^{\Pi^1_2} \RFN[\Sigma^1_2](\Pi^1_1\mhyphen\CA_0) \to \RFN[\Sigma^1_2](\Pi^1_1\mhyphen\CA_0 + \BI). 
    \end{equation*}
    Let $\theta$ be a true $\Pi^1_2$-sentence satisfying
    \begin{equation*}
        \Pi^1_1\mhyphen\CA_0 + \theta + \RFN[\Sigma^1_2](\Pi^1_1\mhyphen\CA_0) \vdash \RFN[\Sigma^1_2](\Pi^1_1\mhyphen\CA_0 + \BI). 
    \end{equation*}
    Then by \autoref{Lemma: RFN is stable under adding true check Gamma statements}, we have
    \begin{equation} \label{Formula: Sigma 1 2 Counterexample 0}
        \Pi^1_1\mhyphen\CA_0 + \RFN[\Sigma^1_2](\Pi^1_1\mhyphen\CA_0) + \theta \vdash \RFN[\Sigma^1_2](\Pi^1_1\mhyphen\CA_0 + \BI + \theta). 
    \end{equation}
    
    \cite[Theorem IX.4.10]{Simpson2009} says $\Pi^1_1\mhyphen\CA_0$ with $\Sigma^1_2$-Induction implies $\RFN[\Pi^1_3](\Sigma^1_2\mhyphen\AC_0)$. Hence by \autoref{Lemma: RFN is stable under adding true check Gamma statements} and $\Sigma^1_2\mhyphen \AC_0 \supseteq \Pi^1_1\mhyphen\CA_0$, we get
    \begin{equation} \label{Formula: Sigma 1 2 Counterexample 1}
        \Pi^1_1\mhyphen\CA_0 + \BI \vdash \RFN[\Sigma^1_2](\Pi^1_1\mhyphen\CA_0).
    \end{equation}
    Combining \eqref{Formula: Sigma 1 2 Counterexample 0} with \eqref{Formula: Sigma 1 2 Counterexample 1}, we have
    \begin{equation*}
        \Pi^1_1\mhyphen\CA_0+ \RFN[\Sigma^1_2](\Pi^1_1\mhyphen\CA_0) + \theta  \vdash \RFN[\Sigma^1_2](\Pi^1_1\mhyphen\CA_0 + \RFN[\Sigma^1_2](\Pi^1_1\mhyphen\CA_0) + \theta). 
    \end{equation*}
    However, no \added{recursive $\Sigma^1_2$-sound} theory can prove its own $\Sigma^1_2$-reflection, a contradiction.
    In sum, if we let $S = \Pi^1_1\mhyphen \CA_0 + \BI$ and $T = \Pi^1_1\mhyphen\CA_0$, then
    \begin{equation*}
        s^1_2(S) \le s^1_2(T) \qquad \text{but} \qquad \Pi^1_1\mhyphen\CA_0 \nvdash^{\Pi^1_2} \RFN[\Sigma^1_2](T)\to \RFN[\Sigma^1_2](S).
    \end{equation*}
\end{example}

\added{
\begin{remark}
    \cite[Theorem IX.4.9]{Simpson2009} states that $\Sigma^1_2\mhyphen\AC_0$ is $\Pi^1_3$-conservative over $\Pi^1_1\mhyphen\CA_0$. Hence, the reader may think
    \begin{equation*}
        \Pi^1_1\mhyphen\CA_0 \nvdash^{\Pi^1_2} \RFN[\Sigma^1_2](T)\to \RFN[\Sigma^1_2](S) \implies \Sigma^1_2\mhyphen\AC_0 \nvdash^{\Pi^1_2} \RFN[\Sigma^1_2](T)\to \RFN[\Sigma^1_2](S),
    \end{equation*}
    for recursive $S$ and $T$, because for a true $\Pi^1_2$-sentence $\tau$, the sentence
    \begin{equation} \label{Formula: tau implies Sigma 1 2 reflection comparison}
        \tau \to [\RFN[\Sigma^1_2](T)\to \RFN[\Sigma^1_2](S)]
    \end{equation}
    looks like a boolean combination of $\Pi^1_2$-sentences, which is $\Pi^1_3$. However, the complexity of $\RFN[\Sigma^1_2](S)$ is $\Pi^1_1\mhyphen\CA_0$-provably $\forall^0 \Sigma^1_2$, at best. Therefore, \eqref{Formula: tau implies Sigma 1 2 reflection comparison} is $\Pi^1_1\mhyphen\CA_0$-provably $\Sigma^1_2\lor \exists^0 \Pi^1_2\lor \forall^0 \Sigma^1_2$ at best, so we do not know if \eqref{Formula: tau implies Sigma 1 2 reflection comparison} is $\Pi^1_1\mhyphen\CA_0$-provably equivalent to a $\Pi^1_3$-sentence.
\end{remark}

\subsection{An equivalent condition for strict comparison}
We finish this section by proving the following:
\begin{theorem} \label{Theorem: Strict s 1 2 comparison equivalence condition}
    Suppose that $S$ and $T$ are $\Sigma^1_2$-sound extensions of $\ACA_0$ such that $S$ is arithmetically definable and $T\supseteq \Sigma^1_2\mhyphen\AC_0$. Then we have
    \begin{equation*}
        s^1_2(S) < s^1_2(T) \iff T\vdash^{\Pi^1_2} \RFN[\Sigma^1_2](S).
    \end{equation*}
\end{theorem}
\begin{proof}
    For one direction, suppose that $s^1_2(S) < s^1_2(T)$. We can find a recursive pseudodilator $D$ such that $s^1_2(S) \le \Clim(D) <s^1_2(T)$. By \autoref{Lemma: When the climax happens before s12 ordinal of a theory}, there is a true $\Pi^1_2$-sentence $\tau$ such that $T + \tau\vdash  \lnot\Dil(D)$. 
    We know that $ s^1_2(S) = \Clim(\|S\|_{\Sigma^1_2}) < \Clim(D)$, so $\Grow(\|S\|_{\Sigma^1_2}, D)$ is a true $\Pi^1_2$-sentence. We have
    \begin{equation*}
        T + \tau + \Grow(\|S\|_{\Sigma^1_2}, D) \vdash \lnot\Dil(\|S\|_{\Sigma^1_2}),
    \end{equation*}
    and \autoref{Proposition: Sigma 1 2 reflection and pseudodilators} with $T\supseteq \Sigma^1_2\mhyphen\AC_0$ implies $T\vdash^{\Pi^1_2} \RFN[\Sigma^1_2](S)$.

    Conversely, suppose that $T + \tau\vdash \RFN[\Sigma^1_2](S)$ for some true $\Pi^1_2$-sentence $\tau$. This implies $T + \tau\vdash \lnot\Dil(\|S\|_{\Sigma^1_2})$ by \autoref{Proposition: Sigma 1 2 reflection and pseudodilators}. $s^1_2(T)$ is a strict supremum, so we have 
    \begin{equation*}
        s^1_2(S) = \Clim(\|S\|_{\Sigma^1_2}) < s^1_2(T+\tau) = s^1_2(T). \qedhere 
    \end{equation*}
\end{proof}
Note that the forward implication of \autoref{Theorem: Strict s 1 2 comparison equivalence condition} holds even when we do not require $T\supseteq \Sigma^1_2\mhyphen\AC_0$. See \autoref{Corollary: s 1 2 comparison and Sigma 1 2 reflection} for its proof.
}

\section{\texorpdfstring{$\Sigma^1_2$}{Sigma 1 2}-Reflection rank}
\label{Section: Sigma 1 2 reflection rank}
The well-foundedness of $\prec^{\Pi^1_2}_{\Sigma^1_2}$ does not follow from the proof similar to that of \autoref{Proposition: WFness Pi 1 2 reflection rank} even for theories extending $\Sigma^1_2\mhyphen\AC_0$. The main trouble is that $\theta$ in \autoref{Proposition: WFness Pi 1 2 reflection rank} becomes $\Sigma^1_3$ instead of $\Pi^1_2$.
To establish the well-foundedness of $\prec^{\Pi^1_2}_{\Sigma^1_2}$, we follow an argument in \cite{Walsh2022incompleteness}:

\begin{proposition} \label{Proposition: Sigma 1 2 modulo Pi 1 2 reflection rank wellfoundedness}
    $\prec^{\Pi^1_2}_{\Sigma^1_2}$ is well-founded for $\Sigma^1_2$-sound r.e.\ extensions of $\Sigma^1_2\mhyphen \AC_0$.
\end{proposition}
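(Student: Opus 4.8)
The plan is to bypass the complexity obstruction entirely by routing $\prec^{\Pi^1_2}_{\Sigma^1_2}$ through the $\Sigma^1_2$-proof-theoretic ordinal, in the spirit of the ordinal-analytic incompleteness argument of \cite{Walsh2022incompleteness}. There is no need to encode an infinite descending chain as a single formula --- which is precisely what one cannot do, since unravelling the true $\Pi^1_2$-oracle in ``$T\vdash^{\Pi^1_2}\RFN[\Sigma^1_2](S)$'' produces a $\Sigma^1_3$, hence not $\Sigma^1_2$, statement --- once we observe that $\prec^{\Pi^1_2}_{\Sigma^1_2}$ is strictly monotone along $s^1_2$. Precisely, I would establish
\[
    S\prec^{\Pi^1_2}_{\Sigma^1_2}T \implies s^1_2(S)<s^1_2(T)
\]
for $\Sigma^1_2$-sound r.e.\ extensions $S$, $T$ of $\Sigma^1_2\mhyphen\AC_0$; since $s^1_2$ is ordinal-valued, well-foundedness of $\prec^{\Pi^1_2}_{\Sigma^1_2}$ on this class follows at once.

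To prove the displayed implication I would argue as follows. By Craig's trick we may assume $S$ is primitive recursively axiomatized, so that $|S|_{\Sigma^1_2}$ is a recursive predilator; this reaxiomatization changes neither the set of theorems of $S$ nor its $\Sigma^1_2$-soundness, hence affects neither side of the implication. Assume $S\prec^{\Pi^1_2}_{\Sigma^1_2}T$, that is $T\vdash^{\Pi^1_2}\RFN[\Sigma^1_2](S)$. Since $T$ extends $\Sigma^1_2\mhyphen\AC_0$ and $\Sigma^1_2\mhyphen\AC_0\vdash\RFN[\Sigma^1_2](S)\to\lnot\Dil(|S|_{\Sigma^1_2})$ by \autoref{Proposition: Sigma 1 2 reflection and pseudodilators}, we obtain $T\vdash^{\Pi^1_2}\lnot\Dil(|S|_{\Sigma^1_2})$. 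Applying \autoref{Lemma: When the climax happens before s12 ordinal of a theory} to the $\Sigma^1_2$-sound extension $T$ of $\ACA_0$ and the recursive predilator $|S|_{\Sigma^1_2}$ then yields $\Clim(|S|_{\Sigma^1_2})<s^1_2(T)$. Finally $\Clim(|S|_{\Sigma^1_2})=s^1_2(S)$ --- the identity recorded just after \autoref{Definition: s12 ordinal}, itself immediate from \autoref{Lemma: Characterization of proof-theoretic pseudodilator} --- so $s^1_2(S)<s^1_2(T)$.

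With the implication in hand, any infinite $\prec^{\Pi^1_2}_{\Sigma^1_2}$-descending chain $T_0\succ^{\Pi^1_2}_{\Sigma^1_2}T_1\succ^{\Pi^1_2}_{\Sigma^1_2}\cdots$ of $\Sigma^1_2$-sound r.e.\ extensions of $\Sigma^1_2\mhyphen\AC_0$ would give $s^1_2(T_0)>s^1_2(T_1)>\cdots$, an impossible infinite descending sequence of ordinals; hence $\prec^{\Pi^1_2}_{\Sigma^1_2}$ is well-founded. I do not expect any serious obstacle: the genuine content is already in \autoref{Proposition: Sigma 1 2 reflection and pseudodilators} and \autoref{Lemma: When the climax happens before s12 ordinal of a theory}, and what remains is the bookkeeping of hypotheses --- in particular, it is the assumption that $S$ and $T$ extend $\Sigma^1_2\mhyphen\AC_0$, rather than merely $\ACA_0$, that is used to convert between $\RFN[\Sigma^1_2]$ and $\lnot\Dil(|\cdot|_{\Sigma^1_2})$ inside $T$, and one should record that the Craig-trick reduction to a recursive axiomatization leaves $\prec^{\Pi^1_2}_{\Sigma^1_2}$ and $s^1_2$ unchanged. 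It is worth noting that the displayed implication is in fact an equivalence (run \autoref{Lemma: When the climax happens before s12 ordinal of a theory} and \autoref{Proposition: Sigma 1 2 reflection and pseudodilators} in the other direction), which is what underlies the coincidence of the $\prec^{\Pi^1_2}_{\Sigma^1_2}$-rank with $s^1_2$ established later.
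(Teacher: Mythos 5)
Your proposal is correct and follows essentially the same route as the paper: both pass from $S\prec^{\Pi^1_2}_{\Sigma^1_2}T$ to $T\vdash^{\Pi^1_2}\lnot\Dil(|S|_{\Sigma^1_2})$ via \autoref{Proposition: Sigma 1 2 reflection and pseudodilators}, then invoke \autoref{Lemma: When the climax happens before s12 ordinal of a theory} to get $s^1_2(S)=\Clim(|S|_{\Sigma^1_2})<s^1_2(T)$, and conclude well-foundedness from the well-foundedness of the ordinals. The only (cosmetic) difference is that you isolate the strict-monotonicity implication up front rather than arguing directly with a hypothetical infinite descending chain, and you are slightly more explicit about keeping the $\Pi^1_2$-oracle and about the Craig-trick bookkeeping that makes $|S|_{\Sigma^1_2}$ recursive.
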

\begin{proof}
    To attack \autoref{Proposition: Sigma 1 2 modulo Pi 1 2 reflection rank wellfoundedness}, it is natural to observe the connection between $\prec^{\Pi^1_2}_{\Sigma^1_2}$ and $s^1_2$.

    Suppose not, and let $\lag T_n\mid n<\omega\rag$ be a $\prec^{\Pi^1_2}_{\Sigma^1_2}$-decreasing sequence of $\Sigma^1_2$-sound r.e.\ extensions of $\Sigma^1_2\mhyphen\AC_0$.
    Then we have, for each $n$,
    \begin{equation*}
        T_n \vdash^{\Pi^1_2} \RFN[\Sigma^1_2](T_{n+1}).
    \end{equation*}
    Since $T_n\supseteq \Sigma^1_2\mhyphen\AC_0$, we have $T_n\vdash\lnot\Dil(|T_{n+1}|_{\Sigma^1_2})$ for each $n$ by \autoref{Proposition: Sigma 1 2 reflection and pseudodilators}.
    Hence, by \autoref{Lemma: When the climax happens before s12 ordinal of a theory}, we have
    \begin{equation*}
        s^1_2(T_{n+1}) = \Clim(|T_{n+1}|_{\Sigma^1_2}) < s^1_2(T_n)
    \end{equation*}
    for each $n$, a contradiction.
\end{proof}

It is natural to ask whether $\prec^{\Pi^1_2}_{\Sigma^1_2}$-rank is related to the $\Sigma^1_2$-proof-theoretic ordinal. The subsequent results say it is:

\begin{proposition} \label{Proposition: Sigma 1 2 reflection rank and s 1 2 incr case}
    Let $S$, $T$ be $\Sigma^1_2$-sound r.e.\ extensions of $\Sigma^1_2\mhyphen\AC_0$.
    If $s^1_2(S)\le s^1_2(T)$, then $\rank^{\Pi^1_2}_{\Sigma^1_2}(S)\le \rank^{\Pi^1_2}_{\Sigma^1_2}(T)$.
\end{proposition}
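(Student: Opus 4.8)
The plan is to argue by induction on the $\prec^{\Pi^1_2}_{\Sigma^1_2}$-rank of $S$, using the hypothesis $s^1_2(S)\le s^1_2(T)$ to transfer each $\prec^{\Pi^1_2}_{\Sigma^1_2}$-predecessor of $S$ to a $\prec^{\Pi^1_2}_{\Sigma^1_2}$-predecessor of $T$. Concretely, to prove $\rank^{\Pi^1_2}_{\Sigma^1_2}(S)\le\rank^{\Pi^1_2}_{\Sigma^1_2}(T)$ it suffices to show that whenever $S'\prec^{\Pi^1_2}_{\Sigma^1_2}S$ for some $\Sigma^1_2$-sound r.e.\ extension $S'$ of $\Sigma^1_2\mhyphen\AC_0$, there is a $\Sigma^1_2$-sound r.e.\ extension $T'$ of $\Sigma^1_2\mhyphen\AC_0$ with $T'\prec^{\Pi^1_2}_{\Sigma^1_2}T$ and $s^1_2(S')\le s^1_2(T')$; then the inductive hypothesis applied to the pair $(S',T')$ gives $\rank^{\Pi^1_2}_{\Sigma^1_2}(S')\le\rank^{\Pi^1_2}_{\Sigma^1_2}(T')<\rank^{\Pi^1_2}_{\Sigma^1_2}(T)$, and since $S'$ was an arbitrary predecessor of $S$ we conclude $\rank^{\Pi^1_2}_{\Sigma^1_2}(S)\le\rank^{\Pi^1_2}_{\Sigma^1_2}(T)$.

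So fix $S'\prec^{\Pi^1_2}_{\Sigma^1_2}S$, i.e.\ $S\vdash^{\Pi^1_2}\RFN[\Sigma^1_2](S')$. Since $S\supseteq\Sigma^1_2\mhyphen\AC_0$, \autoref{Proposition: Sigma 1 2 reflection and pseudodilators} gives $S\vdash^{\Pi^1_2}\lnot\Dil(|S'|_{\Sigma^1_2})$, where $|S'|_{\Sigma^1_2}$ is recursive because $S'$ is r.e.\ Then \autoref{Lemma: When the climax happens before s12 ordinal of a theory} yields $\Clim(|S'|_{\Sigma^1_2})<s^1_2(S)$, i.e.\ $s^1_2(S')<s^1_2(S)\le s^1_2(T)$. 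The natural candidate for $T'$ is $T' := \Sigma^1_2\mhyphen\AC_0 + \RFN[\Sigma^1_2](S')$. This is r.e.\ and extends $\Sigma^1_2\mhyphen\AC_0$; it is $\Sigma^1_2$-sound because $S'$ is $\Sigma^1_2$-sound so $\RFN[\Sigma^1_2](S')$ is true, and adding a true sentence to a $\Sigma^1_2$-sound theory keeps it $\Sigma^1_2$-sound. Moreover $s^1_2(T')\ge s^1_2(S')$: any recursive pseudodilator $D$ with $S'\vdash\lnot\Dil(D)$ has $\Clim(D)<s^1_2(S')$ by \autoref{Lemma: s 1 2 is the strict supremum}, and from $\RFN[\Sigma^1_2](S')$ together with $\Sigma^1_2\mhyphen\AC_0\supseteq\ACA_0$ one derives $\lnot\Dil(D)$ inside $T'$ via $\Pi^1_2$-completeness of recursive dilators, so $s^1_2(S')\le s^1_2(T')$; in fact equality holds but the inequality is all we need. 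It remains to check $T'\prec^{\Pi^1_2}_{\Sigma^1_2}T$, i.e.\ $T\vdash^{\Pi^1_2}\RFN[\Sigma^1_2](T')$; this should follow because $s^1_2(T')=s^1_2(S')<s^1_2(T)$, so by \autoref{Proposition: First equivalence for Sigma 1 2} (or directly via \autoref{Lemma: When the climax happens before s12 ordinal of a theory} applied to $|T'|_{\Sigma^1_2}$) we get $T\vdash^{\Pi^1_2}\lnot\Dil(|T'|_{\Sigma^1_2})$, and then \autoref{Proposition: Sigma 1 2 reflection and pseudodilators} converts this back to $T\vdash^{\Pi^1_2}\RFN[\Sigma^1_2](T')$ since $T\supseteq\Sigma^1_2\mhyphen\AC_0$.

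The main obstacle I anticipate is the bookkeeping around where the $\Pi^1_2$-oracle sits and whether the various equivalences ($\RFN[\Sigma^1_2]\leftrightarrow\lnot\Dil(|\cdot|_{\Sigma^1_2})$, $\Pi^1_2$-completeness of dilators) go through with a $\Pi^1_2$-oracle and over base theory $\Sigma^1_2\mhyphen\AC_0$ rather than plain $\ACA_0$ — in particular verifying that $\Prv^{\Pi^1_2}_T$ behaves well under composition, so that $S\vdash^{\Pi^1_2}\RFN[\Sigma^1_2](S')$ plus $T$-provability facts chain correctly. A secondary subtlety is the legitimacy of the induction: it is an induction along the well-founded relation $\prec^{\Pi^1_2}_{\Sigma^1_2}$ (well-founded by \autoref{Proposition: Sigma 1 2 modulo Pi 1 2 reflection rank wellfoundedness}), carried out in the metatheory, and one should be careful that the statement being proved by induction — "for all $\Sigma^1_2$-sound r.e.\ $T\supseteq\Sigma^1_2\mhyphen\AC_0$ with $s^1_2(S)\le s^1_2(T)$, $\rank^{\Pi^1_2}_{\Sigma^1_2}(S)\le\rank^{\Pi^1_2}_{\Sigma^1_2}(T)$" — has $S$ as the induction variable and is genuinely a $\prec^{\Pi^1_2}_{\Sigma^1_2}$-induction on $S$, which it is, since the predecessors $S'$ of $S$ are exactly the theories we feed back into the hypothesis.
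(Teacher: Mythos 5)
Your proof has a genuine gap at the step $T'\prec^{\Pi^1_2}_{\Sigma^1_2}T$. You choose $T' := \Sigma^1_2\mhyphen\AC_0 + \RFN[\Sigma^1_2](S')$ and assert ``in fact equality holds'' for $s^1_2(T')=s^1_2(S')$, then use this to get $s^1_2(T')<s^1_2(T)$. But equality is \emph{false}: since $T'\vdash\RFN[\Sigma^1_2](S')$ and over $\Sigma^1_2\mhyphen\AC_0$ this is equivalent to $\lnot\Dil(|S'|_{\Sigma^1_2})$ by \autoref{Proposition: Sigma 1 2 reflection and pseudodilators}, we get $T'\vdash\lnot\Dil(|S'|_{\Sigma^1_2})$, so \autoref{Lemma: s 1 2 is the strict supremum} gives $\Clim(|S'|_{\Sigma^1_2})=s^1_2(S')<s^1_2(T')$ strictly. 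What one can actually establish is the chain $s^1_2(S')<s^1_2(T')\le s^1_2(S)\le s^1_2(T)$ (the middle $\le$ comes from $T'\subseteq S+\theta$ for a true $\Pi^1_2$ witness $\theta$ and \autoref{Corollary: s 1 2 is stable under true Pi 1 2}), and the final inequality is not strict. In the degenerate case $T'=S=T$ (which can occur, e.g.\ $S=T=\Sigma^1_2\mhyphen\AC_0+\RFN[\Sigma^1_2](S')$), one has $s^1_2(T')=s^1_2(T)$, so \autoref{Lemma: When the climax happens before s12 ordinal of a theory} fails to deliver $T\vdash^{\Pi^1_2}\lnot\Dil(|T'|_{\Sigma^1_2})$, and indeed $T'\not\prec^{\Pi^1_2}_{\Sigma^1_2}T$.

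The repair is simpler than your detour: take $T':=S'$. From $S'\prec^{\Pi^1_2}_{\Sigma^1_2}S$ you have $S\vdash^{\Pi^1_2}\RFN[\Sigma^1_2](S')$, which over $\Sigma^1_2\mhyphen\AC_0$ is $S\vdash^{\Pi^1_2}\lnot\Dil(|S'|_{\Sigma^1_2})$, a $\Sigma^1_2$-sentence; since $s^1_2(S)\le s^1_2(T)$ gives $S\subseteq^{\Pi^1_2}_{\Sigma^1_2}T$ by \autoref{Proposition: First equivalence for Sigma 1 2}, you get $T\vdash^{\Pi^1_2}\lnot\Dil(|S'|_{\Sigma^1_2})$, i.e.\ $S'\prec^{\Pi^1_2}_{\Sigma^1_2}T$. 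This makes the whole $\prec^{\Pi^1_2}_{\Sigma^1_2}$-induction unnecessary: every predecessor of $S$ is a predecessor of $T$, so $\rank^{\Pi^1_2}_{\Sigma^1_2}(S)\le\rank^{\Pi^1_2}_{\Sigma^1_2}(T)$ directly. This is in spirit what the paper does, though it phrases it differently: it fixes an arbitrary $\alpha<\rank^{\Pi^1_2}_{\Sigma^1_2}(S)$, proves a lemma producing a $U$ with $\rank^{\Pi^1_2}_{\Sigma^1_2}(U)=\alpha$ and $S\vdash^{\Pi^1_2}\RFN[\Sigma^1_2](U)$, and then transfers the reflection statement from $S$ to $T$ via $S\subseteq^{\Pi^1_2}_{\Sigma^1_2}T$.
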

\begin{proof}
    Suppose that $\alpha < \rank^{\Pi^1_2}_{\Sigma^1_2}(S)$. We claim the following lemma:
    \begin{lemma*}
        There is a $\Sigma^1_2$-sound r.e.\ extension $U$ of $\Sigma^1_2\mhyphen\AC_0$ such that $\rank^{\Pi^1_2}_{\Sigma^1_2}(U) = \alpha$ and $S\vdash^{\Pi^1_2}\RFN[\Sigma^1_2](U)$.
    \end{lemma*}
    \begin{proof}
        Let $\bbP$ be the set of all $\Sigma^1_2$-sound r.e.\ extension of $\Sigma^1_2\mhyphen\AC_0$ such that $S\vdash \RFN[\Sigma^1_2](U)$, and consider the structure $(\bbP, \prec^{\Pi^1_2}_{\Sigma^1_2})$.
        By \autoref{Proposition: Sigma 1 2 modulo Pi 1 2 reflection rank wellfoundedness}, $\prec^{\Pi^1_2}_{\Sigma^1_2}$ over $\bbP$ is well-founded, so it defines a rank function $\rho$ over $\bbP$.

        We claim that $\rho(U) = \rank^{\Pi^1_2}_{\Sigma^1_2}(U)$ for all $U\in \bbP$ by $\prec^{\Pi^1_2}_{\Sigma^1_2}$-induction on $\bbP$:
        Suppose that $\rho(V) = \rank^{\Pi^1_2}_{\Sigma^1_2}(V)$ holds for all $V\in\bbP$ such that $V\prec^{\Pi^1_2}_{\Sigma^1_2}U$.
        Then
        \begin{equation*}
            \rho(U) = \sup\{\rho(V)+1\mid V\prec^{\Pi^1_2}_{\Sigma^1_2} U\} = \sup\{\rank^{\Pi^1_2}_{\Sigma^1_2}(V)+1\mid V\prec^{\Pi^1_2}_{\Sigma^1_2} U\} = \rank^{\Pi^1_2}_{\Sigma^1_2}(U),
        \end{equation*}
        proving the claim.
        By a similar argument, we can see that the supremum of all $\rho(U)$ for $U\in \bbP$ is $\rank^{\Pi^1_2}_{\Sigma^1_2}(S)$, which proves the desired claim.
    \end{proof}

    Going back to the proof of the main proposition, let $U$ be a theory such that $\rank^{\Pi^1_2}_{\Sigma^1_2}(U) = \alpha$ and $S\vdash\RFN[\Sigma^1_2](U)$.
    Since $s^1_2(S)\le s^1_2(T)$, which implies $S\subseteq^{\Pi^1_2}_{\Sigma^1_2}T$, we get $T\vdash^{\Pi^1_2}\RFN[\Sigma^1_2](U)$.
    That is, $U\prec^{\Pi^1_2}_{\Sigma^1_2} T$, which implies $\alpha = \rank^{\Pi^1_2}_{\Sigma^1_2}(U) < \rank^{\Pi^1_2}_{\Sigma^1_2}(T)$.
\end{proof}

\begin{proposition} \label{Proposition: Sigma 1 2 reflection rank and s 1 2 strict case}
    Let $S$, $T$ be $\Sigma^1_2$-sound r.e.\ extensions of $\Sigma^1_2\mhyphen\AC_0$.
    If $s^1_2(S)< s^1_2(T)$, then $\rank^{\Pi^1_2}_{\Sigma^1_2}(S)< \rank^{\Pi^1_2}_{\Sigma^1_2}(T)$.
\end{proposition}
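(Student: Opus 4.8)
The plan is to establish the stronger fact that $S \prec^{\Pi^1_2}_{\Sigma^1_2} T$ directly. Since $\prec^{\Pi^1_2}_{\Sigma^1_2}$ is well-founded on $\Sigma^1_2$-sound r.e.\ extensions of $\Sigma^1_2\mhyphen\AC_0$ by \autoref{Proposition: Sigma 1 2 modulo Pi 1 2 reflection rank wellfoundedness}, and both $S$ and $T$ lie in its domain, the defining equation $\rank^{\Pi^1_2}_{\Sigma^1_2}(T)=\sup\{\rank^{\Pi^1_2}_{\Sigma^1_2}(U)+1\mid U\prec^{\Pi^1_2}_{\Sigma^1_2}T\}$ then gives $\rank^{\Pi^1_2}_{\Sigma^1_2}(S)<\rank^{\Pi^1_2}_{\Sigma^1_2}(T)$ at once. (One may think of this as strengthening the inequality already furnished by \autoref{Proposition: Sigma 1 2 reflection rank and s 1 2 incr case}, but the argument below does not need that proposition.)

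First I would collect the basic facts about $|S|_{\Sigma^1_2}$. After passing to a recursive axiomatization of $S$ (legitimate since $S$ is r.e.), $|S|_{\Sigma^1_2}$ is a recursive predilator. It is a pseudodilator because $S$ is $\Sigma^1_2$-sound, so $\RFN[\Sigma^1_2](S)$ is true, hence $\lnot\Dil(|S|_{\Sigma^1_2})$ holds by \autoref{Proposition: Sigma 1 2 reflection and pseudodilators}. Finally $\Clim(|S|_{\Sigma^1_2})=s^1_2(S)$: the disjunctive join $\bigvee$ defining $|S|_{\Sigma^1_2}$ becomes ill-founded precisely at the supremum of the climaxes of its components (using \autoref{Lemma: Disjunctions of countably many predilators} together with functoriality of predilators), and that supremum runs over $\{\Clim(D)\mid D\text{ recursive},\ S\vdash\lnot\Dil(D)\}$, whose supremum is $s^1_2(S)$.

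Next I would feed this into \autoref{Lemma: When the climax happens before s12 ordinal of a theory}. From the hypothesis $s^1_2(S)<s^1_2(T)$ we get $\Clim(|S|_{\Sigma^1_2})<s^1_2(T)$, whence $T\vdash^{\Pi^1_2}\lnot\Dil(|S|_{\Sigma^1_2})$. Since $T$ extends $\Sigma^1_2\mhyphen\AC_0$ and the implication $\lnot\Dil(|S|_{\Sigma^1_2})\to\RFN[\Sigma^1_2](S)$ is a theorem of $\Sigma^1_2\mhyphen\AC_0$ (this is one direction of \autoref{Proposition: Sigma 1 2 reflection and pseudodilators}), we conclude $T\vdash^{\Pi^1_2}\RFN[\Sigma^1_2](S)$, i.e.\ $S\prec^{\Pi^1_2}_{\Sigma^1_2}T$, which completes the proof.

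I do not expect a genuine obstacle here: this is essentially the converse of the chain of implications used in the proof of \autoref{Proposition: Sigma 1 2 modulo Pi 1 2 reflection rank wellfoundedness}. The single point requiring care is that \autoref{Proposition: Sigma 1 2 reflection and pseudodilators} must be invoked as a $\Sigma^1_2\mhyphen\AC_0$-provable implication rather than as a mere external equivalence, so that it survives being applied inside $T$ and under a true $\Pi^1_2$-oracle; inspecting its proof shows the direction we need is indeed derived from $\Sigma^1_2\mhyphen\AC_0$. A minor bookkeeping matter is the recursiveness of $|S|_{\Sigma^1_2}$, which is handled by first replacing $S$ with a recursive axiomatization.
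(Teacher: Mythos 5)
Your proof is correct, and it takes a cleaner route than the paper's own. The paper constructs an auxiliary recursive pseudodilator $\hat{D}$ with $s^1_2(S)\le\Clim(\hat{D})<s^1_2(T)$, writes down a true $\Pi^1_2$ growth statement $\theta\equiv\forall^0 D\in\RecPreDil[\Prv_S(\lnot\Dil(D))\to\Grow(D,\hat{D})]$, and then argues inside $T+\theta$ that every $S$-provable $\Sigma^1_2$-sentence holds (using $\Pi^1_2$-completeness of dilators), so $T\vdash^{\Pi^1_2}\RFN[\Sigma^1_2](S)$; this essentially re-derives, by hand, the content of \autoref{Lemma: When the climax happens before s12 ordinal of a theory} and part of \autoref{Proposition: Sigma 1 2 reflection and pseudodilators}. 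You instead apply \autoref{Lemma: When the climax happens before s12 ordinal of a theory} directly to $|S|_{\Sigma^1_2}$ itself (using $\Clim(|S|_{\Sigma^1_2})=s^1_2(S)$ from the lemma after \autoref{Definition: s12 ordinal}), and then close the gap via the internalized implication $\lnot\Dil(|S|_{\Sigma^1_2})\to\RFN[\Sigma^1_2](S)$ from \autoref{Proposition: Sigma 1 2 reflection and pseudodilators}. This packages the growth-statement reasoning into already-proved lemmas and is shorter. Your attention to the point that the implication must be read as a $\Sigma^1_2\mhyphen\AC_0$-provable theorem (so that it can be applied inside $T$ and under a true $\Pi^1_2$ oracle) is exactly the right care, and in fact if you inspect the relevant direction of the proposition's proof it only needs $\ACA_0$ — this is why the paper's argument yields the slightly stronger \autoref{Corollary: s 1 2 comparison and Sigma 1 2 reflection} for extensions of $\ACA_0$; your proof gives the same corollary once you note that the direction $\lnot\Dil(|S|_{\Sigma^1_2})\to\RFN[\Sigma^1_2](S)$ is $\ACA_0$-provable. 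The bookkeeping step of passing to a recursive axiomatization of $S$ to make $|S|_{\Sigma^1_2}$ recursive is correct and needed.
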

\begin{proof}
    Let $\hat{D}$ be a recursive pseudodilator such that $s^1_2(S) \le \Clim(\hat{D}) < s^1_2(T)$.
    Then by \autoref{Lemma: When the climax happens before s12 ordinal of a theory}, we have $T\vdash^{\Pi^1_2}\lnot\Dil(\hat{D})$.

    Now let $D$ be a recursive pseudodilator such that $S\vdash\lnot\Dil(D)$. Then we have $\Clim(D) \le \Clim(\hat{D})$, so the following sentence is true:
    \begin{equation*}
        \theta \equiv \forall^0 D\in \RecPreDil [\Prv_S(\lnot\Dil(D))\to \Grow(D,\hat{D})].
    \end{equation*}
    Clearly $\theta$ is $\Pi^1_2$.
    Furthermore, we get
    \begin{equation*}
        T + \theta \vdash \forall^0 D\in \RecPreDil [\Prv_S(\lnot\Dil(D))\to \lnot\Dil(D)].
    \end{equation*}
    Hence by the $\Pi^1_2$-completeness of dilators, we get
    \begin{equation*}
        T + \theta \vdash \RFN[\Sigma^1_2](S).
    \end{equation*}
    Since $\theta$ is true, $T \vdash^{\Pi^1_2} \RFN[\Sigma^1_2](S)$, which is the definition of $S\prec^{\Pi^1_2}_{\Sigma^1_2} T$. Hence $\rank^{\Pi^1_2}_{\Sigma^1_2}(S)<\rank^{\Pi^1_2}_{\Sigma^1_2}(T)$.
\end{proof}

Combining \autoref{Proposition: Sigma 1 2 reflection rank and s 1 2 incr case} and \autoref{Proposition: Sigma 1 2 reflection rank and s 1 2 strict case}, we get the following:
\begin{corollary} \label{Corollary: s 1 2 and Sigma 1 2 reflection rank coincides}
    \pushQED{\qed}
    Let $S$, $T$ be $\Sigma^1_2$-sound r.e.\ extensions of $\Sigma^1_2\mhyphen\AC_0$. Then we have
    \begin{equation*}
        s^1_2(S)\le s^1_2(T) \iff \rank^{\Pi^1_2}_{\Sigma^1_2}(S)\le \rank^{\Pi^1_2}_{\Sigma^1_2}(T).
        \qedhere 
    \end{equation*}
\end{corollary}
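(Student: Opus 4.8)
The plan is to obtain the corollary as a purely formal consequence of \autoref{Proposition: Sigma 1 2 reflection rank and s 1 2 incr case} and \autoref{Proposition: Sigma 1 2 reflection rank and s 1 2 strict case}, exploiting that both $s^1_2$ and $\rank^{\Pi^1_2}_{\Sigma^1_2}$ take ordinal values and are therefore linearly ordered, so a trichotomy argument is available. The hypotheses on $S$ and $T$ (being $\Sigma^1_2$-sound r.e.\ extensions of $\Sigma^1_2\mhyphen\AC_0$) are symmetric in $S$ and $T$, which is what lets us apply the two cited propositions with the roles of $S$ and $T$ interchanged.

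The left-to-right implication is immediate: assuming $s^1_2(S)\le s^1_2(T)$, \autoref{Proposition: Sigma 1 2 reflection rank and s 1 2 incr case} directly yields $\rank^{\Pi^1_2}_{\Sigma^1_2}(S)\le \rank^{\Pi^1_2}_{\Sigma^1_2}(T)$. For the converse I would argue by contraposition. Suppose $s^1_2(S)\le s^1_2(T)$ fails; since these are ordinals, this forces $s^1_2(T)<s^1_2(S)$. Now apply \autoref{Proposition: Sigma 1 2 reflection rank and s 1 2 strict case} with $T$ in the role of ``$S$'' and $S$ in the role of ``$T$'': from $s^1_2(T)<s^1_2(S)$ we conclude $\rank^{\Pi^1_2}_{\Sigma^1_2}(T)<\rank^{\Pi^1_2}_{\Sigma^1_2}(S)$, so in particular $\rank^{\Pi^1_2}_{\Sigma^1_2}(S)\le\rank^{\Pi^1_2}_{\Sigma^1_2}(T)$ fails. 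This establishes both directions.

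I do not anticipate any real difficulty here, since the substantive content has already been extracted in the two preceding propositions. The only subtlety worth noting in writing it up is that the non-strict correspondence of \autoref{Proposition: Sigma 1 2 reflection rank and s 1 2 incr case} alone does \emph{not} deliver the reverse implication of the corollary; it is precisely the strict version, \autoref{Proposition: Sigma 1 2 reflection rank and s 1 2 strict case}, together with linearity of the ordinal-valued invariants, that powers the contrapositive step.
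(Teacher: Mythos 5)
Your proof is correct and is exactly the intended combination: the paper simply says ``Combining \autoref{Proposition: Sigma 1 2 reflection rank and s 1 2 incr case} and \autoref{Proposition: Sigma 1 2 reflection rank and s 1 2 strict case}'' and leaves the trichotomy/contrapositive step implicit. Your remark that the non-strict proposition alone is insufficient for the converse, and that the strict version together with linearity of ordinals is what powers the contrapositive, is precisely the right observation to make explicit.
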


Note that the only reason we assumed $S$ and $T$ be extensions of $\Sigma^1_2\mhyphen\AC_0$ is to impose them $\prec^{\Pi^1_2}_{\Sigma^1_2}$-rank. From the proof of \autoref{Proposition: Sigma 1 2 reflection rank and s 1 2 strict case}, we can extract the following result:
\begin{corollary} \label{Corollary: s 1 2 comparison and Sigma 1 2 reflection}
    \pushQED{\qed}
    Let $S$, $T$ be $\Sigma^1_2$-sound r.e.\ extensions of $\ACA_0$.
    If $s^1_2(S)< s^1_2(T)$, then $T \vdash^{\Pi^1_2}\RFN[\Sigma^1_2](S)$. \qedhere 
\end{corollary}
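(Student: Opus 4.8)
The plan is to extract the relevant portion of the proof of \autoref{Proposition: Sigma 1 2 reflection rank and s 1 2 strict case}: that argument never used the assumption that $S$ and $T$ extend $\Sigma^1_2\mhyphen\AC_0$ — that hypothesis served there only to equip the theories with a $\prec^{\Pi^1_2}_{\Sigma^1_2}$-rank, which plays no role in the present statement. So there is essentially nothing new to prove; the work is to isolate and re-present the right lines.

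First I would use $s^1_2(S)<s^1_2(T)$, together with the fact that $s^1_2(T)$ is the supremum of $\Clim(D)$ over $T$-provably recursive pseudodilators $D$, to fix a recursive pseudodilator $\hat D$ with $s^1_2(S)\le\Clim(\hat D)$; by \autoref{Lemma: s 1 2 is the strict supremum} this automatically gives $\Clim(\hat D)<s^1_2(T)$, so \autoref{Lemma: When the climax happens before s12 ordinal of a theory} furnishes a true $\Pi^1_2$-sentence $\sigma$ with $T+\sigma\vdash\lnot\Dil(\hat D)$. Next I would introduce the sentence
\[
    \theta \equiv \forall^0 D\in\RecPreDil\,[\Prv_S(\lnot\Dil(D))\to\Grow(D,\hat D)].
\]
Since $S$ is r.e.\ the predicate $\Prv_S$ is $\Sigma^0_1$, and $\Grow(D,\hat D)$ is $\Pi^1_2$ for recursive $D$ and $\hat D$, so $\theta$ is $\Pi^1_2$ ($\Pi^1_2$ absorbs the number quantifier). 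The sentence $\theta$ is true: by $\Sigma^1_2$-soundness of $S$ and \autoref{Lemma: s 1 2 is the strict supremum}, whenever $S\vdash\lnot\Dil(D)$ the predilator $D$ is a pseudodilator with $\Clim(D)\le s^1_2(S)\le\Clim(\hat D)$, i.e.\ $\Grow(D,\hat D)$ holds.

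Finally I would argue inside $T+\sigma+\theta$: given a recursive predilator $D$ with $\Prv_S(\lnot\Dil(D))$, the sentence $\theta$ gives $\Grow(D,\hat D)$ and $\sigma$ gives a well-order $\alpha$ with $\lnot\WO(\hat D(\alpha))$, so the contrapositive of $\Grow(D,\hat D)$ applied at $\alpha$ gives $\lnot\WO(D(\alpha))$, hence $\lnot\Dil(D)$. Thus $T+\sigma+\theta\vdash\forall^0 D\in\RecPreDil\,[\Prv_S(\lnot\Dil(D))\to\lnot\Dil(D)]$, and by the $\Pi^1_2$-completeness theorem of dilators (available already in $\ACA_0$, hence in $T$) the right-hand side is equivalent to $\RFN[\Sigma^1_2](S)$. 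Since $\sigma\land\theta$ is a true $\Pi^1_2$-sentence, this yields $T\vdash^{\Pi^1_2}\RFN[\Sigma^1_2](S)$. I do not expect a genuine obstacle: the only points needing care are the complexity bookkeeping for $\theta$ and the in-$\ACA_0$ passage from $\forall^0 D\,[\Prv_S(\lnot\Dil(D))\to\lnot\Dil(D)]$ to $\RFN[\Sigma^1_2](S)$, both of which follow routinely from the preliminaries and the completeness theorem.
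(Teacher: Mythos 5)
Your proof is correct and takes essentially the same route as the paper: the corollary is stated in the paper with a \qed and the preceding sentence reads ``From the proof of \autoref{Proposition: Sigma 1 2 reflection rank and s 1 2 strict case}, we can extract the following result,'' which is precisely the extraction you carry out (choose a $T$-provable pseudodilator $\hat D$ with $s^1_2(S)\le\Clim(\hat D)<s^1_2(T)$, form the true $\Pi^1_2$ oracle sentence $\theta$, and conclude via $\Pi^1_2$-completeness of dilators). Your added bookkeeping — explicitly carrying the true $\Pi^1_2$-sentence $\sigma$ alongside $\theta$ — is a slightly more careful rendering of a step the paper compresses, but since one may simply pick $\hat D$ so that $T\vdash\lnot\Dil(\hat D)$ outright, it makes no difference to the argument.
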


\section{A glimpse to Case \texorpdfstring{$\Pi^1_2$}{Pi 1 2}}
\label{Section: Pi 1 2 case}

\autoref{Corollary: s 1 2 is stable under true Pi 1 2} shows $s^1_2(T)$ is stable under true $\Pi^1_2$-statements, and we may ask \added{whether} a similar property holds at \added{the} $\Pi^1_2$-level. The following proposition, which shows adding a true $\Sigma^1_2$-true sentence does not change the eventual behavior of the proof-theoretic dilator, gives some hint:
\begin{proposition} \label{Proposition: True Sigma 1 2 does not change the eventual value of dilator}
    Let $T$ be a $\Pi^1_2$-sound theory and let $F$ be a recursive pseudodilator. If $\alpha$ is a well-order such that $F(\alpha)$ is ill-founded, then for every $\beta\ge\alpha$ we have
    \begin{equation*}
        |T|_{\Pi^1_2}(\beta) = |T+\lnot\Dil(F)|_{\Pi^1_2}(\beta).
    \end{equation*}
\end{proposition}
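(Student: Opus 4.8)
The plan is to reduce everything, via \autoref{Lemma: Disjunctive join of two predilators}, to the observation that past $\alpha$ the extra $\Sigma^1_2$-axiom $\lnot\Dil(F)$ can be absorbed into a $T$-provable dilator. First note that since $F$ is a pseudodilator, $\lnot\Dil(F)$ is true, so $T+\lnot\Dil(F)$ is again $\Pi^1_2$-sound; hence $|T+\lnot\Dil(F)|_{\Pi^1_2}$ is a genuine dilator and the claimed identity is to be read up to bi-embeddability (equivalently, as an isomorphism of the well-orders obtained by evaluating at the well-order $\beta$). One inequality is free: every $T$-provably recursive dilator is also $(T+\lnot\Dil(F))$-provably recursive, so by the universality clause $|T|_{\Pi^1_2}$ embeds into $|T+\lnot\Dil(F)|_{\Pi^1_2}$, whence $|T|_{\Pi^1_2}(\beta)\le|T+\lnot\Dil(F)|_{\Pi^1_2}(\beta)$ for every $\beta$. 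The content is the reverse inequality at $\beta\ge\alpha$.

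For that, fix a recursive dilator $D$ with $T+\lnot\Dil(F)\vdash\Dil(D)$. By the deduction theorem and the $\ACA_0$-provable equivalence $\Dil(D\lor F)\lr(\Dil(D)\lor\Dil(F))$ from clause (1) of \autoref{Lemma: Disjunctive join of two predilators}, one turns a $(T+\lnot\Dil(F))$-proof of $\Dil(D)$ into a $T$-proof of $\Dil(D\lor F)$; and $D\lor F$ is again a recursive dilator (clauses (1) and (3), using that $D$ is a genuine dilator). Hence $D\lor F$ embeds into $|T|_{\Pi^1_2}$, so $(D\lor F)(\beta)\le|T|_{\Pi^1_2}(\beta)$. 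On the other hand, since $\alpha\le\beta$ and $F$ induces a functor on linear orders (\autoref{Proposition: Predilator is a functor}), $F(\alpha)$ embeds into $F(\beta)$, so $F(\beta)$ is ill-founded; fixing an infinite descending sequence $b$ through $F(\beta)$ and applying clause (2) of \autoref{Lemma: Disjunctive join of two predilators} with $D_0=D$ and $D_1=F$ yields an order-embedding $e^b_\beta\colon D(\beta)\to(D\lor F)(\beta)$. Composing, $D(\beta)\le|T|_{\Pi^1_2}(\beta)$ for every recursive dilator $D$ with $T+\lnot\Dil(F)\vdash\Dil(D)$.

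Finally I lift this to the proof-theoretic dilators themselves. Enumerate the $(T+\lnot\Dil(F))$-provably recursive dilators as $\lag D_i\mid i<\omega\rag$, so that, up to bi-embeddability, $|T+\lnot\Dil(F)|_{\Pi^1_2}=\sum_{i<\omega}D_i$ and hence $|T+\lnot\Dil(F)|_{\Pi^1_2}(\beta)=\sum_{i<\omega}D_i(\beta)$. Since each $D_i\lor F$ is a $T$-provably recursive dilator and $|T|_{\Pi^1_2}$ may be presented as an ordered sum of the $T$-provably recursive dilators in any order, we may take $|T|_{\Pi^1_2}$ to be an ordered sum whose first $\omega$ summands are $\lag D_i\lor F\mid i<\omega\rag$; since the first $\omega$ blocks form an initial sub-predilator, $\sum_{i<\omega}(D_i\lor F)(\beta)\le|T|_{\Pi^1_2}(\beta)$. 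Combining with the termwise bound $D_i(\beta)\le(D_i\lor F)(\beta)$ coming from $e^b_\beta$ and monotonicity of transfinite ordinal addition,
\[
|T+\lnot\Dil(F)|_{\Pi^1_2}(\beta)=\sum_{i<\omega}D_i(\beta)\le\sum_{i<\omega}(D_i\lor F)(\beta)\le|T|_{\Pi^1_2}(\beta),
\]
which completes the proof.

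The step I expect to be most delicate is the use of clause (2) of \autoref{Lemma: Disjunctive join of two predilators}: the proposition is about the \emph{value} of the proof-theoretic dilator, so it is not enough to know that $D\lor F$ is a dilator — one genuinely needs the explicit level-$\beta$ embedding $e^b_\beta$, and routing the argument through it (rather than through \autoref{Theorem: Extensional behavior of Pi12 dilators below omega1CK}) is what makes it go through for an arbitrary well-order $\beta\ge\alpha$ and not merely a recursive one. The bookkeeping in the last paragraph — arranging the two ordered-sum presentations so that $\lag D_i\lor F\mid i<\omega\rag$ is visible as an initial block on the $T$-side, which rests on the enumeration-invariance of the sum presentation of a proof-theoretic dilator — also needs care but is routine given the uniqueness of proof-theoretic dilators up to bi-embeddability.
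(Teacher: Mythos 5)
Your proof follows essentially the paper's own route: the easy inclusion from $T\subseteq T+\lnot\Dil(F)$, and for the converse the disjunctive join $D\lor F$ — a $T$-provably recursive dilator by clause~(1) of \autoref{Lemma: Disjunctive join of two predilators} — together with the explicit level-$\beta$ embedding $e^b_\beta$ from clause~(2) to conclude $D(\beta)\le(D\lor F)(\beta)\le|T|_{\Pi^1_2}(\beta)$. The paper's proof in fact stops at this pointwise bound, so your third paragraph is a genuine addition spelling out the lift to $|T+\lnot\Dil(F)|_{\Pi^1_2}(\beta)$. One caveat on that paragraph: re-arranging an $\omega$-indexed ordered sum so that $\lag D_i\lor F\mid i<\omega\rag$ occupies an initial block forces a re-indexing of type $\omega+\omega$, and such re-indexings can genuinely change a transfinite ordinal sum, so the asserted ``enumeration-invariance in any order'' is not free and deserves a proof. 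A more direct route to $\sum_{i<\omega}(D_i\lor F)(\beta)\le|T|_{\Pi^1_2}(\beta)$ that sidesteps this: each finite partial sum $\sum_{i<n}(D_i\lor F)$ is itself a $T$-provably recursive dilator, hence embeds into $|T|_{\Pi^1_2}$ by clause~(1) of the defining universal property, and one then passes to the supremum over $n$. The same observation applied on the $T+\lnot\Dil(F)$-side shows $|T+\lnot\Dil(F)|_{\Pi^1_2}(\beta)=\sup\{E(\beta):T+\lnot\Dil(F)\vdash\Dil(E)\}$, which clarifies why the paper's argument is already complete once the pointwise bound is established.
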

\begin{proof}
    Clearly, $T\subseteq T+\lnot\Dil(F)$, so $|T|_{\Pi^1_2}$ is embeddable into $|T+\lnot\Dil(F)|_{\Pi^1_2}$. This shows $|T|_{\Pi^1_2}(\beta)\le |T+\lnot\Dil(F)|_{\Pi^1_2}(\beta)$ for every $\beta$.

    To show the reverse direction, let $E$ be a recursive dilator such that $T + \lnot\Dil(F)\vdash \Dil(E)$. Then we have
    \begin{equation*}
        T \vdash \Dil(F)\lor \Dil(E).
    \end{equation*}
    Hence $T \vdash \Dil(F\lor E)$. By \autoref{Lemma: Disjunctive join of two predilators}, $F\lor E$ is a recursive dilator. Thus $F\lor E$ is embedded into $|T|_{\Pi^1_2}$. Also, $\beta\ge\alpha$ implies $F(\beta)$ is ill-founded, so there is an embedding $E(\beta)\le (F\lor E)(\beta)$. Therefore
    \begin{equation*}
        E(\beta)\le (F\lor E)(\beta)\le |T|_{\Pi^1_2}(\beta). \qedhere 
    \end{equation*}
\end{proof}

\added{In particular}, if $F$ is a recursive pseudodilator, then there is $\alpha<\delta^1_2$ such that $F(\alpha)$ is ill-founded. Thus, we get the following:
\begin{corollary} \pushQED{\qed}
    Let $T$ be a $\Pi^1_2$-sound extension of $\ACA_0$ and let $F$ be a recursive pseudodilator. Then
    \begin{equation*}
        |T|_{\Pi^1_2}(\delta^1_2) = |T+\lnot\Dil(F)|_{\Pi^1_2}(\delta^1_2). \qedhere 
    \end{equation*}
\end{corollary}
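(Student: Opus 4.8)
The plan is to obtain the corollary as a direct instance of \autoref{Proposition: True Sigma 1 2 does not change the eventual value of dilator}. The only thing that needs to be supplied is a well-order $\alpha$ with $F(\alpha)$ ill-founded and $\delta^1_2 \ge \alpha$. For this I would take $\alpha = \Clim(F)$: since $F$ is a recursive predilator that is not a dilator, such a least $\alpha$ exists, and $F(\alpha)$ is ill-founded by definition of the climax. Because every recursive object is in particular $\Sigma^1_2$-definable (indeed $\Delta^0_1$), the proposition bounding the climax of a $\Sigma^1_2$-definable pseudodilator gives $\Clim(F) < \delta^1_2$, hence $\alpha < \delta^1_2$ and in particular $\delta^1_2 \ge \alpha$.

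Now apply \autoref{Proposition: True Sigma 1 2 does not change the eventual value of dilator} with this $F$, this $\alpha$, and $\beta = \delta^1_2$: since $T$ is $\Pi^1_2$-sound (being a $\Pi^1_2$-sound extension of $\ACA_0$), $F$ is a recursive pseudodilator, and $F(\alpha)$ is ill-founded with $\delta^1_2 \ge \alpha$, we conclude $|T|_{\Pi^1_2}(\delta^1_2) = |T + \lnot\Dil(F)|_{\Pi^1_2}(\delta^1_2)$, which is exactly the claim. Here $|T|_{\Pi^1_2}$ and $|T+\lnot\Dil(F)|_{\Pi^1_2}$ are understood as dilators, i.e.\ as functors on all well-orders, so the evaluation at the ordinal $\delta^1_2$ makes sense and the displayed equality is an equality of well-orders up to isomorphism.

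There is essentially no obstacle here: all the analytic content is already packaged into \autoref{Proposition: True Sigma 1 2 does not change the eventual value of dilator} and into the climax bound $\Clim(F) < \delta^1_2$. The one point to be slightly careful about is that $\delta^1_2$ is not itself recursive, so $|T|_{\Pi^1_2}(\delta^1_2)$ must be read as applying the proof-theoretic dilator of $T$ — which is a genuine dilator since $T$ is $\Pi^1_2$-sound — to the ordinal $\delta^1_2$ rather than to a fixed recursive code; the equality then transfers verbatim from the ``for every $\beta \ge \alpha$'' clause of the proposition.
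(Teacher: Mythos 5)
Your proof is correct and matches the paper's own reasoning: the paper also observes that a recursive pseudodilator $F$ has $\Clim(F)<\delta^1_2$ (so there is some $\alpha<\delta^1_2$ with $F(\alpha)$ ill-founded) and then applies \autoref{Proposition: True Sigma 1 2 does not change the eventual value of dilator} with $\beta=\delta^1_2$. The extra care you take about reading $|T|_{\Pi^1_2}(\delta^1_2)$ as evaluation of a genuine dilator at a non-recursive ordinal is a sensible clarification but does not change the route.
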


By using the eventual value of the proof-theoretic dilator $|T|_{\Pi^1_2}(\delta^1_2)$, we can show that $\prec^{\Sigma^1_2}_{\Pi^1_2}$ is well-founded:
\begin{lemma}
    Suppose that $S$, $T$ be $\Pi^1_2$-sound r.e.\ extensions of $\ACA_0$. If $T\vdash^{\Sigma^1_2} \RFN[\Pi^1_2](S)$, then we can find $\alpha<\delta^1_2$ such that 
    \begin{equation*}
        \forall \beta\ge\alpha \bigl[|S|_{\Pi^1_2}(\beta) < |T|_{\Pi^1_2}(\beta)\bigr].
    \end{equation*}
    \added{In particular}, we have $|S|_{\Pi^1_2}(\delta^1_2) < |T|_{\Pi^1_2}(\delta^1_2)$.
\end{lemma}
\begin{proof}
    Let us prove the following claim first: If $T \vdash \RFN[\Pi^1_2](S)$, then we have 
    \begin{equation*}
        |S|_{\Pi^1_2}\cdot 2\le |T|_{\Pi^1_2}.
    \end{equation*}
    Since $\ACA_0$ proves $\RFN[\Pi^1_2](S)$ is equivalent to $\Dil(|S|_{\Pi^1_2})$, $|S|_{\Pi^1_2}$ appears in the enumeration of all $T$-provable recursive dilators.
    Furthermore, $T \vdash \RFN[\Pi^1_2](S)$ implies every $\Pi^1_2$-consequence of $S$ is a theorem of $T$. This means every $S$-provable recursive dilator is also a $T$-recursive dilator.
    However, no $S$-provable recursive dilator is equal to $|S|_{\Pi^1_2}$, or $S$ proves its own $\Pi^1_2$-reflection that is impossible. Since we defined $|T|_{\Pi^1_2}$ by the sum of all $T$-provable recursive dilators, we have the desired inequality.

    Now, let us prove the lemma we stated. If $F$ is a recursive pseudodilator and if $T+\lnot\Dil(F)$ proves $\RFN[\Pi^1_2](S)$, then we have
    \begin{equation*}
        |S|_{\Pi^1_2}\cdot 2\le |T+\lnot\Dil(F)|_{\Pi^1_2}.
    \end{equation*}
    If $\alpha$ is a well-order such that $F(\alpha)$ is ill-founded, then by \autoref{Proposition: True Sigma 1 2 does not change the eventual value of dilator}, we have 
    \begin{equation*}
        |S|_{\Pi^1_2}(\beta)\cdot 2 \le |T+\lnot\Dil(F)|_{\Pi^1_2}(\beta) = |T|_{\Pi^1_2}(\beta)
    \end{equation*}
    for all $\beta\ge\alpha$. Furthermore, since non-zero constant dilators of a natural number value are $S$-provable dilators, we have $|S|_{\Pi^1_2}(\beta)\ge \omega$ for all $\beta$. This shows 
    \begin{equation*}
        |S|_{\Pi^1_2}(\beta) < |S|_{\Pi^1_2}(\beta)\cdot 2,
    \end{equation*}
    proving the desired inequality.
\end{proof}

Now the following is immediate by the definition of $\prec^{\Sigma^1_2}_{\Pi^1_2}$ and the previous lemma:
\begin{corollary} \pushQED{\qed}
    $\prec^{\Sigma^1_2}_{\Pi^1_2}$ is well-founded for $\Pi^1_2$-sound r.e.\ theories extending $\ACA_0$. \qedhere 
\end{corollary}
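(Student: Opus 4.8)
The plan is to obtain the corollary directly from the preceding lemma by the standard device of mapping the order into the ordinals. First I would observe that the lemma says exactly this: whenever $S \prec^{\Sigma^1_2}_{\Pi^1_2} T$ holds for $\Pi^1_2$-sound r.e.\ extensions $S$, $T$ of $\ACA_0$ — that is, whenever $T \vdash^{\Sigma^1_2}\RFN[\Pi^1_2](S)$ — we have $|S|_{\Pi^1_2}(\delta^1_2) < |T|_{\Pi^1_2}(\delta^1_2)$. Hence the assignment $T \mapsto |T|_{\Pi^1_2}(\delta^1_2)$ is a strictly order-preserving map from the $\prec^{\Sigma^1_2}_{\Pi^1_2}$-ordered collection of $\Pi^1_2$-sound r.e.\ extensions of $\ACA_0$ into the class of ordinals, and any binary relation admitting such a map into the well-ordered ordinals is itself well-founded.

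Carried out as a contradiction: suppose $\prec^{\Sigma^1_2}_{\Pi^1_2}$ were ill-founded, witnessed by a sequence $\lag T_n \mid n<\omega\rag$ of $\Pi^1_2$-sound r.e.\ extensions of $\ACA_0$ with $T_{n+1} \prec^{\Sigma^1_2}_{\Pi^1_2} T_n$ for every $n$. Unfolding the definition of $\prec^{\Sigma^1_2}_{\Pi^1_2}$ gives $T_n \vdash^{\Sigma^1_2}\RFN[\Pi^1_2](T_{n+1})$, so applying the lemma with $S=T_{n+1}$ and $T=T_n$ yields $|T_{n+1}|_{\Pi^1_2}(\delta^1_2) < |T_n|_{\Pi^1_2}(\delta^1_2)$ for all $n$. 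Since every $T_n$ is $\Pi^1_2$-sound, $|T_n|_{\Pi^1_2}$ is a genuine dilator, so each $|T_n|_{\Pi^1_2}(\delta^1_2)$ is an ordinal; thus $\lag |T_n|_{\Pi^1_2}(\delta^1_2) \mid n<\omega\rag$ is an infinite strictly descending sequence of ordinals, which is impossible.

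The one point that needs care is the role of the $\Pi^1_2$-soundness hypothesis: it is what guarantees each $|T_n|_{\Pi^1_2}$ is an actual dilator (and satisfies the universality clause), so that $|T_n|_{\Pi^1_2}(\delta^1_2)$ is a well-order and the purported descending chain genuinely lands inside the ordinals. Beyond that there is no real obstacle here — the substantive work (the inequality $|S|_{\Pi^1_2}\cdot 2 \le |T|_{\Pi^1_2}$ when $T\vdash\RFN[\Pi^1_2](S)$, and the invariance of the eventual value $|T|_{\Pi^1_2}(\delta^1_2)$ under adjoining true $\Sigma^1_2$-sentences furnished by \autoref{Proposition: True Sigma 1 2 does not change the eventual value of dilator}) has already been absorbed into the lemma, so the corollary really is immediate.
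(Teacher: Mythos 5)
Your proof is correct and is exactly the argument the paper has in mind when it declares the corollary immediate: the lemma furnishes a strictly $\prec^{\Sigma^1_2}_{\Pi^1_2}$-order-preserving map $T \mapsto |T|_{\Pi^1_2}(\delta^1_2)$ into the ordinals, and you unwind this into a descending-chain contradiction, with the correct observation that $\Pi^1_2$-soundness is what makes each $|T_n|_{\Pi^1_2}$ a genuine dilator so that the values land in the ordinals.
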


We proved in previous sections that $s^1_2(T)$ ranks $\Sigma^1_2$-consequences of $\Sigma^1_2$-sound theories (modulo $\Pi^1_2$-oracle) in a linear way, and it gauges the strength of $\Sigma^1_2$-reflection of $T$ if $T$ is an extension of $\Sigma^1_2\mhyphen\AC_0$.
We may ask whether we can get an ordinal characteristic of $T$ that captures its  $\Pi^1_2$-consequences modulo $\Sigma^1_2$-oracle. However, the following result by Aguilera and Pakhomov says \added{that} obtaining such an ordinal characteristic for $\Pi^1_2$-consequences is impossible:
\begin{theorem}[\added{Aguilera-Pakhomov }\cite{AguileraPakhomov2024Nonlinearity}] \pushQED{\qed}
    \label{Theorem: Non linearity of Pi 1 2}
    We can find a recursive $T\supseteq \ACA_0$ and $\Pi^1_2$-sentences $\phi_0$, $\phi_1$ such that
    \begin{enumerate}
        \item Both of $T + \phi_0$ and $T + \phi_1$ are $\Pi^1_2$-sound.
        \item Neither $T \vdash^{\Sigma^1_2} \phi_0\to\phi_1$ nor $T \vdash^{\Sigma^1_2} \phi_1\to\phi_0$ holds. \qedhere 
    \end{enumerate}
\end{theorem}

\begin{corollary}
    There is no ordinal assignment $o(T)$ for $\Pi^1_2$-sound r.e.\ extensions $T$ of $\ACA_0$ such that
    \begin{equation*}
        o(S) \le o(T) \iff S \subseteq^{\Sigma^1_2}_{\Pi^1_2} T.
    \end{equation*}
\end{corollary}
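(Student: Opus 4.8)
The plan is to derive this corollary directly from \autoref{Theorem: Non linearity of Pi 1 2} by contraposition, with essentially no extra work beyond unwinding the definition of $\vdash^{\Sigma^1_2}$. The key observation is that if an ordinal assignment $o$ as in the statement existed, then $\subseteq^{\Sigma^1_2}_{\Pi^1_2}$ would be \emph{total} on the class of $\Pi^1_2$-sound r.e.\ extensions of $\ACA_0$: given any two such theories $S$ and $U$, the ordinals $o(S)$ and $o(U)$ are comparable, hence so are $S$ and $U$ under $\subseteq^{\Sigma^1_2}_{\Pi^1_2}$ (only the forward direction of the equivalence defining $o$ is needed). So it suffices to exhibit two $\Pi^1_2$-sound r.e.\ extensions of $\ACA_0$ that are $\subseteq^{\Sigma^1_2}_{\Pi^1_2}$-incomparable, which is exactly what \autoref{Theorem: Non linearity of Pi 1 2} gives us after a small translation.

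Concretely, I would take $T$, $\phi_0$, $\phi_1$ as in \autoref{Theorem: Non linearity of Pi 1 2} and set $S_i = T + \phi_i$ for $i=0,1$. By item (1) of that theorem these are $\Pi^1_2$-sound, and they are r.e.\ extensions of $\ACA_0$ since $T$ is recursive and extends $\ACA_0$. I claim $S_0 \not\subseteq^{\Sigma^1_2}_{\Pi^1_2} S_1$, witnessed by the $\Pi^1_2$-sentence $\phi_0$ itself: on the one hand $S_0 \vdash \phi_0$, hence trivially $S_0 \vdash^{\Sigma^1_2} \phi_0$ (take any true $\Sigma^1_2$-sentence, e.g.\ $\exists^1 X\,\forall^0 n\,(n\notin X)$, as the oracle); on the other hand, were $S_1 \vdash^{\Sigma^1_2} \phi_0$ to hold, there would be a true $\Sigma^1_2$-sentence $\theta$ with $T + \phi_1 + \theta \vdash \phi_0$, so by the deduction theorem $T + \theta \vdash \phi_1 \to \phi_0$, i.e.\ $T \vdash^{\Sigma^1_2}(\phi_1 \to \phi_0)$, contradicting item (2). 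The same argument with the roles of $0$ and $1$ exchanged gives $S_1 \not\subseteq^{\Sigma^1_2}_{\Pi^1_2} S_0$, and thus $S_0$, $S_1$ form the desired incomparable pair, completing the contradiction.

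There is no serious obstacle here: the entire mathematical weight is carried by the cited \autoref{Theorem: Non linearity of Pi 1 2}, and what remains is only to organize the contrapositive and to be careful that each appeal to $\vdash^{\Sigma^1_2}$ matches the definition verbatim — in particular that ``$S_0$ proves $\phi_0$ outright'' really yields ``$S_0 \vdash^{\Sigma^1_2}\phi_0$'' (it does, using a trivially true $\Sigma^1_2$ oracle), and that the deduction-theorem step lands one exactly in the shape ``a true $\Sigma^1_2$-sentence together with $T$ proves $\phi_1 \to \phi_0$'' needed to read off $T \vdash^{\Sigma^1_2}(\phi_1 \to \phi_0)$. The only other point worth a sentence is that $\phi_i$ is an admissible ``$\Gamma$-sentence'' for $\Gamma = \Pi^1_2$ in the definition of $\subseteq^{\Sigma^1_2}_{\Pi^1_2}$, which is immediate since $\phi_i$ is $\Pi^1_2$.
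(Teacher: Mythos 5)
Your proof is correct and takes essentially the same route as the paper: both instantiate $S_i = T + \phi_i$ using \autoref{Theorem: Non linearity of Pi 1 2} and show that $\subseteq^{\Sigma^1_2}_{\Pi^1_2}$ fails to be total via the deduction-theorem translation between $T+\phi_i \vdash^{\Sigma^1_2}\phi_{1-i}$ and $T\vdash^{\Sigma^1_2}(\phi_i\to\phi_{1-i})$. You spell out the details (the trivial $\Sigma^1_2$ oracle, the deduction step) a bit more explicitly than the paper does, but the argument is the same.
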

\begin{proof}
    Let $T$, $\phi_0$, $\phi_1$ be a theory and $\Pi^1_2$-sentences providede by \autoref{Theorem: Non linearity of Pi 1 2}.
    Then neither $T + \phi_0 \vdash^{\Sigma^1_2} \phi_1$ nor $T + \phi_1 \vdash^{\Sigma^1_2} \phi_0$ hold, so $\subseteq^{\Sigma^1_2}_{\Pi^1_2}$ cannot be linear.
\end{proof}

However, the above result does not rule out an ordinal characteristic capturing the $\prec^{\Sigma^1_2}_{\Pi^1_2}$-rank of a theory. We conjecture the eventual value of a proof-theoretic dilator $|T|_{\Pi^1_2}(\delta^1_2)$ should have this role:

\begin{question}
    Let $\rank^{\Sigma^1_2}_{\Pi^1_2}(T)$ be the $\prec^{\Sigma^1_2}_{\Pi^1_2}$-rank of $T$ for $\Pi^1_2$-sound r.e.\ extension $T$ of $\ACA_0$. Then does the following hold? For two $\Pi^1_2$-sound r.e. extensions $S$, $T$ of $\ACA_0$,
    \begin{equation*}
        \rank^{\Sigma^1_2}_{\Pi^1_2}(S) \le \rank^{\Sigma^1_2}_{\Pi^1_2}(T) \iff |S|_{\Pi^1_2}(\delta^1_2)\le |T|_{\Pi^1_2}(\delta^1_2).
    \end{equation*}
\end{question}

\section{Concluding remarks}
\label{Section: Finale}
Throughout this paper, we generalized the main properties of $\Pi^1_1$-proof theory to $\Sigma^1_2$-proof theory. Let us provide some remarks and questions based on the results we produced.

\cite{AguileraPakhomov2023Pi12} stated that ``Although the approach via $\Sigma^1_2$-consequences might initially seem more natural, it appears that this type of analysis is not as informative as the approach via $\Pi^1_2$ consequences...'' The results in this paper bolster this claim in the following sense:
\begin{enumerate}
    \item The usual ordinal analysis for the $\Pi^1_1$-proof-theoretic ordinal allows further improvement in the sense that taking a close look at $\Pi^1_1$-ordinal analysis allows extracting information about $\Pi^0_2$-consequences of a theory. One may hope that a proper method for $\Sigma^1_2$-ordinal analysis should allow extracting information about $\Pi^1_1$- or $\Sigma^1_1$-consequences of a theory, whose way is unclear at least for the author. The current method to obtain an upper bound for $s^1_2(T)$ relies on model-theoretic arguments with $\beta$-models, which does not allow extracting useful information about $\Pi^1_1$- or $\Sigma^1_1$-consequences of $T$ since $\beta$-models are absolute for $\Pi^1_1$-statements. It contrasts with the $\Pi^1_2$-ordinal analysis, which results in the proof-theoretic dilator $|T|_{\Pi^1_2}$: We can easily extract the $\Pi^1_1$-proof-theoretic ordinal from $|T|_{\Pi^1_2}$ by \autoref{Theorem: Extensional behavior of Pi12 dilators below omega1CK}.

    \item Unlike the $\Pi^1_1$-case, the connection between $\Sigma^1_2$-proof-theoretic ordinal and $\Sigma^1_2$-reflection works only for theories extending $\Sigma^1_2\mhyphen\AC_0$. For example, the second equivalence \autoref{Proposition: Second equivalence LtoR for Sigma 1 2} holds only for theories extending $\Sigma^1_2\mhyphen\AC_0$, and \autoref{Example: Desired Second equivalence LtoR fails} shows the equivalence may fail for theories extending $\Pi^1_1\mhyphen\CA_0$.
    Furthermore, we proved \autoref{Proposition: Sigma 1 2 reflection and pseudodilators}, the equivalence between $\RFN[\Sigma^1_2](T)$ and $\lnot\Dil(\|T\|_{\Sigma^1_2})$, over $\Sigma^1_2\mhyphen\AC_0$ and not over $\ACA_0$.
    It is different from what happens for proof-theoretic dilators: For example, the equivalence between $\RFN[\Pi^1_2](T)$ and $\Dil(|T|_{\Pi^1_2})$ is a theorem of $\ACA_0$ as stated in \autoref{Theorem: Proof-theoretic dilator and Pi 1 2 reflection}.

    \item For $\Pi^1_2$-unsound theories, we can still distill the \emph{$\Pi^1_2$-soundness ordinal $o^1_2(T)$} (cf. \cite[\S 4]{AguileraPakhomov2023Pi12}) that gives a useful information about the degree of soundness. We can extract $o^1_2(T)$ from a given theory $T$ by tracking the least ordinal $\alpha$ such that $|T|_{\Pi^1_2}(\alpha)$ is ill-founded.
    However, in $\Sigma^1_2$-case, there is no obvious way to extract a similar characteristic: If $T$ is $\Sigma^1_2$-unsound, then $\|T\|_{\Sigma^1_2}(\alpha)$ will always be well-founded for $\alpha$, and the author does not know how to extract useful information from $\|T\|_{\Sigma^1_2}$ that happens to be a dilator.
\end{enumerate}

Despite faults on $\Sigma^1_2$-proof-theoretic ordinal, it has the following positive sides:
\begin{enumerate}
    \item Computing the $\Sigma^1_2$-proof-theoretic ordinal can rely on a softer model-theoretic argument and gives an ordinal scale for gauging the strength of theories.
    
    \item The currently known value for $s^1_2(T)$ seems somehow tied with $|T|_{\Pi^1_1}$: For example, for theories at the level of iterated inductive definition, $|T|_{\Pi^1_1}$ takes the form of the collapse of $D(s^1_2(T))$ for some dilator $D$. 
    Also, Towsner pointed out that the current ordinal analysis for $\Pi^1_2\mhyphen\CA_0$ relies on stable ordinals, which also have a critical role in computing $s^1_2(\Pi^1_2\mhyphen\CA_0)$.
    It might be possible that $s^1_2(T)$ gives some hint on computing $|T|_{\Pi^1_1}$ for a complicated $T$. 
    
    \item The $\Sigma^1_2$-proof-theoretic ordinal for a set theory looks closely tied with the transitive models of the theory. For example, both $s^1_2(\KP)$ and $s^1_2(\KP\ell)$\footnote{$\KP\ell$ is the theory comprising Primitive Recursive set theory with `every set is contained in an admissible set.' $L_{\omega_\omega^\CK}$ is a model of $\KP\ell$.} are the least height of a transitive model of the theory. 
\end{enumerate}

Computing $s^1_2(T)$ for specific theories, like, the theory of $(<\alpha)$-fold iterated inductive definition $\mathsf{ID}_{<\alpha}$ would be an interesting problem:
\begin{question}
    What is the value of $s^1_2(T)$ for a natural $\Sigma^1_2$-sound $T$? For example, what is $s^1_2(\Pi^1_n\mhyphen\CA_0)$? Also, do we have $s^1_2(\mathsf{ID}_{<\alpha})=\omega_{1+\alpha}^\CK$ for reasonably small $\alpha$?
\end{question}

As pointed out before, the currently known value of $s^1_2(T)$ for \added{a} specific $T$ is closely related to the least height of a transitive model of $T$. \added{A proof of the following theorem will appear in \cite{JeonPhD} and a forthcoming paper:}
\begin{theorem}
    Let $T$ be a $\Sigma^1_2$-sound extension of $\Pi^1_1\mhyphen\CA_0$, and $\Sigma^1_2(T)$ be the set of all $\Sigma^1_2$-consequences of $T$. Then $\ATR_0^\mathsf{set} + \Sigma^1_2(T)$ has a transitive model, and moreover
    \begin{equation*}
        s^1_2(T) = \min\{N\cap \Ord\mid \text{$N$ is a transitive model of }\ATR_0^\mathsf{set} + \Sigma^1_2(T)\}.
    \end{equation*}
\end{theorem}
The general connection between the least height of a transitive model of $T$ and $s^1_2(T)$ is unclear, which should be a future research topic.

The proof for \autoref{Proposition: Sigma 1 2 modulo Pi 1 2 reflection rank wellfoundedness} is based on a connection between $\Sigma^1_2$-proof-theoretic ordinal $s^1_2(T)$ and $\Sigma^1_2$-reflection principle for extensions of $\Sigma^1_2\mhyphen\AC_0$.
The argument breaks down even at the level of $\Pi^1_1\mhyphen\CA_0$ as we examined in \autoref{Example: Desired Second equivalence LtoR fails}.
However, it does not rule out the possibility of well-foundedness of $\prec^{\Pi^1_2}_{\Sigma^1_2}$ for extensions of $\ACA_0$:
\begin{question}
    Can we prove $\prec^{\Pi^1_2}_{\Sigma^1_2}$ is well-founded for $\Sigma^1_2$-sound r.e.\ extensions of $\ACA_0$?
\end{question}

Although we `justified' \autoref{Phenomenon: Sigma 1 2 observation} throughout this paper, it still does not address Steel's full observation \autoref{Phenomenon: SteelObservation}. In the next sequel \cite{Jeon??HigherProofTheoryII}, we discuss evidence for Steel's observation for $\Pi^1_3$ and $\Sigma^1_4$ for theories extending $\ACA_0$ plus $\mathbf{\Delta}^1_2$-Determinacy.

\appendix

\section{Basic theory of dilators} \label{Section: Dilators}
\added{In this section, we briefly review definitions and basic facts about dilators. Every result in this section is not new, albeit in different formulations. See \cite{JeonPhD} for technical details.}

\subsection{Denotation system}
Girard introduced dilators to provide proof-theoretic analysis at the level of $\Pi^1_2$. 
One way to understand a dilator is to understand it as a \emph{denotation system} for a class ordinal. Let us give a simple example to illustrate what dilators are: Let us construct a class ordinal $\Ord+\Ord$. There is no way to express $\Ord+\Ord$ as a transitive class, but there is a way to express its ordertype: We can understand $\Ord+\Ord$ as a collection of pairs of the form $(\xi,0)$ or $(\xi,1)$ for $\xi\in \Ord$, and compare them as follows:
\begin{itemize}
    \item $(\xi,0) < (\eta,0)$ iff $\xi<\eta$. The same holds for $(\xi,1)$ and $(\eta,1)$.
    \item $(\xi,0)< (\eta,1)$ always hold.
\end{itemize}
Moreover, the denotation system is \emph{uniform} in the sense that the same construction over $\alpha$ instead of $\Ord$ gives an ordertype of $\alpha+\alpha$. It turns out that handling the denotation system for class ordinals directly is more fruitful than considering their ordertypes only, which are named by \emph{dilators}. In terms of dilators, the expression for $\Ord+\Ord$ corresponds to the dilator $\mathsf{Id}+\mathsf{Id}$, where $\mathsf{Id}$ is the identity dilator. 
However, the denotation system may result in an ill-founded order, so we define \emph{predilators} first to embrace ill-founded cases, and we define dilators as predilators with certain conditions.

Informally, a denotation system $D$ is a set of terms with a comparison rule between terms. Each term $t$ comes with an \emph{arity} $n=\arity(t)\in \bbN$, and we understand $t$ as a `function' taking an increasing sequence of length $n$, or alternatively, a subset of size $n$. We will form a new linear order $D(\alpha)$ from a given linear order $\alpha$, which takes the form
\begin{equation*}
    D(\alpha) = \{t(a) \mid \text{$t$ is a $D$-term and }a \in [\alpha]^{<\omega}\}.
\end{equation*}
We assume that $s(a)\neq t(b)$ if $s\neq t$. Comparing $s(a)$ and $t(b)$ only depends on the \added{isomorphism type of $(a\cup b;<,a,b)$.}


\begin{definition}
    An \emph{arity diagram} $\cyrDe$ is a commutative diagram over the category $\LO_{<\omega}$ of natural numbers with strictly increasing maps of the form
    \begin{equation} \label{Formula: Arity diagram definition diagram}
    \cyrDe = 
        \begin{tikzcd}
            a_\cap & a_1 \\
            a_0 & a_\cup 
            \arrow[from=1-1, to=1-2]
            \arrow[from=1-1, to=2-1]
            \arrow[from=2-1, to=2-2, "g", swap]
            \arrow[from=1-2, to=2-2, "f"]
        \end{tikzcd}
    \end{equation}
    such that the above diagram is a pullback and $\ran f\cup \ran g = \field a_\cup$.
    We say an arity diagram is \emph{trivial} if $a_\cap = a_0=a_1=a_\cup$.
    For an arity diagram $\cyrDe$ of the form \eqref{Formula: Arity diagram definition diagram}, the diagram $-\cyrDe$ is a diagram obtained by switching the order of $a_0$ and $a_1$:
    \begin{equation*}
    -\cyrDe = 
        \begin{tikzcd}
            a_\cap & a_0 \\
            a_1 & a_\cup 
            \arrow[from=1-1, to=1-2]
            \arrow[from=1-1, to=2-1]
            \arrow[from=2-1, to=2-2, "f", swap]
            \arrow[from=1-2, to=2-2, "g"]
        \end{tikzcd}
    \end{equation*}
\end{definition}

An arity diagram is a way to describe \added{the} relative order between elements of $a_0$ and that of $a_1$. 
A typical example of an arity diagram is induced from an inclusion diagram, for example,
\begin{equation*}
    \begin{tikzcd}
        \{1,3\} & \{0,1,3\} \\
        \{1,2,3,4\} & \{0,1,2,3,4\} 
        \arrow[from=1-1, to=1-2, "\subseteq"]
        \arrow[from=1-1, to=2-1, "\subseteq", swap]
        \arrow[from=2-1, to=2-2, "\subseteq", swap]
        \arrow[from=1-2, to=2-2, "\subseteq"]
    \end{tikzcd}
\end{equation*}
We can see that the above diagram is isomorphic to
\begin{equation*}
    \begin{tikzcd}
        \{\mathbf{0},\mathbf{1}\} & \{0,\mathbf{1},\mathbf{2}\} \\
        \{\mathbf{0},1,\mathbf{2},3\} & \{0,1,2,3,4\} 
        \arrow[from=1-1, to=1-2, "h"]
        \arrow[from=1-1, to=2-1, "k", swap]
        \arrow[from=2-1, to=2-2, "f", swap]
        \arrow[from=1-2, to=2-2, "g"]
    \end{tikzcd}
\end{equation*}
where $h$ and $k$ are maps sending boldface numbers to boldface numbers in an increasing manner, $f(n)=n+1$, and $g(0)=0$, $g(1)=1$, $g(2)=3$.
The following notion will be used to define the linearity condition of a denotation system:
\begin{definition}
    Let $a_0,\cdots, a_{m-1}$ be natural numbers and let $\ttL_m=(\ttL_m,\land,\lor)$ be the free distributive lattice generated by $\{0,1,\cdots,m-1\}$. An \emph{IU diagram $\bfcyrDe$ for $a_0,\cdots, a_{m-1}$} (abbreviation of \emph{Intersection-Union diagram}) is a collection of objects $\{a_i\mid i\in \ttL_m\}$ and strictly increasing functions $f_{ij}\colon a_i\to a_j$ for $i\le j$ such that the following holds:
    \begin{enumerate}
        \item $f_{ii}$ is the identity map.
        \item For $i\le j\le k$, $f_{ik}=f_{jk}\circ f_{ij}$.
        \item For each $i,j\in \ttL_m$, the following diagram is an arity diagram:
        \begin{equation*}
            \bfcyrDe(i,j) = 
            \begin{tikzcd}
            a_{i\land j} & a_j \\
            a_i & a_{i\lor j}
            \arrow[from=1-1, to=1-2, "f_{i\land j, j}"]
            \arrow[from=1-1, to=2-1, "f_{i\land j, i}", swap]
            \arrow[from=2-1, to=2-2, "f_{i, i\lor j}", swap]
            \arrow[from=1-2, to=2-2, "f_{j, i\lor j}"]
            \end{tikzcd}
        \end{equation*}
    \end{enumerate}
    Alternatively, we can think of an IU diagram as a functor from $\ttL_n$ as a category to the category of natural numbers with strictly increasing maps.
\end{definition}

Now, let us define denotation systems, which we will call semidilators.
\begin{definition}
    A \emph{denotation system} or \emph{semidilator} $D$ is a set of \emph{$D$-terms} usually denoted by $t$, and each term $t$ comes with an \emph{arity} $\arity(t)\in \bbN$.
    Also, for each two $D$-terms $t_0$, $t_1$ and an arity diagram
    \begin{equation*}
        \cyrDe = 
        \begin{tikzcd}
            a_\cap & a_0 \\
            a_1 & a_\cup 
            \arrow[from=1-1, to=1-2]
            \arrow[from=1-1, to=2-1]
            \arrow[from=2-1, to=2-2]
            \arrow[from=1-2, to=2-2]
        \end{tikzcd}
    \end{equation*}
    such that $a_i=\arity(t_i)$, we have the relation $t_0<_\cyrDe t_1$ between two terms. We call $\cyrDe$ an \emph{arity diagram for $t_0$ and $t_1$}. Then $D$ satisfies the following:
    \begin{enumerate}
        \item (Irreflexivity) If $t_0=t_1$ and $\cyrDe$ is trivial, then $t_0<_\cyrDe t_0$ does not hold.
        \item (Linearity) If $t_0\neq t_1$ or $\cyrDe$ is not trivial, then one of $t_0<_\cyrDe t_1$ or $t_1<_{-\cyrDe} t_0$ must hold.
        \item (Transitivity) For three $D$-terms $t_0,t_1,t_2$ such that $\arity(t_i) = a_i$, and an IU diagram $\bfcyrDe$ for $t_0,t_1,t_2$, if $t_0<_{\bfcyrDe(0,1)} t_1$ and $t_1 <_{\bfcyrDe(1,2)} t_2$, then $t_0 <_{\bfcyrDe(0,2)} t_2$.
    \end{enumerate}
    \added{We denote the underlying set of $D$ (or, the set of $D$-terms) by $\field D$, following the field of linear orders.}

    We say $D$ is \emph{(primitive) recursive} or \emph{countable} if the set of $D$-terms and the set of comparison rules are (primitive) recursive or countable, respectively.
\end{definition}

The reader should be warned that materials call a semidilator in different ways: Aguilera-Pakhomov \cite{AguileraPakhomov2023Pi12} call it predilator, but it overlaps with the standard definition of a predilator we will introduce later. Girard \cite{Girard1982Logical} call semidilator \emph{paleodilator}, Freund \cite{FreundPhD} call it \emph{prae-dilator}, and Catlow \cite{Catlow1994} call it \emph{1-functor}. 

\subsection{Category-theoretic aspects of semidilators}
We want to understand semidilators as a functor over the category of linear orders $\LO$ with strictly increasing functions as morphisms. Hence let us define the application of semidilators:
\begin{definition} \label{Definition: Application of a predilator}
     From a given semidilator $D$ and a linear order $X$, let us define a new structure $D(X)$ by
    \begin{equation}
        D(X) = \{(t,a)\mid t\in \field D\land a\in [X]^{\arity t}\}.
    \end{equation}
    We write $t(a)$ instead of $(t,a)$, and we identify $a\subseteq X$ with a finite increasing sequence over $X$. The order of $D(X)$ is given by
    \begin{equation*}
        s(a) <_{D(X)} t(b) \iff s <_\cyrDe t
    \end{equation*}
    where $\cyrDe=\Diag_X(a,b)$ is the \emph{induced diagram from $a$ and $b$}, that is the unique arity diagram isomorphic to the inclusion diagram
    \begin{equation*}
        \begin{tikzcd}
            a\cap b & b \\
            a & a \cup b
            \arrow[from=1-1, to=1-2, "\subseteq"]
            \arrow[from=1-1, to=2-1, "\subseteq", swap]
            \arrow[from=2-1, to=2-2, "\subseteq", swap]
            \arrow[from=1-2, to=2-2, "\subseteq"]
        \end{tikzcd}
    \end{equation*}
    More precisely, $\cyrDe=\Diag(a,b)$ is the innermost diagram in the below commutative diagram, where $\en_a\colon |a|\to a$ is the unique order isomorphism for finite linear order $a$.
    \begin{equation*}
        \begin{tikzcd}
            a\cap b &&& b \\
            & {|a\cap b|} & {|b|} & \\
            & {|a|} & {|a\cup b|} & \\
            a &&& a\cup b
            \arrow[from=1-1, to=1-4, "\subseteq"]
            \arrow[from=1-1, to=4-1, "\subseteq", swap]
            \arrow[from=1-4, to=4-4, "\subseteq"]
            \arrow[from=4-1, to=4-4, "\subseteq", swap]
            \arrow[from=2-2, to=2-3]
            \arrow[from=2-2, to=3-2]
            \arrow[from=3-2, to=3-3, "e_0", swap]
            \arrow[from=2-3, to=3-3, "e_1"]
            \arrow[from=2-2, to=1-1, "\en_{a\cap b}" , "\cong"']
            \arrow[from=2-3, to=1-4, "\cong", "\en_{a}"']
            \arrow[from=3-2, to=4-1, "\cong", "\en_{b}"']
            \arrow[from=3-3, to=4-4, "\en_{a\cup b}", "\cong"']
        \end{tikzcd}
    \end{equation*}
    
    For a strictly increasing function $f\colon X\to Y$, consider the map $D(f)\colon D(X)\to D(Y)$ given by
    \begin{equation*}
        D(f)(t,a) = (t,f[a]),
    \end{equation*}
    where $f[a] = \{f(x) \mid x\in a\}$.
\end{definition}

We can see the following facts without difficulty:
\begin{proposition}[$\ACA_0$] \label{Proposition: Predilator is a functor}
    Let $D$ be a semidilator and $X$ be a linear order. Then $D(X)$ is a linear order. Also, $D$ induces a functor $\LO\to\LO$.
\end{proposition}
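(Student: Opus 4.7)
The plan is to reduce each of the three linear-order axioms for $<_{D(X)}$ to the corresponding axiom of $D$ via the induced diagram $\Diag(a,b)$, and then to check functoriality by unwinding the definitions.

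First I would verify that $<_{D(X)}$ is irreflexive. If $s(a) <_{D(X)} s(a)$ then $s <_{\Diag(a,a)} s$; but $\Diag(a,a)$ is trivial (all four corners equal $|a|$ with identity morphisms), contradicting the irreflexivity axiom for $D$. For linearity, suppose $s(a) \ne t(b)$ in $D(X)$, so either $s \ne t$ or $a \ne b$. If $a = b$ then $\Diag(a,b)$ is trivial while $s \ne t$, and the linearity axiom applies; if $a \ne b$ then $\Diag(a,b)$ is non-trivial (because $a\cup b$ properly contains at least one of $a$, $b$) and again linearity applies. In either case, using that $\Diag(b,a) = -\Diag(a,b)$, we obtain one of $s(a) <_{D(X)} t(b)$ or $t(b) <_{D(X)} s(a)$.

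The main step is transitivity. Given $r(c) <_{D(X)} s(a) <_{D(X)} t(b)$, I would construct an IU diagram $\bbD$ indexed by $\ttL_3$ by assigning, to each element $i\in\ttL_3$, the enumeration of the corresponding Boolean combination of the finite subsets $c,a,b\subseteq X$ (for instance $\{0\}\wedge\{1\}$ goes to $|c\cap a|$ and $\{0\}\vee\{1\}$ to $|c\cup a|$), with inclusion maps transported through the enumerating isomorphisms $\en$. Each resulting square $\bbD(i,j)$ is a pullback with covering ranges because intersections and unions of finite subsets of a linear order behave so in $\LO_{<\omega}$, so $\bbD$ is a valid IU diagram. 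By construction $\bbD(0,1)=\Diag(c,a)$, $\bbD(1,2)=\Diag(a,b)$, and $\bbD(0,2)=\Diag(c,b)$, so $\Diag(c,b)$ is a merge of $\Diag(c,a)$ and $\Diag(a,b)$. The transitivity axiom of $D$ then yields $r <_{\Diag(c,b)} t$, i.e., $r(c) <_{D(X)} t(b)$.

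For functoriality, observe that for a strictly increasing $f\colon X\to Y$ and finite $a,b\subseteq X$, the map $f$ restricts to an order isomorphism sending $a\cap b$ to $f[a]\cap f[b]$ and $a\cup b$ to $f[a]\cup f[b]$, so $\Diag(f[a],f[b]) = \Diag(a,b)$ as arity diagrams. Hence $s(a)<_{D(X)} t(b)$ implies $s(f[a])<_{D(Y)} t(f[b])$, so $D(f)$ is a morphism of $\LO$. The identity and composition laws follow from $\mathrm{id}[a] = a$ and $(g\circ f)[a] = g[f[a]]$. The only real obstacle is the bookkeeping in the transitivity step: one must confirm that the Boolean combinations of $c$, $a$, $b$ really fit into an IU diagram on $\ttL_3$, and that the pullback condition passes through the enumerating isomorphisms $\en$. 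Once verified, the remaining properties follow mechanically from the semidilator axioms.
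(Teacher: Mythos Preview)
Your proposal is correct and follows essentially the same approach as the paper: reduce irreflexivity and linearity of $<_{D(X)}$ to the corresponding semidilator axioms via the induced diagram, and for transitivity build the IU diagram from the Boolean lattice generated by the three finite parameter sets so that $\Diag(c,b)$ appears as the required merge. Your treatment is in fact more explicit than the paper's in two places: you spell out the case split for linearity and you verify that $D(f)$ is strictly increasing by observing $\Diag(f[a],f[b])=\Diag(a,b)$, whereas the paper leaves these as easy checks for the reader.
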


Like strictly increasing maps between linear orders, we can define a map between semidilators preserving their structures:
\begin{definition}
    Let $D$, $E$ be two semidilators. A map $\iota\colon D\to E$ is an \emph{embedding} or \added{a \emph{semidilator morphism}} if it satisfies the following conditions:
    \begin{enumerate}
        \item $\iota$ is a function \added{from} $\field D$ to $\field E$.
        \item $\iota$ preserves the arity: i.e., $\arity(\iota(t)) = \arity(t)$ for every $t\in \field D$,
        \item For each two terms $t_0,t_1\in \field D$ and an arity diagram $\cyrDe$ between them, we have $t_0<_\cyrDe t_1$ iff $\iota(t_0) <_\cyrDe \iota(t_1)$.
    \end{enumerate}
    An embedding $\iota$ is an \emph{isomorphism} if $\iota\colon \field D\to  \field E$ is a bijection and $\iota^{-1}$ is also an embedding. We denote $D\cong E$ if there is an isomorphism between $D$ and $E$. 
    We also write $D\le E$ if there is an embedding from $D$ to $E$.
\end{definition}

We did not require an embedding between two semidilators to be \added{injective}, but \added{it can be easily shown that an embedding is injective. Moreover, the inverse of a bijective embedding is also an embedding.}

We can view an embedding between two semidilators as a natural transformation in the following manner:
\begin{definition}
    For an embedding $\iota\colon D\to E$ and a linear order $X$, define $\iota_X\colon D(X)\to E(X)$ by
    \begin{equation*}
        \iota_X(t,a) = (\iota(t),a).
    \end{equation*}
\end{definition}

\begin{proposition}[$\ACA_0$]
    Let $\iota \colon D\to E$ be an embedding between two predilators $D$ and $E$.
    Then $\iota_X\colon D(X)\to E(X)$ is a strictly increasing map. Furthermore, if $f\colon X\to Y$ is a strictly increasing map between two linear orders $X$ and $Y$, then the following diagram commutes:
    \begin{equation*}
        \begin{tikzcd}
            D(X) & D(Y) \\
            E(X) & E(Y)
            \arrow[from=1-1, to=1-2, "D(f)"]
            \arrow[from=1-1, to=2-1, "\iota_X", swap]
            \arrow[from=2-1, to=2-2, "E(f)", swap]
            \arrow[from=1-2, to=2-2, "\iota_Y"]
        \end{tikzcd}
    \end{equation*}
\end{proposition}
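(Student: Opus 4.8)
The plan is to verify the two assertions directly from the definitions, since both amount to bookkeeping about arity diagrams. For strict monotonicity of $\iota_X$, I would take elements $s(a),t(b)$ of $D(X)$ with $s(a)<_{D(X)}t(b)$. By \autoref{Definition: Application of a predilator} this means $s<_\Delta t$, where $\Delta=\Diag(a,b)$. The crucial observation is that $\Diag(a,b)$ depends only on the finite sets $a,b\subseteq X$, not on the terms decorating them; since $\iota$ preserves arity, $\arity(\iota(s))=\arity(s)=|a|$ and $\arity(\iota(t))=\arity(t)=|b|$, so the very same $\Delta$ is an arity diagram for the pair $\iota(s),\iota(t)$. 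Because $\iota$ is an embedding, $s<_\Delta t$ yields $\iota(s)<_\Delta \iota(t)$, hence $\iota(s)(a)<_{E(X)}\iota(t)(b)$, i.e.\ $\iota_X(s(a))<_{E(X)}\iota_X(t(b))$. (Injectivity of $\iota_X$ is then automatic from strict monotonicity into a linear order, or one can cite the injectivity of $\iota$ itself applied termwise.)

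For the naturality square I would simply chase an arbitrary element $t(a)\in D(X)$, with $a\subseteq X$ and $|a|=\arity(t)$, around both paths. Going right then down: $D(f)(t,a)=(t,f[a])$, and then $\iota_Y(t,f[a])=(\iota(t),f[a])$. Going down then right: $\iota_X(t,a)=(\iota(t),a)$, and then $E(f)(\iota(t),a)=(\iota(t),f[a])$, using that $\iota(t)$ has arity $|a|=|f[a]|$ so that $(\iota(t),f[a])$ is a legitimate element of $E(Y)$. The two outputs coincide, so the diagram commutes. Everything here manipulates only finite data, so it is visibly formalizable in $\ACA_0$.

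The only obstacle worth flagging is making the phrase ``$\Delta$ is an arity diagram for $\iota(s)$ and $\iota(t)$'' precise: one must check that replacing a term by its $\iota$-image does not disturb the induced-diagram construction $\Diag(-,-)$ from \autoref{Definition: Application of a predilator}. This is exactly the place where arity preservation in the definition of an embedding is used, and once that point is made the rest is routine unwinding of definitions.
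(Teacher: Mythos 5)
Your argument is correct and follows essentially the same route as the paper's: for monotonicity, both proofs fix $\Delta=\Diag(a,b)$, note that arity preservation makes $\Delta$ a valid arity diagram for $\iota(s),\iota(t)$, and then apply the embedding condition; for naturality, both reduce to the one-line element chase $(\iota(t),f[a])$ on each side. The only difference is cosmetic: the paper leaves the naturality square as ``easy to check,'' while you spell it out, and you flag slightly more explicitly the point where arity preservation enters.
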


\subsection{Predilators and Dilators}
\begin{definition}
    A semidilator $D$ is a \emph{predilator} if it satisfies the \emph{monotonicity condition}:
    For every $t\in\field(D)$ and a non-trivial arity diagram 
    \begin{equation} \label{Formula: Arity diagram with the same arities}
        \cyrDe = 
        \begin{tikzcd}
            n_\cap & n \\
            n & n_\cup 
            \arrow[from=1-1, to=1-2]
            \arrow[from=1-1, to=2-1]
            \arrow[from=2-1, to=2-2, "e_0", swap]
            \arrow[from=1-2, to=2-2, "e_1"]
        \end{tikzcd}
    \end{equation}
    if $e_0(i)\le e_1(i)$ for each $i<n$, then $t <_\cyrDe t$ holds.\footnote{By the non-triviality of $\cyrDe$, there must be $i<n$ such that $e_0(i)<e_0(i)$.}
\end{definition}

\begin{definition}
    A semidilator $D$ is a \emph{dilator} if for every well-order $\alpha$, $D(\alpha)$ is also well-ordered.
\end{definition}

Why do we introduce predilators? One reason is that every semidilator arising in our context is a predilator. Another reason is that finite predilators will turn out to be \emph{dilators}, but not every finite semidilator is a dilator. We expect the role between predilators and dilators to be similar to that between linear orders and well-orders, so the current notion of predilators is a more natural choice than semidilators.

Girard's original formulation of a predilator is more category-theoretic. The next lemma says Girard's formulation is `equivalent' to our formulation. For two strictly increasing maps $f, g\colon X\to Y$, let us say $f\le g$ if $f(x)\le g(x)$ for every $x\in X$.
\begin{lemma}[$\ACA_0$] \label{Lemma: Monotone semidilator iff predilator}
    A semidilator $D$ is a predilator iff for every linear order $X$, $Y$ and strictly increasing maps $f,g\colon X\to Y$, if $f\le g$ then $D(f)\le D(g)$.
\end{lemma}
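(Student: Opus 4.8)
The plan is to prove both directions of the equivalence by translating the ``pointwise domination'' condition $f\le g$ on morphisms of $\LO$ into the combinatorial condition about arity diagrams that defines a predilator, and vice versa. Throughout I will work over $\ACA_0$, which suffices to handle the finite combinatorics of arity diagrams and the functoriality already established in \autoref{Proposition: Predilator is a functor}.

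For the forward direction, assume $D$ is a predilator and let $f,g\colon X\to Y$ be strictly increasing with $f\le g$. I want to show $D(f)\le D(g)$, i.e.\ for every $t(a)\in D(X)$ we have $D(f)(t,a) = (t,f[a]) \le (t,g[a]) = D(g)(t,a)$ in $D(Y)$. By definition of the order on $D(Y)$, this amounts to showing $t \le_{\Diag(f[a],g[a])} t$, which by linearity and irreflexivity reduces to ruling out $t <_{\Diag(g[a],f[a])} t$ when the diagram is nontrivial. So the first key step is to identify the arity diagram $\Diag(f[a],g[a])$: writing $a = \{x_0 < \cdots < x_{n-1}\}$, since $f\le g$ and both are strictly increasing, the relative position of $f[a]$ and $g[a]$ inside $f[a]\cup g[a]$ is exactly captured by a diagram of the form \eqref{Formula: Arity diagram with the same arities} with $e_0$ coming from $f[a]$ and $e_1$ from $g[a]$, and the hypothesis $f(x_i)\le g(x_i)$ translates precisely to $e_0(i)\le e_1(i)$ for all $i<n$ (where I must be slightly careful about which side the induced-diagram convention puts $f[a]$ versus $g[a]$, choosing the orientation so that this inequality comes out in the direction the monotonicity condition wants). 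Then the monotonicity condition for $D$ at the term $t$ gives exactly $t <_\Delta t$ in the correct sense, hence $(t,f[a]) \le (t,g[a])$; applying this to every element of $D(X)$ and using that $D(f), D(g)$ are strictly increasing (so $\le$ between them is checked pointwise) yields $D(f)\le D(g)$.

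For the reverse direction, suppose the stated pointwise-domination property holds for all $X,Y,f,g$, and let $t\in\field(D)$ with $\arity(t)=n$, together with a nontrivial arity diagram $\Delta$ of the form \eqref{Formula: Arity diagram with the same arities} satisfying $e_0(i)\le e_1(i)$ for all $i<n$. I need to produce a concrete instance $X,Y,f,g$ realizing $\Delta$ as the induced diagram of $f[a]$ and $g[a]$ for a suitable set $a$ of size $n$. The natural choice is $X = n$ (the finite linear order), $a = n$ itself, $Y = n_\cup$, and $f = e_0$, $g = e_1$; then $f\le g$ by hypothesis on $\Delta$, so $D(f)\le D(g)$, which applied to the element $(t,n)\in D(X)$ gives $(t, e_0[n]) \le (t, e_1[n])$ in $D(Y)$, i.e.\ $t \le_{\Diag(e_0[n], e_1[n])} t$. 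The remaining step is to check that $\Diag(e_0[n],e_1[n])$, the diagram induced from the ranges of $e_0$ and $e_1$ inside $n_\cup$ via the enumeration isomorphisms, is literally (isomorphic to) the given $\Delta$ — this is where one uses that $\ran e_0 \cup \ran e_1 = \field n_\cup$ (the surjectivity clause in the definition of arity diagram) and that the pullback square is determined by the two maps $e_0,e_1$. Combining $t\le_\Delta t$ with the fact that $\Delta$ is nontrivial and the definition of $\le$ on terms gives $t <_\Delta t$, which is the monotonicity condition.

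The main obstacle is bookkeeping about orientations and the precise correspondence between an ``abstract'' arity diagram and the ``concrete'' induced diagram $\Diag(-,-)$: the definition of $D(X)$ orders $s(a) <_{D(X)} t(b)$ via $\Diag(a,b)$ with a fixed convention for which of $a,b$ plays the role of $a_0$ versus $a_1$ in the pullback square, and I must make sure that in the forward direction the inequality $f\le g$ feeds into the $e_0\le e_1$ slot of the monotonicity condition rather than its reverse (otherwise one would only get $\ge$). Once the convention is pinned down — and it is forced, since the whole point of \autoref{Definition: Application of a predilator} with its big commutative diagram of enumeration isomorphisms is to make $\Diag$ canonical — the argument is a direct unwinding; no deep idea beyond the recognition that the monotonicity condition is exactly the ``diagrammatic shadow'' of pointwise domination is needed. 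I would present the forward direction in full and note that the reverse is obtained by specializing to $X = \arity(t)$ with $f,g$ the two legs of $\Delta$, leaving the verification that $\Diag$ returns the original diagram as a routine check.
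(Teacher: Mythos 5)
Your proof is correct and follows essentially the same route as the paper's: for the forward direction, apply the monotonicity condition to the induced diagram $\Diag(f[a],g[a])$ after checking that $f\le g$ translates to $e_0\le e_1$ through the enumeration isomorphisms, and for the reverse direction, specialize to $X=n$, $Y=n_\cup$, $f=e_0$, $g=e_1$ and observe that the induced diagram from $\ran e_0,\ran e_1$ recovers $\Delta$. The intermediate "rule out $t<_{-\Delta}t$" step in your forward direction is a harmless detour—the monotonicity condition hands you $t<_\Delta t$ directly, which is what you end up using.
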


It turns out that checking $D$ being a dilator only requires checking the well-orderedness of $D(\alpha)$ for countable $\alpha$:
\begin{lemma} \label{Lemma: Being a dilator is captured by countable WOs}
    Let $D$ be a predilator. Then $D$ is dilator iff for every countable well-order $\alpha$, $D(\alpha)$ is well-ordered.
\end{lemma}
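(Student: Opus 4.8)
The plan is to prove only the nontrivial (right-to-left) direction, since the left-to-right direction is immediate: a dilator sends \emph{every} well-order to a well-order, in particular every countable one. So I would assume $D$ is a predilator with $D(\alpha)$ well-ordered for every countable well-order $\alpha$, and suppose toward a contradiction that $D(\beta)$ is ill-founded for some well-order $\beta$. Fix an infinite $<_{D(\beta)}$-descending sequence $(x_n)_{n\in\omega}$. By \autoref{Definition: Application of a predilator}, each $x_n$ can be written as $x_n = t_n(a_n)$ for a $D$-term $t_n$ and a finite subset $a_n\subseteq\beta$ with $|a_n| = \arity(t_n)$.

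The key move is a downward-Löwenheim--Skolem-type reflection to a countable subordering. Put $X = \bigcup_{n\in\omega} a_n$; since each $a_n$ is finite, $X$ is a countable subset of $\beta$, so the suborder $\gamma$ of $\beta$ with underlying set $X$ is a countable well-order (a subordering of a well-order is a well-order). Let $\iota\colon\gamma\to\beta$ be the inclusion, which is strictly increasing. By \autoref{Proposition: Predilator is a functor}, $D$ induces a functor $\LO\to\LO$, so $D(\iota)\colon D(\gamma)\to D(\beta)$ is strictly increasing and hence injective. Viewing each $a_n$ now as a finite subset of $\gamma$, the term application $t_n(a_n)$ is a legitimate element of $D(\gamma)$, and by the definition of $D(\iota)$ we have $D(\iota)\big(t_n(a_n)\big) = t_n(\iota[a_n]) = t_n(a_n) = x_n$. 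Since $D(\iota)$ is injective and order-preserving, $\big(t_n(a_n)\big)_{n\in\omega}$ is an infinite $<_{D(\gamma)}$-descending sequence, so $D(\gamma)$ is ill-founded, contradicting the hypothesis that $\gamma$ is a countable well-order. Hence $D(\beta)$ is well-ordered for every well-order $\beta$, i.e., $D$ is a dilator.

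I do not foresee a genuine obstacle here: the whole content is that any candidate descending sequence in $D(\beta)$ already lives over a countable subordering of $\beta$, which $D$ preserves functorially (in fact only functoriality is used, so the same argument works verbatim for any semidilator). The one point that deserves a word of care is the extraction of an infinite descending sequence from an ill-founded linear order, which uses a mild amount of choice (dependent choice) and is harmless in the ambient set-theoretic setting of this section.
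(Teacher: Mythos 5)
Your proof is correct and follows essentially the same approach as the paper's: both extract a descending sequence in $D(\beta)$, take the countable subordering generated by the union of the finitely many parameters appearing at each stage, and observe that the descending sequence already lives in $D$ applied to that countable subordering. Your version is slightly more explicit about invoking functoriality and the inclusion map, but the underlying argument is identical.
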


We can see that every predilator is a dilator. \added{Although its proof appears in prior materials, e.g., \cite[Proposition 2.3.7]{Girard1981Dilators}, we present a different proof since the following proof is used in \autoref{Proposition: Climax is a semidilator is trivial}.\footnote{We also remark that Girard's proof only tells that if $D$ is a semidilator that is not a predilator, then $\Clim(D)<\omega^\omega$.}}
\begin{proposition}[$\ACA_0$] \label{Proposition: Dilator is a predilator}
    If $D$ is a dilator, then $D$ is a predilator.
\end{proposition}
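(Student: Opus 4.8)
The plan is to prove the contrapositive: if a semidilator $D$ is \emph{not} a predilator, I will produce a well-order $\alpha$ such that $D(\alpha)$ is ill-founded, so that $D$ is not a dilator.

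First I would extract a concrete witness to the failure of the monotonicity condition: a $D$-term $t$ of arity $n$ and a non-trivial arity diagram $\Delta$ of the form \eqref{Formula: Arity diagram with the same arities}, with legs $e_0,e_1\colon n\to m$ (writing $m=n_\cup$) satisfying $e_0(i)\le e_1(i)$ for every $i<n$, but with $t<_\Delta t$ \emph{failing}. Since $\Delta$ is non-trivial, the linearity clause of a semidilator forces $t<_{-\Delta}t$. Unwinding the definitions of $-\Delta$ and of $\Diag$, the relation $-\Delta=\Diag(\ran e_1,\ran e_0)$ holds, so $t<_{-\Delta}t$ says precisely that in $D(m)$ we have $t(\ran e_1)<_{D(m)}t(\ran e_0)$, where the $n$-element set $\ran e_0$ lies pointwise below $\ran e_1$. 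The slogan: shifting the argument of $t$ upward strictly decreases its value; the remaining task is to iterate this upward shift $\omega$ times inside a single well-order.

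Next I would build the ambient order $\alpha$ by gluing $\omega$ copies $m^{(0)},m^{(1)},\dots$ of $m$, identifying $\ran e_1\subseteq m^{(k)}$ with $\ran e_0\subseteq m^{(k+1)}$ via the unique order-isomorphism between these two $n$-element sets. This comes equipped with strictly increasing maps $\sigma_k\colon m\to\alpha$ satisfying $\sigma_k\circ e_1=\sigma_{k+1}\circ e_0$, with $\alpha=\bigcup_k\sigma_k[m]$. Setting $A_k:=\sigma_k[\ran e_0]$, one computes $A_{k+1}=\sigma_k[\ran e_1]$, hence $A_k\cup A_{k+1}=\sigma_k[m]$ and, because $\sigma_k$ is an order-isomorphism onto its image, $\Diag(A_{k+1},A_k)\cong\Diag(\ran e_1,\ran e_0)=-\Delta$. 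Therefore $t(A_0)>_{D(\alpha)}t(A_1)>_{D(\alpha)}t(A_2)>_{D(\alpha)}\cdots$, each step being an instance of $t<_{-\Delta}t$, so $D(\alpha)$ is ill-founded. (One can phrase the last two steps instead through the functorial reformulation in \autoref{Lemma: Monotone semidilator iff predilator} together with a standard mapping-telescope colimit; the content is the same.)

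The hard part will be verifying that $\alpha$ is genuinely a well-order — both that the gluing defines a linear order at all, and that this order is well-founded — and this is exactly where the hypothesis $e_0\le e_1$ is used. Pointwise domination makes the identification maps ``point upward'': for instance $0\in\ran e_0$ and $m-1\in\ran e_1$ always hold, so the minima and maxima of the copies $m^{(k)}$ are non-decreasing in $k$, which is what prevents order-conflicts in the colimit. And following the orbit of an element through the maps $m^{(k)}\to m^{(k+1)}$, its position inside the finite set $m$ is non-increasing, hence eventually constant; I would leverage this to order-embed $\alpha$ into a countable ordinal, the finitely many new elements appearing at each stage being placed into reserved gaps. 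Once $\alpha$ is known to be a well-order, the ill-foundedness of $D(\alpha)$ shows $D$ is not a dilator, which is the contrapositive of the claim.
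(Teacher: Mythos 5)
Your argument is the contrapositive of the paper's and hinges on the same auxiliary fact: the existence of a well-order $\alpha$ together with strictly increasing maps $\sigma_k\colon m\to\alpha$ satisfying $\sigma_k\circ e_1=\sigma_{k+1}\circ e_0$. That is precisely the content of Girard's Proposition~2.3.9, which the paper simply cites (obtaining $\gamma$, $h$, $k$ with $he_1=khe_0$) and then applies in the forward direction via $D(k)(\xi)\ge\xi$ for $\xi\in D(\gamma)$. The gap in your proposal is in the attempt to rederive that fact by a mapping telescope. The quotient of $\coprod_k m^{(k)}$ by the identifications you describe is in general only a \emph{partial} order, even when $e_0\le e_1$. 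For instance with $n=2$, $m=4$, $\ran e_0=\{0,2\}$, $\ran e_1=\{1,3\}$: the classes of $2^{(0)}$ and of $1^{(1)}$ both lie strictly between $[1^{(0)}]=[0^{(1)}]$ and $[3^{(0)}]=[2^{(1)}]$, and no chain of in-copy comparisons and identifications relates them, so they are incomparable in the gluing. Hence ``pointwise domination prevents order-conflicts in the colimit'' is false as stated, and ``placing the finitely many new elements into reserved gaps'' is precisely the step that needs a proof: you must exhibit a linear extension that is moreover \emph{well-founded}, and a generic linear extension of a well-founded poset need not be well-founded.

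The repair amounts to invoking the same lemma the paper cites: take $\gamma$, $h\colon m\to\gamma$, $k\colon\gamma\to\gamma$ from Girard's Proposition~2.3.9 (the paper notes $\gamma=\omega^{2\cdot m}$ suffices and that the fact is provable in $\ACA_0$), and set $\sigma_i:=k^i\circ h$. Then $\sigma_i\circ e_1=k^ihe_1=k^{i+1}he_0=\sigma_{i+1}\circ e_0$, each $\sigma_i$ is strictly increasing into the well-order $\gamma$, and with $A_i:=\sigma_i[\ran e_0]$ one has $A_i\cup A_{i+1}=\sigma_i[m]$, $\Diag(A_{i+1},A_i)\cong-\Delta$, and so $t(A_0)>_{D(\gamma)}t(A_1)>_{D(\gamma)}\cdots$ witnesses ill-foundedness of $D(\gamma)$. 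Once you do this, you are running the paper's proof in contrapositive form rather than offering an independent route. A small further point worth flagging: $0\in\ran e_0$ does not follow from $e_0$ being strictly increasing alone; it uses the arity-diagram requirement $\ran e_0\cup\ran e_1=\field a_\cup$ together with $e_0\le e_1$, so you should say so if you rely on it.
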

\begin{proof}
    Suppose that $D$ is a semidilator that is not a predilator.
    Then we have a non-trivial monotone arity diagram $\cyrDe$ of the form \eqref{Formula: Arity diagram with the same arities} and a $D$-term $t$ of arity $n$ such that $D\vDash t\nless_\cyrDe t$.
    We begin the proof by decomposing $\cyrDe$ into other monotone arity diagrams.

    Let $\{k_0<\cdots<k_{N-1}\}\subseteq n_\cup$ be set of natural numbers such that $e_0(k)<e_1(k)$. For each $i\le N$, define
    \begin{equation*}
        f_i(j) = 
        \begin{cases}
            e_0(j), & \text{If }j\le k_{N-1-i}\\
            e_1(j), & \text{If }j>k_{N-1-i}.
        \end{cases}
    \end{equation*}
    (For technical convenience let $k_{-1}=-1$.)
    Then $f_0=e_0$, $f_N = e_1$, and $f_0\le f_1\le\cdots\le f_N$.
    In fact, $f_i$ and $f_{i+1}$ differs at just one point, namely $k_{N-1-i}$.
    Hence the induced diagram $\Diag_{n_\cup}(\ran f_i,\ran f_{i+1})$ is also a non-trivial monotone diagram.

    Now define the IU diagram $\bfcyrDe$ for $N+1$ copies of $n$ over $\bbN$ by the unique diagram isomorphic to the inclusion diagram generated by the intersection and union of $\ran f_i$ for $i=0,1,\cdots, N$.
    That said, we define an inclusion diagram $\bfcyrDe'\colon \ttL_{N+1}\to \mathcal{P}(n_\cup)$ by $\bfcyrDe'(i)=\ran f_i$, $\bfcyrDe'(i\land j)=\bfcyrDe'(i)\cap \bfcyrDe'(j)$, and $\bfcyrDe'(i\lor j)=\bfcyrDe'(i)\cup \bfcyrDe'(j)$, and $\bfcyrDe$ is the unique IU diagram over $\bbN$ isomorphic to $\bfcyrDe'$.
    Then $\bfcyrDe[i,i+1]=\Diag_{n_\cup}(\ran f_i,\ran f_{i+1})$ is monotone and non-trivial, and $\bfcyrDe[0,N]=\Diag_{n_\cup}(\ran e_0,\ran e_1) = \cyrDe$.
    Hence $D\vDash t\nless_\cyrDe t$ implies there is $i<N$ such that $D\vDash t\nless_{\bfcyrDe[i,i+1]} t$, and let $\cyrZhe = \bfcyrDe[i,i+1]$.
    Since $f_i$ and $f_{i+1}$ differ only at one point, $\cyrZhe$ must take the form
    \begin{equation*}
        \cyrZhe = 
        \begin{tikzcd}
            n-1 & n \\
            n & n+1 
            \arrow[from=1-1, to=1-2]
            \arrow[from=1-1, to=2-1]
            \arrow[from=2-1, to=2-2, "g_0", swap]
            \arrow[from=1-2, to=2-2, "g_1"]
        \end{tikzcd}
    \end{equation*}
    and there is a unique $k<n$ such that $g_0(k)<g_1(k)$.

    Now we claim that $D(\omega+(n-k))$ is ill-founded. For each $j\in\bbN$, define $a_i = \{0,\cdots, k-1, k+i,\omega,\cdots,\omega+n-k-1\}$.
    Then $|a_i|=n$ and $\Diag(a_i,a_j)=\cyrZhe$ for every $i<j\in\bbN$.
    Hence $D(\omega+n-k)\vDash t(a_i)>t(a_j)$ for $i<j$, so $D(\omega+n-k)$ has an infinite decreasing sequence.
\end{proof}

For finite semidilators, being a predilator is equivalent to being a dilator. That is, if $D$ is a \emph{finite}\footnote{Or \emph{strongly finite} in Girard's terminology.} predilator, being monotone is sufficient for being a dilator. We start with the following lemma:
\begin{lemma} \label{Lemma: Finitely many arity diagrams}
    For $n\in\bbN$, there are finitely many arity diagrams between $n$ and $n$.
\end{lemma}

\begin{proposition}[$\ACA_0$] \label{Proposition: Finite predilator is a dilator}
    Every finite predilator is a dilator.
\end{proposition}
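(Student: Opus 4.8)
The plan is to prove that $D(\alpha)$ is well-founded for every well-order $\alpha$. The structural fact driving everything is that, since $D$ has only finitely many terms,
\[
D(\alpha)=\bigcup_{t\in\field D} L_t,\qquad L_t:=\{t(a):a\subseteq\alpha,\ |a|=\arity t\},
\]
each $L_t$ carrying the suborder inherited from $D(\alpha)$. A finite union of well-founded suborders of a linear order is well-founded (pigeonhole on a hypothetical descending $\omega$-sequence), and each $L_t$ is itself a linear order, being a suborder of the linear order $D(\alpha)$ (\autoref{Proposition: Predilator is a functor}); so it suffices to show each $L_t$ is well-founded. This is the only place finiteness of $D$ is used, and it is precisely the step that fails for infinite predilators — which is why being a dilator is genuine extra content there.

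Fix $t\in\field D$ and put $n=\arity t$. For $a,b\in[\alpha]^n$ write $a\le_{\mathrm{pw}}b$ if the $i$-th element of $a$ is $\le_\alpha$ the $i$-th element of $b$ for each $i<n$; this is a partial order on $[\alpha]^n$. I would first show
\[
a\le_{\mathrm{pw}}b \ \Longrightarrow\ t(a)\le_{D(\alpha)} t(b),\qquad\text{with strict inequality when }a\neq b.
\]
Indeed, for $a\neq b$ the induced diagram $\Delta=\Diag(a,b)$ is a non-trivial arity diagram of the shape \eqref{Formula: Arity diagram with the same arities}, and — by the same computation as in the proof of \autoref{Lemma: Monotone semidilator iff predilator} — $a\le_{\mathrm{pw}}b$ says exactly that the two insertion maps of $\Delta$ satisfy $e_0\le e_1$; since $D$ is a predilator, the monotonicity condition gives $t<_\Delta t$, i.e.\ $t(a)<_{D(\alpha)}t(b)$. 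Consequently the bijection $a\mapsto t(a)$ presents the linear order $L_t$ as a linear extension of $([\alpha]^n,\le_{\mathrm{pw}})$.

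It remains to show that every $\omega$-sequence in $[\alpha]^n$ has an infinite subsequence that is weakly $\le_{\mathrm{pw}}$-increasing; granting this, $L_t$ is well-founded, for a descending sequence $t(a^0)>_{D(\alpha)}t(a^1)>_{D(\alpha)}\cdots$ pulls back to a sequence $\langle a^k\rangle$ in $[\alpha]^n$ with pairwise distinct terms, a weakly $\le_{\mathrm{pw}}$-increasing subsequence $a^{i_0},a^{i_1},\dots$ of which is then strictly $\le_{\mathrm{pw}}$-increasing, whence $t(a^{i_0})<_{D(\alpha)}t(a^{i_1})$, contradicting $t(a^{i_0})>_{D(\alpha)}t(a^{i_1})$. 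The subsequence claim is a form of Dickson's lemma, obtained over $\ACA_0$ as follows: first, every sequence in a well-order $\beta$ has an infinite weakly increasing subsequence — at stage $m+1$ take the least index past stage $m$ realizing the $<_\beta$-least value occurring in the remaining tail, and note that minima over shrinking tails are weakly increasing; then apply this $n$ times, coordinate by coordinate, thinning once for each $i<n$, using $[\alpha]^n\subseteq\alpha^n$ under the pointwise order.

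The main obstacle is the Dickson-type combinatorial lemma together with checking it is available over $\ACA_0$; everything else — the finite decomposition into slices, the reduction of a single slice to pointwise domination via monotonicity, and the fact that a linear extension of a well-quasi-order is well-founded — is routine.
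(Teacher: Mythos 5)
Your proof is correct, but it takes a genuinely different route from the paper's. The paper argues by contraposition: assuming $D$ is a finite semidilator with $D(\alpha)$ ill-founded, it passes by pigeonhole to a bad sequence $t(a_0) >_{D(\alpha)} t(a_1) >_{D(\alpha)} \cdots$ with a fixed term $t$, colors pairs $\{i,j\}$ by the induced arity diagram $\Diag(a_i,a_j)$, applies Ramsey's theorem for pairs to homogenize, and then shows the resulting diagram $\Delta$ must be non-trivial with $e_0 \le e_1$ (else one reads off a descending $\alpha$-sequence), giving $t \nless_\Delta t$ and hence a monotonicity failure. Your proof goes directly: slice $D(\alpha)$ into the finitely many pieces $L_t$, show via the monotonicity condition that each $L_t$ is a linear extension of the pointwise order on $[\alpha]^n$, and then invoke a Dickson-type lemma (iterated minima over tails) to see that $([\alpha]^n,\le_{\mathrm{pw}})$ is a wqo in the strong sense, whence each $L_t$ is well-founded and the finite union is too. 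Both proofs use the same monotonicity computation to translate $\le_{\mathrm{pw}}$ into insertion-map domination, and both reduce to a term of fixed arity; the real difference is the combinatorial engine. Your Dickson's lemma, proved coordinate-by-coordinate from the iterated-minimum lemma, is more elementary than infinite Ramsey for pairs and isolates a cleaner structural fact ($L_t$ linearizes a wqo) that Ramsey obscures. Both arguments go through over $\ACA_0$; for your minimum-selection step, the least element of a nonempty arithmetically definable subset of a well-order exists in $\ACA_0$ by defining a descending index sequence when it doesn't, and the subsequences you build are arithmetic in the data. As a minor aside, the paper's remark that the range of its coloring is merely ``countable'' should really say ``finite'' (the arities are fixed, so there are only finitely many possible diagrams); this matters because Ramsey for pairs fails with countably many colors, while your approach sidesteps the issue entirely.
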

\begin{proof}
    Suppose that $D$ is a finite semidilator that is not a dilator. We claim that $D$ is not a predilator. Fix a countable well-order $\alpha$ such that $D(\alpha)$ is ill-founded, so we have a sequence of $D$-terms $\lag t_i\mid i\in\bbN\rag$ and finite subsets $\lag a_i \mid i\in\bbN\rag$ of $\alpha$ such that 
    \begin{equation*}
        D(\alpha) \vDash t_0(a_0) > t_1(a_1) > t_2(a_2) > \cdots.
    \end{equation*}
    By the Pigeonhole principle, we may assume that all of $t_i$ are equal to $t$ whose arity is $n$. Now consider the coloring $F$ of domain $[\bbN]^2$ defined by
    \begin{equation*}
        F(i,j) = \text{The arity diagram induced from $a_i$ and $a_j$}.
    \end{equation*}
    Then the range of $F$ is finite by \autoref{Lemma: Finitely many arity diagrams}. Hence by Ramsey's theorem for pairs, we have an infinite homogeneous set $H\subseteq \bbN$ for $F$ with value $\cyrDe$. 
    
    For $i,j\in H$, $t(a_i) >_{D(\alpha)} t(a_j)$ implies $t \nless_\cyrDe t$. $\cyrDe$ is not trivial since otherwise we have $a_i=a_j$ for $i,j\in H$, so $t(a_i)=t(a_j)$.
    Now let us claim that $\cyrDe$ witnesses $D$ is not a predilator: 
    Suppose that $\cyrDe$ has the form \eqref{Formula: Arity diagram with the same arities}. We claim that $e_0\le e_1$ holds. Suppose not, let $k<n$ be a natural number such that $e_0(k)>e_1(k)$. Recall that we have the unique isomorphism $\en_{a_i}\colon n\to a_i$. Since $\cyrDe$ is the induced diagram from $a_i$ and $a_j$, we have a commutative diagram
    \begin{equation*}
        \begin{tikzcd}[column sep=large]
            a_i\cap a_j &&& a_j \\
            & n_\cap & n & \\
            & n & n_\cup & \\
            a_i &&& a_i\cup a_j
            \arrow[from=1-1, to=1-4, "\subseteq"]
            \arrow[from=1-1, to=4-1, "\subseteq", swap]
            \arrow[from=1-4, to=4-4, "\subseteq"]
            \arrow[from=4-1, to=4-4, "\subseteq", swap]
            \arrow[from=2-2, to=2-3]
            \arrow[from=2-2, to=3-2]
            \arrow[from=3-2, to=3-3, "e_0", swap]
            \arrow[from=2-3, to=3-3, "e_1"]
            \arrow[from=2-2, to=1-1, "\en_{a_i\cap a_j}" , "\cong"']
            \arrow[from=2-3, to=1-4, "\cong", "\en_{a_j}"']
            \arrow[from=3-2, to=4-1, "\cong", "\en_{a_i}"']
            \arrow[from=3-3, to=4-4, "\en_{a_i\cup a_j}", "\cong"']
        \end{tikzcd}
    \end{equation*}
    If we denote the $k$th element of $a_i$ by $a_i(k)$, then $e_0(k)>e_1(k)$ implies $a_i(k) > a_j(k)$.
    It leads to a contradiction since $H$ is infinite, so we get an infinite descending sequence over $\alpha$.

    Thus, we have an arity diagram $\cyrDe$ of the form \eqref{Formula: Arity diagram with the same arities} such that $e_0\le e_1$, but also have $t\in \field(D)$ of arity $n$ such that $D\vDash t\nless_\cyrDe t$. Hence $D$ is not a predilator.
\end{proof}

\subsection{Dilators as functors with support transformations}
There are different ways to define predilators and dilators.
Girard \cite{Girard1981Dilators} defined dilators as autofunctors $F\colon \WO\to \WO$ over the category \added{of} well-orders preserving direct limits and pullbacks. 
Freund \added{(e.g., \cite{FreundPhD, Freund2021Pi14})} defined dilators and predilators as functors $F\colon \LO\to\LO$ admitting the support transformation $\supp \colon F\to [\cdot]^{<\omega}$, a transformation taking an element $t(a)\in F(X)$ and returning the element part $a$. Freund also proved \cite[Remark 2.2.2]{FreundPhD} that his definitions of a predilator and a dilator are equivalent to Girard's ones.
\added{Freund's dilators are useful in practice, so we introduce how Freund defined dilators, and its equivalence with a denotation system.}

Let $\LO$ be the category of all linear orders with strictly increasing functions. We say a functor $F\colon \LO\to\LO$ is \emph{monotone} if $f\le g$ implies $D(f)\le D(g)$.
For a given set $X$, $[X]^{<\omega}$ is the set of all finite subsets of $X$. We can turn $[\cdot]^{<\omega}$ to a functor by defining $[f]^{<\omega}(a)=\{f(x)\mid x\in a\}$ for $f\colon X\to Y$.

\begin{definition}
    An \emph{F-semidilator} (F abbreviates `Freund') is a pair $(F,\supp)$, where $F\colon \LO\to\LO$ is a functor and $\supp^F\colon F\to [\cdot]^{<\omega}$ is a natural transformation satisfying the \emph{support condition}
    \begin{equation} \label{Formula: Support condition}
        \{\sigma\in F(Y) \mid \supp^F_Y(\sigma)\subseteq \ran(f)\} \subseteq \ran(F(f))
    \end{equation}
    for every morphism $f\colon X\to Y$. We often omit $\supp^F$ and simply write $F$ to mean $(F,\supp)$.
    If $F$ is monotone, then we call $F$ an \emph{F-predilator}.
\end{definition}

We can extract a denotation system from Fruend-style semidilator as follows:
\begin{definition}
    The \emph{trace} of an F-semidilator $F$ is given by
    \begin{equation*}
        \Tr(F) = \{(n,\sigma) \mid \sigma\in F(n)\land \supp^F_n(\sigma)=n\}.
    \end{equation*}
\end{definition}
Roughly, $\Tr(D)$ will be the set of all elements of the form $t(\{0,1,\cdots, n-1\})$ for a $F$-term $t$ of arity $n$. 

\begin{proposition} \label{Proposition: F-semidilator to a denotation system}
    For an F-semidilator $F$, if we define $\frakf(F)$ by $\field(\frakf(F))=\Tr(F)$, $\arity^{\frakf(F)}(n,t)=n$, and for an arity diagram
    \begin{equation*}
    \cyrDe = 
        \begin{tikzcd}
            n_\cap & n_1 \\
            n_0 & n_\cup 
            \arrow[from=1-1, to=1-2]
            \arrow[from=1-1, to=2-1]
            \arrow[from=2-1, to=2-2, "e_0", swap]
            \arrow[from=1-2, to=2-2, "e_1"]
        \end{tikzcd}
    \end{equation*}
    and $(n_0,t_0),(n_1,t_1)\in \Tr(F)$, let us define
    \begin{equation*}
        (n_0,t_0) <_\cyrDe (n_1,t_1) \iff F(n_\cup)\vDash F(e_0)(t_0) < F(e_1)(t_1).
    \end{equation*}
    Then $\frakf(F)$ is a semidilator.
\end{proposition}

Conversely, an application of a denotation system turns a denotation system into a Freund-style semidilator:
\begin{theorem}
    Let $D$ be a semidilator. Let us define $\fraka(D)(X)=D(X)$ for a linear order $X$. For increasing $f\colon X\to Y$, let us define $\fraka(D)(f)\colon \fraka(D)(X)\to\fraka(D)(Y)$ by 
    \begin{equation*}
        \fraka(D)(f)(t(a)) = t(f[a]).
    \end{equation*}
    Then $\fraka(D)$ is an F-semidilator.
\end{theorem}

Now let $\SDil$ and $\SDil_\mathsf{F}$ be the category of denotation systems and the category of F-semidilators respectively. We can extend $\frakf$ and $\fraka$ into functors $\frakf\colon \SDil_\mathsf{F}\to\SDil$ and $\fraka\colon \SDil\to\SDil_\mathsf{F}$. The following theorem says denotation systems and Freund-style semidilators are `equivalent' in the sense of category equivalence:
\begin{theorem}
    $\fraka\circ \frakf$ and $\frakf\circ \fraka$ are naturally isomorphic to the identity functor over the domain category. Hence, $\fraka$ and $\frakf$ form a category equivalence.
\end{theorem}

\printbibliography

\end{document}